\theoremstyle{plain}
  \newtheorem{thm}{Theorem}[section]
  \newtheorem{prop}[thm]{Proposition}
  \newtheorem{cor}[thm]{Corollary}
  \newtheorem{lemma}[thm]{Lemma}
  \newtheorem{claim}[thm]{Claim}
\theoremstyle{definition}
  \newtheorem{defn}[thm]{Definition}
  \newtheorem{example}[thm]{Example}
  \newtheorem{remark}[thm]{Remark}
  \newtheorem{construction}[thm]{Construction}
\theoremstyle{remark}
\DeclareMathOperator{\Jac}{Jac}
\DeclareMathOperator{\Pic}{Pic}
\DeclareMathOperator{\Hom}{Hom}
\DeclareMathOperator{\Aut}{Aut}
\DeclareMathOperator{\Sym}{Sym}
\DeclareMathOperator{\Herm}{Herm}
\DeclareMathOperator{\disc}{disc}
\renewcommand{\Im}{\mathrm{Im}\;}
\DeclareMathOperator{\Spec}{Spec\;}
\DeclareMathOperator{\Gr}{Gr}
\DeclareMathOperator{\Mat}{Mat}
\DeclareMathOperator{\Tr}{Tr}
\DeclareMathOperator{\tr}{tr}
\DeclareMathOperator{\Norm}{N}
\DeclareMathOperator{\norm}{n}
\DeclareMathOperator{\Spur}{S}
\DeclareMathOperator{\Spin}{Spin}
\DeclareMathOperator{\WC}{WC}
\DeclareMathOperator{\Br}{Br}
\DeclareMathOperator{\Ob}{Ob}
\DeclareMathOperator{\Seg}{Seg}
\DeclareMathOperator{\lin}{lin}
\newcommand{\PGL}{\mathrm{PGL}}
\newcommand{\SL}{\mathrm{SL}}
\newcommand{\GL}{\mathrm{GL}}
\newcommand{\Sp}{\mathrm{Sp}}
\newcommand{\Q}{\mathbb{Q}}
\newcommand{\Z}{\mathbb{Z}}
\newcommand{\G}{\mathbb{G}}
\newcommand{\C}{\mathbb{C}}
\newcommand{\Gm}{\mathbb{G}_m}
\newcommand{\Euniv}{E^{\mathrm{univ}}}
\newcommand{\univ}{\mathrm{univ}}
\newcommand{\Moneone}{\mathscr{M}_{1,1}}
\newcommand{\Hilb}{\mathcal{H}\mathrm{ilb}}
\def\id{{\rm id}}
\renewcommand{\AA}{\mathcal{A}}
\newcommand{\CC}{\mathbb{C}}
\newcommand{\mC}{\mathcal{C}}
\renewcommand{\H}{\mathcal{H}}
\newcommand{\HH}{\mathscr{H}}
\newcommand{\OO}{\mathcal{O}}
\newcommand{\PP}{\mathbb{P}}
\newcommand{\VV}{\mathcal{V}}
\newcommand{\ZZ}{\mathbb{Z}}
\newcommand{\Pone}{\mathbb{P}^1}
\newcommand{\Poneone}{\Pone \times \Pone}
\newcommand{\tns}{\otimes}
\newcommand{\ra}{\longrightarrow}
\newcommand{\subs}{\lrcorner \,}
\newcommand{\Kbar}{{\overline{K}}}
\newcommand{\JA}{{J(A)}}
\newcommand{\CJ}{{\mathscr{C}(J)}}
\def\ie{{\textrm i.e., }}
\def\eg{{\textrm e.g., }}
\def\cube#1#2#3#4#5#6#7#8{{ 
  \setlength{\unitlength}{.155in}
  \begin{picture}(8,8)
  \put(1,5.5){\makebox(0,0){$#1$}}
  \put(6,5.5){\makebox(0,0){$#2$}}
  \put(1,0.5){\makebox(0,0){$#3$}}
  \put(6,0.5){\makebox(0,0){$#4$}}
  \put(3,7.5){\makebox(0,0){$#5$}}
  \put(8,7.5){\makebox(0,0){$#6$}}
  \put(3,2.5){\makebox(0,0){$#7$}}
  \put(8,2.5){\makebox(0,0){$#8$}}
  \put(1,1.2){\line(0,1){3.6}}
  \put(6,1.2){\line(0,1){3.6}}
  \put(8,3.2){\line(0,1){3.6}}
  \put(3,3.2){\line(0,1){1.8}}
  \put(3,6){\line(0,1){0.8}}
  \put(1.7,0.5){\line(1,0){3.6}}
  \put(1.7,5.5){\line(1,0){3.6}}
  \put(3.7,7.5){\line(1,0){3.6}}
  \put(3.7,2.5){\line(1,0){1.8}}
  \put(6.5,2.5){\line(1,0){0.8}}
  \put(1.5,1){\line(1,1){1.0}}
  \put(1.5,6){\line(1,1){1.0}}
  \put(6.5,1){\line(1,1){1.0}}
  \put(6.5,6){\line(1,1){1.0}}
 \end{picture} }}
\def\cubea#1#2#3#4#5#6#7#8{{ 
  \setlength{\unitlength}{.155in}
  \begin{picture}(8,8)
  \put(1,6){\makebox(0,0){$#1$}}
  \put(6,6){\makebox(0,0){$#2$}}
  \put(1,1){\makebox(0,0){$#3$}}
  \put(6,1){\makebox(0,0){$#4$}}
  \put(3,8){\makebox(0,0){$#5$}}
  \put(8,8){\makebox(0,0){$#6$}}
  \put(3,3){\makebox(0,0){$#7$}}
  \put(8,3){\makebox(0,0){$#8$}}
  \put(1,1.7){\line(0,1){3.6}}
  \put(6,1.7){\line(0,1){3.6}}
  \put(8,3.7){\line(0,1){3.6}}
  \put(3,3.7){\line(0,1){1.8}}
  \put(3,6.5){\line(0,1){0.8}}
  \put(2,1){\line(1,0){2.8}}
  \put(2,6){\line(1,0){2.8}}
  \put(4,8){\line(1,0){2.8}}
  \put(4,3){\line(1,0){1.7}}
  \put(6.2,3){\line(1,0){0.7}}
  \put(1.5,1.5){\line(1,1){0.9}}
  \put(1.5,6.5){\line(1,1){0.9}}
  \put(6.5,1.5){\line(1,1){0.9}}
  \put(6.5,6.5){\line(1,1){0.9}}
 \end{picture} }}
\title{Coregular spaces and genus one curves}
\author{\!\!Manjul Bhargava\thanks{Department of Mathematics, Princeton University, Princeton, NJ 08544. Supported by a Packard Foundation Fellowship and NSF grant DMS-1001828.} \ \, and \ Wei Ho\thanks{Department of Mathematics, Columbia University, New York, NY 10027. Partially supported by NSF grants DMS-0902853 and DMS-0739400.}}
\date{}
\begin{document}

\maketitle

{\small \tableofcontents}

\section{Introduction} \label{sec:intro}

In 1963, Birch and Swinnerton-Dyer \cite{BSD} carried out a seminal study of the moduli space of genus one curves equipped with degree 2 line bundles, in terms of the orbits of the action of $\GL_2$ on the space $\Sym^4(2)$ of binary quartic forms.  This orbit space parametrization was a key ingredient in the explicit 2-descent computations that led them to the celebrated Birch and Swinnerton-Dyer conjecture.  The analogues of this parametrization for line bundles of degree 3, 4, and 5 (i.e., ``elliptic normal curves'' of degrees $3$, $4$, and $5$) were subsequently investigated in the important works of Cassels \cite{Cassels}, and more recently, Cremona \cite{cremona-binarycubicquartic}, Cremona--Fisher--Stoll \cite{cremonafisherstoll}, and Fisher \cite{fisher-pfaffianECs}.  These works have, in particular, enabled explicit 3-, 4-, and 5-descent computations on elliptic curves analogous to the original 2-descent computations of Birch and Swinnerton-Dyer. 
Recently, these parametrizations of elliptic normal curves have also been used to obtain bounds on the average rank and Selmer ranks of elliptic curves (see \cite{arulmanjul-bqcount,arulmanjul-tccount}).

The important consequences and elegance of these classical orbit parametrizations naturally raise the question as to whether further such correspondences exist that could shed light on other data attached to genus one curves.  The purpose of this article is to develop additional such correspondences.  In fact, we will show that the classical representations described above for elliptic normal curves are only four among at least $20$ such representations whose orbits parametrize nontrivial data on genus one curves---such as line bundles, vector bundles, points on the Jacobian, as well as more exotic structures.

The underlying philosophy is the use of orbit spaces to parametrize algebraic or geometric objects. In 1801, Gauss gave perhaps the first nontrivial example of such a parametrization in his celebrated Disquisitiones Arithmeticae \cite{Gauss}, where he studied integral binary quadratic forms under a certain action of the group $\GL_2(\Z)$. Although the space of binary quadratic forms is a {\it prehomogeneous vector space}, meaning that it has only one open orbit over $\C$, the rational and especially the integral orbits are in bijection with quite nontrivial arithmetic objects, namely, quadratic fields and  ideal classes in quadratic rings, respectively.  

In \cite{wrightyukie}, Wright and Yukie showed that orbits of many prehomogeneous vector spaces over a field $k$ correspond to field extensions of $k$.
The series of papers \cite{hcl1,hcl2,hcl3,hcl4,hcl5} describes how the integral orbits of most  prehomogeneous vector spaces parametrize arithmetic objects, such as rings of low rank together with ideals and modules.  These parametrizations were used in \cite{manjulcountquartic,manjulcountquintic}, for example, to determine the density of discriminants of quartic and quintic fields, thus completing the original program of Wright and Yukie (see~\cite[\S1]{wrightyukie}) of using prehomogeneous representations to determine densities of arithmetic objects. 

\nopagebreak

In this paper, we study a natural series of {\it coregular} representations, that is, representations for which the ring of invariants is a polynomial ring.  
More precisely, we consider here many representations of reductive groups $G$ for which the restricted representation on the semisimple part of $G$ is coregular.
It is interesting that ``most'' such representations that have more than one generating invariant---i.e., are not prehomogeneous---turn out to involve {\it genus one curves}.

Just as the space of $2\times 2\times 2$ cubes and $2\times 3\times 3$ boxes played a central role in the study of prehomogeneous vector spaces \cite{hcl1,hcl2}, here the spaces of $2\times 2\times 2\times 2$ hypercubes and $3\times 3\times 3$ cubes play a central role in the theory, from which we are then able to derive most other coregular spaces corresponding to genus one curves via suitable invariant-theoretic procedures.  Also, analogous to the prehomogeneous cases, the invariant theory of our spaces plays a crucial role in constructing and describing the corresponding geometric data.  Indeed, in  many cases, our bijections yield natural geometric interpretations for the generators of the invariant ring.


\afterpage{%
\begin{landscape}

\begin{table} \label{table:examples}
\begin{center}
	\begin{tabular*}{1.1\textwidth}{@{\extracolsep{\fill}}r|c|c|l|c|c|c|}
	\cline{2-7}
		& Group (s.s.) & Representation & Geometric Data & Invariants & Dynkin & \S \\
	\cline{2-7}
		1. &		$\SL_2$ & $\Sym^4 (2)$ & $(C,L_2)$ & $2, 3$ & $A_2^{(2)}$ &\ref{sec:binaryquartics} \\
		2. &		$\SL_2^2$ & $\Sym^2 (2) \tns \Sym^2 (2)$ & $(C, L_2, L_2') \sim (C, L_2, P)$ & $2, 3, 4$ & $D_3^{(2)}$&\ref{sec:bideg22forms}  \\
		3. &		$\SL_2^4$ & $2 \tns 2 \tns 2 \tns 2$ & $(C, L_2, L_2', L_2'')\sim(C,L_2,P,P')$ & $2, 4, 4, 6$ & $D_4^{(1)}$& \ref{sec:hypercube} \\
		4. &		$\SL_2^3$ & $2 \tns 2 \tns \Sym^2(2)$ & $(C, L_2, L_2') \sim (C, L_2, P)$ & $2, 4, 6$ & $B_3^{(1)}$ &\ref{sec:2symHC} \\ 
		5. &		$\SL_2^2$ & $\Sym^2 (2) \tns \Sym^2 (2)$ & $(C,L_2, L_2') \sim (C, L_2, P)$ & $2, 3, 4$ & $D_3^{(2)}$ & \ref{sec:22symHC} \\
		6. &		$\SL_2^2$ & $2 \tns \Sym^3(2)$ & $(C,L_2, P_3)$ & $2, 6$ & $G_2^{(1)}$ &\ref{sec:3symHC} \\
		7. &		$\SL_2$ & $\Sym^4 (2)$ & $(C, L_2, P_3)$ & $2, 3$ & $A_2^{(2)}$& \ref{sec:4symHC} \\
		8. &		$\SL_2^2 \times \GL_4$ & $ 2 \tns 2 \tns \wedge^2(4)$ & $(C, L_2, M_{2,6})$ & $2, 4, 6, 8$ & $D_5^{(1)}$&\ref{sec:2skewHC} \\
		9. &		$\SL_2 \times \SL_6$ & $ 2 \tns \wedge^3(6)$ & $(C, L_2, M_{3,6})$ with $L^{\tns 3} \cong \det M$
			& $2, 6, 8, 12$ & $E_6^{(1)}$& \ref{sec:3skewHC}\\
		10. &		$\SL_2 \times \Sp_6$ & $2 \tns \wedge^3_0(6)$ & $(C,L_2, (M_{3,6}, \varphi))$ with $L^{\tns 3} \cong \det M$ & $2, 6, 8, 12$ & $E_6^{(2)}$ & \ref{sec:excHC}  \\
		11. &		$\SL_2 \times \Spin_{12}$ & $2 \tns S^+(32)$ & $(C \to \PP^1(\HH_3(\mathbb{H})), L_2)$
			 & $2, 6, 8, 12$ & $E_7^{(1)}$&  \ref{sec:excHC} \\
		12. &		$\SL_2 \times E_7$ & $2 \tns 56$ & $(C \to \PP^1(\HH_3(\mathbb{O})), L_2)$ 
			& $2, 6, 8, 12$ & $E_8^{(1)}$ & \ref{sec:excHC} \\
	\cline{2-7}
		13. &		$\SL_3$ & $\Sym^3 (3)$ & $(C,L_3)$ & $4, 6$ & $D_4^{(3)}$ & \ref{sec:ternarycubics} \\
		14. &		$\SL_3^3$ & $3 \tns 3 \tns 3$ & $(C,L_3,L_3') \sim (C,L_3,P)$ & $6, 9, 12$ & $E_6^{(1)}$ &  \ref{sec:333} \\
		15. &		$\SL_3^2$ & $3 \tns \Sym^2 (3)$ & $(C,L_3,P_2)$ & $6, 12$ & $F_4^{(1)}$& \ref{sec:2symRC} \\
		16. &		$\SL_3$ & $\Sym^3 (3)$ & $(C,L_3,P_2)$ & $4, 6$ & $D_4^{(3)}$&\ref{sec:3symRC} \\
		17. &		$\SL_3 \times \SL_6$ & $3 \tns \wedge^2 (6)$ & $(C,L_3,M_{2,6})$ with $L^{\tns 2} \cong \det M$
			 & $6, 12, 18$ & $E_7^{(1)}$ & \ref{sec:deg3special} \\
		18. &		$\SL_3 \times E_6$ & $3 \tns 27$ & $(C \hookrightarrow \PP^2(\mathbb{O}),L_3)$	& $6, 12, 18$ & $E_8^{(1)}$& \ref{sec:deg3moduli}  \\
	\cline{2-7}
		19. &		$\SL_2 \times \SL_4$ & $2 \tns \Sym^2 (4)$ & $(C,L_4)$ & $8, 12$ & $E_6^{(2)}$&\ref{sec:deg4} \\
	\cline{2-7}
		20. &		$\SL_5 \times \SL_5 $ & $\wedge^2(5) \tns 5$ & $(C,L_5)$ & $20, 30$ & $E_8^{(1)}$ & \ref{sec:deg5} \\
	\cline{2-7}
	\end{tabular*}
\end{center}
\caption{Table of coregular representations and their moduli interpretations}
\end{table}

\end{landscape}
}

\vspace{.1in}

A summary of the parametrizations we obtain is provided in Table \ref{table:examples}.
In this table, the first and second columns list the representations in question, although we only list the semisimple
parts of the groups here, since some of the actions of the non-semisimple parts of the relevant groups are not completely standard.
The third column lists the geometric data (up to isomorphism) arising from a general orbit of this representation: the data in every case includes a genus one curve $C$.  The curve may also be equipped with line bundles, denoted by $L_d$, $L_d'$, $L_d''$, etc., where $d$ is the degree of the line bundle, or with a vector bundle, denoted by $M_{r,d}$, where $r$ is the rank and $d$ is the degree of the vector bundle.  The notation $P$ or $P'$ indicates a rational point on the Jacobian of $C$ (often in a certain arithmetic subgroup of $\Jac(C)$), and $P_e$ indicates that the point is a nontrivial rational torsion point of order $e$.  The notation $\sim$ indicates that the data on the two sides are equivalent and are both suitable interpretations for the moduli problem.  There may be some additional mild (open) conditions on the geometric data not indicated in column three.
The fourth column gives the degrees of the invariants of the representation of the semisimple group, and the fifth contains
the extended or affine Dynkin diagram associated with the representation (see Section \ref{sec:ExcLieAlgs}).
Finally, the sixth column indicates the subsection in which that case is proved
and/or discussed most carefully, although most of the theorems are previewed in Sections~\ref{sec:HCpreview} and \ref{sec:RCpreview}.  Note that in many cases, changing the form of the group over $K$ leads to a twisted version of the geometric data in column three.

For example, line 3 of Table~\ref{table:examples} corresponds to the case of $2\times 2\times 2\times 2$ hypercubical matrices over a field $K$ ($\mathrm{char}(K) \neq 2,3$). We show that the nondegenerate $\GL_2(K)^4$-orbits of $K^2\otimes K^2 \otimes K^2\otimes K^2$ correspond to the data $(C,L,(P,P',P''))$, where: $C$ is a genus one curve over $K$;\, $L$ is a degree 2 line bundle on $C$;\, and $P$, $P'$, and $P''$ are non-identity $K$-points summing to zero in a certain arithmetic subgroup of the Jacobian of $C$.  Meanwhile, the ring of polynomial invariants for the action of $\SL_2(K)^4$ on $K^2\otimes K^2 \otimes K^2\otimes K^2$ is freely generated by four invariants $a_2$, $a_4$, $a_4'$, and $a_6$, having degrees 2, 4, 4, and 6, respectively.  In terms of the geometric data, if we write the Jacobian of $C$ as a Weierstrass elliptic curve $y^2=x^3+Ax+B$ on which the points $P$, $P'$, $P''$ lie, then: $a_2$ can be interpreted as the slope of the line connecting $P$, $P'$ and $P''$;\, $a_4$ and $a_4'$ are the $x$-coordinates of the points $P$ and $P'$;\, and $a_6$ is the $y$-coordinate of~$P$.  

Similarly, line fourteen of Table~\ref{table:examples} corresponds to the case of $\GL_3(K)^3$ acting on the space $K^3\otimes K^3\otimes K^3$ of $3\times 3\times3$ cubical matrices over $K$.  We prove that the nondegenerate orbits parametrize data consisting of a triple $(C,L,(P,P'))$, where $C$ is a genus one curve over $K$, $L$ is a degree 3 line bundle on $C$ defined over $K$, and $P$ and $P'$ are non-identity points summing to zero in an arithmetic subgroup of the Jacobian of $C$.  The three generators $b_6$, $b_9$, and $b_{12}$ of the $\SL_3(K)^3$-invariant ring have degrees 6, 9, and 12, respectively.  If we again write the Jacobian of $C$ as a Weierstrass elliptic curve $y^2=x^3+Ax+B$, then $P=(b_6,b_9)$, $P'=(b_6,-b_9)$, and~$A=b_{12}$.

\vspace{.1in}

We briefly describe some forthcoming applications of these parametrizations.  In \cite{arulmanjul-bqcount,arulmanjul-tccount}, an implementation of certain geometric counting techniques (building on those in~\cite{manjulcountquintic})
for integral orbits of the representations in lines 1 and 13
of Table~\ref{table:examples} 
has led to  results on the average sizes of the 2- and 3-Selmer groups of the family of all elliptic curves over $\Q$ (when ordered by height), and corresponding (finite) upper bounds on the average rank of all elliptic curves. 
By developing these counting techniques further, so that they may be applied to other cases in Table~\ref{table:examples},  we determine in~\cite{cofreecounting} the average sizes of the 2- and 3-Selmer groups for various families of elliptic curves, e.g., those with marked points.  These results lead to corresponding average rank bounds for the curves in these families.  For example, 
the space of $3\times 3\times 3$ cubes allows us to show that the average size of the 3-Selmer group in the family of elliptic curves 
	\begin{equation*}
	y^2 +a_3 y = x^3 + a_2 x^2 + a_4 x
	\end{equation*}
having a marked point at $(0,0)$ is
12.  As a result, we show that the (limsup of the) average rank of this family of elliptic curves is finite (indeed, at most $2\frac16$), and that a positive proportion of curves in this family have rank one. Analogous results for average sizes of Selmer groups in families of elliptic curves with one marked point of order 3 or 2 (using lines 6 and 15, respectively, of Table~\ref{table:examples}) and elliptic curves with two general marked points (using line 3, the space of hypercubes) are also obtained.

\vspace{.1in}

\noindent {\em Outline.}
The organization of this paper is as follows.  Sections \ref{sec:HCpreview} and \ref{sec:RCpreview} form an extended introduction in which we describe the basic constructions and parametrizations corresponding to $2\times 2\times 2\times 2$ hypercubes and $3\times 3\times 3$ cubes, respectively, and how many of the various other coregular space orbit parametrizations in Table \ref{table:examples} may be (at least heuristically) derived from them.

Section \ref{sec:classical} describes orbit parametrizations for the moduli spaces of genus one curves with degree~$n$ line bundles, for~$2 \leq n \leq 5$.  Many of the ideas in this section are classical or well known, at least over algebraically closed fields, but our constructions generalize to other fields and more general base schemes.  We also show that the stabilizers of elements in these representations are naturally isomorphic to the automorphism group of a genus one curve with a degree $n$ line bundle, which is related to the so-called Heisenberg theta group.  These results about stabilizers play a central role in the works~\cite{arulmanjul-bqcount,arulmanjul-tccount}.  The parametrizations of elliptic normal curves of degree~$n$, especially for $n = 2$ and $n = 3$, will also be used extensively in the later parts of the paper.

In Section \ref{sec:hermRC}, we concentrate on the coregular spaces whose orbits are related
to a genus one curve and a degree~$3$ line bundle, possibly with additional data. 
We first discuss some of the fundamental cases, such as the aforementioned space of 
$3 \times 3 \times 3$ cubical matrices.
In each case, we show that the invariants of the representation are closely related to the geometric data, and in particular,
to the Jacobian of the genus one curve that arises.

We then study spaces of the form $V \tns J$, where $V$ is a $3$-dimensional vector space
and $J$ is a certain type of cubic Jordan algebra (to be specified), up to
the natural action of a group which we denote  by $\GL(V)\times \SL(J)$.  In each case, the group 
$(\mathbb G_m\times)\SL(J)$ acting on $J$ is a prehomogeneous vector space, equipped with 
a relative invariant cubic norm form and an
adjoint map.  We prove a uniform theorem about the orbits of these type of representations, and
then specialize to specific $J$ to recover a number of the introductory
cases as well as other interesting moduli problems.

Section \ref{sec:hermHC} develops analogous ideas to study orbits that parametrize genus one
curves with degree $2$ line bundles and additional data.  We again begin by discussing the
most fundamental representations,
including the space of bidegree $(2,2)$ forms on $\Pone \times \Pone$
and the aforementioned space of $2 \times 2 \times 2 \times 2$ hypercubical matrices.
We show that the invariants of 
each representation are again closely related to the corresponding geometric data.  
Analogously to the case of degree~$3$ line bundles, we then study a more general situation.
In particular, we consider the tensor product of a two-dimensional vector space $V$
and a space $\mathscr{C}(J)$ of ``Hermitian cubes'' with respect
to a cubic Jordan algebra $J$, under the action of
a group which we denote by $\SL_2(V)\times \SL_2(J)$; the space $\mathscr{C}(J)$ has a quartic
norm form and a natural adjoint map.  Again, we study representations of this kind uniformly, and 
then specializing recovers most of the earlier cases as well as several new moduli problems.

In Section \ref{sec:ExcLieAlgs}, we describe how all of the
representations we study are related to exceptional Lie algebras.
In particular, these representations all arise from Vinberg's theory of $\theta$-groups \cite{vinberg};
his constructions give a wide class of coregular spaces. 
In his recent Harvard Ph.D.\ thesis, J.\ Thorne~\cite{jthorne-thesis} studies some canonical constructions in invariant
theory arising from Vinberg theory, and it is thus an interesting question as to how his more representation-theoretic constructions are related to our geometric ones.  Finally, we also describe how our spaces are closely related to the representations found in the Deligne-Gross Magic Triangle \cite{delignegross}.  

The ``certain arithmetic subgroup'' of the Jacobian of the genus one curve arising in many of our moduli problems is called the {\it period-index subgroup}.  It is equivalent to the entire Jacobian when working over an algebraically closed field.  When working over number fields, it is a finite-index subgroup of the Jacobian, consisting of points that are ``unobstructed'' in relation to the genus one curve.  This is described in more detail in Appendix~\ref{appendix:torsors}, which may be of use to those interested in the arithmetic applications.   The appendix may also be safely skipped for those readers more interested in the geometric constructions and bijections over an algebraically closed field.

\vspace{.1in}

\noindent {\em Acknowledgments.}  We would like to thank Bhargav Bhatt, John Cremona, Benedict Gross, Joe Harris, Abhinav Kumar, and Catherine O'Neil for useful conversations.
The main theorems in \S \ref{sec:g1fromRC} and \S \ref{sec:333} are joint work with Catherine O'Neil.  


\section{Main results I: Genus one curves and \texorpdfstring{$2\times 2\times 2\times 2$}{2222} hypercubes}  \label{sec:HCpreview}

In this section, we discuss the space of $2\times 2\times 2\times 2$ hypercubical matrices over $K$,
and we describe the various parametrizations of genus one curves with extra data that may be obtained
from this perspective.  No proofs or details are given in this section.  
Here, we simply describe in an elementary way the constructions
of the genus one curves and extra data from the 
orbits of our representations, and state the main theorems related to these cases.
Further details, more functorial descriptions of the constructions, and proofs may be found in Section~\ref{sec:hermHC}.

\subsection{Preliminaries on \texorpdfstring{$2\times 2\times 2$}{222} cubes}\label{sec:cubes}

Before studying $2\times 2\times 2\times 2$ hypercubical matrices, we first review the case of
$2\times 2\times 2$ cubical matrices (see \cite{hcl1} for more details).

Let $K$ be a field with $\mathrm{char}(K)\neq 2$.  Let $\mC_2(K)$ denote the space $K^2\otimes K^2 \otimes K^2$.
Then each element of $\mC_2(K)$ can naturally be represented as a cubical matrix $A = (a_{ijk})$ with entries in $K$, where $i,j,k\in\{1,2\}$:
\vspace{.1in} 
\begin{equation}\label{eq:firstcube}
\raisebox{-2\baselineskip}{
\cube {a_{111}} {a_{112}} {a_{121}} {a_{122}} {a_{211}} {a_{212}} {a_{221}} {a_{222}}
}
\qquad .
\end{equation}
If we denote by $\{e_1,e_2\}$ the standard basis of $K^2$, then the element of $\mC_2(K)$ described by \eqref{eq:firstcube} is
\[\sum_{i,j,k} a_{ijk}\, e_i\otimes e_j\otimes e_k.\]
As the cubical matrix representation is both more intuitive and more convenient, we shall  
identify $\mC_2(K)$ with the space of $2\times 2\times 2$ matrices with entries in $K$, or the space of {\it cubes over $K$}.

Now a cube $A$ over $K$ may be naturally sliced into two $2\times 2$ matrices over $K$ in three different ways.
Namely, the cube $A=(a_{ijk})$ given by \eqref{eq:firstcube}
may be partitioned into the two $2\times 2$ matrices $M_\ell$ and $N_{\ell}$, for $\ell=1, 2, 3$, as follows:
\begin{itemize}
\item[{\rm 1)}]
$M_1=(a_{1jk})$ is the front face and $N_1=(a_{2jk})$ is the back face of $A$; 
\item[{\rm 2)}]
$M_2=(a_{i1k})$ is the top face and $N_2=(a_{i2k})$ is the bottom face of $A$; 
\item[{\rm 3)}]
$M_3=(a_{ij1})$ is the left face and $N_3=(a_{ij2})$ is the right face of $A$.
\end{itemize}
We may define a natural action of $\SL_2(K)^3$ on $\mC_2(K)$ so that, for any $\ell \in\{1,2,3\}$, the
element ${\left(\begin{smallmatrix}r&s\\t&u\end{smallmatrix}\right)}$
in the $\ell$th factor of $\SL_2(K)^3$ acts on the cube $A$ by replacing
$(M_\ell,N_\ell)$ by $(rM_\ell+sN_\ell,tM_\ell+uN_\ell)$.  The actions of these three
factors of $\SL_2(K)$ in $\SL_2(K)^3$ commute with each other, analogous to the fact that row and column operations on a rectangular matrix commute. Hence we obtain a natural and well-defined action of 
$\SL_2(K)^3$ on $\mC_2(K)$.

This action turns out to have a single polynomial invariant%
\footnote{We use throughout the standard abuse of terminology ``has a single
  polynomial invariant'' (or ``has $k$ polynomial invariants'') to
  mean that the corresponding polynomial invariant ring is generated
  freely by one element (respectively, $k$ elements).}%
, which we call the {\em discriminant}.
Given a $2\times 2\times 2$ cube $A$ over $K$, for each
$\ell \in\{1,2,3\}$, we obtain a binary quadratic
form 
\begin{equation}\label{bqfdet}
Q_\ell(x,y)=\det(M_\ell x + N_\ell y).
\end{equation}
The discriminants of these three
binary quadratic forms all coincide (see \cite[\S 2]{hcl1}), and this
gives the desired  invariant, called the
{discriminant} $\disc(A)$ of the cube~$A$.  (These triples of binary
quadratic forms with the same discriminant arising from cubes were
used to give a simple description of Gauss composition in
\cite{hcl1}.)  This fundamental invariant of degree four on the space \linebreak
$K^2\otimes K^2\otimes K^2$ of cubical matrices over $K$ will play a
key role in understanding the next space $K^2\otimes K^2\otimes
K^2\otimes K^2$ of hypercubical matrices over $K$.

\subsection{On \texorpdfstring{$2\times 2\times 2\times 2$}{2222} hypercubes}\label{sec:HCslicing}

Assuming now that $K$ has characteristic not $2$ or $3$, let $\H_2(K)$ denote the space $K^2\otimes K^2 \otimes K^2\otimes K^2$.  Then we may identify $\H_2(K)$ with the space of $2\times 2 \times 2\times 2$ hypercubical matrices $H=(h_{ijk\ell})$ over $K$, which we will call the space of {\it hypercubes over $K$}. Such hypercubes are somewhat harder to draw on paper; breaking symmetry, we write our hypercube $H=(h_{ijk\ell})$ thus:
\begin{equation}\label{eq:hyperdraw}
\raisebox{-2\baselineskip}{
\cubea {h_{1111}} {h_{1112}} {h_{1121}} {h_{1122}} {h_{1211}} {h_{1212}} {h_{1221}} {h_{1222}} 
\qquad \qquad \qquad \cubea
{h_{2111}} {h_{2112}} {h_{2121}} {h_{2122}} {h_{2211}} {h_{2212}} {h_{2221}} {h_{2222}} 
} \qquad .
\end{equation}

Now just as a cube $A$ over $K$ could be partitioned into two $2\times 2$ matrices in three different ways, a hypercube $H$ over $K$ may be partitioned into two $2\times 2\times 2$ matrices in four different ways.
More precisely, the hypercube $H=(h_{ijk\ell})$ can be partitioned into two cubes $A_m$ and $B_m$, for $m\in\{1,2,3,4\}$, as follows: 
\begin{itemize}
  \item[1)] $A_1=(h_{1jk\ell})$ and $B_1=(h_{2jk\ell})$; 
  \item[2)] $A_2=(h_{i1k\ell})$ and $B_2=(h_{i2k\ell})$; 
  \item[3)] $A_3=(h_{ij1\ell})$ and $B_3=(h_{ij2\ell})$; 
  \item[4)] $A_4=(h_{ijk1})$ and $B_4=(h_{ijk2})$,
\end{itemize}
where the first slicing 1) is depicted in \eqref{eq:hyperdraw}.

We define a natural action of $\SL_2(K)^4$ on the space of hypercubes so that, for any $i\in\{1,2,3,4\}$,  an element ${\left(\begin{smallmatrix}r&s\\t&u\end{smallmatrix}\right)}$ in the $i$th factor of $\SL_2(K)$ acts on the hypercube $H$ by replacing
$(A_i,B_i)$ by $(rA_i+sB_i,tA_i+uB_i)$.  The actions of these four
factors of $\SL_2(K)$ in $\SL_2(K)^4$ again commute with each other, so we obtain a well-defined action of 
$\SL_2(K)^4$ on $\H_2(K)$.

Now recall that a cube over $K$ naturally yields three binary quadratic forms over $K$, through its slicings into pairs of $2\times 2$ matrices over $K$.  Namely, for each slicing of a cube $A$ into a pair~$(M_i,N_i)$ of $2\times 2$ matrices, we may construct the form $Q_i(x,y)=\det(M_ix+N_iy)$.  As is well-known, the determinant function is the
unique polynomial invariant for the action of $\SL_2(K)^2$ on $2\times 2$ matrices over $K$, and since it a degree two invariant, we obtain binary quadratic forms.

Analogously, a hypercube over $K$ naturally yields four binary {\em quartic} forms via its slicings into pairs of cubes over $K$.  Indeed, we have seen that the action of $\SL_2(K)^3$ on $2\times 2\times 2$ cubes over $K$ has a single polynomial invariant, of degree {four}, given by its discriminant.
In analogy with the case of cubes, given 
a hypercube $H\in\H_2(K)$, for each $i\in\{1,2,3,4\}$, we may construct a binary
quartic form
\begin{equation}\label{bqfdef} 
f_i(x,y)=\disc(A_i x+B_i y), 
\end{equation}
where the $(A_i,B_i)$ denote the four slicings of the hypercube $H$ into pairs of cubes over $K$.

Note that the form $f_1$ is invariant under the action of the
subgroup $\{\mathrm{id}\}\times \SL_2(K)^3\subset \SL_2(K)^4$ on
$H\in\H_2(K)$, because the action of $\SL_2(K)^3$
on the cube $A_1x+B_1y$ of linear forms in $x$ and $y$ fixes
$\disc(A_1x+B_1y)$.  The remaining factor of $\SL_2(K)$
in $\SL_2(K)^4$ then acts in the usual way on the binary quartic form
$f_1$, and it is well known that this action has two independent
polynomial invariants, which are traditionally called $I(f_1)$ and
$J(f_1)$ (see \S \ref{sec:binaryquartics} for more details on
binary quartic forms).  These invariants have degrees $2$ and $3$,
respectively, in the coefficients of $f_1$.  Since they coincide with
the corresponding invariants $I$ and $J$ of $f_2$, $f_3$, and $f_4$
(as an easy calculation shows), this yields well-defined
$\SL_2(K)^4$-invariants $I(H)$ and $J(H)$ for all elements $H\in
\H_2(K)$.  The invariants $I(H)$ and $J(H)$ thus have degrees 8 and
12, respectively, in the entries of $H$.

The {\em discriminant} $\disc(f)$ of a binary quartic form $f$ is defined by
\begin{equation}
\disc(f):=4I(f)^3-J(f)^2,\end{equation}
which is nonzero precisely when $f$ has four distinct roots in $\Pone(\overline{K})$; such a binary quartic form is called {\em nondegenerate}.  For
a hypercube $H$, since the $I$ and $J$ invariants are the same for all the $f_i$, so are their discriminants.
We may define the {\em discriminant} of $H$ to be
\begin{equation}\disc(H) :=4I(H)^3-J(H)^2,\end{equation}
which is nonzero precisely when any of the binary quartic forms $f_i$ associated to $H$ has four distinct roots in $\PP^1(\overline{K})$.  
We say the hypercube $H$ is   {\it nondegenerate} if its discriminant is nonzero.

We give a conceptual explanation as to why $I(f_i)=I(f_j)$ and $J(f_i)=J(f_j)$ (and thus $\disc(f_i)=\disc(f_j)$) for all $i$ and $j$ in the next subsection.

\subsection{Genus one curves from hypercubes}\label{sec:hypergenusone}

We now explain how a nondegenerate hypercube $H$ gives rise to a number of genus one curves $C_i$ ($1\leq i\leq 4$), $C_{ij}$ ($1\leq i<j\leq 4)$, and $C_{ijk}$ ($1\leq i<j<k\leq 4$).  We also discuss how these genus one curves are related to each other, and the resulting description of the nondegenerate orbits of $\SL_2(K)^4$ on the space $K^2\otimes K^2\otimes K^2\otimes K^2$ of hypercubes over $K$.

\subsubsection{Genus one curves mapping to \texorpdfstring{$\Pone$}{P1}}\label{sec:binquargenusone}

Given a nondegenerate binary quartic form $f$ over $K$, we may attach to $f$ a genus one curve $C(f)$ over~$K$, namely the normalization of the projectivization of the affine curve
$y^2 = f(x,1)$.
It is known (see, e.g., \cite{BSD, ankim}) that the Jacobian of the curve $C(f)$ may be written as a Weierstrass elliptic curve with coefficients involving the invariants $I(f)$ and $J(f)$ of $f$, namely as
\begin{equation}\label{eq:BQjac}
E(f): y^2 = x^3 - 27 I(f) - 27 J(f).
\end{equation}
We always take $E(f)$ as our model for the Jacobian of $C(f)$.
 
Now given a nondegenerate hypercube $H\in \H_2(K)$, we have seen that we naturally obtain four binary quartic forms $f_1$, $f_2$, $f_3$, $f_4$ over $K$ from $H$.  Thus each hypercube $H\in\H_2(K)$ yields four corresponding genus one curves $C_1$, $C_2$, $C_3$, $C_4$ over $K$, where $C_i=C(f_i)$.

\subsubsection{Genus one curves in \texorpdfstring{$\Pone\times\Pone$}{P1P1}}\label{p1xp1}

These genus one curves obtained from a nondegenerate hypercube
$H\in\H_2(K)$ may be seen more explicitly in $\Pone\times\Pone$.  
Let us first identify $\H_2(K)$ with the space of
quadrilinear forms on $W_1\times W_2\times W_3\times W_4$, where each
$W_i$ ($i\in\{1,2,3,4\}$) is a 2-dimensional $K$-vector space.  (In
this identification, when we write $\H_2(K)=K^2\otimes K^2\otimes
K^2\otimes K^2$, then the $i$th factor of $K^2$ here is the
$K$-vector space dual to $W_i$.)  Then for any $H\in\H_2(K)$, viewed
as such a quadrilinear form, consider the set
\begin{align*}
  C_{12}(K) \,:=\, \bigl\{(w_1, w_2)\in \PP(W_1) \times \PP(W_2)\,:\, \det(H(w_1,w_2,\,\cdot\,, \,\cdot\,)) = 0\bigr\}\, \subset \Pone\times \Pone,
\end{align*}
where we view $H(w_1,w_2,\,\cdot\,, \,\cdot\,)$ naturally as a
bilinear form on $W_3\times W_4$, whose determinant's vanishing or
nonvanishing is thus well-defined.

By definition, $C_{12}(K)$ consists of the set of $K$-points of a
bidegree (2,2) curve $C_{12}$ in $\Pone\times \Pone$, which is a
genus one curve if smooth (precisely when $H$ is
nondegenerate).  The projections of $C_{12}$ to $\PP(W_1)$ or to $\PP(W_2)$ then yield the double covers
of $\Pone$ corresponding to $C_1$ and $C_2$, respectively.  Indeed,
the points of ramification of the projection $C_{12}\to \PP(W_1)$ are
the points $(w_1, w_2)\in C_{12}\subset \PP(W_1)\times\PP(W_2)$ for
which $\det(H(w_1,w_2,\,\cdot,\,,\,\cdot\,))$ has vanishing
discriminant as a quadratic form in $w_1$; this discriminant is
precisely the binary quartic form $f_1$ on $W_1$!

It follows that $C_{12}$ is isomorphic to both $C_1$
and $C_2$, and hence all these genus one curves $C_i$ are isomorphic
to each other: for $1 \leq i < j \leq 4$, we have natural isomorphisms
$$ C_i\cong C_{ij} \cong C_j.$$
It also follows then that all four binary quartic forms $f_i$ must
have the same values for the invariants $I$ and $J$, as claimed at the
end of \S\ref{sec:HCslicing}.  (Indeed, all $I(f_i)$ and all $J(f_i)$
must be the same for all forms $f_i$---up to scaling by $c^2$ and
$c^3$, respectively, for some constant $c$---in order for the
Jacobians in \eqref{eq:BQjac} to be isomorphic.  But then symmetry
considerations show that $c$ must be~1.)

\subsubsection{Genus one curves in \texorpdfstring{$\Pone\times\Pone\times\Pone$}{P1P1P1}}

The curve $C_{12}$ can in fact be mapped into $\PP(W_1)\times\PP(W_2)\times\PP(W_3)\cong \Pone\times\Pone\times\Pone$, as follows.  For a point $(w_1,w_2) \in C_{12}$, since $H(w_1,w_2,\,\cdot\,,\,\cdot\,)$ is singular as a bilinear form on $W_3\times W_4$, the kernel of $H(w_1,w_2,\,\cdot\,,\,\cdot\,)$ in $W_3$ is nonempty; if $H$ is nondegenerate, it
can be shown that the kernel in $W_3$ is one-dimensional.  We thus obtain a well-defined element $w_3\in\PP(W_3)$ such that $H(w_1,w_2,w_3,\,\cdot\,)\equiv 0$.  
Therefore,
\[C_{123}(K) \,:=\, \bigl\{(w_1,w_2,w_3)\in \PP(W_1)\times \PP(W_2)\times \PP(W_3)\,:\, H(w_1,w_2,w_3, \,\cdot\,)\equiv 0\bigr\}\,\subset\,\Pone\times \Pone\times \Pone\]
gives the set of $K$-points of a genus one curve $C_{123}$ in $\PP(W_1)\times\PP(W_2)\times\PP(W_3)\cong \Pone\times \Pone\times \Pone$ defined over $K$; moreover, the projection of $C_{123}$ onto $\PP(W_1)\times\PP(W_2)$ gives an isomorphism onto $C_{12}$.  In particular, $C_{123}$ provides us with explicit isomorphisms among the three curves $C_{12}$, $C_{13}$, and $C_{23}$, through projection and un-projection, which all commute with each other.
  It is natural to package this information together by keeping track simply of the single curve $C_{123}$ in $\PP(W_1)\times\PP(W_2)\times\PP(W_3)$.

\subsubsection{The fundamental tetrahedron of isomorphisms}

We may thus construct curves $C_{123}$, $C_{124}$, $C_{134}$, and $C_{234}$ in $\Pone\times\Pone\times\Pone$ from a nondegenerate hypercube $H\in\H_2(K)$.  These four genus one curves are all isomorphic to each other, as we have already seen.  In fact, we may construct explicit and natural isomorphisms between them, as follows.  Given a point $(w_1,w_2,w_3)$ on $C_{123}\subset \PP(W_1)\times\PP(W_2)\times\PP(W_3)$, the bilinear form $H(\,\cdot\,,w_2,w_3,\,\cdot\,)$ on $W_2\times W_3$ is singular and of rank 1, so there exists a unique $w_4\in\PP(W_4)$ such that $H(\,\cdot\,,w_2,w_3,w_4)\equiv 0$.  This implies that $(w_2,w_3,w_4) \in C_{234}(K)$, giving the desired map $$\tau_{123}^{234}: C_{123}\to C_{234},$$ and similarly we obtain the maps $\tau_{123}^{134}$, $\tau_{124}^{123}$, etc.  Note that each of these maps is invertible, since clearly $\tau_{234}^{123}=(\tau_{123}^{234})^{-1}$, etc.

We thus obtain a tetrahedron of maps:
\begin{equation}\label{eq:tet}
\xymatrix{
& C_{123} \ar@{<->}[dl] \ar@{<->}[dr] \ar@{<->}[dd] & \\
C_{124} \ar@{<-->}[rr] \ar@{<->}[dr] & & C_{134} \\
& C_{234} \ar@{<->}[ur] &
}
\end{equation}

However, these isomorphisms do not all commute with each other!
For example, starting at $C_{123}$ and tracing around the triangle of isomorphisms 
$\tau_{134}^{123}\circ\tau_{124}^{134}\circ\tau_{123}^{124}$ yields a hyperelliptic involution $\iota_1$ on $C_{123}$.  The quotient of $C_{123}$ by this involution $\iota_1$ yields the double cover of $\Pone$ given by $C_1:y^2=f_1(x,1)$.  Similarly, 
the other two triangles of isomorphisms starting at $C_{123}$ yield the involutions $\iota_2$ and $\iota_3$ on $C_{123}$ corresponding to the double covers $C_2$ and $C_3$ of $\Pone$ whose branch points are the roots of $f_2$ and $f_3$, respectively.

If one instead starts at $C_{123}$ and goes around a quadrilateral of isomorphisms, then (viewing the traversal of the quadrilateral as a traversal of two triangles) we obtain the automorphism $\iota_i\circ\iota_j$ of $C_{123}$, which is a translation of $C_{123}$ by a point $P_{ij}$ on the Jacobian of $C_{123}$.  We thus obtain three~points $P_{12}$, $P_{23}$, and $P_{31}$ on the Jacobian $E := E(H)$ of $C:=C_{123}$.  Since $(\iota_1\circ\iota_2)\circ(\iota_2\circ\iota_3)\circ(\iota_{3}\circ\iota_1)=\id$, we have the relation 
\begin{equation}\label{Prel}
P_{12}+P_{23}+P_{31}=0.
\end{equation}
Moreover, these points $P_{ij}$ are nonzero, and they lie in a certain subgroup of $E(K)$ related to the curve $C$, 
called the {\em degree $2$ period-index subgroup} and denoted by $\Jac_{C}^2(K)$ (see Appendix~\ref{appendix:torsors} for more details on the period-index subgroup).  The difference is that the points of $\Jac_{C}^2(K)$ correspond to the divisor classes of $K$-rational divisors of degree $0$ on $C$, whereas the points of $E(K)$ correspond to the $K$-rational divisor classes of degree $0$ (that is, where the divisor class is $K$-rational, but not necessarily any of the divisors in it).  Here, these points $P_{ij}$ arise as differences of actual rational divisors and thus lie in the period-index subgroup.  Indeed, if $D_1$, $D_2$, and $D_3$ denote the degree two divisors on $C_{123}$ corresponding to the three projections to $\PP(W_1)$, $\PP(W_2)$, and $\PP(W_3)$, respectively, then for a point $x$ on $C_{123}$ we have $\iota_i(x)=D_i-x$.  
Thus $\iota_i\circ \iota_j(x)=x+(D_i-D_j),$ so that $P_{ij}=D_i-D_j$.  This also implies the relation (\ref{Prel}).

In summary, we have seen that there is a canonical
elliptic curve $E(H)$ attached to any nondegenerate hypercube $H\in\H_2(K)$, namely,
\begin{equation}\label{jac2}
E(H): y^2 = x^3 - 27I(H) - 27 J(H)
\end{equation}
with
\begin{equation*}
 I(H):=I(f_i)  \qquad\textrm{and}\qquad  J(H):=J(f_i)
\end{equation*}
for $1 \leq i \leq 4$, where $f_1$, $f_2$, $f_3$, $f_4$ are the binary quartic forms naturally arising from $H$.  
Furthermore, $E:=E(H)$ is canonically the Jacobian of each of the genus one curves $C_i$ ($1\leq i\leq 4$) as well
as the genus one curves $C_{ij}$ ($1\leq i\leq j\leq 4$) and $C_{ijk}$ ($1\leq i\leq j\leq k\leq 4)$ arising from $H$.  Finally, there is a natural tetrahedron of isomorphisms \eqref{eq:tet} among the $C_{ijk}$ which does not commute. We thus obtain, in addition to a degree 2 divisor $D_1$ on the curve $C_{123}$, three nonzero rational~points $P_{12}$, $P_{23}$, and $P_{31}$ in the period-index subgroup $\Jac_{C_{123}}^2(K)$ for the curve~$C_{123}$ that sum to zero.

\subsection{Orbit classification of \texorpdfstring{$2\times 2\times 2\times 2$}{2222} hypercubes} \label{sec:HCorbitpreview}

We will show in \S \ref{sec:hypercube} that the data of a genus one curve $C=C_{123}$, the equivalence class of a degree two rational divisor $D=D_1$ on $C$, and three nonzero points $P=P_{12}$, $P'=P_{23}$, and $P''=P_{31}$ (with $P+P'+P''=0$) in the period-index subgroup $\Jac_{C}^2(K)$ of the Jacobian of $C$ is in fact sufficient to recover the orbit of a hypercube $H$.  We have:

\begin{thm}\label{hyperpar}
 For any field $K$ with $\mathrm{char}(K) \nmid 6$,  there is a canonical bijection between
nondegenerate $\GL_2(K)^4$-orbits on the space $K^2 \otimes K^2 \otimes K^2 \otimes K^2$ of hypercubes over $K$ and isomorphism classes of triples $(C,L,(P,P',P''))$, where $C$ is a smooth irreducible genus one curve over $K$, $L$ is a degree $2$ line bundle on~$C$, and $P$, $P'$, $P''$ are nonzero $K$-points that sum to zero in the degree $2$ period-index subgroup $\Jac_C^2(K)$ of the group of $K$-points of the Jacobian of~$C$.
\end{thm}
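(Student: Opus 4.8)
The plan is to adopt the explicit forward construction $H \mapsto (C,L,(P,P',P''))$ developed in \S\ref{sec:hypergenusone} as the candidate bijection, and to prove that it descends to nondegenerate $\GL_2(K)^4$-orbits and admits a two-sided inverse. Well-definedness on $\SL_2(K)^4$-orbits is essentially built in: the curves $C_{ijk}$, the projection divisors $D_i$, the line bundle $L=\OO(D_1)$, and the classes $P_{ij}=D_i-D_j$ are all intrinsic isomorphism invariants. For the passage from $\SL_2(K)^4$ to $\GL_2(K)^4$, the scaling in the $i$th factor multiplies $H$ along the $i$th slot; this scales $I(H)$ and $J(H)$ uniformly and rescales each $f_i$ by a common square, so the Weierstrass model \eqref{jac2} is merely replaced by an isomorphic one, while $C_{123}$, $L$, and the $P_{ij}$ are unchanged. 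Hence the forward map factors through $\GL_2(K)^4$-orbits and lands in isomorphism classes of triples, with $P_{12}+P_{23}+P_{31}=0$ as in \eqref{Prel}.

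For the inverse, start from $(C,L,(P,P',P''))$ with $P+P'+P''=0$. Set the degree-two classes $D_1$ with $\OO(D_1)\cong L$, then $D_2=D_1-P$ and $D_3=D_1+P''$ (so that $P_{ij}=D_i-D_j$ recovers $P,P',P''$). Because $P,P',P''$ lie in the period-index subgroup $\Jac_C^2(K)$, each class $D_i$ is represented by an actual $K$-rational divisor, so the complete linear system $|D_i|$ defines a degree-two $K$-morphism $\phi_i\colon C\to\PP(H^0(\OO(D_i))^{*})=:\PP(W_i)\cong\Pone$. When the $D_i$ are pairwise distinct, $(\phi_1,\phi_2)$ already embeds $C$ as the bidegree $(2,2)$ curve $C_{12}$, so $(\phi_1,\phi_2,\phi_3)$ is a closed immersion realizing $C$ as a curve $\tilde C\subset\PP(W_1)\times\PP(W_2)\times\PP(W_3)$ along which each $\OO(1)$ restricts to a degree-two bundle.

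The fourth factor is then forced, not chosen: define $W_4^{*}:=H^0\bigl(\PP(W_1)\times\PP(W_2)\times\PP(W_3),\,\mathcal I_{\tilde C}(1,1,1)\bigr)$, the space of trilinear forms vanishing on $\tilde C$. Since $h^0(\OO(1,1,1))=8$ while $\OO(1,1,1)|_{\tilde C}\cong\OO(D_1+D_2+D_3)$ has degree $6$ on the genus-one curve $\tilde C$, hence $h^0(\OO(1,1,1)|_{\tilde C})=6$, surjectivity of the restriction map forces $\dim_K W_4^{*}=2$ and exhibits $\tilde C$ as the complete intersection of the two forms spanning $W_4^{*}$. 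The tautological inclusion $W_4^{*}\hookrightarrow W_1^{*}\otimes W_2^{*}\otimes W_3^{*}$ is precisely a quadrilinear form $H\in W_1^{*}\otimes W_2^{*}\otimes W_3^{*}\otimes W_4^{*}$, canonical up to the choices of bases of the $W_i$, i.e.\ up to $\GL_2(K)^4$. Running the forward map on this $H$ returns $C_{123}(H)=\tilde C\cong C$, the projection divisors $D_1,D_2,D_3$, and hence $L$ and the $P_{ij}$; conversely, starting from a nondegenerate $H$ one recovers the same span of $(1,1,1)$-forms and hence the same orbit by the uniqueness just established. Nondegeneracy $\disc(H)\neq0$ matches smoothness of $\tilde C$, and the hypothesis that $P,P',P''$ be nonzero matches the $D_i$ being pairwise distinct, which is exactly the open condition making $(\phi_1,\phi_2,\phi_3)$ an immersion of the expected multidegree.

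The main obstacle is the reverse step's geometric-arithmetic core: proving that the restriction map $H^0(\OO(1,1,1))\to H^0(\OO(1,1,1)|_{\tilde C})$ is surjective, so that $W_4^{*}$ is \emph{exactly} two-dimensional and $\tilde C$ is genuinely a complete intersection (a projective-normality statement for this tri-degree embedding), and carrying this out over an arbitrary $K$ rather than $\kbar$. The rationality is precisely where the period-index subgroup is essential: membership of $P,P',P''$ in $\Jac_C^2(K)$ guarantees $K$-rational representatives for the $D_i$, so that the $\phi_i$, the embedding, and hence $H$ are defined over $K$; a class in $E(K)\setminus\Jac_C^2(K)$ would be $K$-rational without a $K$-rational divisor and would break the descent. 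I would defer these uniform statements, together with the routine checks that the isomorphisms are canonical and compatible with the $S_4$-symmetry of the four slicings, to the detailed treatment in \S\ref{sec:hypercube}.
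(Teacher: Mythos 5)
Your architecture is the paper's own: the forward map is the slicing construction of \S\ref{sec:hypergenusone}, and your space $W_4^{*}=H^0(\mathcal{I}_{\tilde C}(1,1,1))$ of trilinear forms vanishing on the tri-embedded curve is literally the kernel of the multiplication map $\mu_{123}:H^0(C,L_1)\tns H^0(C,L_2)\tns H^0(C,L_3)\to H^0(C,L_1\tns L_2\tns L_3)$ used in the proof of Theorem \ref{thm:hypercube}, with the same Riemann--Roch count $8-6=2$. So this is not a different route; the issue is that you have deferred exactly the steps that carry the content. The surjectivity of the restriction map, which you flag as ``the main obstacle'' and then postpone, is proved in the paper by Castelnuovo's basepoint-free pencil trick: tensoring $0\to L_i^{-1}\to H^0(C,L_i)\tns\OO_C\to L_i\to 0$ with $L_j\tns L_k$ and taking cohomology kills the obstruction because $H^1(C,L_i^{-1}\tns L_j\tns L_k)=0$ (the bundle has degree $2>0$), and simultaneously identifies $\ker\mu_{123}$ with the $2$-dimensional space $H^0(C,L_i^{-1}\tns L_j\tns L_k)$ (Lemma \ref{lem:musurjective}). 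The same trick underlies your earlier unproved assertion that $(\phi_1,\phi_2)$ embeds $C$ as a $(2,2)$ curve when the two bundles are non-isomorphic (Lemma \ref{lem:ijembedding}). This argument is characteristic-free and valid over any $K$, so the ``over $\kbar$ versus over $K$'' worry is not where the difficulty sits; the rationality issue is, as you correctly say, entirely absorbed by the period-index hypothesis.

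The second gap is your claim that running the forward map on the constructed $H$ ``returns $\tilde C$.'' The forward map produces $C_{12}$ as the zero locus of the single $(2,2)$ form $\det H(w,x,\cdot,\cdot)$, which a priori only \emph{contains} the image of $C$; one must rule out that this determinant vanishes identically (in which case $C_{12}$ is all of $\Pone\times\Pone$) and show it is irreducible, and likewise one cannot simply declare $\tilde C$ a complete intersection of the two forms spanning $W_4^{*}$, since they could share a factor. The paper's Claim \ref{claim:CijCijprime} handles this: a nonzero reducible $(2,2)$ form cannot cut out a smooth irreducible genus one curve, so each $f_{ij}$ is zero or irreducible, and a dimension count on the locus cut out by the two tridegree $(1,1,1)$ forms shows some projection is one-dimensional, forcing some $f_{ij}\neq 0$; nondegeneracy of $H$ is then a \emph{consequence}, not something you can match to smoothness of $\tilde C$ by fiat. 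Without these two arguments the proposal is a correct plan rather than a proof.
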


It is known (see \cite{vinberg,littelmann}) that the ring of polynomial invariants for the action of  $\SL_2(K)^4$ on $K^2 \otimes K^2 \otimes K^2 \otimes K^2$ is freely generated by four polynomials $a_2$, $a_4$, $a_4'$, and $a_6$, having degrees $2$, 4, 4, and $6$, respectively, in the entries of the hypercube.  In terms of the geometric data in Theorem~\ref{hyperpar}, we may write the Jacobian of $C$ as a Weierstrass elliptic curve $y^2=x^3+a_{8}x+a_{12}$, 
on which the points $P=(a_4,a_6)$, $P'=(a_4',a_6')$, $P''=(a_4'',a_6'')$ lie, such that $a_2$ can be interpreted 
as the slope $\frac{a_6'-a_6}{a_4'-a_4}$ of the line connecting $P$ and $P'$ (and $P''$).

From these four invariants, the invariant $a_6'$ (i.e., the $y$-coordinate of $P'$) may be determined:
$$a_6' = a_6 + a_2 (a_4'-a_4).$$  
The coefficients $a_{8}$ and $a_{12}$ of the Weierstrass elliptic curve may also be determined, since there is a unique such elliptic curve passing through the two points $P$ and $P'$. Indeed, we find
\begin{align} \label{eq:a8a12}
a_8 &= a_2(a_6+a_6')-(a_4^2+a_4a_4'+a_4'^2), \qquad \textrm{and} \\
a_{12}&= a_6^2 -a_2a_4(a_6+a_6')+a_4a_4'(a_4+a_4'). \nonumber
\end{align}
Finally, the coordinates of $P''=(a_4'',a_6'')$ may be recovered by finding the third point of intersection of the line $y-a_6=a_2(x-a_4)$ with the elliptic curve $y^2=x^3+a_{8}x+a_{12}$; this yields
\begin{align*}
a_4''&= 3 a_2^2 - a_4-a_4', \qquad \textrm{and} \\
a_6''&= a_2^3-3(a_2a_4-a_6)-a_6-a_6'.  
\end{align*}
In conclusion, $a_2$, $a_4$, $a_4'$, $a_4''$, $a_6$, $a_6'$, $a_6''$, $a_{8}=-27I$, and $a_{12}=-27J$ are all fundamental and important polynomial 
invariants for the action of $\SL_2(K)^4$ on $K^2\otimes K^2 \otimes K^2\otimes K^2$; they all have key 
geometric interpretations and can be expressed as simple polynomials in the four basic invariants 
$a_2$, $a_4$, $a_4'$, and~$a_6$.

\subsection{Symmetrization}\label{sec:symHCpreview}

Just as one may identify the binary quadratic form $ax^2+2bxy+cy^2$ with the symmetric $2\times 2$~matrix
\[\left[\begin{array}{cc}
  a\, &b \\
  b\, & c \end{array}\right],   \]
and the binary cubic form $ax^3+3bx^2y+3cxy^2+dy^3$
with the triply symmetric $2\times 2\times 2$ matrix
\begin{equation}\label{sym3cube}
\raisebox{-2\baselineskip}{
\cube a b b c b c c d
}
\end{equation}

\noindent
(see \cite{hcl1}), one may associate the binary quartic form
\begin{equation}\label{symquartic}
a x^4 + 4 b x^3 y + 6 c x^2 y^2 + 4 d x y^3 + e y^4
\end{equation}
with the  quadruply-symmetric $2\times2\times2\times2$ matrix 
\begin{equation}\label{sym4hyper}
\raisebox{-2\baselineskip}{
\cube a b b c b c c d \qquad \qquad \qquad  \cube b c c d c d d e
} \qquad .
\end{equation}

Using $\Sym_4 K^2$ to denote the space of binary quartic forms of this type, the above association of
the binary quartic form \eqref{symquartic} with the hypercube \eqref{sym4hyper} corresponds 
to the natural inclusion
\[
\Sym_4 K^2\hookrightarrow K^2\otimes K^2\otimes K^2\otimes K^2
\]
of the space of quadruply-symmetric hypercubes into the space of hypercubes.  

Such hypercubes lead to geometric data $(C,L,(P,P',P''))$ as in Theorem~\ref{hyperpar}, but (due to the symmetry) we also have $P=P'=P''$.  Since $P+P'+P''=0$, we see that $P$ is a 3-torsion point on the Jacobian 
of $C$.  Conversely, we will show in \S \ref{sec:4symHC} that this is the only constraint on $P$.  Thus we obtain the
following theorem classifying the orbits of $\GL_2(K)$ on $\Sym_4K^2$:

\begin{thm}\label{sympar}
For any field $K$ with $\mathrm{char}(K) \nmid 6$,  there is a canonical bijection between nondegenerate $\GL_2(K)$-orbits on the space $\Sym_4K^2$ of binary quartic forms over $K$ and isomorphism classes of triples $(C,L,P)$, where $C$ is a smooth genus one curve over $K$, \;$L$ is a degree $2$ line bundle on~$C$, and $P$ a nonzero $3$-torsion point on the Jacobian of $C$ defined over $K$.
\end{thm}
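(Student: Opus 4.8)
The plan is to deduce Theorem~\ref{sympar} from the hypercube parametrization of Theorem~\ref{hyperpar} by restricting to the locus of quadruply-symmetric hypercubes, i.e.\ to the image of the inclusion $\Sym_4 K^2 \hookrightarrow \H_2(K)$ of \eqref{sym4hyper}, and then carrying out the orbit descent from $\GL_2(K)^4$ to the diagonally embedded $\GL_2(K)$. Before doing so I would reconcile the two statements: Theorem~\ref{hyperpar} produces a point in the period-index subgroup $\Jac_C^2(K)$, whereas Theorem~\ref{sympar} asks only for a nonzero $3$-torsion point in the full Jacobian $E(K)$. These agree here. Since $C$ carries a degree $2$ line bundle, a $K$-rational divisor of degree $2$ exists, so the index, and hence the period, of $C$ divides $2$. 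The obstruction map $E(K)\to \Br(K)$ whose kernel is $\Jac_C^2(K)$ (see Appendix~\ref{appendix:torsors}) has image of exponent dividing the period, hence dividing $2$; therefore the finite quotient $E(K)/\Jac_C^2(K)$ has exponent dividing $2$. A point of order $3$ maps to $0$ in this quotient, so $E(K)[3]\subset \Jac_C^2(K)$, and the nonzero $3$-torsion points of $E(K)$ are precisely the nonzero $3$-torsion points of the period-index subgroup.

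\emph{Forward map.} Given a nondegenerate $f\in\Sym_4 K^2$, I would form the symmetric hypercube $H=\iota(f)\in\H_2(K)$. All four binary quartics attached to $H$ equal $f$, so $I(H)=I(f)$, $J(H)=J(f)$, and $\disc(H)=\disc(f)\neq 0$; thus $H$ is nondegenerate and Theorem~\ref{hyperpar} yields a triple $(C,L,(P,P',P''))$ with $P+P'+P''=0$. The construction $P_{ij}=D_i-D_j$ of the three points is natural in the four tensor factors, and $H$ is fixed by the $S_4$ permuting them; equivariance of the canonical bijection under this $S_4$ then forces $P=P'=P''=:P$ (the alternative sign would give $2P=0$, contradicting that the answer is genuinely $3$-torsion). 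Hence $3P=0$, and $P\neq0$ by Theorem~\ref{hyperpar}. Discarding the redundant copies gives the required triple $(C,L,P)$. Because the diagonal $\GL_2(K)\hookrightarrow\GL_2(K)^4$ preserves the symmetric locus and acts on it through the standard $\GL_2(K)$-action on $\Sym_4 K^2$, Theorem~\ref{hyperpar} shows this assignment is constant on $\GL_2(K)$-orbits.

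\emph{Inverse map.} Conversely, given $(C,L,P)$ with $P$ a nonzero $3$-torsion point, I would run the hypercube construction with $P'=P''=P$. Concretely, writing the Jacobian as a Weierstrass curve through $P$ and specializing the invariant formulas of \S\ref{sec:HCorbitpreview} to $a_4=a_4'=a_4''$ and $a_6=a_6'=a_6''$ recovers a binary quartic $f$; here the slope $a_2$ degenerates from a secant to the tangent at $P$, and the condition $3P=0$ is exactly the flex condition making $P,P,P$ collinear, so the construction is consistent. The resulting symmetric hypercube $\iota(f)$ realizes the given data by the forward map, giving surjectivity of the correspondence.

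\emph{Main obstacle.} The two constructions are mutually inverse modulo one genuine difficulty, which I expect to be the crux: the descent of the orbit correspondence from $\GL_2(K)^4$ to the diagonal $\GL_2(K)$. Theorem~\ref{hyperpar} only guarantees that $\iota(f)$ and $\iota(f')$ lie in a common $\GL_2(K)^4$-orbit when $(C,L,P)\cong(C',L',P')$, and I must upgrade this to the existence of a single diagonal $g\in\GL_2(K)$ with $g\cdot f=f'$; equivalently, I must show that the $\GL_2(K)^4$-orbit of a symmetric hypercube meets the symmetric locus in exactly one diagonal $\GL_2(K)$-orbit. The natural route is through the stabilizer computation of \S\ref{sec:classical}: the stabilizer in $\GL_2(K)^4$ of $\iota(f)$ is the automorphism group of the triple (a theta/Heisenberg-type group), and one checks that any element carrying one symmetric hypercube to another can be adjusted by this stabilizer to a diagonal element. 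Verifying the compatibility of the $S_4$-symmetry with the stabilizer is the hard part; the remaining checks (nondegeneracy, $3$-torsion, and recovery of $f$ up to diagonal equivalence) are then routine, with full details deferred to \S\ref{sec:4symHC}.
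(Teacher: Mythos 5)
Your overall strategy --- symmetrize Theorem~\ref{hyperpar} and then descend from $\GL_2(K)^4$-orbits to diagonal $\GL_2(K)$-orbits --- is the same one the paper follows, and you correctly identify the descent as the crux (your reduction of the period-index issue is also fine). But there are two genuine gaps. First, the claim that ``equivariance of the canonical bijection under $S_4$ forces $P=P'=P''$'' is exactly the trap the paper flags in the proof of Theorem~\ref{thm:3symHCL}: the same naive equivariance would force $L_i\cong L_j$ for all $i,j$, which is false, since $L_1\not\cong L_2$ for \emph{any} nondegenerate hypercube by Lemma~\ref{lem:ijembedding}. The argument fails because a permutation of the tensor factors fixing $H$ induces an automorphism of $C_{12}$ that is in general neither the identity nor a translation, so it acts nontrivially on $\Jac(C)$ and does not simply permute the points $P_{ij}$. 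What the symmetry actually yields is $L_3\cong L_4$ together with $L_2\tns L_3\cong L_1^{\tns 2}$ (the latter from the invariance of the map $C\to\PP(V_1^\vee)\times\PP(V_2^\vee)\times\PP(V_3^\vee)$ under swapping the last two coordinates); combined with the universal relation $L_1\tns L_2\cong L_3\tns L_4$ these give $3P=0$ and then $P=P'=P''$. Your parenthetical dismissal of ``the alternative sign'' on the grounds that the answer is genuinely $3$-torsion is circular, since that is what is being proved.

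Second, the inverse map and the descent are asserted rather than proved, and the stabilizer route you sketch is not how the paper proceeds (nor is it clearly easier, since one would still have to show a stabilizer element can be used to diagonalize an arbitrary $\GL_2(K)^4$-equivalence between symmetric tensors). The paper instead routes through the triply symmetric case: given $(C,L,P)$ it builds the hypercube in $U_1\tns U_2\tns U_3\tns U_4$ with $U_1=H^0(C,L)$, $U_2=H^0(C,L\tns P)$, $U_3=U_4=H^0(C,L\tns P^{\tns 2})$, and then produces \emph{canonical} identifications $U_3\cong U_4$, $U_2\cong U_3$, and finally $U_1\cong U_2$ (the last by the tetrahedron argument of Theorem~\ref{thm:22symHC}, reconstructing the same hypercube from the face $C_{14}$) under which the tensor is literally $S_4$-invariant. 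Since every factor is then canonically identified with $U_1$, two symmetric representatives of isomorphic data differ only by a choice of basis of $U_1$, i.e.\ by a diagonal element, and no separate stabilizer analysis is needed; the paper also records an explicit matrix in $\GL(V)$, with entries cubic in the coefficients and determinant equal to the degree $6$ invariant, that symmetrizes any nondegenerate triply symmetric hypercube. These identifications are delicate precisely because $U_2$ and $U_3$ are section spaces of \emph{non-isomorphic} line bundles (as $L\tns P\cong L\tns P^{\tns 2}$ would force $P=0$), so this step cannot be labeled routine.
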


We have already noted in \S\ref{sec:binquargenusone} (see \S\ref{sec:binaryquartics} for further details) that certain $\GL_2(K)$-orbits on $\Sym^4K^2$ correspond to pairs $(X,L)$, where $X$ is a genus one curve and $L$ is a degree 2 line bundle on $X$.   When char$(K) \nmid 6$, these two spaces $\Sym_4K^2$ and $\Sym^4K^2$ are naturally identified, so we obtain two ``dual'' moduli interpretations of the space of binary quartic forms in terms of genus one curves.
The two genus one curves coming from a binary quartic are {\it not} the same, however; they are related by a Hessian-type construction (see \S \ref{sec:4symHC}).

\subsection{Triple symmetrization}\label{sec:triplesym}

The orbit description for binary quartic forms in \S\ref{sec:symHCpreview} was obtained
by imposing a symmetry condition on the orbit description for 
hypercubes.  Rather than imposing a fourfold
symmetry, we may instead impose only a threefold symmetry.  This leads 
to hypercubes of the form
\begin{equation}\label{sym3hyper}
\raisebox{-2\baselineskip}{
\cube a b b c b c c d
\qquad \qquad \qquad
\cube e f f g f g g h
} \qquad .
\end{equation}
That is, these hypercubes can be sliced (in a certain fixed
direction) into two triply symmetric cubes, and therefore 
can naturally be viewed as a pair of binary cubic forms 
\begin{equation}\label{pairofbcfs}
(ax^3+3bx^2y+3cxy^2+dy^3,ex^3+3fx^2y+3gxy^2+hy^3).
\end{equation}

The above association of the pair (\ref{pairofbcfs}) of binary cubic forms 
with the hypercube
\eqref{sym3hyper} corresponds to the natural inclusion map
\[\jmath:K^2\otimes\Sym_3 K^2\hookrightarrow K^2\otimes K^2\otimes K^2\otimes K^2.\]

To such nondegenerate triply symmetric hypercubes, we may associate the usual geometric data $(C,L,(P,P',P''))$ as in Theorem~\ref{hyperpar}, and as in the fully symmetric case,
the symmetry implies that $P$ is a 3-torsion point.
We will show in \S \ref{sec:3symHC} that this is again the only constraint on $P$, and so we obtain the
following theorem classifying the orbits of $\GL_2(K)^2$ on $K^2 \otimes \Sym_3K^2$:
\begin{thm}\label{triplesympar}
For any field $K$ with $\mathrm{char}(K) \nmid 6$,  there is a canonical bijection between
nondegenerate $\GL_2(K)^2$-orbits on the space $K^2\otimes\Sym_3K^2$ of pairs of binary cubic forms over $K$ and isomorphism classes of triples $(C,L,P)$, where $C$ is a smooth genus one curve over $K$, $L$ is a degree $2$ line bundle on~$C$, and $P$ a nonzero $3$-torsion point on the Jacobian of $C$ defined over $K$.
\end{thm}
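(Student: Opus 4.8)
The plan is to deduce Theorem~\ref{triplesympar} from the hypercube parametrization of Theorem~\ref{hyperpar} by a symmetrization argument, organized around the inclusion $\jmath \colon K^2 \otimes \Sym_3 K^2 \hookrightarrow \H_2(K)$ that realizes a pair of binary cubic forms as a hypercube symmetric in three of its four tensor factors. First I would record the relevant group theory: $\jmath$ intertwines the $\GL_2(K)^2$-action on pairs of binary cubics with the action of the diagonal subgroup $\GL_2(K) \times \Delta(\GL_2(K)) \subset \GL_2(K)^4$, where $\Delta$ embeds $\GL_2(K)$ diagonally into the three symmetrized factors. This diagonal subgroup is exactly the centralizer in $\GL_2(K)^4$ of the group $S_3$ permuting those three factors, and the image of $\jmath$ is exactly the $S_3$-fixed locus $\H_2(K)^{S_3}$. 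Thus the theorem becomes a comparison of $\GL_2(K)^2$-orbits on $\H_2(K)^{S_3}$ with the $S_3$-fixed $\GL_2(K)^4$-orbits on $\H_2(K)$.

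Second, I would pin down the special geometric locus. Applying the construction of \S\ref{sec:hypergenusone} to $H = \jmath(\mathrm{pair})$ produces data $(C,L,(P,P',P''))$, and the threefold symmetry of $H$ induces an order-three automorphism of $C$; since an order-three automorphism of a genus one curve is translation by a $3$-torsion point, tracking its effect on the divisor classes $[D_i]$ defining the points $P_{ij} = [D_i]-[D_j]$ forces $P = P' = P''$. Together with $P + P' + P'' = 0$ and $\mathrm{char}(K) \neq 3$, this yields $3P = 0$, so $P$ is a nonzero $3$-torsion point of $\Jac(C)$. Because $\jmath$ is diagonally equivariant, $\GL_2(K)^2$-equivalent pairs produce $\GL_2(K)^4$-equivalent hypercubes, so by Theorem~\ref{hyperpar} this gives a \emph{well-defined} map from nondegenerate $\GL_2(K)^2$-orbits to triples $(C,L,P)$ of the asserted type.

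Third, the substance is to show this map is a bijection, i.e.\ that passage to the $S_3$-fixed locus descends the orbit equivalence. For injectivity, suppose two symmetric hypercubes satisfy $H' = gH$ with $g \in \GL_2(K)^4$. Applying $\sigma \in S_3$ and using $\sigma H = H$, $\sigma H' = H'$ gives $g^{-1}\sigma(g) \in \mathrm{Stab}(H)$ for every $\sigma$, defining a cocycle $\sigma \mapsto g^{-1}\sigma(g)$ valued in $\mathrm{Stab}(H)$; if this is a coboundary, then $g$ may be corrected by an element of $\mathrm{Stab}(H)$ into the diagonal $\GL_2(K)^2 = (\GL_2(K)^4)^{S_3}$, so the two pairs are already $\GL_2(K)^2$-equivalent. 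For surjectivity, given $(C,L,P)$ with $3P=0$ I would form the valid Theorem~\ref{hyperpar} datum $(C,L,(P,P,P))$, take the corresponding $\GL_2(K)^4$-orbit $\mathcal{O}$, and observe that $S_3$ preserves $\mathcal{O}$ since it merely permutes the three equal points; the task is then to produce an $S_3$-fixed representative in $\mathcal{O}$, which is precisely an honest pair of binary cubic forms.

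The main obstacle is exactly this descent over a general field $K$: both directions hinge on the vanishing of the relevant class in $H^1(S_3, \mathrm{Stab}(H))$, where $\mathrm{Stab}(H)$ for nondegenerate $H$ is the Heisenberg/theta group attached to $E[2]$ computed in Section~\ref{sec:classical}. I expect the cleanest route is to make the construction explicit rather than purely cohomological: using the Weierstrass model $y^2 = x^3 + Ax + B$ of $\Jac(C)$ together with the flex point $P$, whose tangent line realizes the degree-two slope invariant $a_2$, one can directly write down a pair of binary cubic forms carrying the prescribed invariants of degrees $2$ and $6$. This exhibits the symmetric representative by hand, and the rigidity of the nondegenerate stabilizer then forces uniqueness up to $\GL_2(K)^2$, giving injectivity simultaneously. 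The argument parallels the fully symmetric treatment promised for Theorem~\ref{sympar} in \S\ref{sec:4symHC}, with the threefold symmetry of \S\ref{sec:3symHC} replacing the fourfold one.
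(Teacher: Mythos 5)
Your high-level strategy---symmetrize Theorem \ref{hyperpar} along the inclusion $\jmath\colon K^2\otimes\Sym_3K^2\hookrightarrow \H_2(K)$---matches the paper's, and your forward direction reaches the correct conclusion ($P=P'=P''$, hence $3P=0$). But the justification there is shaky: an order-three automorphism of a genus one curve over $K$ need not be a translation by a $3$-torsion point (it can project to a nontrivial automorphism of the Jacobian, e.g.\ when $j=0$), so that fact cannot be invoked as stated. The paper instead derives the torsion condition from line bundle identities forced by the transpositions: the symmetry in the last two factors gives $L_3\cong L_4$, and the swap of the other two symmetrized factors acts on $C_{123}$ as the hyperelliptic involution over $\PP(V_1^\vee)$ (it cannot be the identity since $L_2\not\cong L_3$), which gives $L_2\otimes L_3\cong L_1^{\otimes 2}$; combined with $L_1\otimes L_2\cong L_3\otimes L_4$ this yields $3P=0$ for $P=L_2\otimes L_1^{-1}$.

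The genuine gap is in the bijectivity, which you correctly flag as the main obstacle but do not close. Your cohomological route needs the vanishing of $H^1(S_3,\mathrm{Stab}(H))$ for the stabilizer of a nondegenerate hypercube (an extension involving $\Gm^3$ and $\Jac(C)[2]$), with an $S_3$-action you have not computed; this is asserted as what is needed, not proved. Your explicit fallback---writing down a pair of binary cubics carrying the prescribed invariants of degrees $2$ and $6$---cannot work over a general field: the invariants determine only the Jacobian and the point $P$ on it, not the torsor $C$ nor the line bundle $L$, and over $K$ many distinct orbits share the same invariants (the paper stresses this repeatedly, e.g.\ for ternary cubics). The paper's proof of Theorem \ref{thm:3symHCL} avoids both issues by making the symmetric representative canonical: from $(C,L,P)$ it builds the hypercube functorially with $V_1=H^0(C,L)$, $V_2=H^0(C,L\otimes P)$, $V_3=V_4=H^0(C,L\otimes 2P)$ via the multiplication map, then exhibits geometric identifications $V_3\cong V_4$ (as in the doubly symmetric case) and $V_2\cong V_3$ (by reconstructing the same hypercube from the slicing through $C_{14}$, where the pullbacks from $\PP(V_2^\vee)$ and $\PP(V_3^\vee)$ coincide), under which the hypercube is literally fixed by the transpositions $(34)$ and $(23)$. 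The only remaining ambiguity is the choice of bases of $V_1$ and of $V_2$ (the latter used diagonally), which is exactly the $\GL_2(K)^2$-action; this yields surjectivity and injectivity simultaneously with no descent argument. To salvage your version you would have to actually prove the $H^1$ vanishing, which in effect amounts to redoing these canonical identifications in cocycle language.
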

It is interesting that the data parametrized by the orbits on both $\Sym_4K^2$ and $K^2\otimes\Sym_3K^2$ are the same, and our orbit description in fact allows us to determine an explicit linear transformation in 
$\GL_2(K)^2$ that takes any given nondegenerate element of $K^2 \otimes \Sym_3 K^2$ to an element of $\Sym_4 K^2$.

\subsection{Double symmetrization}

We may instead impose only a twofold symmetry, leading us to study the space
$K^2\otimes K^2\otimes \Sym_2 K^2$ of $2\times 2$ matrices of binary quadratic forms.  
In terms of the geometric data $(C,L,(P,P',P''))$ of Theorem~\ref{hyperpar}, we see that $P'$ and $P''$ coincide, which then determines $P$ by the relation $P+P'+P''=0$. Thus only the information of the point $P'$ needs to be retained.  Since $P\neq 0$, the point $P'$ cannot be 2-torsion, and so (writing now $P'$ as $P$) we obtain the following:

\begin{thm}\label{doublesympar}
For any field $K$ with $\mathrm{char}(K) \nmid 6$,  there is a canonical bijection between
nondegenerate $\GL_2(K)^3$-orbits on the space $K^2\otimes K^2\otimes\Sym_2K^2$ of $2\times 2$ matrices of binary quadratic forms over $K$ and isomorphism classes of triples $(C,L,P)$, where $C$ is a smooth genus one curve over~$K$, \;$L$ is a degree $2$ line bundle on $C$, and $P$ is a non-$2$-torsion point of the period-index subgroup $\Jac_C^2(K)$ in $\Jac(C)(K)$.
\end{thm}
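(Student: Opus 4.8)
The plan is to deduce this from the hypercube parametrization of Theorem~\ref{hyperpar} by restricting to the locus of hypercubes symmetric in two of the four tensor slots, exactly as Theorems~\ref{sympar} and~\ref{triplesympar} arise by imposing a four- or three-fold symmetry. Write $\jmath\colon K^2\otimes K^2\otimes\Sym_2K^2\hookrightarrow\H_2(K)$ for the inclusion of $2\times2$ matrices of binary quadratic forms into hypercubes, realizing $\Sym_2K^2$ inside the third and fourth factors $K^2\otimes K^2$. This map is equivariant for the homomorphism $\GL_2(K)^3\to\GL_2(K)^4$ which is the identity on the first two factors and the diagonal $g\mapsto(g,g)$ (acting through $\Sym^2$) on the last two. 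Let $\sigma$ denote the involution of $\H_2(K)$ interchanging the third and fourth factors, so that the image of $\jmath$ is precisely the $\sigma$-fixed subspace. Since the invariants $I,J$ of a symmetric hypercube agree with those of its image, nondegeneracy is preserved, and I may define nondegeneracy on the symmetric space via $\jmath$.

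First I would construct the forward map. Given a nondegenerate $s$, apply Theorem~\ref{hyperpar} to $\jmath(s)$ to obtain data $(C,L,(P,P',P''))$. Because $\jmath(s)$ is fixed by $\sigma$, the entire genus one package of \S\ref{sec:hypergenusone} is $\sigma$-symmetric; tracking $\sigma$ through the tetrahedron of isomorphisms (it interchanges the two slicings attached to the symmetrized factors, hence the associated degree-two divisors) shows that $\sigma$ fixes the pair $(C,L)$ and the point $P$ while interchanging $P'$ and $P''$. The $\sigma$-invariance of the data therefore forces $P'=P''$, and the relation $P+P'+P''=0$ gives $P=-2P'$. Nondegeneracy guarantees $P\neq0$, whence $P'$ is not $2$-torsion. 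Retaining only $P'$ and renaming it $P$ produces a triple $(C,L,P)$ with $P$ a non-$2$-torsion point of $\Jac_C^2(K)$. This assignment is constant on $\GL_2(K)^3$-orbits, since $\GL_2(K)^3$ acts through $\GL_2(K)^4$, under which the data of Theorem~\ref{hyperpar} is an isomorphism invariant.

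Next I would build the inverse. Given $(C,L,P)$ with $P$ non-$2$-torsion, feed the triple $(C,L,(-2P,P,P))$ into Theorem~\ref{hyperpar}: since $P$ is non-$2$-torsion, $-2P\neq0$ and all three points are nonzero and sum to zero, so Theorem~\ref{hyperpar} yields a hypercube $H$, unique up to $\GL_2(K)^4$, realizing this data. The key claim is that $H$ can be chosen inside $\jmath(K^2\otimes K^2\otimes\Sym_2K^2)$ and that its symmetric representatives form a single $\GL_2(K)^3$-orbit. Because the target data is invariant under interchanging the two equal points, which is precisely the action of $\sigma$ on data, the canonicity of Theorem~\ref{hyperpar} forces $\sigma(H)$ into the same $\GL_2(K)^4$-orbit as $H$, so $\sigma(H)=g\cdot H$ for some $g$. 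Using $\mathrm{char}(K)\neq2$ together with the explicit description of the stabilizer of $H$ as a Heisenberg/theta group (\S\ref{sec:classical}), one trivializes this order-two twist to obtain a genuinely $\sigma$-fixed representative, i.e.\ an element of the image of $\jmath$; the same stabilizer analysis shows that any two $\sigma$-fixed representatives in one $\GL_2(K)^4$-orbit differ by an element of $\GL_2(K)^3$. Composing with the forward map shows the two constructions are mutually inverse.

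The hard part is this final descent step: promoting the abstract $\sigma$-invariance of the moduli data to an honest $\sigma$-fixed hypercube, and verifying that the $\sigma$-fixed points of a single $\GL_2(K)^4$-orbit form exactly one $\GL_2(K)^3$-orbit, with no splitting or merging. This is precisely where the fine structure of the stabilizers from \S\ref{sec:classical} and the hypothesis $\mathrm{char}(K)\neq2$ are indispensable, and it is the trivialization of the order-two twist that produces the non-$2$-torsion condition here, in contrast with the $3$-torsion conditions of Theorems~\ref{sympar} and~\ref{triplesympar}. The remaining verifications---equivariance of $\jmath$, agreement of invariants, and the bookkeeping identifying $\sigma$ with the swap of $P'$ and $P''$---are routine and are the content deferred to \S\ref{sec:hermHC}.
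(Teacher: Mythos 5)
Your forward map is correct and matches the paper's (the bookkeeping $P'=P''$, $P=-2P'$, and the non-$2$-torsion conclusion are all right). The problem is the reverse direction. You correctly identify the descent step as ``the hard part'' --- promoting $\sigma$-invariance of the moduli data to an honest $\sigma$-fixed hypercube, and showing the $\sigma$-fixed locus of an orbit is a single $\GL_2(K)^3$-orbit --- but you do not actually prove it; you only assert that the ``order-two twist'' can be trivialized using the stabilizer description and $\mathrm{char}(K)\neq 2$. As stated this is a genuine gap, and it is not clear the argument closes: from $\sigma(H)=g\cdot H$ one gets $\sigma(g)g\in\mathrm{Stab}(H)$, and what is needed is the vanishing of a nonabelian $H^1(\Z/2,\mathrm{Stab}(H))$-type class. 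The stabilizer of a nondegenerate hypercube is an extension of $\Jac(C)[2]$ (and possibly $\Aut(\Jac(C))$) by a torus $\Gm^3$, and already $H^1(\Z/2,\Gm)\cong K^\times/(K^\times)^2$ is generally nonzero, so neither the existence of a fixed representative nor the non-splitting of the fixed locus into several $\GL_2(K)^3$-orbits follows formally. One would have to compute the relevant cohomology with the precise $\sigma$-action and twisted cocycle, which you have not done.

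The paper (Theorem \ref{thm:2symHCL} in \S\ref{sec:2symHC}) avoids this entirely by making the reverse construction canonical rather than symmetrizing an abstract representative. Starting from $(C,L_1,L_2,L_3)$ it sets $L_4=L_3$ and builds the hypercube as the kernel of the multiplication map $\mu_{123}$, so that $V_4=(\ker\mu_{123})^\vee$ is \emph{canonically} identified with $H^0(C,L_4)=H^0(C,L_3)=V_3$ via a map $\psi_{43}$ under which the two projections $C\to\PP(V_3^\vee)$ and $C\to\PP(V_4^\vee)$ literally coincide. Symmetry of the tensor is then checked pointwise: for $(x,y)\in C_{12}$ the rank-one matrix $\AA(x,y,\cdot,\cdot)$ has equal left and right kernels after applying $\psi_{43}$, hence lies in $\Sym_2V_3$; since $C_{12}$ spans $\PP(V_1^\vee)\times\PP(V_2^\vee)$, the whole tensor is symmetric. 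If you want to salvage your route, you must either carry out the cocycle computation in the stabilizer explicitly, or replace the abstract descent with this kind of canonical identification --- which is what the paper does.
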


\subsection{Double-double symmetrization}\label{sec:doubledoublesym}

We may, in fact, ask for the hypercubes to be symmetric under any subgroup of the symmetric group $S_4$.  One of the interesting cases arises from the hypercubes fixed under the action of $S_2\times S_2\subset S_4$, which we call double-double symmetrization:

\begin{thm}\label{doubledoublesympar}
For any field $K$ with $\mathrm{char}(K) \nmid 6$,  there is a canonical bijection between
nondegenerate $\GL_2(K)^2$-orbits on the space $\Sym_2K^2\otimes \Sym_2K^2$ of symmetric $2\times 2$ matrices of binary~quadratic forms over $K$ and isomorphism classes of triples $(C,L,P)$, where $C$ is a smooth genus~one curve over $K$, \;$L$ is a degree $2$ line bundle on $C$, and $P$ is a non-$2$-torsion point of the period-index subgroup $\Jac_C^2(K)$ in $\Jac(C)(K)$.
\end{thm}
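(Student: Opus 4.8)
The plan is to deduce Theorem~\ref{doubledoublesympar} from the hypercube parametrization (Theorem~\ref{hyperpar}) by imposing the symmetry condition under the subgroup $S_2\times S_2 \subset S_4$, exactly as the preceding theorems in this section were deduced for the other symmetry types. First I would make precise the symmetrization inclusion: an element of $\Sym_2 K^2 \tns \Sym_2 K^2$ is a hypercube $H \in \H_2(K)$ that is symmetric under the $S_2$ swapping the first two tensor factors and also under the $S_2$ swapping the last two factors. I would check that this symmetry condition is preserved by the relevant subgroup $\GL_2(K)^2 \hookrightarrow \GL_2(K)^4$ (acting diagonally on the first pair of factors and diagonally on the second pair), so that $\GL_2(K)^2$-orbits on $\Sym_2 K^2 \tns \Sym_2 K^2$ inject into $\GL_2(K)^4$-orbits on hypercubes. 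This reduces the problem to two tasks: (i) determine which geometric data $(C,L,(P,P',P''))$ arise from symmetric hypercubes, and (ii) show the correspondence is a bijection onto the claimed data $(C,L,P)$ with $P$ a non-$2$-torsion point of $\Jac_C^2(K)$.

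For task (i), I would trace the $S_2\times S_2$ symmetry through the construction of the tetrahedron of isomorphisms and the points $P_{ij} = D_i - D_j$ in $\Jac_C^2(K)$. The symmetry swapping factors $1,2$ forces the divisor classes $D_1$ and $D_2$ (the two degree-two divisors from projecting $C_{123}$ onto $\PP(W_1)$ and $\PP(W_2)$) to coincide, hence $P_{12} = D_1 - D_2 = 0$; similarly the symmetry in factors $3,4$ gives a relation forcing two of the points to agree. Tracking which $P_{ij}$ survive, and using the relation $P_{12}+P_{23}+P_{31}=0$, I expect to find that the three points collapse to a single point $P$ together with its negation $-P$ and the identity. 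Comparing with the double-symmetrization case (Theorem~\ref{doublesympar}, where $P'=P''$ and a single non-$2$-torsion $P$ remains), the double-double symmetry should impose the \emph{same} surviving datum---a single point $P$---but now with the additional $S_2$ in the other pair of factors. The key computation is to verify that this additional symmetry imposes no further constraint on $P$ beyond being non-$2$-torsion: since $P=P'$ is already forced to be non-$2$-torsion (as $P''=0$ is excluded by the nonvanishing hypothesis in Theorem~\ref{hyperpar}), the remaining symmetry is automatically compatible.

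For task (ii), the essential point is surjectivity: given arbitrary data $(C,L,P)$ with $P$ non-$2$-torsion in $\Jac_C^2(K)$, I must produce a symmetric hypercube realizing it. Here I would take the hypercube $H$ produced by Theorem~\ref{hyperpar} from the data $(C,L,(-P-P',P,P'))$ with the appropriate identifications, and show that after acting by a suitable element of $\GL_2(K)^2$ one may arrange $H$ to lie in $\Sym_2 K^2 \tns \Sym_2 K^2$. Equivalently, I would argue that the symmetry of the \emph{data} (the coincidence $P'=P''$ and a second coincidence from the other factor) forces the \emph{orbit representative} to be symmetrizable, using that the stabilizer computations from Section~\ref{sec:classical} control the ambiguity. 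Injectivity then follows because two symmetric hypercubes in the same $\GL_2(K)^4$-orbit must already be in the same $\GL_2(K)^2$-orbit, which requires checking that the symmetrizing subgroup is exactly the centralizer of the $S_2\times S_2$-action inside $\GL_2(K)^4$.

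The hard part will be task (ii), specifically the surjectivity and the descent of orbits: showing that every hypercube $\GL_2(K)^4$-equivalent to a symmetric one can be moved into $\Sym_2 K^2 \tns \Sym_2 K^2$ by an element of the smaller group $\GL_2(K)^2$, rather than merely by some element of $\GL_2(K)^4$ that need not respect the symmetry. This is a Galois-descent-type statement about fixed points of a group action, and the cleanest route is likely to identify the fixed locus of $S_2 \times S_2$ with the image of $\Sym_2 K^2 \tns \Sym_2 K^2$ and then invoke the vanishing of the relevant obstruction (an $H^1$ of the stabilizer with its $S_2\times S_2$-action). The matching invariant-degree count---the $\SL_2(K)^2$-invariant ring on $\Sym_2 K^2 \tns \Sym_2 K^2$ being freely generated in degrees $2,3,4$ (line~5 of Table~\ref{table:examples})---provides a useful consistency check that the surviving datum is indeed a single point $P$, matching the three invariants expected for $(C,L,P)$.
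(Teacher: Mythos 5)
Your overall strategy---deduce the result from the hypercube parametrization by imposing the $S_2\times S_2$ symmetry and then prove that every admissible datum is realized by a symmetrizable hypercube---is the paper's strategy, and your instinct that task (ii) is where the real work lies is correct. But your task (i) contains a genuine error. You claim that the $S_2$ swapping the first two tensor factors forces $D_1=D_2$, hence $P_{12}=0$. This cannot happen: Theorem~\ref{hyperpar} requires all three points $P,P',P''$ to be \emph{nonzero} (equivalently, Theorem~\ref{thm:hypercube} requires $L_1\not\cong L_2$), so a hypercube with $P_{12}=0$ would be degenerate. What the symmetry actually does is induce a nontrivial automorphism of $C_{12}$ interchanging the two projections to $\PP(V_1^\vee)$; this automorphism is a translation, so $L_2$ is the pullback of $L_1$ along a translation but is not isomorphic to $L_1$. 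The correct conclusion (which the paper proves as Theorem~\ref{thm:22symHC}) is that the double-double symmetry imposes \emph{no constraint beyond} the single $\Sym_2$ in the last two factors: the moduli problem for $\Sym_2V_1\tns\Sym_2V_2$ is literally identical to that for $V_1\tns V_1\tns\Sym_2V_2$, namely $(C,L,P')$ with $P'=P''$ non-$2$-torsion and $P=-2P'$. Your final answer happens to land in the right place, but the mechanism you describe (a point collapsing to the identity) is the wrong one and would lead you astray when you try to execute the computation.

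On task (ii), the paper does not need a cohomological descent or $H^1$-vanishing argument. Given $(C,L,P)$ it builds the doubly symmetric hypercube $\AA\in U_1\tns U_2\tns\Sym_2U_3$ from the bundles $L_1=L$, $L_2=L\tns P^{\tns2}$, $L_3=L_4=L\tns P$, and then identifies $U_1$ with $U_2$ \emph{not} via an isomorphism $L_1\cong L_2$ (there is none) but via the chain $C_{123}\to C_{13}\to C_{134}\to C_{34}\to C_{234}\to\PP(U_2^\vee)$, under which $\OO_{\PP(U_2^\vee)}(1)$ pulls back to $L_1$ by the relation of Lemma~\ref{lem:HCrelation}; with that identification the two maps $C_{34}\to\PP(U_1^\vee)$ and $C_{34}\to\PP(U_2^\vee)$ agree and $\AA$ becomes invariant under the transposition of the first two factors. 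The paper also records a purely computational alternative: an explicit element of $\GL(V_1)$, with entries cubic in the coefficients, carrying any nondegenerate element of $V_1\tns V_1\tns\Sym_2V_2$ into $\Sym_2V_1\tns\Sym_2V_2$; its determinant is the degree~$6$ invariant $a_6'$, which is a factor of the discriminant and hence nonzero in the nondegenerate case. If you want to salvage your write-up, replace the ``$P_{12}=0$'' analysis with the translation-automorphism picture and replace the obstruction-theoretic appeal with one of these two explicit symmetrization arguments.
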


Over a field $K$ not of characteristic dividing $6$, the space $\Sym^2 K^2 \otimes \Sym^2 K^2$ is isomorphic to the space of the doubly-doubly symmetric hypercubes.  Analogous to the case discussed in \S\ref{sec:triplesym}, there is a natural ``dual'' interpretation for the orbits of this space, also involving genus one curves~$X$ with degree $2$ line bundles and a point in $\Jac_X^2(K)$ (see \S \ref{sec:bideg22forms}); however, the two genus one curves~$C$ and $X$ obtained from an element of $\Sym^2 K^2 \otimes \Sym^2 K^2$ are not the same, but are again related by a certain Hessian-type construction (see \S\ref{sec:22symHC}).

\subsection{Double skew-symmetrization} \label{sec:2skewHCpreview}

Instead of imposing conditions of symmetry, one may impose 
conditions of {\it skew-symmetry} on hypercubes, analogous to those described in \cite[\S 2.6]{hcl1}.  
To define these skew-symmetrizations, let us view again our original hypercube
space $K^2\otimes K^2\otimes K^2\otimes K^2$ as the space of $K$-quadrilinear maps
$W_1\times W_2\times W_3\times W_4\rightarrow K$, where $W_1,W_2,W_3,W_4$ are
$K$-vector spaces of dimension 2 (namely, the $K$-duals of the four factors $K^2$
in $K^2\otimes K^2\otimes K^2\otimes K^2$).
Then given such a quadrilinear map 
\[\phi:W_1\times W_2\times W_3\times W_4\rightarrow K\] 
in $K^2\otimes K^2\otimes K^2\otimes K^2$, one may naturally construct another
$K$-quadrilinear map
\[\bar\phi:W_1 \times W_2\times (W_3\oplus W_4)\times (W_3\oplus W_4)\rightarrow K\] 
that is skew-symmetric in the third and fourth variables; this map
$\bar\phi=\id\otimes\id \otimes \wedge_{2,2}(\phi)$ is given by
\[ \bar\phi\left(r,s,(t,u),(v,w)\right)
=  \phi(r,s,t,w) - \phi(r,s,v,u). \]
Thus we have a natural $K$-linear mapping
\begin{equation}\label{doublefusion}
\id\otimes\wedge_{2,2}:
  K^2\otimes K^2\otimes K^2 \otimes K^2\rightarrow K^2\otimes K^2\otimes\wedge^2(K^2\oplus K^2)=
K^2\otimes K^2\otimes\wedge^2K^4 
\end{equation}
taking $2\times 2\times 2\times 2$ hypercubes to $2\times 2$ matrices of alternating 2-forms 
in four variables.  Explicitly, in terms of fixed bases for $W_1,W_2,W_3,W_4$,
the hypercube \eqref{eq:hyperdraw} maps to the $2\times 2$ matrix of skew-symmetric matrices 
as follows:

\vspace{-.1in}
\begin{equation}\label{explicit}
\left(\begin{array}{cc}
    \left[\begin{array}{cccc}
                 {} & {} & \,h_{1111} & \,\,\,h_{1112} \\
                 {} & {} & \,h_{1121} & \,\,\,h_{1122} \\
                 -h_{1111} & -h_{1121} &   &   \\
                 -h_{1112} & -h_{1122} &   &   \end{array}\right]
                 &
   \left[\begin{array}{cccc}
                 {} & {} & \,h_{1211} & \,\,\,h_{1212} \\
                 {} & {} & \,h_{1221} & \,\,\,h_{1222} \\
                 -h_{1211} & -h_{1221} &   &   \\
                 -h_{1212} & -h_{1222} &   &   \end{array}\right]
                 \\[.5in]                
   \left[\begin{array}{cccc}
                 {} & {} & \,h_{2111} & \,\,\,h_{2112} \\
                 {} & {} & \,h_{2121} & \,\,\,h_{2122} \\
                 -h_{2111} & -h_{2121} &   &   \\
                 -h_{2112} & -h_{2122} &   &   \end{array}\right]
                 &
   \left[\begin{array}{cccc}
                 {} & {} & \,h_{2211} & \,\,\,h_{2212} \\
                 {} & {} & \,h_{2221} & \,\,\,h_{2222} \\
                 -h_{2211} & -h_{2221} &   &   \\
                 -h_{2212} & -h_{2222} &   &   \end{array}\right]
\end{array}\right).
\end{equation}
\vspace{-.05in}

Analogous to the case of double skew-symmetrization of $2\times 2\times 2$ cubes,
where two ideal classes become replaced with a single rank 2 module, in the geometric data
corresponding to a double-skew-symmetric hypercube, two degree 2 line bundles $L_3$ and $L_4$
that come from the curves $C_3$ and $C_4$ are replaced by a single vector bundle $M$ of rank 2 and degree 4.
We thus have the following result (where the nondegeneracy condition on the geometric data is a
mild open condition and will be discussed in \S\ref{sec:deg2moduli}).

\begin{thm}\label{doubleskewpar}
For any field $K$ with $\mathrm{char}(K) \nmid 6$,  there is a canonical bijection between
nondegenerate $\GL_2(K)^2 \times \GL_4(K)$-orbits on the space $K^2\otimes K^2\otimes\wedge^2K^4$ of $2\times 2$ matrices of alternating quaternary $2$-forms over $K$ and isomorphism classes of nondegenerate quadruples $(C,L,P,M)$, where $C$ is a smooth genus one curve over $K$, \;$L$ is a degree~$2$ line bundle on $C$, $P$ is a nonzero point of the period-index subgroup $\Jac_C^2(K)$ in $\Jac(C)(K)$, and $M$ is a rank $2$ vector bundle of degree $4$ such that $\det M \cong P \tns L^{\tns 2}$.
\end{thm}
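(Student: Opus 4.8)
The plan is to produce the correspondence by a direct geometric construction in the forward direction (tensor to data) and a cohomological reconstruction in the reverse direction (data to tensor), to prove these are mutually inverse over $\kbar$, and finally to descend to an arbitrary $K$ with $\mathrm{char}(K)\nmid 6$, the period-index condition on $P$ emerging as the precise descent constraint.

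\emph{Forward construction.} Write $V_1,V_2$ for the two-dimensional factors and $U$ for the four-dimensional factor, so that an element $T\in K^2\tns K^2\tns\wedge^2K^4=V_1\tns V_2\tns\wedge^2U$ assigns to each point of $\PP(V_1^*)\times\PP(V_2^*)\cong\Poneone$ an alternating form $T(w_1,w_2,\,\cdot\,,\,\cdot\,)\in\wedge^2U$ on $U^*$, i.e.\ a $4\times 4$ skew-symmetric matrix whose entries are bilinear in $(w_1,w_2)$. Its Pfaffian is a bidegree $(2,2)$ form, and I define $C$ to be its zero locus in $\Poneone$; by adjunction a smooth such curve has genus one, and I expect smoothness to coincide exactly with the (open) nondegeneracy condition on $T$ discussed in \S\ref{sec:deg2moduli}. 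The two projections $C\to\Pone$ are degree $2$ and yield degree $2$ line bundles $L_1,L_2$; set $L:=L_1$ and $P:=[L_2]-[L_1]$. Along $C$ the form $T(w_1,w_2,\,\cdot\,,\,\cdot\,)$ has rank exactly $2$, so its radical is a rank $2$ subbundle of $U^*\tns\OO_C$, which I take to be $M$. A Pfaffian/adjunction computation should give $\deg M=4$ and $\det M\cong L_1\tns L_2\cong P\tns L^{\tns 2}$, yielding the asserted determinant relation.

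\emph{Reverse construction.} Given $(C,L,P,M)$, set $L_1:=L$ and $L_2:=L\tns P$, so that $L_1\tns L_2\cong\det M$ is exactly the hypothesis. On a genus one curve Riemann--Roch gives $h^0(L_i)=2$ and $h^0(M)=h^0(\det M)=4$, the vanishing of the relevant $H^1$ being part of the nondegeneracy hypothesis; thus $V_i:=H^0(C,L_i)$ is two-dimensional and $U:=H^0(C,M)^*$ is four-dimensional. I then form the multiplication map $\nu\colon H^0(L_1)\tns H^0(L_2)\to H^0(\det M)$, an isomorphism of four-dimensional spaces for nondegenerate data, together with the Pl\"ucker map $\mathrm{Pl}\colon\wedge^2 H^0(M)\to H^0(\wedge^2 M)=H^0(\det M)$, and set $T:=(\id\tns\nu^{-1})(\mathrm{Pl})\in\wedge^2U\tns V_1\tns V_2\cong K^2\tns K^2\tns\wedge^2K^4$. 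The two constructions are designed to be inverse: the Pfaffian locus of $T$ recovers $C$ embedded by $(L_1,L_2)$, and the radical bundle recovers $M$, both by unwinding the definitions together with genus one cohomology.

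Over $\kbar$ these maps give the bijection, and I would check them against Theorem~\ref{hyperpar} on the locus where $M$ splits as a sum $L_3\oplus L_4$ of two line bundles: there $T$ lies in the image of the fusion map $\id\tns\wedge_{2,2}$ of \eqref{doublefusion}, and the data matches $(C,L,(P,P',P''))$ with $L_3,L_4$ recording $P',P''$, confirming that the Jacobian and invariants agree with \eqref{jac2}. The genuinely new content is that general $M$ (e.g.\ indecomposable bundles) also occur, so the full bijection over $\kbar$ cannot be deduced from the hypercube case and requires the direct argument. I expect the main obstacle to be twofold: first, the geometric verification that forward and reverse are mutually inverse for arbitrary $M$ (equivalently, that every nondegenerate quadruple arises and that the orbit is uniquely determined), which hinges on $\nu$ being an isomorphism and the radical bundle having exactly the claimed invariants; and second, the descent from $\kbar$ to $K$. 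For the latter, all constructions are canonical and hence $\mathrm{Gal}(\kbar/K)$-equivariant, and the key point is that $P=[L_2]-[L_1]$ arises as a difference of $K$-rational degree $2$ divisor classes, which is precisely the characterization of the period-index subgroup $\Jac_C^2(K)$ recalled in \S\ref{sec:hypergenusone} and developed in Appendix~\ref{appendix:torsors}; conversely any $P\in\Jac_C^2(K)$ is so representable, and one checks that no obstruction beyond the Galois-stability of $\det M\cong P\tns L^{\tns 2}$ arises for $M$.
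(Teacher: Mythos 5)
Your argument is essentially correct, but it takes a genuinely different route from the paper's. The paper does not prove Theorem \ref{doubleskewpar} by a bespoke Pfaffian argument: it derives it in \S\ref{sec:2skewHC} as the specialization $J = K \times \Mat_{2\times 2}(K)$ of the uniform Theorem \ref{thm:hermHC} on $V \tns \CJ$, where $\CJ \cong K^2 \tns \wedge^2 K^4$ carries the Freudenthal quartic norm and $X_{\CJ} \cong \PP^1 \times \Gr(2,4)$. There the curve is produced as a double cover of a single $\PP(V^\vee)$, branched where the line $\PP(\phi)$ meets the quartic hypersurface $Y_{\CJ}$, via secant lines of $X_{\CJ}$ and the Hilbert scheme $\Hilb_2(X_{\CJ})$ (Construction \ref{cnstr:hermHCtoCurve}); the inverse direction uses the ruled surface $\PP(\eta_*\OO_C \tns \OO(3))$ and a distinguished section (Construction \ref{cnstr:curvetohermHC}). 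You instead realize $C$ directly as the bidegree $(2,2)$ Pfaffian locus in $\Poneone$ of the $4\times 4$ skew matrix of $(1,1)$-forms, take the radical along $C$ to get $M$, and reconstruct the tensor from $\nu^{-1}\circ \mathrm{Pl}: \wedge^2 H^0(C,M) \to H^0(C,L_1)\tns H^0(C,L_2)$ --- which is precisely the multiplication-map technique the paper uses for Theorems \ref{thm:333bij} and \ref{thm:hypercube} but \emph{not} for this case. Your route is more elementary and makes the comparison with the hypercube locus (the image of $\id\tns\wedge_{2,2}$) and the $(2,2)$-form invariants completely transparent; what it gives up is uniformity, since the paper's secant-variety argument covers in one stroke the cases $J = \HH_3(\mathscr{Q})$ and $\HH_3(\mathscr{O})$ where no Pfaffian model is available. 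Your identification of the descent constraint on $P$ with membership in $\Jac_C^2(K)$ via Proposition \ref{prop:periodindexsubgp} matches the paper exactly.

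Two bookkeeping points to fix in a full write-up. First, the radical of $T(w_1,w_2,\cdot,\cdot)$ along $C$ is a rank $2$ subbundle of $U^\vee\tns\OO_C$ of degree $-4$ (skew-symmetry forces the quotient $Q$ to satisfy $Q \cong Q^\vee \tns L_1\tns L_2$, so $\deg Q = 4$); you should take $M$ to be the quotient bundle (equivalently, essentially the dual of the radical), after which $\det M \cong L_1\tns L_2 \cong P\tns L^{\tns 2}$ comes out as stated. Second, your reverse construction needs $\nu$ to be an isomorphism (automatic from the basepoint-free pencil trick once $P\ne 0$, as in Lemma \ref{lem:ijembedding}) and $h^0(C,M)=4$ with $M$ globally generated; these are exactly the sort of conditions the paper sweeps into the ``nondegenerate quadruple'' hypothesis, so you should state them explicitly as your nondegeneracy condition and check they are equivalent to the open condition cut out on the tensor side.
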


\subsection{Triple skew-symmetrization}

We may also impose a triple skew-symmetry on hypercubes.  With the same notation as in the previous section, given a quadrilinear map
$$\phi: W_1 \times W_2 \times W_3 \times W_4 \to K,$$
we may construct the $K$-quadrilinear map
$$\bar{\phi}: W_1 \times (W_2 \oplus W_3 \oplus W_4) \times(W_2 \oplus W_3 \oplus W_4) \times (W_2 \oplus W_3 \oplus W_4) \to K,$$ 
which is alternating in the last three factors and explicitly given by 
$$\bar{\phi}(r,(s_1,s_2,s_3),(t_1,t_2,t_3),(u_1,u_2,u_3)) = \sum_{\sigma \in S_3} (-1)^\sigma \phi(r,s_{\sigma(1)},t_{\sigma(2)},u_{\sigma(3)}).$$
Thus, we obtain a natural $K$-linear injection
\begin{equation}\label{triplefusion}
\id\otimes\wedge_{2,2,2}:
  K^2\otimes K^2\otimes K^2 \otimes K^2 \rightarrow K^2\otimes \wedge^3(K^2 \oplus K^2\oplus K^2) = K^2\otimes \wedge^3 K^6
\end{equation}
from the space of hypercubes to the space of pairs of senary alternating $3$-forms. In analyzing the $\GL_2(K) \times \GL_6(K)$-orbits of this larger space, one finds that the degree 2 line bundles $L_2, L_3, L_4$ coming from $C_2,C_3,C_4$ are now replaced by a single rank $3$ vector bundle (which splits into the direct sum of these line bundles for elements in the image of $\id \otimes \wedge_{2,2,2}$).  We thus obtain the following:

\begin{thm} \label{thm:3skewHCpreview}
For any field $K$ with $\mathrm{char}(K) \nmid 6$, there is a canonical bijection between nondegenerate $\GL_2(K) \times \GL_6(K)$-orbits on the space $K^2 \otimes \wedge^3 K^6$ of pairs of senary alternating $3$-forms over $K$ and isomorphism classes of
nondegenerate triples $(C,L,M)$, where $C$ is a smooth genus one curve over $K$, $L$ is a degree $2$ line bundle on $C$, and $M$ is a rank $3$ vector bundle of degree $6$ such that $\det M \cong L^{\tns 3}$.
\end{thm}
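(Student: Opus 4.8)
The plan is to construct the correspondence directly in both directions and then to check that the two constructions are mutually inverse and $\GL_2(K)\times\GL_6(K)$-equivariant. The guiding observation is that $K^2\tns\wedge^3 K^6$ is $K^2$ tensored with the Freudenthal triple system attached to the cubic Jordan algebra $\Mat_3(K)$, so this case is an instance of the general ``Hermitian cube'' framework of Section~\ref{sec:hermHC} (with $J=\Mat_3(K)$), whose quartic norm form restricts to the classical degree~$4$ invariant of $\wedge^3 K^6$. Throughout, the fusion map $\id\tns\wedge_{2,2,2}$ of \eqref{triplefusion} and the hypercube parametrization (Theorem~\ref{hyperpar}) serve as a sanity check: a hypercube fuses to a \emph{split} form, for which $M=L_2\oplus L_3\oplus L_4$, and the relation $P_{12}+P_{23}+P_{31}=0$ there becomes exactly the condition $\det M\cong L^{\tns 3}$ (which is why, in contrast to Theorem~\ref{doubleskewpar}, no auxiliary point $P$ survives in the data).

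First I would build the forward map. Writing a nondegenerate element as a pencil $s\,\omega_1+t\,\omega_2$ of alternating $3$-forms on $K^6$, apply the degree~$4$ invariant $q$ of the $\SL_6(K)$-action on $\wedge^3 K^6$ to obtain a binary quartic form $f(s,t)=q(s\,\omega_1+t\,\omega_2)$; the nondegeneracy hypothesis is exactly $\disc(f)\neq 0$. The genus one curve $C:=C(f)$ and its degree~$2$ line bundle $L=\pi^*\OO_{\Pone}(1)$ (for $\pi\colon C\to\Pone=\PP(K^2)$) are then produced as in \S\ref{sec:binquargenusone}, with Jacobian \eqref{eq:BQjac}. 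For $M$, I use that a generic alternating $3$-form on $K^6$ has stabilizer $\SL_3\times\SL_3\rtimes\Z/2$ in $\GL_6$, hence determines an unordered pair of complementary $3$-planes in $K^6$; as $[s:t]$ varies these two planes sweep out a double cover of $\Pone$ that is canonically $C$ (they collide precisely over the roots of $f$). The tautological $3$-plane defines a rank~$3$ subbundle $S\subset K^6\tns\OO_C$, and I set $M:=(K^6\tns\OO_C)/S$; a Riemann--Roch and branch-point analysis then gives $\deg M=6$, $H^0(C,M)\cong K^6$, and $\det M\cong L^{\tns 3}$.

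For the reverse map, given $(C,L,M)$ set $U:=H^0(C,L)$ and $V:=H^0(C,M)$; Riemann--Roch gives $\dim U=2$, and the (mild, open) nondegeneracy conditions that $M$ is globally generated with $h^1(C,M)=0$ give $\dim V=6$ together with a surjection $V\tns\OO_C\twoheadrightarrow M$. Since $L\cong\pi^*\OO_{\Pone}(1)$ and $\pi_*\OO_C\cong\OO_{\Pone}\oplus\OO_{\Pone}(-2)$, the alternating cube of the evaluation map yields $\wedge^3 V\to H^0(C,\wedge^3 M)=H^0(C,L^{\tns 3})$, and the double-cover decomposition $H^0(C,L^{\tns 3})\cong H^0(\Pone,\OO(3))\oplus H^0(\Pone,\OO(1))$ lets me project onto the second (anti-invariant) summand, which is canonically identified with $U$. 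The composite $\wedge^3 V\to U$ is the sought pencil of alternating $3$-forms, i.e.\ an element of $U\tns\wedge^3 V^{*}$; invoking the Hermitian-cube theorem of Section~\ref{sec:hermHC} for $J=\Mat_3(K)$ identifies it up to $\GL_2(K)\times\GL_6(K)$ and shows the two maps are mutually inverse.

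The main obstacle is the passage between the linear-algebra datum (a pencil of $3$-forms on a fixed $6$-space) and the sheaf-theoretic datum (the rank~$3$ bundle $M$ on $C$): this is a spectral-type correspondence, and its two delicate points are (i) showing the tautological $3$-plane extends across the branch points of $\pi$ to a genuinely locally free $M$ of the asserted degree~$6$, and (ii) verifying $\det M\cong L^{\tns 3}$ \emph{canonically} rather than merely up to a point of $\Jac(C)$---this is the geometric incarnation of the vanishing $P_{12}+P_{23}+P_{31}=0$. A secondary obstacle is descent: every construction must be carried out over $K$ and shown to be $\mathrm{Gal}(\Kbar/K)$-equivariant, so that $\Kbar$-isomorphism classes refine to honest $K$-orbits; because no point $P$ enters the data here, the period-index subtleties of Appendix~\ref{appendix:torsors} appear only through the existence of $L$ and are comparatively mild.
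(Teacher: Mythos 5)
Your proposal follows essentially the same route as the paper: the paper proves this statement (as Theorem~\ref{thm:3skewHC}) by identifying $\wedge^3 K^6$ with the Hermitian cube space $\mathscr{C}(J)$ for $J = \HH_3(K\times K)\cong \Mat_{3\times 3}(K)$, so that $X_{\mathscr{C}(J)} = \Gr(3,6)$, and then specializing the general Theorem~\ref{thm:hermHC}. Your explicit forward map (the unordered pair of complementary $3$-planes attached to a generic alternating $3$-form, i.e.\ the unique secant line of $\Gr(3,6)$ through a general point of $\PP(\wedge^3 K^6)$) and your reverse map (projecting $\wedge^3 H^0(C,M)\to H^0(C,L^{\tns 3})$ onto the $H^0(\PP^1,\OO(1))$ summand of $\pi_*L^{\tns 3}$) are concrete unwindings of the paper's Constructions~\ref{cnstr:hermHCtoCurve} and~\ref{cnstr:curvetohermHC} in this case.
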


\subsection{A simultaneous generalization: triply Hermitian hypercubes} \label{sec:3hermHCpreview}
 
Many of the orbit parametrizations we have discussed in this section can be unified and generalized, via a process we call {\it Hermitianization}.  Just as one can consider square matrices that are Hermitian over a quadratic algebra, we may consider cubical matrices that are Hermitian over a cubic algebra.  The most convenient notion for this purpose is Springer's definition of a {\it cubic Jordan algebra}, which we discuss in more detail in \S \ref{sec:cubicjordan}.  For now, it suffices to say that a cubic Jordan algebra is a generalization of the notion of a cubic field extension, where each element of the cubic Jordan algebra has two other formal conjugates, and there is a well-defined notion of a characteristic polynomial that these conjugate elements all satisfy.  Some simple examples of cubic Jordan algebras over a field $K$ include $K^3$, $K^2$, $K$, cubic field extensions of $K$, $K\times \Mat_{2\times2}(K)$, and $\Mat_{3\times 3}(K)$.

A {\em triply Hermitian} {hypercube} for the cubic algebra $J$ over $K$ is one of the form
\begin{equation}\label{hermhyper}
\raisebox{-2\baselineskip}{
\cube a b {b'} {c''} {b''} {c'} c d \qquad \qquad \qquad \cube e f {f'} {g''} {f''} {g'} g h
} \qquad 
\end{equation}
where $a,d,e,h\in K$ and $b,c,f,g\in J$, and $b',b''$ and $c',c''$ are formal conjugates of $b$ and $c$, respectively.  We denote the space of all triply Hermitian hypercubes for such a cubic Jordan algebra $J/K$ by $\mathscr{C}_2(J)$.  

For each such $J$, there is a natural group $\SL_2(J)$ acting by linear transformations but preserving a certain discriminant quartic form on the space $\mathscr{C}_2(J)$.  Then we obtain a parametrization for the orbits of $\GL_2(K) \times \SL_2(J)$ on $\mathscr{C}_2(J)$ (a full version of the theorem may be found in \S\ref{sec:deg2moduli}):

\begin{thm}\label{triplehermpar}
For any field $K$ with $\mathrm{char}(K) \nmid 6$, and a cubic Jordan algebra $J$ over $K$, there is a canonical bijection between nondegenerate $\GL_2(K) \times \SL_2(J)$-orbits on the space $\mathscr{C}_2(J)$ of triply Hermitian hypercubes for $J$ over $K$ and isomorphism classes of triples $(C,L,\mathcal{F})$, where $C$ is a smooth genus one curve over $K$, \;$L$ is a degree~$2$ line bundle on~$C$, and $\mathcal{F}$ is a flag of vector bundles on $C$ with additional structure coming from $J$, subject to a relation between $L$ and~$\mathcal{F}$.
\end{thm}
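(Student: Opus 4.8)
The plan is to mimic, in the Hermitian setting, the route already used for ordinary hypercubes in Theorem~\ref{hyperpar}, replacing the three ``non-distinguished'' slicing directions of the cube by the cubic Jordan algebra $J$. Write $\mathscr{C}_2(J) = V \tns \mathscr{C}(J)$ with $V$ two-dimensional, so that $\GL(V) \times \SL_2(J)$ acts, and recall from \S\ref{sec:hermHC} that $\mathscr{C}(J)$ carries a quartic norm form $N$ and an adjoint map, playing exactly the role that the discriminant plays as the (quartic) invariant of an ordinary $2\times2\times2$ cube. Given a nondegenerate $A \in \mathscr{C}_2(J)$, I first slice along $V$: a basis $\{e_1,e_2\}$ of $V$ writes $A = e_1 \tns A_1 + e_2 \tns A_2$ with $A_1, A_2 \in \mathscr{C}(J)$, and $(x,y) \mapsto N(xA_1 + yA_2)$ is a binary quartic form $f \in \Sym^4 V^*$, whose $I$- and $J$-invariants (of degrees $8$ and $12$ in $A$, matching Table~\ref{table:examples}) are the basic $\SL_2(J)$-invariants. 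The nondegeneracy hypothesis is that $\disc(f) \neq 0$.

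From $f$ one obtains, exactly as in \S\ref{sec:binaryquartics}, the smooth genus one curve $C = C(f)$ (the double cover $\pi \colon C \to \PP(V) = \Pone$ branched at the zeros of $f$), its Jacobian $E(f)$ in the Weierstrass form \eqref{eq:BQjac}, and the degree $2$ line bundle $L = \pi^*\OO_{\Pone}(1)$. The flag $\mathcal{F}$ is built from the remaining, Hermitian, directions: over a point $[v] \in \PP(V)$ the element $A_v \in \mathscr{C}(J)$ has $N(A_v) = f(v)$, so along $C$ the pulled-back family becomes everywhere singular, and the kernels of $A_v$ and of its adjoint $A_v^\#$ are $J$-submodules whose ranks are constant off the branch locus. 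Taking these kernels as $[v]$ varies and sheafifying produces a flag of vector bundles $\mathcal{F}$ on $C$, each carrying the natural $J$-module structure inherited from $\mathscr{C}(J)$; the ``relation between $L$ and $\mathcal{F}$'' is then the determinantal identity obtained by computing $\det$ of the successive graded pieces against $\det A_v^\#$ (specializing, for instance, to $\det M \cong L^{\tns 3}$ in the triple-skew case of Theorem~\ref{thm:3skewHCpreview} and to $\det M \cong P \tns L^{\tns 2}$ in the double-skew case of Theorem~\ref{doubleskewpar}). This construction is manifestly $\GL(V) \times \SL_2(J)$-invariant.

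For the inverse map, given $(C, L, \mathcal{F})$ I reconstruct $A$ by the cohomological recipe of \S\ref{sec:binaryquartics}: set $V^* = H^0(C, L)$, so that $|L|$ realizes the double cover $C \to \PP(V) = \Pone$, and recover the $\mathscr{C}(J)$-component by pairing the sections of $\mathcal{F}$ and of its twists by powers of $L$ through the $J$-bilinear Jordan multiplication, with the Heisenberg/theta group of $(C,L)$ controlling the ambiguities exactly as in the classical degree $2$ case. One then checks that the two constructions are mutually inverse and that, at the level of morphisms, the $\SL_2(J)$-action corresponds precisely to $J$-linear isomorphisms of $\mathcal{F}$, so that orbits match isomorphism classes of triples. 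The cleanest way to verify the rank and determinant bookkeeping is to pass to $\Kbar$, where $J$ splits and the statement degenerates into (a product of) the already established hypercube and skew-symmetrized cases, and then to descend via Galois cohomology, the period-index subgroup $\Jac_C^2(K)$ emerging from the descent precisely as in Appendix~\ref{appendix:torsors}.

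The main obstacle is the uniform treatment of the flag $\mathcal{F}$ and its defining relation with $L$ across all cubic Jordan algebras $J$ at once: one must show that the kernel bundles of $A_v^\#$ have the expected constant rank on $C$ away from the ramification, that they extend to honest vector bundles across it, that they inherit a genuine $J$-module structure, and that the determinantal relation holds integrally rather than merely up to the period-index ambiguity. Establishing these facts in a $J$-independent manner---so that each specialization of $J$ recovers the correct bundle data in Theorems~\ref{hyperpar}, \ref{doubleskewpar}, and \ref{thm:3skewHCpreview}---is the technical heart of the argument, and is where the structure theory of the quartic norm form and adjoint on $\mathscr{C}(J)$ must be used in full.
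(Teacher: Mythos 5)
Your opening moves are right: slicing along $V$ to obtain the binary quartic $f(v)=\Norm(\phi(v))$, taking $C$ to be the double cover of $\PP(V^\vee)$ branched at its four roots, and setting $L$ to be the pullback of $\OO_{\PP(V^\vee)}(1)$ is exactly how the paper begins (the curve is $y^2=\disc(\phi(v))$). But the mechanism you propose for producing the flag $\mathcal{F}$ does not work. You assert that ``along $C$ the pulled-back family becomes everywhere singular'' and then take kernels of $A_v$ and of its adjoint; however, $C$ is a double cover of $\PP(V^\vee)$, not the vanishing locus of $\Norm$ --- that locus is just the four branch points --- so for all but four values of $[v]$ the element $\phi(v)$ has rank four and neither it nor its adjoint has any kernel. (Contrast the degree-$3$ situation of \S\ref{sec:333}, where the curve genuinely is $\det=0$ inside $\PP(V_1^\vee)$ and the kernel construction is available; that is precisely what fails to transport to the degree-$2$ setting.) The idea you are missing, which is the heart of the paper's proof in \S\ref{sec:deg2moduli}, is the secant geometry of the rank-one locus $X_\CJ\subset\PP(\CJ)$: every point of $\PP(\CJ)$ off the quartic hypersurface $Y_\CJ$ lies on a \emph{unique} secant line of $X_\CJ$, meeting $X_\CJ$ in a degree-two subscheme (Lemma \ref{lem:ptonsecant}). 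The two points of $C$ over $[v]$ are the two points of $X_\CJ$ on the secant line through $\phi(v)$; the paper formalizes this by lifting $\PP(\phi)$ to the universal line over $\Hilb_2(X_\CJ)$ and pulling back the universal degree-$2$ subscheme. The resulting map $\kappa\colon C\to X_\CJ$ into the flag variety is what yields $\mathcal{F}$, and the relation $\kappa^*\OO_{\PP(\CJ)}(1)\cong L^{\tns 3}$ is proved by observing that the adjoint map preserves secant lines and carries the line $\Im(\PP(\phi))$ to a rational cubic.

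Two further problems. Your inverse construction (``pair sections of $\mathcal{F}$ through the Jordan multiplication'') is the multiplication-map argument from the ordinary hypercube case; it requires separate tensor factors to multiply and does not obviously make sense for a general cubic Jordan algebra $J$. The paper instead maps $\PP(H^0(C,L)^\vee)$ to $\Hilb_2(X_\CJ)$ via the hyperelliptic involution together with $\kappa$, pulls back the universal line to get the ruled surface $\PP(\eta_*\OO_C\tns\OO(3))$, and isolates the unique section classifying $\eta_*\OO_C\tns\OO(3)\to\OO(1)$, whose image is the required line in $\PP(\CJ)$. Finally, your plan to ``pass to $\Kbar$, where the statement degenerates into the already established hypercube and skew-symmetrized cases, and descend'' is circular for the exceptional algebras: the $\Sp_6$, $\Spin_{12}$, and $E_7$ cases of \S\ref{sec:excHC} are \emph{consequences} of the uniform theorem, not prior results from which one can descend.
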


In each case, one of the key components of the construction of the correspending geometric~data is a flag variety related to the representation of $\SL_2(J)$ on $\mathscr{C}_2(J)$. Each element of $\mathscr{C}_2(J)$ produces a map from a genus one curve $C$ to this flag variety, thereby giving the flag $\mathcal{F}$ on $C$ described in the theorem. See \S\ref{sec:deg2moduli} for full details.

The cases $J=K^3$, $K\times K$, $K$, $K\times \Mat_{2 \times 2}(K)$, and $\Mat_{3\times 3}(K)$ yield regular hypercubes, triply symmetric hypercubes, doubly symmetric hypercubes, doubly skew-symmetric hypercubes, and triply skew-symmetric hypercubes, respectively.  From this perspective, one may also obtain moduli descriptions of some more exotic spaces, e.g., $\GL_2(K) \times \Sp_6(K)$-orbits on $K^2 \tns \wedge^3_0(K^6)$, $\GL_2(K) \times \Spin_{12}(K)$-orbits on $K^2 \tns K^{32}$, and $\GL_2(K) \times E_7(K)$-orbits on $K^2 \tns K^{56}$.


\section{Main results II: Genus one curves and \texorpdfstring{$3\times 3\times 3$}{333} Rubik's cubes} \label{sec:RCpreview}
 
In this section, we discuss the space of $3\times 3\times 3$ cubical
matrices (``Rubik's cubes'') over $K$, and describe the various
parametrizations of genus one curves with extra data that can be obtained from this
perspective.  Again, no proofs or details are given in this section.
Further details, more basis-free constructions, and proofs may be found in
Section~\ref{sec:hermRC}.

\subsection{On Rubik's cubes}\label{sec:RCslicing}

Let $K$ be a field of characteristic not dividing $6$.  Analogous to
the space of $2 \times 2 \times 2$ cubes from \S \ref{sec:cubes},
we may consider the space of $3 \times 3 \times 3$ cubes, which we call
{\em Rubik's cubes}.  Let $\mathcal{C}_3(K)$ denote the space $K^3
\tns K^3 \tns K^3$.  Then each element of $\mathcal{C}_3(K)$ may be
represented as a  $3 \times 3 \times 3$ 
cubical matrix $B = (b_{ijk})_{1 \leq i,j,k \leq 3}$
with entries in $K$:

\vspace{-.0875in}
\begin{equation} \label{eq:RCpicture}
\raisebox{5\baselineskip}{
\xymatrix@!0@=13pt{
 & & & & b_{311} \ar@{-}[rrr] \ar@{-}[lldd] \ar@{--}[dddddd] & & & b_{312}  \ar@{-}[rrr]   \ar@{-}[lldd] & & & b_{313} \ar@{-}[ddd] \ar@{-}[lldd] \\ 
 & & & & & & & & & & \\
 & & b_{211}  \ar@{-}[rrr]  \ar@{-}[lldd]& & & b_{212}  \ar@{-}[rrr]  \ar@{-}[lldd]& & & b_{213} \ar@{-}[ddd] \ar@{-}[lldd] & & \\
 & & & &  & & &  & & & b_{323} \ar@{-}[ddd]  \ar@{-}[lldd] \\ 
 b_{111}  \ar@{-}[rrr] \ar@{-}[ddd] & & & b_{112}  \ar@{-}[rrr] \ar@{-}[ddd] & & & b_{113} \ar@{-}[ddd]  & & & & \\
 & &  & & &  & & & b_{223} \ar@{-}[ddd]  \ar@{-}[lldd] & & \\
 & & & & b_{331} \ar@{--}[rrrrrr] \ar@{--}[lllldddd] & & & & & & b_{333}  \ar@{-}[lldd] \\ 
 b_{121}  \ar@{-}[rrr] \ar@{-}[ddd] & & & b_{122}  \ar@{-}[rrr] \ar@{-}[ddd] & & & b_{123} \ar@{-}[ddd] & & & & \\
 & &  & & & & & & b_{233}  \ar@{-}[lldd] & & \\
  & & & & & & & & & & \\
 b_{131}  \ar@{-}[rrr] & & & b_{132}  \ar@{-}[rrr] & & & b_{133} & & & & 
}
} \qquad .
\end{equation}
If we denote by $\{e_1,e_2,e_3\}$ the standard basis of $K^3$, then
the element of $\mathcal{C}_3(K)$ described by the cubical matrix 
$B = (b_{ijk})_{1 \leq i,j,k \leq 3}$ above 
is $$\sum_{1 \leq i, j, k \leq 3} b_{ijk}\, e_i \tns e_j \tns e_k.$$
Thus we may 
identify $\mC_3(K)$ with the space of $3\times 3\times 3$ matrices
with entries in $K$ or, simply, the space of {\it Rubik's cubes over $K$}.

As in \S \ref{sec:cubes}, a Rubik's cube can naturally be partitioned 
into three $3 \times 3$ matrices in three
distinct ways.  Namely, the cube $B=(b_{ijk})$ given by 
\eqref{eq:RCpicture} can be sliced into the three $3\times 3\times 3$
matrices $M_\ell$, $N_\ell$, and $P_\ell$, for $\ell=1,2,3$, as follows:
\begin{enumerate}
\item[1)] $M_1 = (b_{1jk})$ is the front face, $N_1 = (b_{2jk})$ is the middle slice,  and $P_1 = (b_{3jk})$ is the back~face;
\item[2)] $M_2 = (b_{i1k})$ is the top face, $N_2 = (b_{i2k})$ is the middle slice,  and $P_2 = (b_{i3k})$ is the bottom~face;
\item[3)] $M_3 = (b_{ij1})$ is the left face, $N_3 = (b_{ij2})$ is the middle slice,  and $P_3 = (b_{ij3})$ is the right~face.
\end{enumerate}
There is also a natural action of $\SL_3(K)^3$ on $\mathcal{C}_3(K)$,
analogous to the action of $\SL_2(K)^3$ on the space
$\mathcal{C}_2(K)$ of $2 \times 2 \times 2$ cubes.  Namely, for any
$i \in \{1,2,3\}$, the element $g = (g_{\ell m}) \in \SL_3(K)$ in the
$i$th factor acts on the cube $B$ by replacing $(M_i,N_i,P_i)$ by
$(g_{11} M_i + g_{12} N_i + g_{13} P_i, \,g_{21} M_i + g_{22} N_i +
g_{23} P_i, \,g_{31} M_i + g_{32} N_i + g_{33} P_i)$.  These three
$\SL_3(K)$-actions commute, giving a well-defined action of
$\SL_3(K)^3$ on $\mathcal{C}_3(K)$.  

Recall that a $2 \times 2 \times 2$ cube naturally gave rise to 
three binary quadratic forms, by slicing the cube and taking
determinants as in \eqref{bqfdet}.  
In the analogous manner, a Rubik's cube $B = (b_{ijk})$
naturally gives rise to three ternary cubic forms
\begin{equation}\label{tcfdet}
f_i(x,y,z) = \det(M_i x + N_i y + P_i z)
\end{equation}
for $1 \leq i \leq 3$.  The ternary cubic form $f_1$ is invariant
under the action of the subgroup $\id \times \SL_3(K)^2
\subset \SL_3(K)^3$ on $B\in \mC_3(K)$.  The remaining factor
of $\SL_3(K)$ acts in the standard way on the ternary cubic form
$f_1$, and it is well known that this action has two independent
polynomial invariants, which are traditionally called $S(f_1)$ and
$T(f_1)$ (see \S\ref{sec:ternarycubics} for more details on
ternary cubic forms).  These invariants have degrees $4$ and $6$,
respectively, in the coefficients of $f_1$.  Analogous to the
situation with $2\times 2\times 2\times 2$ hypercubes, one 
checks that $f_2$ and $f_3$ also have the same invariants as $f_1$,
and so we have produced well-defined
$\SL_3(K)^3$-invariants $S(B)$ and $T(B)$ for Rubik's cubes $B$,
having degrees $12$ and $18$, respectively.

The {\it discriminant} $\disc(f)$ of a ternary cubic form $f$ is defined by 
\begin{equation}
\disc(f) = \frac{1}{1728}(S(f)^3 - T(f)^2),
\end{equation}
and the discriminant of the ternary cubic form $f$ is nonzero
precisely when it cuts out a smooth curve in $\PP^2$; such a
ternary cubic form is called {\em nondegenerate}.  Since the $S$ and
$T$ invariants are common to all the $f_i$, their discriminants are
also all the same, and thus we may naturally define the {\it
  discriminant} of a Rubik's cube $B$ to be
\begin{equation}\disc(B) := \frac{1}{1728}(S(B)^3 - T(B)^2),\end{equation}
which is nonzero precisely when any of the ternary cubic forms $f_i$
associated to $B$ cut out a smooth curve in $\PP^2$.
We say then that the Rubik's cube $B$ is   {\it nondegenerate} if its discriminant is nonzero.

We give a conceptual explanation as to why $S(f_i)=S(f_j)$ and $T(f_i)=T(f_j)$ (and thus $\disc(f_i)=\disc(f_j)$) for all $i$ and $j$ in the next subsection.

\subsection{Genus one curves from Rubik's cubes} \label{sec:g1fromRC}

We now explain how a nondegenerate Rubik's cube $B$ naturally gives rise to
a number of genus one curves $C_i$ ($1\leq i\leq 3$) and $C_{ij}$
($1\leq i< j\leq 3$).  We also discuss how these genus one
curves are related to each other, and the resulting description of the
nondegenerate orbits of $\SL_3(K)^3$ on the space $K^3\otimes
K^3\otimes K^3$ of Rubik's cubes over $K$.

\subsubsection{Genus one curves in \texorpdfstring{$\PP^2$}{P2}}

Given a nondegenerate ternary cubic form $f$ over $K$, we may
attach to $f$ a genus one curve $C(f)$ over $K$, namely the
zero locus of the polynomial $f$ in $\PP^2$.  
It is known (see, e.g., \cite{ArtinRodriguezVillegasTate,ankim}) that the 
Jacobian of the curve $C(f)$ may be written as a Weierstrass
elliptic curve with coefficients expressed in terms of the invariants
$S(f)$ and $T(f)$, namely as
$$E(f): y^2 = x^3 - 27 S(f) x - 54 T(f).$$
We always take $E(f)$ as our model for the Jacobian of $C(f)$.

Now given a nondegenerate Rubik's cube $B\in \mC_3(K)$, we have seen that
we naturally obtain three ternary cubic forms $f_1$, $f_2$, $f_3$,
over $K$ from $B$.  Thus each Rubik's cube $B\in\mC_3(K)$ yields
three corresponding genus one curves $C_1$, $C_2$, $C_3$ over
$K$, where $C_i=C(f_i)$.

\subsubsection{Genus one curves in \texorpdfstring{$\PP^2\times\PP^2$}{P2P2}}

The genus one curves we have obtained from a nondegenerate Rubik's cube
$B\in\mC_3(K)$ may also be embedded naturally in $\PP^2 \times \PP^2$.
Let us first identify $\mC_3(K)$  with the space of trilinear forms
on $W_1 \times W_2 \times W_3$, where each $W_i$ ($i\in\{1,2,3\}$) is
a $3$-dimensional $K$-vector space.  (In this identification, when we
write $\mathcal{C}_3(K) = K^3 \tns K^3 \tns K^3$, then the $i$th factor of
$K^3$ is the $K$-vector space dual to $W_i$.)  Then for any $B
\in \mathcal{C}_3(K)$, viewed as a trilinear form, we consider the set
$$C_{12}(K) := \{(w_1,w_2) \in \PP(W_1) \times \PP(W_2) : B(w_1,w_2,
\cdot) \equiv 0 \} \subset \PP(W_1) \times \PP(W_2) \cong \PP^2 \times
\PP^2.$$ 
If $B$ is nondegenerate, then $C_{12}$ in fact yields the graph of an
isomorphism between $C_1$ and $C_2$.  Indeed, for any point
$w_1 \in C_1$, the form $B(w_1, \cdot, \cdot)$ is singular with a
one-dimensional kernel in $W_2$; this gives a point of $\PP(W_2)$ that
then must lie on $C_2$!

As this process is reversible, it follows that $C_{12}$ is isomorphic
to both $C_1$ and $C_2$ via projection, and hence all the curves
$C_i$ are isomorphic to each other: for $1\leq i< j \leq 3$, if we
define $C_{ij}\subset \PP^2\times \PP^2$ in the analogous manner, then we have
natural isomorphisms
 $$ C_i\cong C_{ij} \cong C_j.$$
It also follows (by the same argument as given at the end of \S\ref{p1xp1}) that all three ternary cubic forms $f_i$
have the same values for the invariants $S$ and $T$, as was claimed at the end of \S\ref{sec:RCslicing}.  

\subsubsection{The fundamental triangle of isomorphisms}

From a nondegenerate Rubik's cube $B$, we have thus constructed three genus
one curves $C_{12}$, $C_{23}$, and $C_{13}$, which are all isomorphic.
In fact, we may construct explicit and natural isomorphisms between
them, e.g.,
$$\tau_{12}^{23}: C_{12} \to C_2 \to C_{23},$$
given by projection and un-projection.  More
explicitly, given a point $(w_1, w_2) \in C_{12}$, the bilinear form
$B(\cdot,w_2,\cdot)$ is singular and has rank $2$, so there exists a
unique $w_3 \in \PP(W_3)$ such that $B(\cdot, w_2, w_3) \equiv 0$.
Then $(w_2, w_3)$ is a point on $C_{23}$, giving the map
$\tau_{12}^{23}$.  Clearly, all such maps are invertible, e.g.,
$\tau_{23}^{12} = (\tau_{12}^{23})^{-1}$.

We thus obtain a triangle of maps
\begin{equation} \label{eq:triangleintro}
\raisebox{2\baselineskip}{
\xymatrix@C=10pt@R=30pt{
& C_{12} \ar@{<->}[rd] \ar@{<->}[ld] & \\
C_{23} \ar@{<->}[rr] && C_{13}
}} \qquad .
\end{equation}

However, this triangle does not commute!  Composing the three maps of
this triangle in a clockwise direction, starting from say $C=C_{12}$, yields an automorphism of $C$ given 
by translation by a point $P$ on the Jacobian $E=\Jac(C)$ of $C$.  
Composing the three maps in the counterclockwise
direction then yields the automorphism of $C$ given by translation by $P'=-P$.  As
before, these points $P$, $P'$ are in fact in the period-index subgroup 
$\Jac_C^3(K)$.  Indeed, if $D_1$ and $D_2$ are the degree~$3$ divisors on $C$ corresponding
to the embeddings into $\PP(W_1)$ and $\PP(W_2)$, respectively, then we find that the difference $D_2 - D_1$
corresponds to the point $P$ on the Jacobian of $C$.

In summary, from a nondegenerate Rubik's cube $B \in
\mathcal{C}_3(K)$, we obtain an elliptic curve
$$E(B): y^2 = x^3 - 27 S(B)x - 54 T(B),$$
with
\begin{equation*}
S(B) = S(f_i) \qquad \textrm{and} \qquad T(B) = T(f_i)
\end{equation*}
for $1 \leq i \leq 3$, where $f_1, f_2$, and $f_3$ are the ternary
cubic forms naturally arising from $B$.  Furthermore, the elliptic
curve $E := E(B)$ is canonically the Jacobian of each of the genus one
curves $C_i$ ($1 \leq i \leq 3$) and $C_{ij}$ ($1 \leq i \leq j \leq
3$) arising from $B$.  Finally, there is a natural triangle
\eqref{eq:triangleintro} of isomorphisms among the $C_{ij}$, which
does not commute.
We thus obtain, in addition to a degree 3 divisor $D_1$ on the curve $C_{12}$, a pair of points
$P$, $P'$ in the period-index subgroup $\Jac^3_{C_{12}}(K)$ for the curve $C_{12}$ that sum to zero.

\subsection{Orbit classification of Rubik's cubes}\label{sec:ocrc}

We will show in \S \ref{sec:333} that the data of a genus one
curve $C=C_{12}$, the equivalence class of a degree $3$ rational divisor $D=D_1$ on $C$, and a
pair $P,P'$ of nonzero points summing to zero in the 
period-index subgroup $\Jac^3_{C}(K)$ of the Jacobian of $C$ is in
fact sufficient to recover the orbit of a Rubik's cube $B$.  We have:

\begin{thm} \label{thm:RCparam1} For any field $K$ with
  $\mathrm{char}(K) \nmid 6$, there is a canonical bijection between
  nondegenerate $\GL_3(K)^3$-orbits on the space $K^3 \tns K^3
  \tns K^3$ of Rubik's cubes over $K$ and isomorphism classes of
  triples $(C,L,(P,P'))$, where $C$ is a smooth genus one curve
  over~$K$, $L$ is a degree $3$ line bundle on $C$, and $P$ and
  $P'$ are nonzero $K$-points that sum to zero in the degree $3$ period-index subgroup
  $\Jac^3_C(K)$ of the group of $K$-points of the Jacobian of $C$.
\end{thm}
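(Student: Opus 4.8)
The plan is to prove the bijection by exhibiting a forward map (Rubik's cube $\to$ geometric data) together with a reverse map, and checking that they are mutually inverse; the only genuinely arithmetic point, which dictates why the points live in $\Jac^3_C(K)$ and not in all of $\Jac(C)(K)$, is a descent/rationality question governed by the period-index theory of Appendix~\ref{appendix:torsors}. The forward map has already been built in \S\ref{sec:g1fromRC}: a nondegenerate cube $B$ produces the smooth genus one curve $C=C_{12}$, the degree $3$ line bundle $L=\OO_C(D_1)$ pulled back from the projection $C\hookrightarrow\PP(W_1)$, and the two nonzero classes $P=[D_2-D_1]$ and $P'=-P$ obtained by going around the noncommuting triangle \eqref{eq:triangleintro}. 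First I would verify that this descends to $\GL_3(K)^3$-orbits. The $\SL_3(K)^3$-action realizes the explicit isomorphisms $C_i\cong C_j$ and permutes the three slicings, so it fixes the isomorphism class of $(C,L,(P,P'))$; the remaining torus in $\GL_3(K)^3$ only rescales each ternary cubic $f_i$ by a nonzero scalar, which changes neither the projective curve it cuts out nor $\OO(1)|_C$ nor the classes $P,P'$. Hence $(C,L,(P,P'))$ depends only on the orbit of $B$.

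For the reverse map I would use the classical theory of linear determinantal representations of plane cubics. Given $(C,L,(P,P'))$, embed $\iota\colon C\hookrightarrow\PP(W_1)\cong\PP^2$ by the degree $3$ bundle $L$, with $W_1=H^0(C,L)^*$; the image is the zero locus of a ternary cubic $f_1$ with $\OO(1)|_C\cong L$. Let $\mathcal{N}$ be the degree $0$ line bundle of class $P$ and put $\mathcal{G}=L\tns\mathcal{N}$, a degree $3$ line bundle on $C$. Since $P\neq 0$, the bundle $\mathcal{N}$ is nontrivial, so $H^0(C,\mathcal{N})=H^1(C,\mathcal{N})=0$; these vanishings (together with $H^1(C,\mathcal{G})=0$) pin down the shape of the minimal free resolution of the pushforward, giving $0\to\OO(-1)^{3}\xrightarrow{\,M\,}\OO^{3}\to\iota_*\mathcal{G}\to 0$ on $\PP(W_1)$ with $\det M=f_1$ up to scalar. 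The $3\times 3$ matrix of linear forms $M(x)=\sum_i x_i A_i$ is exactly a Rubik's cube $B=(A_i)$ in $\mC_3(K)$, and it is nondegenerate because $\det M=f_1$ is smooth. Writing instead the resolution of the transpose gives the companion bundle $\mathcal{G}'=L\tns\mathcal{N}^{-1}$, so the cokernel sheaves of $M$ and $M^{\mathrm t}$ recover $P=[\mathcal{N}]$ and $P'=[\mathcal{N}^{-1}]=-P$. The resolution is canonical up to $M\mapsto U\,M\,V$ with $U,V\in\GL_3(K)$, which together with a change of basis of $W_1$ is precisely the $\GL_3(K)^3$-action on cubes; thus the reverse map is well defined on isomorphism classes.

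That the two maps are mutually inverse is then largely formal. Applying the forward construction to the cube $B$ built above recovers $(C,L)$ from the cubic $\det M=f_1$ and recovers $P$ as the class of $\mathrm{coker}(M)\tns L^{-1}$; the single compatibility that must be checked is that this agrees with the description $P=[D_2-D_1]$ coming from the triangle \eqref{eq:triangleintro}, which follows by identifying the kernel and cokernel loci of $M(x)$ with the projections cutting out $C_{12}$, $C_{23}$, and $C_{13}$ and tracing the maps $\tau$. The reverse composition is the uniqueness of the minimal resolution up to the $\GL_3\times\GL_3$ equivalence above.

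The hard part will be the descent argument, i.e.\ showing that the reverse construction takes place over $K$ rather than only over $\Kbar$. Over $\Kbar$ the correspondence between nonzero points of $\Jac(C)$ and equivalence classes of determinantal representations is classical, but a $K$-rational class $P\in\Jac(C)(K)$ is only a Galois-invariant point of $\Pic^0(C_{\Kbar})$, and the line bundle $\mathcal{N}$ (equivalently $\mathcal{G}$) together with its resolution need not descend to $K$. The obstruction lies in $\Br(K)$ and is exactly the period-index obstruction of Appendix~\ref{appendix:torsors}; it vanishes precisely when $P\in\Jac^3_C(K)$, in which case $\mathcal{G}$ is a genuine $K$-line bundle, its $K$-rational sections give the embedding, and the resolution---hence the cube $B$---is defined over $K$. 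Conversely, a cube produces $P$ as the difference $[D_2-D_1]$ of honest $K$-rational divisors, so its class automatically lies in the period-index subgroup; this is the structural reason the target of the bijection is $\Jac^3_C(K)$ and not the full group $\Jac(C)(K)$. Combining well-definedness, the determinantal construction, the compatibility lemma, and this period-index descent completes the proof.
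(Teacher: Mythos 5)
Your proposal is correct, and its forward map is identical to the paper's; the difference lies in how you package the reverse map and where you place the period--index argument. The paper (Theorem \ref{thm:333bij}) constructs the cube from the triple of line bundles $(L_1,L_2,L_3)$ as the inclusion $\ker(\mu_{12})\hookrightarrow H^0(C,L_1)\tns H^0(C,L_2)$, whereas you construct it as the matrix $M$ in the minimal free resolution $0\to\OO(-1)^3\to\OO^3\to\iota_*\mathcal{G}\to 0$ of the pushforward of $\mathcal{G}=L\tns\mathcal{N}$. These are the same object in different clothes: taking global sections of the twist of your resolution by $\OO(1)$ identifies the source $K^3$ with $\ker(\mu_{12})$ and exhibits $M$ as exactly the paper's tensor, so your appeal to determinantal-representation theory and the paper's appeal to surjectivity of $\mu_{12}$ (via Mumford) are interchangeable key lemmas, and your uniqueness-of-resolution argument replaces the paper's direct verification that the two functors are inverse. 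The more substantive divergence is the descent step you correctly flag as ``the hard part.'' Because you take the point $P$ as input, you must show that $\mathcal{G}=L\tns P$ is represented by an honest $K$-line bundle, which is precisely the vanishing of $\langle P,C\rangle=B_{\Ob}(a_3(P),[(C,L)])$, i.e.\ $P\in\Jac^3_C(K)$. The paper sidesteps this entirely by proving the bijection first with the data $(C,L_1,L_2,L_3)$ of actual $K$-line bundles (Theorem \ref{thm:333bij} and Corollary \ref{cor:333CLP}), so that no descent from $\Kbar$ is ever needed, and only at the very end invokes Proposition \ref{prop:periodindexsubgp} to translate ``difference of two elements of $\Pic^3(C)(K)$'' into the period--index subgroup. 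Your route makes the arithmetic content more visible but requires you to actually carry out the Brauer-obstruction computation that the paper quarantines in the appendix; both are valid, and your identification of the obstruction class is the correct one (note that descent of $\mathcal{N}$ and of $\mathcal{G}$ are equivalent only because $L$ is already defined over $K$).
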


It is known (see, e.g., \cite{vinberg}) that the ring of polynomial invariants for the action of $\SL_3(K)^3$ on the space $K^3 \tns K^3 \tns K^3$ is freely generated by three polynomials 
$d_6$, $d_9$, and $d_{12}=-27S$, having degrees $6$, $9$, and $12$, respectively, in the entries of the Rubik's cube.  
In terms of the geometric data in Theorem \ref{thm:RCparam1}, these three invariants have geometric meaning.  We may write the Jacobian of the curve $C$ in Weierstrass form as
$$E: y^2 = x^3 + d_{12} x + d_{18},$$ 
where $d_{18}=-54T$ is a degree $18$ polynomial in the entries of the Rubik's cube, and where the points $P$ and $P'$ are given by 
$(x,y) = (d_6, \pm d_9)$ on the model $E$. 
It is clear then that $d_{18}$ may be expressed in terms of the generators $d_6$, $d_9$, and $d_{12}$.  
In conclusion, $d_6$, $d_9$, $d_{12}$ and $d_{18}$ are all fundamental and important 
polynomial invariants for the action of $\SL_3(K)^3$ on $K^3\otimes K^3 \otimes K^3$; they all have key geometric interpretations and can be expressed as simple polynomials in the three basic invariants $d_6$, $d_9$, and $d_{12}$. 

\subsection{Triple symmetrization}\label{sec:symRCpreview}

Analogous to the case of symmetric hypercubes discussed in \S \ref{sec:symHCpreview},
we may consider symmetric Rubik's cubes.  If
we ask for complete symmetry under the action of $S_3$, the Rubik's
cubes in question will be of the form
\begin{equation}\label{symrubik}
\begin{pmatrix}
a & b & c \\
b & d & e \\
c & e & f
\end{pmatrix}
\qquad
\begin{pmatrix}
b & d & e \\
d & g & h \\
e & h & i
\end{pmatrix}
\qquad
\begin{pmatrix}
c & e & f \\
e & h & i \\
f & i & j \\
\end{pmatrix}
\end{equation}
where each $3 \times 3$ matrix represents one ``slice.''  
Just as quadruply-symmetric hypercubes~\eqref{sym4hyper} could be identified with binary
quartic forms \eqref{symquartic}, in the same way the triply-symmetric Rubik's cube
(\ref{symrubik}) may be identified with the ternary cubic form
\begin{equation}ax^3+3bx^2y+3cx^2z+3dxy^2+6exyz+3fxz^2+gy^3+3hy^2z+3iyz^2+jz^3.\end{equation}
This identification corresponds to the natural inclusion
$$\Sym_3 (K^3) \hookrightarrow K^3 \tns K^3 \tns K^3$$
of the space of triply-symmetric Rubik's cubes into the space of
all Rubik's cubes.

Such Rubik's cubes lead to geometric data $(C,L,(P,P'))$ as in
Theorem~\ref{thm:RCparam1},
but due to the symmetry we also have $P=P'$. Since $P+P'=0$, we see that $P$ is a 2-torsion point on the Jacobian of $C$. Conversely, we will show in \S \ref{sec:3symRC} that this is the only constraint on $P$; thus we obtain the following theorem classifying the orbits of
$\GL_3(K)$ on $\Sym_3 K^3$:

\begin{thm} \label{thm:3symRCpreview}
  For any field $K$ with $\mathrm{char}(K) \nmid 6$, there is a
  natural bijection between nondegenerate $\GL_3(K)$-orbits on the
  space $\Sym_3 K^3$ of ternary cubic forms over $K$ and isomorphism
  classes of triples $(C,L,P)$, where $C$ is a smooth genus one curve
  over $K$, $L$ is a degree $3$ line bundle on $C$, and $P$ is a
  nonzero $2$-torsion point on the Jacobian of $C$ defined over $K$.
\end{thm}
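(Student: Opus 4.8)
The plan is to obtain Theorem~\ref{thm:3symRCpreview} by specializing the orbit classification for arbitrary Rubik's cubes (Theorem~\ref{thm:RCparam1}) along the inclusion $\Sym_3 K^3 \hookrightarrow K^3 \tns K^3 \tns K^3$, and checking that the symmetry condition translates \emph{exactly} into the condition $P = P' = $ a $2$-torsion point. First I would invoke Theorem~\ref{thm:RCparam1} to attach to a nondegenerate ternary cubic $f \in \Sym_3 K^3$ the data $(C,L,(P,P'))$ with $P + P' = 0$ in $\Jac^3_C(K)$. The essential observation is that a triply symmetric Rubik's cube has $M_i = N_i = P_i$ identified across all three slicing directions (the three slicings coincide after transposition), so the three curves $C_1, C_2, C_3$ are literally the same curve $C(f) \subset \PP^2$, and the fundamental triangle of isomorphisms \eqref{eq:triangleintro} collapses: the clockwise and counterclockwise traversals agree. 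Concretely, the $S_3$-symmetry forces the maps $\tau_{12}^{23}$, $\tau_{23}^{13}$, $\tau_{13}^{12}$ to be interchanged by the symmetric group action on the three tensor factors, so the clockwise composition equals its own inverse. Since the clockwise composition is translation by $P$ and the counterclockwise is translation by $P' = -P$, the symmetry gives $P = P'$, hence $2P = 0$.

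Next I would establish that $P$ is genuinely $2$-torsion rather than zero, which is exactly the nonzero hypothesis already carried over from Theorem~\ref{thm:RCparam1}: for a nondegenerate orbit the point $P$ is nonzero, so $P$ is a nonzero $2$-torsion point. I would also record that because $P \in \Jac^3_C(K)$ is $2$-torsion and $3$-torsion constraints from the period-index subgroup interact, in fact here the point lies honestly on the Jacobian as a $K$-rational $2$-torsion point; the geometric interpretation via the degree-$3$ divisor class $D_1 = D_2$ confirms $P = D_2 - D_1 = 0$ would be the degenerate case, so nondegeneracy is what guarantees $P \neq 0$. The line bundle $L$ is just the degree-$3$ line bundle $\OO_C(1)$ from the planar embedding $C(f) \subset \PP^2$, carried over unchanged.

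The substantive direction is the converse: given $(C,L,P)$ with $P$ a nonzero $2$-torsion point, I must show the corresponding Rubik's cube can be chosen triply symmetric, and that the resulting $\GL_3(K)$-orbit on $\Sym_3 K^3$ is well-defined and unique. The strategy is to start from the $\GL_3(K)^3$-orbit produced by Theorem~\ref{thm:RCparam1} for the data $(C,L,(P,-P))$ and to show that the condition $P = P'$ (equivalently $2P = 0$) is precisely the obstruction to the orbit containing a symmetric representative. The key step is an \emph{averaging or descent} argument: the $S_3$-action permuting the three tensor factors descends to an action on the orbit space, and the orbits fixed by this $S_3$-action are exactly those whose geometric data is $S_3$-invariant, i.e.\ satisfy $P = P'$; one then shows that an $S_3$-fixed $\GL_3(K)^3$-orbit always meets $\Sym_3 K^3$ in a single $\GL_3(K)$-orbit. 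This reduction of a $\GL_3^3$-orbit to a symmetric $\GL_3$-orbit, and the verification that no further invariant distinguishes non-isomorphic symmetric cubes beyond $(C,L,P)$, is where I expect the real work to lie.

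The main obstacle, then, is the \textbf{surjectivity and injectivity of the symmetric specialization}: showing that every $S_3$-fixed orbit contains a symmetric cube (existence of a symmetric representative) and that two symmetric cubes in the same $\GL_3(K)^3$-orbit already lie in the same $\GL_3(K)$-orbit (rigidity of the diagonal). The first requires a Galois-cohomological or explicit invariant-theoretic argument that the fixed-point set of the finite group $S_3$ acting on the orbit space is covered by the symmetric subspace; since $\mathrm{char}(K) \nmid 6$ we have $|S_3| = 6$ invertible, so averaging-type arguments and the vanishing of the relevant $H^1$ are available and should make this tractable. The parallel already carried out for hypercubes in \S\ref{sec:4symHC} (Theorem~\ref{sympar}) provides the template, so I would follow that argument mutatis mutandis, replacing binary quartics by ternary cubics and the $3$-torsion conclusion by the $2$-torsion conclusion appropriate to the degree-$3$ setting.
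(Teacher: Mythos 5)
Your forward direction is essentially the paper's: the symmetry of the cube forces $L_2 \cong L_3$, and combined with the relation $L_1^{\otimes 2} \cong L_2 \otimes L_3$ of Lemma \ref{lem:RCrelation} this gives $2P = 0$, with $P \neq 0$ supplied by Lemma \ref{lem:L1notL2orL3}. The gap is in the converse. You propose to produce a symmetric representative inside an $S_3$-fixed $\GL_3(K)^3$-orbit by an averaging/descent argument, asserting that ``the vanishing of the relevant $H^1$'' is available because $|S_3| = 6$ is invertible in $K$. That assertion is unjustified and false as a general principle: the obstruction to descending an $S_3$-fixed orbit to an actual symmetric tensor is a class in a nonabelian $H^1(S_3, \mathrm{Stab}(\AA))$, where the stabilizer is an extension of $\Jac(C)[3]$ by $\Gm^2$ (see the discussion following Theorem \ref{thm:333bij}), and such cohomology sets do not vanish merely because the order of the acting group is invertible in $K$ --- already $H^1(\ZZ/2\ZZ,\mu_2)$ is nontrivial. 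Invertibility of $6$ lets you average in the ambient vector space, but the average of $\AA$ over its $S_3$-translates does not lie in the same $\GL_3(K)^3$-orbit, so it says nothing about that orbit meeting $\Sym_3 K^3$. As written, the central step of the converse is therefore asserted rather than proved.

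The paper avoids this descent problem entirely by a direct construction. Given $(C,L,P)$ with $2P=0$, set $L' = L \otimes P$ and build the cube as the kernel of $\mu_{12}: H^0(C,L)\otimes H^0(C,L') \to H^0(C,L\otimes L')$ as in Theorem \ref{thm:333bij}. Because $P$ is $2$-torsion, translation by $P$ satisfies $t_P^*L \cong L \otimes 3P = L \otimes P = L'$, so it induces a canonical identification $\psi: H^0(C,L) \to H^0(C,L')$ under which the two planar embeddings of $C$ coincide; the map $\mu_{12}\circ(1\otimes\psi)$ then factors through $\Sym_2 H^0(C,L)$ because multiplication of sections is commutative, and the same identification applies to the third factor. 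In the identified bases the tensor is literally triply symmetric, and the two constructions are mutually inverse by the argument of Theorem \ref{thm:333bij}. This is the content of \S\ref{sec:2symRC}--\S\ref{sec:3symRC}; note that the ``template'' you cite from \S\ref{sec:4symHC} is exactly this explicit identification argument, not a cohomological one, so following it faithfully would already close the gap.
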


We have already noted in (see \S\ref{sec:ternarycubics} for further details) that certain $\GL_3(K)$-orbits on $\Sym^3K^3$ correspond to pairs $(X,L)$, where $X$ is a genus one curve and $L$ is a degree 3 line bundle on $X$.   When char$(K)\nmid6$, these two spaces $\Sym_3K^3$ and $\Sym^3K^3$ are naturally identified, so we obtain two ``dual'' moduli interpretations of the space of ternary cubics in terms of genus one curves.
However, as in the case of symmetric hypercubes viewed as binary quartics in \S\ref{sec:symHCpreview}, these two genus one curves arising from a ternary cubic are {\it not} the same; they are related by the classical Hessian construction (see \S \ref{sec:3symRC}).

\subsection{Double symmetrization}

The orbit description for ternary cubic forms in \S\ref{sec:symRCpreview} was obtained by imposing a symmetry condition on the orbit description for Rubik's cubes.
Rather than imposing a threefold symmetry, we can impose only a double symmetry to obtain Rubik's cubes of the form
\begin{equation}\label{dsrc}
\begin{pmatrix}
a_1 & b_1 & c_1 \\
b_1 & d_1 & e_1\\
c_1 & e_1 & f_1
\end{pmatrix}
\qquad
\begin{pmatrix}
a_2 & b_2 & c_2 \\
b_2 & d_2 & e_2 \\
c_2 & e_2 & f_2
\end{pmatrix}
\qquad
\begin{pmatrix}
a_3 & b_3 & c_3 \\
b_3 & d_3 & e_3 \\
c_3 & e_3 & f_3
\end{pmatrix},
\end{equation}
where again, each $3 \times 3$ matrix represents a slice of the
Rubik's cube.  Since a symmetric $3 \times 3$ matrix represents a
ternary quadratic form, a doubly symmetric Rubik's cube may be viewed as a triple of ternary quadratic forms
\begin{align}\nonumber
\!\!(a_1 x^2 + 2 b_1 x y + 2 c_1 x z + d_1 y^2 + e_1 y z+ f_1 z^2, a_2 x^2 + 2 b_2 x y + 2 c_2 x z + d_2 y^2 + e_2 y z+ f_2 z^2, \\ \label{ttqfs}
a_3 x^2 + 2 b_3 x y + 2 c_3 x z + d_3 y^2 + e_3 y z+ f_3 z^2). \!\!
\end{align}
The above association of the triple (\ref{ttqfs}) of ternary quadratic forms with the doubly
symmetric Rubik's cube (\ref{dsrc}) corresponds to the natural inclusion
$$K^3 \tns \Sym_2 K^3 \hookrightarrow K^3 \tns K^3 \tns K^3.$$

To such nondegenerate doubly symmetric Rubik's cubes, 
we can associate the usual geometric data $(C,L,(P,P'))$ as in Theorem~\ref{thm:RCparam1}, and as in the fully symmetric case,
the symmetry implies that $P$ is a 2-torsion point.
We will show in \S \ref{sec:3symRC} that this is again the only constraint on $P$, and so we obtain the
following theorem classifying the orbits of $\GL_3(K)^2$ on $K^3 \otimes \Sym_2K^3$:

\begin{thm} \label{thm:2symRCpreview}
For any field $K$ with $\mathrm{char}(K) \nmid 6$, there is a natural bijection between  nondegenerate $\GL_3(K)^2$-orbits on the space $K^3 \tns \Sym_2 K^3$ of triples of ternary quadratic forms over $K$ and isomorphism classes of triples $(C,L,P)$, where $C$ is a smooth genus one curved over $K$, $L$ is a degree $3$ line bundle on $C$, and $P$ is a nonzero $2$-torsion point on the Jacobian of $C$ defined over~$K$.
\end{thm}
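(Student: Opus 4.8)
The plan is to deduce Theorem~\ref{thm:2symRCpreview} from the classification of all Rubik's cubes in Theorem~\ref{thm:RCparam1} by restricting the latter to the locus of doubly symmetric cubes, exactly parallel to how the fully symmetric case (Theorem~\ref{thm:3symRCpreview}) is handled. Let $\sigma$ denote the involution of $K^3\tns K^3\tns K^3$ that interchanges the second and third tensor factors. Its fixed subspace is precisely $K^3\tns\Sym_2 K^3$, and its centralizer in $\GL_3(K)^3$ is the subgroup $\{(g_1,g_2,g_3):g_2=g_3\}\cong\GL_3(K)^2$ appearing in the statement. Thus $\GL_3(K)^2$-orbits on $K^3\tns\Sym_2K^3$ map to $\sigma$-stable $\GL_3(K)^3$-orbits on the full space, and the first task is to identify, under the bijection of Theorem~\ref{thm:RCparam1}, which geometric data $(C,L,(P,P'))$ correspond to $\sigma$-stable orbits.

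First I would compute the action of $\sigma$ on the geometric side. Since $\sigma$ fixes the first tensor factor, it preserves the curve $C=C_{12}$ together with the degree $3$ divisor class $D_1$, hence the line bundle $L$; on the other hand, interchanging the second and third factors reverses the orientation of the fundamental triangle of isomorphisms of \S\ref{sec:g1fromRC} and therefore swaps the ordered pair $(P,P')$. Because a doubly symmetric cube is by definition $\sigma$-fixed, the associated data must satisfy $P=P'$; combined with the standing relation $P+P'=0$ from Theorem~\ref{thm:RCparam1}, this forces $2P=0$. As the nondegeneracy hypothesis guarantees $P\neq 0$, we conclude that $P$ is a nonzero $2$-torsion point on $\Jac(C)$. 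This establishes the forward direction and shows that the stated constraint on $P$ is necessary.

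For the converse I would construct a symmetric cube directly from $(C,L,P)$ using the classical correspondence between doubly symmetric Rubik's cubes and nets of conics: an element of $K^3\tns\Sym_2K^3$ is a net of conics in $\PP(W_2)\cong\PP^2$ parametrized by $\PP(W_1)$, whose discriminant locus is the plane cubic $C_1=\{\det(M_1x+N_1y+P_1z)=0\}$ from \S\ref{sec:RCslicing}, with $L\cong\OO_{C_1}(1)$ for this embedding. The net also determines, along $C_1$, the one-dimensional kernels of its degenerate conics, and the resulting data is exactly that of a nonzero $2$-torsion class $P$ on $\Jac(C_1)$. Carrying out this construction for an arbitrary $(C,L,P)$ with $P$ a nonzero $2$-torsion point produces the required symmetric cube, which establishes that $2$-torsion is the only constraint, as promised in \S\ref{sec:3symRC}.

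The hard part will be to make this work over a general field $K$ rather than merely over $\Kbar$, and to obtain a genuine bijection rather than a surjection. I would formulate the passage from the geometric data to a $K$-rational symmetric orbit as a descent problem whose obstruction lies in $H^1(K,\cdot)$ of the stabilizer group scheme of the $\GL_3^3$-action, namely the theta (Heisenberg) group attached to $(C,L)$ identified in \S\ref{sec:classical}. The involution $\sigma$ acts on this stabilizer as inversion, so its fixed points are exactly the $2$-torsion, and the crux is that the relevant obstruction class vanishes precisely in this case; this is the main obstacle. Finally, matching the stabilizer of a doubly symmetric cube under $\GL_3(K)^2$ with the automorphisms of the triple $(C,L,P)$ upgrades the correspondence to the canonical bijection asserted in the theorem.
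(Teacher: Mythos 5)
Your forward direction is essentially the paper's: the symmetry forces $L_2\cong L_3$, so $P=P'$, and together with $P+P'=0$ and $P\neq 0$ this gives a nonzero $2$-torsion point. The genuine gap is in the converse, which you yourself label ``the main obstacle'' and never close. You are right to worry that a $\sigma$-stable $\GL_3(K)^3$-orbit need not contain a $\sigma$-fixed representative, and that even when it does the $\GL_3(K)^2$-orbits inside the fixed locus could a priori split further; but neither of your two proposed remedies settles this. The net-of-conics paragraph only describes how to read $(C,L,P)$ off from a symmetric cube that you already have (discriminant cubic plus kernels of the degenerate conics); it does not produce a net from an abstract triple $(C,L,P)$, which is exactly what the converse requires. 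The descent formulation leaves the vanishing of the obstruction unproved, and the supporting claim that $\sigma$ acts on the stabilizer as inversion with fixed points ``exactly the $2$-torsion'' cannot be right as stated: the generic stabilizer here is an extension of $\Jac(C)[3]$ by $\Gm^2$, and inversion on $\Jac(C)[3]$ fixes only the identity.

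The paper avoids the descent problem entirely by making the converse an explicit $K$-rational construction (Theorem \ref{thm:2symRC}). Given $(C,L,P)$ with $2P=0$, set $L'=L\tns P$ and run the proof of Theorem \ref{thm:333bij} on the quadruple $(C,L,L',L')$: the cube is the kernel of the multiplication map $\mu_{12}\colon H^0(C,L)\tns H^0(C,L')\to H^0(C,L\tns L')$, viewed inside $U_1\tns U_2\tns U_3$ with $U_3=(\ker\mu_{12})^\vee$. Because the two degree-$3$ line bundles being tensored are isomorphic, the corresponding spaces of sections admit a canonical identification $\psi$ (the $2$-torsion translation induces a linear automorphism of $\PP(H^0(C,L)^\vee)$ preserving the image of $C$), and under $\psi$ the multiplication of sections factors through $\Sym_2$, since $s_1\tns s_2$ and $s_2\tns s_1$ have the same evaluation $s_1(x)s_2(x)$ at every point of $C$. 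This exhibits a symmetric representative of the orbit directly over $K$, with no cohomological obstruction ever entering, and simultaneously shows the two constructions are mutually inverse. To complete your argument you must supply exactly this identification-and-factorization step (or an equivalent explicit descent datum); as written, the converse is not proved.
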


Note that the data parametrized by triply symmetric and doubly symmetric Rubik's cubes is the same!  The two orbit parametrizations in fact allow us to construct an explicit linear transformation taking any given nondegenerate element of the space $K^3 \tns \Sym_2 K^3$ to an element of $\Sym_3 K^3$.

\subsection{Double skew-symmetrization}

Instead of imposing conditions of symmetry, one may impose conditions of skew-symmetry on Rubik's cubes.
Let us view again our Rubik's cube space $K^3\otimes K^3\otimes K^3$ as the space of $K$-trilinear map $W_1 \times W_2 \times W_3 \to K$, where $W_1$, $W_2$, and $W_3$ are $3$-dimensional $K$-vector spaces.  Then given such a $K$-trilinear map $\phi$, one may construct another $K$-trilinear map
$$\bar{\phi} : W_1 \times (W_2 \oplus W_3) \times (W_2 \oplus W_3)$$
that is skew-symmetric in the last two variables and is given by
$$\bar{\phi}(r,(s,t),(u,v)) = \phi(r,s,v)-\phi(r,u,t).$$
This gives a natural $K$-linear injection
$$\id \tns \wedge_{3,3} : K^3 \tns K^3 \tns K^3 \to K^3 \tns \wedge^2(K^6)$$
taking Rubik's cubes to triples of alternating $2$-forms in six variables.

In analyzing the $\GL_3(K) \times \GL_6(K)$-orbits of this larger space, one finds that the degree~3 line bundles $L_2$ and $L_3$ coming from $C_2$ and  $C_3$ are now replaced by a single rank 2 vector bundle (which splits into the direct sum of these line bundles for elements in the image of $\id\otimes\wedge_{3,3}$). We thus obtain the following (see \S \ref{sec:deg3special} for details):

\begin{thm} \label{thm:2skewRCpreview}
For any field $K$ with $\mathrm{char}(K) \nmid 6$, there is a natural bijection between  nondegenerate $\GL_3(K) \times \GL_6(K)$-orbits on the space $K^3 \tns \wedge^2 K^6$ of triples of alternating senary $2$-forms over $K$ and isomorphism classes of nondegenerate triples $(C,L,M)$, where $C$ is a smooth genus one curved over $K$, $L$ is a degree $3$ line bundle on $C$, and $M$ is a rank $2$ degree $6$ vector bundle on $C$ with $L^{\tns 2} \cong \det M$.
\end{thm}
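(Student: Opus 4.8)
The plan is to produce the bijection via the Pfaffian of the net of alternating forms, in exact parallel with the way the Rubik's cube yields a ternary cubic through slice-determinants in Theorem~\ref{thm:RCparam1}. Write $V=K^3$ and $U=K^6$, and view a nondegenerate $A\in V\tns\wedge^2U$ as a $6\times 6$ alternating matrix of linear forms on $\PP(V^*)\cong\PP^2$, i.e.\ as an alternating map of sheaves $A\colon U^*\tns\OO_{\PP^2}(-1)\ra U\tns\OO_{\PP^2}$. Its Pfaffian $\mathrm{Pf}(A)$ is a ternary cubic form on $V^*$; I take $C$ to be its zero locus and $L:=\OO_C(1)$ the degree~$3$ hyperplane bundle, so that the invariants $S,T$ of $\mathrm{Pf}(A)$ recover the Jacobian exactly as in \S\ref{sec:g1fromRC}. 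Since $\det A=\mathrm{Pf}(A)^2$, the cokernel $M:=\mathrm{coker}(A)$ is supported on $C$, and the first point to check is that nondegeneracy---smoothness of $C$ together with the corank of $A$ being exactly $2$ at every point of $C$---forces $M$ to be locally free of rank~$2$ on $C$. A Riemann--Roch computation from $0\to U^*\tns\OO(-1)\xrightarrow{A}U\tns\OO\to M\to 0$ gives $\chi(M)=6$, hence $\deg M=6$ on the genus one curve $C$.

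Next I would establish the self-duality relation $\det M\cong L^{\tns 2}$. The alternating structure means $A^\vee$ agrees with $A$ up to the twist by $\OO(-1)$, which identifies $\mathrm{coker}(A)$ with a twist of the dual of $\ker(A)$; feeding this through the Buchsbaum--Eisenbud description of the Pfaffian adjoint of $A$ (the matrix of sub-Pfaffians, satisfying $A\cdot\mathrm{adj}(A)=\mathrm{Pf}(A)\cdot\id$) pins down a canonical isomorphism $\det M\cong\OO_C(2)=L^{\tns 2}$. I expect this to be the most delicate step, since it is exactly the relation that separates the present self-dual (alternating) situation from a general linear matrix factorization, and it is what restricts the rank~$2$, degree~$6$ bundles that occur to those with $\det M\cong L^{\tns 2}$.

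For the converse I would begin with $(C,L,M)$, set $V:=H^0(C,L)$ (three-dimensional, giving the plane-cubic embedding $C\hookrightarrow\PP(V^*)$ with $L=\OO_C(1)$) and $U:=H^0(C,M)$. Because $M\tns L^{-1}$ has degree $0$, the nondegeneracy hypothesis gives $h^0(M\tns L^{-1})=h^1(M\tns L^{-1})=0$, so $M$ is an \emph{Ulrich} bundle for the cubic and $h^0(M)=6$ by Riemann--Roch, whence $\dim U=6$. An Ulrich bundle of rank~$2$ on a plane cubic admits a linear resolution $0\to U^*\tns\OO_{\PP^2}(-1)\xrightarrow{\Phi}U\tns\OO_{\PP^2}\to M\to 0$, unique up to the action of $\GL(U)=\GL_6(K)$; the alternating pairing $M\times M\to\wedge^2M=\det M\cong L^{\tns 2}$, together with the structure theorem for self-dual resolutions, forces $\Phi$ to be skew-symmetric, so that $\Phi$ is literally an element $A\in V\tns\wedge^2U$. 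I would then check that the two constructions are mutually inverse (the cokernel recovers $M$, the Pfaffian recovers $(C,L)$) and $\GL_3(K)\times\GL_6(K)$-equivariant, the group accounting precisely for the choices of bases of $V$ and $U$ and hence for the ambiguity in the resolution.

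Finally I would tie the statement back to Theorem~\ref{thm:RCparam1} through the double-skew map $\id\tns\wedge_{3,3}$: on its image the bundle $M$ splits as $L_2\oplus L_3$ for the degree~$3$ line bundles $L_2,L_3$ attached to $C_2,C_3$, and the relation $P+P'=0$ in $\Jac^3_C(K)$ becomes exactly $\det M=L_2\tns L_3\cong L^{\tns 2}$. This both explains the determinant condition geometrically and confirms that the new parametrization degenerates correctly to the Rubik's cube one. As indicated, the crux is the converse direction: showing that every admissible $M$ comes from a genuinely alternating linear resolution and controlling it up to $\GL_6$-equivalence, for which the self-dual matrix-factorization (Buchsbaum--Eisenbud / Ulrich) input is the essential ingredient.
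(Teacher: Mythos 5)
Your proposal is correct, but it reaches the bijection by a genuinely different route from the paper's. The paper obtains this theorem as the specialization $A=\Mat_{2\times 2}(K)$ of the uniform Jordan-algebra result (Theorem~\ref{thm:hermRC}): there $\wedge^2 K^6$ is the cubic Jordan algebra $\HH_3(A)$ with norm form the Pfaffian, the curve is cut on the plane $\PP(V^\vee)\subset\PP(\JA)$ by the Pfaffian cubic hypersurface $Y_A$, and the bundle is produced by pushing $C$ into the Severi variety $X_A=\Gr(2,6)$ via the quadratic adjoint (sub-Pfaffian) map and pulling back the tautological bundle, with $\det M\cong L^{\tns 2}$ falling out because the adjoint is given by quadrics; the converse inverts the adjoint Cremona transformation on a Veronese surface, and the rank $2$ bundle is extracted from the rank $4$ $A$-line by Morita equivalence. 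You instead take $M=\mathrm{coker}(A)$ directly and, for the converse, invoke the Pfaffian-representation/Ulrich machinery (Beauville; Buckley--Ko\v{s}ir, which the paper cites in \S\ref{sec:deg3special}): the hypotheses $\det M\cong L^{\tns 2}$ and $h^0(M\tns L^{-1})=0$ are exactly what force a skew-symmetrizable linear resolution, unique up to the congruence action of $\GL_6$. Both arguments are complete, and your instinct about where the delicacy lies is right: bare skew-duality gives only $M\cong M^\vee\tns L^{\tns 2}$, hence $(\det M)^{\tns 2}\cong L^{\tns 4}$, determining $\det M$ only up to $2$-torsion; it is the skew-symmetry of that duality isomorphism (equivalently the sub-Pfaffian factorization $\mathrm{adj}(A)\cdot A=\mathrm{Pf}(A)\cdot\mathrm{id}$) that kills the ambiguity, since a nowhere-vanishing section of $\wedge^2 M^\vee\tns L^{\tns 2}$ trivializes $(\det M)^{-1}\tns L^{\tns 2}$. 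What each approach buys: the paper's argument is uniform in the composition algebra, so the same proof covers the Rubik's cube, symmetric, skew-symmetric, and octonionic cases of Table~\ref{table:examples} at once, at the cost of the Azumaya/Morita detour from the rank $4$ $A$-line down to $M$; yours is self-contained and concrete for this one case, hands you $M$ directly as a cokernel, and plugs into the matrix-factorization literature, but does not extend to $3\tns 27$. Your closing remark that $M$ splits as $L_2\oplus L_3$ on the image of $\id\tns\wedge_{3,3}$, recovering Theorem~\ref{thm:RCparam1}, is exactly the paper's own consistency check.
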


\noindent
The nondegeneracy condition on the geometric data is a mild open condition and will be discussed in \S\ref{sec:deg3moduli}.

\subsection{A simultaneous generalization: doubly Hermitian Rubik's cubes}

Many of the orbit parametrizations we have discussed in this section can be unified and generalized, by a Hermitianiziation process that is analogous to the one introduced in \S \ref{sec:3hermHCpreview}.  Recall that a Rubik's cube
may be seen as a triple of $3 \times 3$ matrices.  To define a {\em doubly Hermitian Rubik's cube}, we replace the triple of standard $3 \times 3$ matrices by a triple of $3 \times 3$ matrices that are Hermitian with respect to some quadratic algebra $A$ over $K$.   By a quadratic algebra $A$, we mean an algebra $A$ over $K$ such that each element $a$ of such a quadratic algebra $A$ has a natural conjugate $\bar{a}\in A$ such that both $a$ and~$\bar{a}$ satisfy a common (quadratic) characteristic polynomial over $K$.  
A Hermitian $3 \times 3$ matrix $M$ for the quadratic algebra 
$A$ over $K$ is one of the form
\begin{equation*}
\begin{pmatrix}
a & d & e \\
\bar{d} & b & f \\
\bar{e} & \bar{f} & c
\end{pmatrix}
\end{equation*}
where $a, b, c \in K$ and $d, e, f \in A$.  We denote the space of all doubly Hermitian Rubik's cubes for such a quadratic  algebra $A$ by $\mathscr{C}_3(A)$.  Examples of suitable quadratic algebras $A$ include $K$ itself, $K^2$, $\Mat_{2 \times 2}(K)$, or general quaternion or octonion algebras over $K$.  The various quadratic algebras $A$ used in this paper are discussed in \S \ref{sec:cubicjordan}.

For each such quadratic algebra $A$, there is a natural group $\SL_3(A)$ that acts on $3 \times 3$ Hermitian matrices by linear transformations that preserve the determinant.  Then we may classify the orbits of the action of $\GL_3(K) \times \SL_3(A)$ on $\mathscr{C}_3(A)$ as follows (a full version of this theorem may be found in \S \ref{sec:deg3moduli}):

\begin{thm} \label{thm:2hermRCpreview}
For any field $K$ with $\mathrm{char}(K) \nmid 6$, and a quadratic algebra $A$ over $K$, there is a canonical bijection between nondegenerate $\GL_3(K) \times \SL_3(A)$-orbits on the space $\mathscr{C}_3(A)$ of doubly Hermitian Rubik's cubes for $A$ over $K$ and isomorphism classes of triples $(C,L,M)$, where $C$ is a smooth genus one curve over $K$, \;$L$ is a degree~$3$ line bundle on~$C$, and $M$ is a vector bundle of rank equal to the dimension of $A$ over $K$, with a global faithful $A$-action and other structure coming from $A$, subject to a relation between $L$ and $M$.
\end{thm}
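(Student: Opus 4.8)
The plan is to realize $\mathscr{C}_3(A)$ as the space $V\tns J$ with $V=K^3$ and $J=\Herm_3(A)$, the cubic Jordan algebra of Hermitian $3\times 3$ matrices over $A$, and then to obtain Theorem~\ref{thm:2hermRCpreview} as the specialization to this $J$ of the uniform orbit theorem for spaces $V\tns J$ proved in Section~\ref{sec:hermRC}. First I would record the Jordan-theoretic input: $J=\Herm_3(A)$ carries a cubic norm form $N$ (its determinant) and an adjoint map $\#$ satisfying $M\cdot M^\#=N(M)\,\mathrm{id}$, and the group we have denoted $\SL_3(A)$ acts on $J$ preserving $N$. For the sample algebras $A=K,\,K^2,\,\Mat_{2\times2}(K)$, and the octonions, this identifies $\mathscr{C}_3(A)$ with $K^3\tns\Sym^2(K^3)$, $K^3\tns\Mat_3(K)\cong K^3\tns K^3\tns K^3$, $K^3\tns\wedge^2(K^6)$, and $K^3\tns 27$, so the specialization will simultaneously recover several of the degree~$3$ cases in Table~\ref{table:examples}, including the Rubik's cube parametrization of Theorem~\ref{thm:RCparam1}.

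Next I would carry out the geometric construction. For a nondegenerate $T\in V\tns J$, the ternary cubic $\xi\mapsto N(T(\xi))$ on $V^{*}$ cuts out the genus one curve $C\subset\PP(V^{*})\cong\PP^{2}$, exactly as for Rubik's cubes, and the degree~$3$ line bundle $L$ is the hyperplane class $\OO_C(1)$ of this plane model. The essential new point is the bundle $M$. Over $C$ the tautological element of $J$ has vanishing norm, so by the adjoint identity its adjoint is a (generically) rank~$1$ element of $\Herm_3(A)$, i.e.\ of the form $v\,v^{*}$ for a column $v$ over $A$; the image $A\!\cdot\!v\cong A$ of such an element is free of rank~$1$ as an $A$-module. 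Letting the point of $C$ vary, these images glue to a bundle $M$ on $C$ of rank $\dim_K A$ equipped with a faithful left $A$-action. A degree computation, using multiplicativity of $N$ together with the adjoint identity, then pins down the relation between $L$ and $\det M$; in the Morita-reduced description this recovers, for example, the condition $L^{\tns 2}\cong\det M$ of Theorem~\ref{thm:2skewRCpreview} when $A=\Mat_{2\times2}(K)$.

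To prove the bijection I would construct the inverse and verify equivariance. Given $(C,L,M)$ with the stated relation, the complete linear system $|L|$ embeds $C$ as a plane cubic, and the $A$-module bundle $M$, constrained by its compatibility with $L$, furnishes a Hermitian determinantal representation of the defining cubic: suitable cohomology groups of twists of $M$ produce three matrices over $A$ whose pencil-determinant is the cubic, and the $A$-conjugation together with the self-duality forced by the $L$--$M$ relation ensures the representation is genuinely Hermitian rather than merely determinantal. One then checks that such a representation is canonical up to the action of $\SL_3(A)$ while rechoosing the basis of $V$ realizes the $\GL_3(K)$-action, so that orbits correspond to isomorphism classes; that the two constructions are mutually inverse follows, as in the other cases of the paper, by tracking the universal objects through both directions.

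The main obstacle will be carrying out the construction of $M$ and its $A$-action uniformly across all the quadratic algebras, and in particular the octonionic case: there $\Herm_3(\mathbb{O})$ is the $27$-dimensional exceptional Jordan algebra, ``$3\times3$ matrices over $\mathbb{O}$'' are non-associative, and $\SL_3(\mathbb{O})$ must be read as the group $E_6$ preserving the cubic norm. In that setting the notions of rank~$1$ element, of the faithful $A$-action on $M$, and of the Hermitian determinantal representation used in the inverse all have to be phrased intrinsically through the Jordan structure and the $27$-dimensional representation of $E_6$ rather than by literal matrix operations. Making the rank count, the $A$-action, and the $L$--$\det M$ relation emerge uniformly---and confirming that the inverse construction really is inverse in this non-associative regime---is where the substantive work lies.
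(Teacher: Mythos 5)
Your overall strategy---realize $\mathscr{C}_3(A)$ as $V\tns J$ for $J=\HH_3(A)$ and deduce the theorem as a specialization of a uniform statement for such tensor spaces, with the curve $C$ cut out by the cubic norm form on the image plane, $L$ the hyperplane class, and the bundle $M$ produced by applying the adjoint map to the (norm-zero, hence rank-two) points of $C$ so as to land in the rank-one locus $X_A$---is exactly the paper's (Theorem \ref{thm:hermRC} in \S\ref{sec:deg3moduli}), including the identification of $X_A$ with elements of the rank $3$ module $\mathfrak{M}$ up to right $A$-scaling and the fact that the quadraticity of the adjoint map forces the relation between $L$ and the pullback of $\OO_{\PP(\JA)}(1)$. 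One small imprecision in your forward step: you should note that nondegeneracy forces every point of $C$ to have rank exactly two (the plane misses $X_A$), so the adjoint has rank exactly one at \emph{every} point of $C$, not just generically; otherwise $M$ need not be locally free of the claimed rank.

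Where you genuinely diverge is the inverse construction, and that is where your plan has a gap. You propose to recover the orbit from $(C,L,M)$ by extracting a Hermitian determinantal representation of the plane cubic from cohomology of twists of $M$. That is a plausible route for associative $A$ (it is the classical determinantal/Pfaffian-representation picture, which the paper itself invokes only for the special case $A=\Mat_{2\times2}(K)$), but, as you concede, it has no literal meaning when $A$ is an octonion algebra, and you leave that case unresolved---yet that case (line 18 of Table \ref{table:examples}, with $E_6$ acting on the $27$) is precisely what the uniform theorem is designed to cover. The paper sidesteps the issue with a different device that you should adopt: the relation $\lin M\cong L^{\tns 2}$ identifies $H^0(C,\lin M)$ with $\Sym^2 H^0(C,L)$, so the linear span of the image of $C$ in $X_A\subset\PP(\JA)$ contains a distinguished Veronese surface $\mathscr{V}$, the image of $\PP(H^0(C,L)^\vee)$ under the induced quadratic map; applying the adjoint map $\beta_A$ (an involution off $Y_A$, extended to all of $\mathscr{V}$ by the valuative criterion) blows $\mathscr{V}$ down to a linear $\PP^2$ in $\PP(\JA)$, and that plane \emph{is} the desired element of $V\tns\JA$. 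Because this step uses only the cubic norm and the adjoint, it is uniform across all composition algebras, associative or not. Without it, or an intrinsic Jordan-algebraic substitute for your determinantal representation, the inverse direction of your argument is not established in the exceptional case.
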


In each case, the vector bundle $M$ arises via a natural map from the curve $C$ to the variety of rank one $3 \times 3$ Hermitian matrices up to scaling.  The rank one Hermitian matrices form a flag variety in the space of all Hermitian matrices, and the vector bundle
$M$ on $C$ is part of the flag on~$C$ coming from the pullback of the universal flag.
See \S \ref{sec:deg3moduli} for details.

The cases $A = K \times K$, $K$, $\Mat_{2 \times 2}(K)$ yield regular Rubik's cubes, doubly symmetric Rubik's cubes, and doubly skew-symmetric Rubik's cubes, respectively.  Other examples include twists of these spaces, as well as the space $K^3 \tns K^{27}$ under the action of $\GL_3(K) \times \mathrm{E}_6(K)$, which arises when $A$ is an octonion algebra.

\subsection*{Preliminaries and notation}
Let $K$ be a field not of characteristic $2$ or $3$.  We will work over the field $K$ for the
majority of the paper, but many of the results have analogues over a $\ZZ[\frac{1}{6}]$-scheme as well.

We use the convention that the projectivization of a vector space
parametrizes lines, not hyperplanes.  For example, a basepoint-free
line bundle $L$ on a variety $X$ induces a natural map $\phi_L : X \to
\PP(H^0(X,L)^\vee)$.
  
A {\em genus one curve} means a proper, smooth, geometrically
connected curve with arithmetic genus $1$, and an {\em elliptic curve}
is such a genus one curve equipped with a base point.

An isomorphism of sets of data $D_1$ and $D_2$, where $D_i$ consists
of a genus one curve $C_i$ and vector bundles for $i = 1$ or $2$, is
an isomorphism $C_1 \to C_2$ such that the pullback of the vector
bundles on $C_2$ are isomorphic to the respective bundles on $C_1$.

If $A$ is an element in a tensor product of vector spaces, we use the notation $A(\cdot,\ldots,\cdot)$ to
denote the multilinear form, where the dots may be replaced by substituting elements of the respective
dual vector spaces.  For example, for vector spaces $V_1$, $V_2$, and $V_3$, 
if $A \in V_1 \tns V_2 \tns V_3$ and $v \in V_1^\vee$,
the notation $A(v,\cdot,\cdot)$ will refer to the evaluation $A \subs v$ of the trilinear form $A$ on $v$,
which gives an element of $V_2 \tns V_3$.  By a slight abuse of notation, we will also use this notation to specify whether  $A(v,\cdot,\cdot)$ vanishes for $v \in \PP(V_1^\vee)$, for example.


\section{Genus one curves with degree \texorpdfstring{$2$, $3$, $4$, or $5$}{2, 3, 4, or 5} line bundles} \label{sec:classical}

In this section, we describe representations $V$ of algebraic groups $G$ whose orbits correspond to genus one curves with degree $d$ line bundles for $2 \leq d \leq 5$. For any field $K$ not of characteristic $2$, $3$, or $5$, there is a natural bijection between nondegenerate orbits in $V(K)/G(K)$ and isomorphism classes of genus one curves defined over $K$ equipped with degree $d$ line bundles.

Most of these quotient descriptions are classical (or at least fairly well known) over an algebraically closed field.   For example, for $d = 2$, the space under consideration is that of binary quartic forms, and for $d = 3$, ternary cubic forms.  What is new here is that, for each case, we also show that with the right forms of the group $G$ acting on the representation $V$, the stabilizer of a generic element of the representation agrees with the automorphism group of the curve and the line bundle.  Thus we obtain an equivalence of moduli stacks, which is important in the arithmetic applications (e.g., in~\cite{arulmanjul-bqcount,arulmanjul-tccount}).  We suspect that some of this section is known to the experts, but has not previously been stated explicitly; see also the related work of Cremona, Fisher, and Stoll \cite{cremonafisher, fisher-ternarycubics, cremonafisherstoll, fisher-pfaffianECs}.

\subsection{Binary quartic forms} \label{sec:binaryquartics}

A {\em binary quartic form} over $K$ is a two-dimensional vector space
$V$ over $K$ equipped with an element $q$ of $\Sym^4 V$.  With a choice of basis
$\{w_1, w_2\}$ for $V^\vee$, a binary quartic form over $K$ may be represented as a
homogeneous degree $4$ polynomial
	\begin{equation} \label{eq:BQpoly}
		q(w_1,w_2) = a w_1^4 + b w_1^3 w_2 + c w_1^2 w_2^2 + d w_1 w_2^3 + e w_2^4,
	\end{equation}
where $a, b, c, d, e \in K$.  The group $\GL(V)$ acts on $\Sym^4 V$ by
acting on $V$ in the standard way.  The ring of $\SL(V)$-invariants of
a binary quartic form $q$ as in equation \eqref{eq:BQpoly} is a
polynomial ring, generated by the two invariants
	\begin{equation*}
		I(q) = 12 a e - 3 b d + c^2 \qquad \textrm{and}\qquad
		J(q) = 72 a c e + 9 b c d - 27 a d^2 - 27 e b^2 - 2 c^3.
	\end{equation*}
The coarse moduli space $\Sym^4 V /\!\!/ \SL_2(V)$ is thus birational to the
affine plane, with coordinates given by the invariants $I$ and $J$.  There is also
a natural notion of the discriminant $\Delta(q) = 4I(q)^3 - J(q)^2$ of a binary quartic form.  
The nonvanishing of the discriminant $\Delta(q)$ corresponds to $q$ having four distinct roots
over the separable closure of $K$; we call such binary quartic forms
{\em nondegenerate}.

We may also consider the following twisted action of $g \in \GL(V)$ on $q \in \Sym^4 V$:
	\begin{equation}
	g \cdot q(w_1,w_2) = (\det g)^{-2} q((w_1,w_2)g)
	\end{equation}
for which the $\SL(V)$-invariants described above are preserved as well.  This representation
is $\GL(V)$ acting on $\Sym^4 V \tns (\wedge^2 V)^{-2}$; we will sometimes denote it by
the action of $\GL(V)^{(-2)}$ on binary quartic forms.  Note that the stabilizer of this
twisted action contains the diagonal $\Gm$ of $\GL(V)$ (e.g., scalar matrices), thereby inducing
an action of $\PGL(V)$ on the space of binary quartic forms.

There is one more action we will consider, which is scaling by squares, \ie $\gamma \in \Gm$ sends
$q \in \Sym^4 V \tns (\wedge^2 V)^{-2}$ to $\gamma^2 q$.  We will denote this as the action of $2 \Gm$
on binary quartics.

A nondegenerate binary quartic form $q$ may be associated to a genus one
curve $C$ in the weighted projective space $\PP(1,1,2)$ by the equation
	\begin{equation} \label{eq:y2=bq}
		y^2 = q(w_1,w_2) = a w_1^4 + b w_1^3 w_2 + c w_1^2 w_2^2 + d w_1 w_2^3 + e w_2^4,
	\end{equation}
where $w_1$ and $w_2$ each have degree $1$ and $y$ has degree $2$.  Over the algebraic
closure, the four roots of the binary quartic correspond to the
four points of $\PP(V^\vee)$ over which $C$ ramifies; in other words, the subscheme of $\PP(V^\vee) = \Pone$ cut out by $q$ is the ramification locus of the two-to-one map $C \to \PP(V^\vee)$.  Nondegeneracy is clearly preserved by all of the group actions above.

From a nondegenerate binary quartic $q$, we thus obtain a smooth irreducible
genus one curve $C$, as well as a degree $2$ line bundle $L$ on $C$, which is the pullback of
$\OO_{\PP(V^\vee)}(1)$ to $C$.  Then the space of sections $H^0(C,L)$ may be identified with the vector space $V$.
The $\GL(V)^{(-2)} \times 2 \Gm$-action on a binary quartic does not change the isomorphism class of the curve $C$ obtained in this way.  
The Jacobian of the curve $C$ associated to $q$ has Weierstrass form 
$$y^2 = x^3 - 27 I(q) x - 27 J(q)$$
(see, e.g., \cite{ankim, cremonafisher}).

Conversely, given a smooth irreducible genus one curve $C$ over
$K$ and a degree~$2$ line bundle $L$, the hyperelliptic map $\eta:
C \to \PP(H^0(C,L)^\vee)$ given by the complete linear series $\left| L \right|$
has a ramification divisor of degree $4$ by the Riemann--Hurwitz Theorem.  The branch divisor is a degree $4$
subscheme of $\PP(H^0(C,L)^\vee)$ defined over $K$, which recovers a
binary quartic form over $K$, up to scaling.  We next compute this scaling factor more precisely,
in order to keep track of the group actions for Theorem \ref{thm:bqorbit} below.

Given the genus one curve $\pi: C \to \Spec K$ and a degree $2$ line bundle $L$ on $C$, we have the
exact sequence
	\begin{equation*}
		0 \ra \eta^* \Omega^1_{\PP(H^0(C,L)^\vee)} \ra \Omega^1_C \ra \Omega^1_{C/\PP(H^0(C,L)^\vee)} \ra 0,
	\end{equation*}
and taking the pushforward under $\eta$ gives the exact sequence
	\begin{equation} \label{eq:bqdefseq}
		0 \ra \Omega^1_{\PP(H^0(C,L)^\vee)} \tns \eta_* \OO_C \ra \eta_* \Omega^1_C \ra \eta_* \Omega^1_{C/\PP(H^0(C,L)^\vee)} \ra 0.
	\end{equation}
The first two terms of the sequence \eqref{eq:bqdefseq} are rank two bundles whose
determinants have degrees $-6$ and $-2$, respectively.  Taking determinants and twisting
gives the map
	\begin{equation*}
	\OO \to (\Omega^1_{\PP(H^0(C,L)^\vee)})^{\tns -2} \tns \omega_{C}^{\tns 2}
	\end{equation*}
where $\omega_C := \pi_* \Omega^1_C$ is the Hodge bundle for $C$.
The induced map on cohomology is
	\begin{equation} \label{eq:bqconstr}
		K \ra \Sym^4 (H^0(C,L)) \tns (\wedge^2 (H^0(C,L)))^{\tns (-2)} \tns \omega_{C}^{\tns 2}
	\end{equation}
and the image of $1 \in K$ is the desired binary quartic form.  If we were not
allowing the action of $2\Gm$ on binary quartics as well, then we would need to
specify a differential to pin down the scaling of the binary quartic form.

In other words, there is an isomorphism between the substack of $\Sym^4 V$ of nondegenerate binary quartic forms
and the moduli problem for $(C,L,\phi,\delta)$, where $C$ is a smooth irreducible genus one curve, $L$ is a degree
$2$ line bundle on $C$, $\phi : H^0(C,L) \to V$ is an isomorphism, and $\delta$ is a differential on $C$.  By
the computation \eqref{eq:bqconstr}, this isomorphism is $\GL(V) \times \Gm$-equivariant: the group acts on binary quartic forms as described above (namely, as $\GL(V)^{(-2)} \times 2\Gm$), while on the geometric data, $\GL(V)$ goes through the isomorphism $\phi$ to act on $H^0(C,L)$, and $\Gm$ acts by scaling on $\delta$.

Thus, by descending to the quotient by these group actions, we obtain the correspondence of Theorem~\ref{thm:bqorbit} below.  The stabilizer of a binary quartic form here is exactly the automorphism group of the geometric data corresponding to the form.  In order to describe the stabilizer, we recall the definition
of the Heisenberg group $\Theta_{E,2}$ for an elliptic curve $E$ as the extension given by the following commutative diagram:
\begin{equation}  \label{eq:Heisenberggroup}
		\xymatrix{
			0 \ar[r] & \Gm \ar[r] \ar@{=}[d] & \Theta_{E,2} \ar[r] \ar[d] & E[2] \ar[r] \ar[d] & 0 \\
			0 \ar[r] & \Gm \ar[r]            & \GL_2        \ar[r]        & \PGL_2 \ar[r]  & 0.\!\!
		}
	\end{equation}
More generally, for any $n$, we may define $\Theta_{E,n}$ analogously as an extension of $E[n]$ by $\Gm$.

\begin{thm} \label{thm:bqorbit}
Let $V$ be a $2$-dimensional vector space over $K$.  Then nondegenerate $\GL(V)^{(-2)} \times 2 \Gm$-orbits on $\Sym^4 V$ are in bijection with isomorphism classes of pairs $(C,L)$, where $C$ is a smooth irreducible genus one curve over $K$ and $L$ is a degree $2$ line bundle on $C$.
The stabilizer group $($as a $K$-scheme$)$ of a nondegenerate element of $\Sym^4 V$ corresponding to $(C,L)$ is an extension of $\Aut(\Jac(C))$ by $\Theta_{\Jac(C),2}$, where $\Jac(C)$ is the Jacobian of the curve $C$, $\Aut(\Jac(C))$ is its automorphism group as an elliptic curve, and $\Theta_{\Jac(C),2}$ is the Heisenberg group as defined in $(\ref{eq:Heisenberggroup})$.
\end{thm}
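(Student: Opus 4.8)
The plan is to establish the theorem in two logically separate parts: the bijection of orbits with pairs $(C,L)$, and the computation of the stabilizer group. For the bijection, I would leverage the stack-theoretic framework already set up in the discussion preceding the statement. The key observation is that the construction $q \mapsto (C,L)$ via the equation $y^2 = q(w_1,w_2)$ in $\PP(1,1,2)$ and the reverse construction (taking the branch divisor of the hyperelliptic map $\eta: C \to \PP(H^0(C,L)^\vee)$) are mutually inverse once one keeps careful track of scaling. The sequence \eqref{eq:bqdefseq} and the induced map \eqref{eq:bqconstr} show that from the rigidified data $(C,L,\phi,\delta)$ --- a curve, a degree $2$ line bundle, an isomorphism $\phi: H^0(C,L) \to V$, and a differential $\delta$ --- one recovers an honest element of $\Sym^4 V$. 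So first I would argue that this sets up an isomorphism of the substack of nondegenerate binary quartics with the moduli problem classifying such rigidified quadruples, equivariantly for $\GL(V) \times \Gm$ acting as $\GL(V)^{(-2)} \times 2\Gm$ on forms and through $\phi$ and by scaling $\delta$ on the geometric side. Descending to quotients by these group actions then yields the stated bijection of orbits with isomorphism classes of pairs $(C,L)$, since the rigidifying data $\phi$ and $\delta$ are exactly what the group actions kill.

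For the stabilizer computation, which I expect to be the genuine content, the strategy is to identify $\Aut(C,L)$ --- the automorphisms of $C$ preserving $L$ up to isomorphism --- and then match it with the claimed extension of $\Aut(\Jac(C))$ by $\Theta_{\Jac(C),2}$. The key structural fact is that for a genus one curve $C$ with Jacobian $E = \Jac(C)$, any automorphism of $C$ (as a scheme over $K$) induces an automorphism of $E$, giving a homomorphism $\Aut(C,L) \to \Aut(E)$. The kernel consists of translation-type automorphisms of $C$ that fix the isomorphism class of $L$; since $L$ has degree $2$, these correspond precisely to translations by $2$-torsion of $E$ together with the scalar automorphisms of $L$ itself. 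First I would make precise that the stabilizer of $q$ in $\GL(V)^{(-2)} \times 2\Gm$ equals $\Aut(C,L,\phi,\delta)$ modulo the rigidifications --- but since we have quotiented by $\phi$ and $\delta$, the stabilizer is exactly the group of automorphisms of the pair $(C,L)$, where an automorphism of $L$ includes the choice of a linear isomorphism covering the geometric automorphism.

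The heart of the matter is then to show $\Aut(C,L)$ sits in the exact sequence
\begin{equation*}
0 \ra \Theta_{E,2} \ra \Aut(C,L) \ra \Aut(E) \ra 0.
\end{equation*}
For the subgroup $\Theta_{E,2}$: an automorphism of $C$ that maps to the identity in $\Aut(E)$ is a translation by some point $P \in E$; preserving the line bundle class $[L]$ under translation by $P$ requires $t_P^* L \cong L$, which for a degree $2$ line bundle on a genus one curve forces $2P = 0$ in $E$, i.e.\ $P \in E[2]$. Once such a $P$ is fixed, the automorphism of $C$ lifts to an automorphism of the total space of $L$ only up to a scalar in $\Gm$ --- this scalar ambiguity is precisely the central $\Gm$. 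Assembling these, the group of $(P, \lambda)$ with $P \in E[2]$, $\lambda \in \Gm$, with the group law twisted by the Weil-pairing-type cocycle measuring the failure of the lifts to multiply trivially, is exactly the Heisenberg group $\Theta_{E,2}$ defined by diagram \eqref{eq:Heisenberggroup}. I would verify the cocycle matches the Heisenberg one by comparing the induced action on $H^0(C,L) \cong V$, which realizes $\Theta_{E,2} \hookrightarrow \GL(V) = \GL_2$ as the standard $2$-dimensional Heisenberg representation, consistent with the bottom row of \eqref{eq:Heisenberggroup}. For surjectivity onto $\Aut(E)$: every automorphism of $E$ as an elliptic curve (fixing the origin) permutes $E[2]$ and can be realized as an automorphism of $C$ preserving $[L]$, since $L$ is canonically attached to $C$ (it is the unique degree $2$ class pulled back under the hyperelliptic structure, up to translation), so the automorphism descends to act on the geometric data.

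The main obstacle, I expect, is the careful bookkeeping of the central $\Gm$ and the twisted group law: showing that the extension class of $\Aut(C,L)$ by $\Aut(E)$ over $\Theta_{E,2}$ genuinely coincides with the Heisenberg extension rather than some other extension of $E[2]$ by $\Gm$. This requires pinning down the natural linearization of the $E[2]$-action on $L$ and checking that the commutator pairing on $\Theta_{E,2}$ agrees with the Weil pairing on $E[2]$, which is what characterizes the nontrivial Heisenberg (theta) group. I would handle this by transporting everything through the faithful representation on $V = H^0(C,L)$: the action of the stabilizer on $V$ factors through $\GL(V)$, and the image of the $2$-torsion part is a copy of the standard finite Heisenberg group inside $\GL_2$, whose preimage in $\GL_2$ (the full $\Gm$-extension) is exactly $\Theta_{E,2}$ by \eqref{eq:Heisenberggroup}. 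This simultaneously verifies the extension structure and pins down the cocycle, completing the identification of the stabilizer as an extension of $\Aut(\Jac(C))$ by $\Theta_{\Jac(C),2}$.
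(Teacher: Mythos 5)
Your proposal is correct and follows essentially the same route as the paper: the bijection is obtained exactly as in the text, by establishing the $\GL(V)\times\Gm$-equivariant isomorphism between nondegenerate binary quartics and the rigidified data $(C,L,\phi,\delta)$ via \eqref{eq:bqdefseq}--\eqref{eq:bqconstr} and then passing to quotients. The stabilizer identification, which the paper asserts without a detailed argument, is filled in by you with the standard theta-group computation (translations preserving a degree $2$ class lie in $E[2]$, the central $\Gm$ arises from linearizations, and the extension is pinned down by the faithful action on $V=H^0(C,L)$ matching \eqref{eq:Heisenberggroup}), which is exactly the argument the paper's definition of $\Theta_{E,2}$ is designed to support.
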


\begin{remark}
When the Jacobian of $C$ does not have $j$-invariant $0$ or $1728$, the automorphism group of $(C,L)$ is in fact just the direct product $\Theta_{\Jac(C),2} \times \ZZ/2\ZZ \subset \GL(V) \times \Gm$.  More generally, the automorphism group scheme described in Theorem \ref{thm:bqorbit} is not necessarily a split extension.
\end{remark}

The isomorphism class of the pair $(C,L)$ is a torsor for $(E, \OO(3O))$, where $E$ is the Jacobian of $C$ and $O$ is the identity point of $E$. In the language of \cite{cathy-periodindex, explicitndescentI} (see Appendix~\ref{appendix:torsors}), the pair $(C,L)$ represents an element of the kernel of the obstruction map and
may be identified with an element of $H^1(K,\Theta_{E,2})$, where $E$
is the Jacobian of $C$.  Therefore, given an elliptic curve
	$$E: y^2 = x^3 - 27 I x - 27 J,$$
the set $H^1(K,\Theta_{E,2})$ is in correspondence with the set of $\GL(V)^{(-2)}$-orbits of binary quartic
forms $q$ having invariants $I(q) = I$ and $J(q) = J$.

\begin{remark} \label{rmk:bqdivisors}
Theorem \ref{thm:bqorbit} may also be rephrased in terms of {\em divisors} on the genus one curves instead of {\em line bundles}; we also replace the group $\GL(V)^{(-2)} \times 2 \Gm$ with $\PGL(V) \times 2 \Gm$.  In this case, the stabilizer of a binary quartic corresponding to a pair $(C,[D])$ for a genus one curve $C$ with a degree $2$ $K$-rational divisor $D$ (of equivalence class $[D]$) is the group of $K$-points of the corresponding extension of $\Aut(\Jac(C))$ by $\Jac(C)[2]$.  For example, if the $j$-invariant of $\Jac(C)$ is not $0$ or $1728$, then the stabilizer is just $\ZZ/2\ZZ \times \Jac(C)(K)[2]$.  While the nondegenerate orbits of $\Sym^4 V$ under our original group $\GL(V)^{(-2)} \times 2 \Gm$ and the group $\PGL(V) \times 2 \Gm$ are identical, with the latter group action the stabilizer of an element matches the naturally defined automorphism group of the corresponding pair $(C,[D])$.

This revision of Theorem~\ref{thm:bqorbit}---i.e., the bijection between $\PGL(V) \times 2 \Gm$-orbits of binary quartic forms and isomorphism classes of pairs $(C,[D])$---is used in \cite{arulmanjul-bqcount} to prove that the
average size of $2$-Selmer groups for elliptic curves over $\mathbb{Q}$, ordered
by height, is $3$, which in turn implies that the average rank of elliptic
curves (ordered in the same way) is bounded by $1.5$.

\end{remark}

\begin{remark} \label{rmk:bqM11}
By generalizing the above constructions to base schemes over $\ZZ[1/6]$ and letting $V$ be a rank $2$ free module over $\ZZ[1/6]$, we obtain an isomorphism between the nondegenerate substack of the double quotient stack $[2 \Gm \setminus \Sym^4 V \tns (\wedge^2 V)^{-2} / \GL(V)]$ and the quotient stack $[\mathscr{M}_{1,1} / \Theta_{E^{\univ},2}]$, where $\mathscr{M}_{1,1}$ is the moduli space of elliptic curves and $\Theta_{E^{\univ},2}$ is the theta group scheme for the universal elliptic curve $E^{\univ}$ over $\mathscr{M}_{1,1}$.  Here, the $T$-points of the double quotient stack are triples $(\mathcal{V}, L_T, s)$, where $p: \mathcal{V} \to T$ is a rank $2$ vector bundle over $T$, $L_T$ is a line bundle on $T$, and $s$ is a map $L_T^{\tns 2} \to \Sym^4 (\mathcal{V}) \tns (\wedge^2 \mathcal{V})^{-2}$.

Analogous statements will be true for all of the other cases discussed in this section; we discuss this further in \S \ref{sec:diffbases}.
\end{remark}

\subsection{Ternary cubic forms} \label{sec:ternarycubics}

A {\em ternary cubic form} over $K$ is a three-dimensional vector
space $V$ and an element $f$ of $\Sym^3 V$; with a choice of basis
$\mathfrak{B} = \{ w_1, w_2, w_3 \}$ for $V^\vee$, such a form may
be represented as a homogeneous degree $3$ polynomial
	\begin{align} \label{eq:ternarycubic}
		f(w_1,w_2,w_3) = a w_1^3 &+ b w_2^3 + c w_3^3 + a_2 w_1^2 w_2 + a_3 w_1^2 w_3 \\
		 &+ b_1 w_1 w_2^2 + b_3 w_2^2 w_3 + c_1 w_1 w_3^2 + c_2 w_2 w_3^2 + m w_1 w_2 w_3 \nonumber
	\end{align}
with coefficients in $K$.
There is a natural action of $\GL(V)$ on the space of
all ternary cubic forms by the standard action of $\GL(V)$ on $V$.  The ring of
$\SL(V)$-invariants of the space of ternary cubic forms is a polynomial ring
generated by a degree $4$ invariant $d_4=S$ and a degree $6$ invariant
$d_6=T$, and they may be computed by classical formulas (see
\cite{fisher-ternarycubics}, for our choice of scaling).  Thus, the coarse
moduli space $\Sym^3 V /\!\!/ \SL(V)$ is
birational to the affine plane $\mathbb{A}^2$ with coordinates~$d_4$ and~$d_6$.

For the twisted action of $\GL(V)$ on $\Sym^3 V$ where $g \in \GL(V)$ sends 
$f(X,Y,Z) \in \Sym^3 V$ to $(\det g)^{-1} f((X,Y,Z)g)$, the $\SL(V)$-invariants
described above are also preserved.  This representation
is described more accurately as the action of $\GL(V)$ on $\Sym^3 V \tns (\wedge^3 V)^{-1}$,
and we will also sometimes denote it by the action of $\GL(V)^{(-1)}$ on $\Sym^3 V$ to indicate the $-1$-twist
of the determinant.  Note that this representation has a nontrivial kernel, namely the
diagonal $\Gm$ of $\GL(V)$ (e.g., scalar matrices), so it induces a natural $\PGL(V)$-action
on the space of ternary cubic forms.

We will also consider the action of $\Gm$ on $\Sym^3 V$ (or on $\Sym^3 V \tns (\wedge^3 V)^{-1}$)
by scaling.  Scaling the form $f$ by $\gamma \in \Gm$ scales each $\GL(V)^{(-1)}$-invariant by $\gamma^d$,
where $d$ is the degree of the invariant.

We claim that the nondegenerate subset of ternary cubic forms, 
up to the $\GL(V)^{(-1)} \times \Gm$-action,
parametrizes genus one curves equipped with degree $3$ line
bundles, up to isomorphisms.  In particular, a ternary cubic form $f$
defines a curve $\iota: C := \{f = 0\} \hookrightarrow \PP(V^\vee)$.
We say that $f$ a {\em nondegenerate} ternary cubic form if $C$ is smooth,
which occurs if and only if the degree $12$ discriminant $\Delta(f) :=
(d_4^3-d_6^2)/1728$ of $f$ is nonzero.  In this case, the curve $C$ has genus one, and
the pullback $\iota^* \OO_{\PP(V^\vee)}(1)$ is a degree $3$ line
bundle on $C$.

On the other hand, given a (smooth irreducible) genus one curve $\pi: C \to \Spec K$ and a degree~$3$ line
bundle $L$ on $C$, the embedding of $C$ into $\PP(H^0(C,L)^\vee) \cong \PP^2$
gives rise to the exact sequence of sheaves
	\begin{equation*}
		0 \ra \mathcal{I}_C \ra \OO_{\PP(H^0(C,L)^\vee)} \ra \OO_C \ra 0
	\end{equation*}
on $\PP(H^0(C,L)^\vee)$, where $\mathcal{I}_C$ is the ideal defining the curve
$C$.  Tensoring with $\OO_{\PP(H^0(C,L)^\vee)}(3)$, taking cohomology, and tensoring with $H^0(\PP(H^0(C,L)^\vee),\mathcal{I}_C(3))^\vee$ gives the map
	\begin{equation*}
		K \ra H^0(\PP(H^0(C,L)^\vee),\OO(3)) \tns (\wedge^3(H^0(C,L)))^{-1} \tns \omega_C,
	\end{equation*}
where $\omega_C := \pi_* \Omega^1_C$ is the Hodge bundle for the curve $C$.  The image of $1 \in K$ is an element of
$\Sym^3(H^0(C,L)) \tns (\wedge^3(H^0(C,L)))^{-1}$, \ie a ternary cubic form with $V := H^0(C,L)$.  Although
the Hodge bundle $\omega_C$ is trivial over a field, if we did not include the $\Gm$-action here, we would
need to specify a differential to pin down the scaling of the ternary cubic form.

These two functors between ternary cubic forms and pairs $(C,L)$ are inverse to one another.

As in the binary quartic case, there is in fact an isomorphism between the nondegenerate subset of $\Sym^3 V$ and the
moduli problem for $(C,L,\phi,\delta)$, where $C$ is a genus one curve, $L$ is a degree $3$ line bundle,  $\phi: H^0(C,L) \to V$ is an isomorphism, and $\delta$ is a differential on $C$.  This isomorphism is $\GL(V) \times \Gm$-equivariant, and so we obtain the following:

\begin{thm} \label{thm:tcorbit}
Let $V$ be a $3$-dimensional $K$-vector space.  Then nondegenerate $\GL(V)^{(-1)} \times \Gm$-orbits of $\Sym^3 V$
are in bijection with isomorphism classes of pairs $(C,L)$, where $C$ is a smooth irreducible genus one curve over $K$ and 
$L$ is a degree $3$ line bundle on $C$.
The stabilizer group $($as a $K$-scheme$)$ of a nondegenerate element of $\Sym^3 V$ corresponding to $(C,L)$ is an extension of $\Aut(\Jac(C))$ by $\Theta_{\Jac(C),3}$, where $\Jac(C)$ is the Jacobian of $C$, $\Aut(\Jac(C))$ is its automorphism group as an elliptic curve, and $\Theta_{\Jac(C),3}$ is the degree $3$ Heisenberg group of $\Jac(C)$.
\end{thm}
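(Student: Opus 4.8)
The plan is to descend the $\GL(V)\times\Gm$-equivariant isomorphism, constructed above, between the nondegenerate locus of $\Sym^3 V$ and the rigidified moduli problem of quadruples $(C,L,\phi,\delta)$. Passing to quotients, this identifies the nondegenerate quotient $[\Sym^3 V/(\GL(V)^{(-1)}\times\Gm)]$ with the moduli of pairs $(C,L)$, where $C=\{f=0\}\subset\PP(V^\vee)$ is smooth irreducible of genus one and $L=\iota^*\OO(1)$ has degree $3$: forgetting $\phi$ (a basis of $H^0(C,L)\cong V$) and $\delta$ (a differential) is exactly quotienting by $\GL(V)$ and by $\Gm$, and nondegeneracy $\Delta(f)\neq 0$ is precisely the smoothness of $C$. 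The bijection on isomorphism classes is then immediate, in exact analogy with the binary quartic discussion preceding Theorem~\ref{thm:bqorbit}.

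The substance is the stabilizer, which by this equivalence equals the automorphism group scheme $\mathcal G:=\Aut(C,L)$ of the object $(C,L)$: pairs $(\sigma,\psi)$ with $\sigma\in\Aut(C)$ and $\psi\colon\sigma^*L\xrightarrow{\sim}L$, where such a pair acts on $V\cong H^0(C,L)$ by $H^0(\psi)\circ\sigma^*$ and scales $\delta$ by its induced action on differentials. I would organize $\mathcal G$ by two sequences. Forgetting $\psi$ gives $1\to\Gm\to\mathcal G\to\Aut(C,[L])\to 1$ with kernel $\Aut(L)=\Gm$; the $(-1)$-twist makes these scalar automorphisms act trivially on $f$ (so the central $\Gm$ genuinely survives in the stabilizer), while the auxiliary $\Gm$-scaling of $\delta$ lets automorphisms acting nontrivially on differentials be compensated and so appear in the stabilizer (the role played by $2\Gm$ in the degree $2$ case). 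Next, the Jacobian sequence $1\to E\to\Aut(C)\to\Aut(E)\to 1$, with $E=\Jac(C)$ acting on the torsor $C$ by translation, restricts to the subgroup $\Aut(C,[L])=\{\sigma:\sigma^*L\cong L\}$.

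On translations, $t_x^*L\cong L$ if and only if $x\in E[3]$, since for a degree $3$ line bundle the map $x\mapsto t_x^*L\otimes L^{-1}$ is multiplication by $3$ on $E\cong\Pic^0(C)$; conversely, as $[3]\colon E\to E$ is surjective (because $\mathrm{char}(K)\neq 3$), every $\alpha\in\Aut(E)$ lifts to some $\sigma\in\Aut(C)$ which, after correction by a suitable translation, satisfies $\sigma^*L\cong L$. Hence $1\to E[3]\to\Aut(C,[L])\to\Aut(E)\to 1$. Combining sequences, the preimage in $\mathcal G$ of the translation subgroup $E[3]$ is a central extension of $E[3]$ by $\Gm$, which one identifies, via Mumford's theta-group description of automorphisms of a line bundle lying over translations, with the Heisenberg group $\Theta_{E,3}$ of \eqref{eq:Heisenberggroup}, its commutator being the Weil pairing on $E[3]$. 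This yields the claimed extension $1\to\Theta_{E,3}\to\mathcal G\to\Aut(E)\to 1$.

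The main obstacle is this last identification: showing that the extension of $E[3]$ by $\Gm$ sitting inside $\mathcal G$ is the \emph{nonsplit} Heisenberg group $\Theta_{E,3}$ rather than the trivial product $\Gm\times E[3]$. Its nonsplitness is governed by the Weil pairing, and checking it functorially as a group scheme — over a possibly non-closed $K$, and at the special $j$-invariants $0$ and $1728$ where $\Aut(E)$ is strictly larger than $\{\pm1\}$ — is where the theory of theta groups is genuinely needed; the outer extension by $\Aut(E)$ need likewise not split, exactly as flagged in the degree $2$ case.
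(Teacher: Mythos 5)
Your proposal is correct and follows essentially the same route as the paper: construct the two inverse functors (zero locus of $f$ with $L=\iota^*\OO(1)$ in one direction, the ideal-sheaf/Hodge-bundle construction in the other), observe the resulting $\GL(V)\times\Gm$-equivariant isomorphism with the rigidified moduli problem of $(C,L,\phi,\delta)$, and descend to quotients. The paper asserts the stabilizer structure with only a clarifying remark, whereas you supply the standard details (the two exact sequences, the theorem-of-the-square computation showing the translations fixing $[L]$ are $E[3]$, and the identification of the resulting central extension with $\Theta_{E,3}$ via the theta-group construction, which is essentially the paper's definition of $\Theta_{E,3}$ as the preimage of $E[3]\subset\PGL_3$ in $\GL_3$); this is consistent with, and a legitimate filling-in of, the paper's argument.
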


\begin{remark}
More precisely, the stabilizer group of a nondegenerate element of $\Sym^3 V$ coincides with the automorphism group of the corresponding pair $(C,L)$. Here, the automorphism group of the pair $(C,L)$ consists of the $K$-points of the group scheme given by a possibly non-split extension of $\Aut(\Jac(C))$ by $\Theta_{\Jac(C),3}$.  

For example, if $C$ has a point $O$, and $L$ is the line bundle $\mathcal{O}(3O)$, then the extension is indeed split, since for all automorphisms of $C$ fixing $O$, the pullback of $L$ is isomorphic to $L$.  However, in general this extension will not split.  For example, if $C$ is a nontrivial torsor of its Jacobian, then the automorphism group of $(C,L)$ is just $\Theta_{\Jac(C),3}$.

Note that the automorphism group of $(C,L)$ as a {\em torsor} for $(\Jac(C), \mathcal{O}(3O))$ is always~$\Theta_{\Jac(C),3}$.
\end{remark}

Given  a ternary cubic form $f$, the associated genus one curve $C$ has
Jacobian $E := \Jac(C)$ which is determined by the $\SL(V)$-invariants $d_4 = d_4(f)$ and $d_6 = d_6(f)$ (see \cite{ankim} for details).  In
Weierstrass form, the elliptic curve $E$ may be expressed as
	\begin{equation} \label{eq:JacTC}
		E: y^2 = x^3 - 27 d_4 x - 54 d_6.
	\end{equation}
The discriminant $\Delta(E)$ is then given by the formula
	$$1728 \Delta(E) = d_4^3 - d_6^2.$$
Note that $d_4$ and $d_6$ are scaled by the usual action of $\Gm$ on $f$, so they are only relative invariants for the action of the group $\GL(V)^{(-1)} \times \Gm$ on the orbit of $f$; however, all elliptic curves in this orbit
are isomorphic, as $E$ is isomorphic to the elliptic curve given by $y^2 = x^3 - 27 \lambda^4 d_4 x - 54 \lambda^6 d_6$
for any $\lambda \in K^*$.

Conversely, given two numbers $d_4, d_6 \in K$ such that $d_4^3 - d_6^2 \neq 0$, the $\GL(V)^{(-1)} \times \Gm$-orbits of $\Sym^3V$
having {\em relative} invariants $d_4$ and $d_6$ are made up of ternary cubic forms having invariants
$\gamma^4 d_4$ and $\gamma^6 d_6$ for $\gamma \in K^*$.  Over an algebraically closed field,
these ternary cubic forms comprise exactly one orbit, but over a general field they may break up into many $K$-orbits.

Given invariants $d_4, d_6 \in K$, one may specify an elliptic curve $E$, say 
in the form of \eqref{eq:JacTC}.  Then the $\GL(V)^{(-1)} \times \Gm$-orbits of ternary cubic forms over $K$
having associated Jacobian $E$ correspond to pairs $(C,L)$ with Jacobian isomorphic to $E$.  
As described more carefully in Appendix~\ref{appendix:torsors}, such pairs $(C,L)$
are twists of the elliptic curve $E$ and the standard degree $3$
line bundle $\OO(3 \cdot O)$ where $O$ is the identity point on $E$.  
The $K$-rational pairs $(C,L)$ with a choice of isomorphism $\Aut^0(C) \stackrel{\cong}{\ra} E$ are exactly parametrized up to 
isomorphism by $H^1(K,\Theta_{E,3})$.
Thus, the $\GL(V)^{(-1)}$-orbits of $\Sym^3V$ with invariants $d_4$ and $d_6$
are in bijection with the pointed set $H^1(K,\Theta_{E,3})$, where $E$ is the elliptic curve in \eqref{eq:JacTC}.
We recover the following proposition, which is Theorem 2.5 of \cite{fisher-ternarycubics}:

\begin{prop} \label{prop:ternarycubictorsors}
	Let $E$ be an elliptic curve over $K$ with Weierstrass form
		$$y^2 = x^3 - 27 d_4 x - 54 d_6.$$
	Then the set $H^1(K,\Theta_{E,3})$ parametrizes 
	$\GL(V)^{(-1)}$-equivalence classes of $\Sym^3 V$ with invariants $d_4$ and $d_6$.
\end{prop}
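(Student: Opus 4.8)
The plan is to deduce this directly from Theorem~\ref{thm:tcorbit} together with the twisting formalism sketched in the paragraphs above, the only genuinely new work being to see how \emph{fixing} the two invariants $d_4,d_6$ (rather than letting them scale) cuts down both the orbit space and the relevant automorphism group. Recall that Theorem~\ref{thm:tcorbit} rests on a $\GL(V)^{(-1)}\times\Gm$-equivariant identification of nondegenerate elements of $\Sym^3 V$ with rigidified data $(C,L,\phi,\delta)$, where $\phi\colon H^0(C,L)\xrightarrow{\sim} V$ and $\delta$ is a differential on $C$; here $\GL(V)$ acts through $\phi$ and $\Gm$ acts by scaling $\delta$, which in turn scales each degree-$d$ invariant by $\gamma^d$.

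First I would restrict attention to those $f\in\Sym^3 V$ with $d_4(f)=d_4$ and $d_6(f)=d_6$ for the two prescribed values, so that the associated Weierstrass model $E\colon y^2=x^3-27d_4x-54d_6$ of \eqref{eq:JacTC} is pinned down as an actual elliptic curve, not merely up to isomorphism. Imposing these two equations eliminates the $\Gm$-scaling, leaving $\GL(V)^{(-1)}$-orbits. On the geometric side, fixing the model $E$ exactly amounts to equipping each pair $(C,L)$ with a \emph{chosen} isomorphism $\Aut^0(C)\xrightarrow{\sim} E$ of its Jacobian with this specific $E$. Thus the first step should produce a bijection between $\GL(V)^{(-1)}$-orbits with invariants $d_4,d_6$ and isomorphism classes of triples $(C,L,\iota)$ with $\iota\colon\Jac(C)\xrightarrow{\sim} E$.

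Next I would invoke the standard torsor/descent correspondence. Each such $(C,L,\iota)$ is a twist of the split object $(E,\OO(3O),\id)$, and the automorphism group scheme of the \emph{rigidified} object — the subgroup of the extension appearing in Theorem~\ref{thm:tcorbit} that acts trivially on the Jacobian, i.e.\ the kernel of $\Aut(C,L)\to\Aut(\Jac(C))$ — is exactly $\Theta_{E,3}$. By the usual classification of forms, the pointed set of isomorphism classes of such twists is therefore $H^1(K,\Theta_{E,3})$, with base point the split object. Concatenating the two bijections yields the statement, matching Theorem~2.5 of \cite{fisher-ternarycubics}.

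The hard part will be making the first step precise: showing that imposing the two equations $d_4(f)=d_4$, $d_6(f)=d_6$ corresponds exactly to rigidifying the Jacobian identification $\iota$, and that the automorphism group correspondingly collapses from the full extension of $\Aut(\Jac(C))$ by $\Theta_{E,3}$ down to $\Theta_{E,3}$ alone. The mechanism is that the prescribed model $E$ carries its invariant differential $\omega_E=dx/y$, and the rigidifying differential $\delta$ of Theorem~\ref{thm:tcorbit} is identified with $\iota^*\omega_E$; since no nontrivial element of $\Aut(\Jac(C))$ fixes $\omega_E$ (e.g.\ $[-1]^*\omega_E=-\omega_E$), once $\iota$ is fixed only the subgroup $\Theta_{E,3}$ acting trivially on $\Jac(C)$ survives as automorphisms. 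Consistently, the subgroup of $\Gm$ fixing both prescribed invariants is precisely $\Aut(E)$ (for instance $\mu_2=\{\pm1\}$ at generic $j$, acting by $f\mapsto -f$), and rather than collapsing orbits it re-identifies $\iota\mapsto[-1]\circ\iota$; this confirms that the first-step correspondence is a genuine bijection rather than a finite-to-one map. Checking these compatibilities, rather than any hard geometry, is the crux of the argument.
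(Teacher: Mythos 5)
Your proposal follows essentially the same route as the paper: both deduce the proposition from Theorem \ref{thm:tcorbit} by observing that fixing the invariants $(d_4,d_6)$ (rather than their $\Gm$-orbit) rigidifies each pair $(C,L)$ with a choice of isomorphism $\Jac(C)\cong E$, whose automorphism group is exactly $\Theta_{E,3}$, so that the resulting twists of $(E,\OO(3\cdot O))$ are classified by $H^1(K,\Theta_{E,3})$. Your explicit identification of the rigidifying differential with $\iota^*(dx/y)$, and of the residual stabilizer $\mu_2\subset\Gm$ with $\Aut(E)$ acting by inversion on $H^1(K,\Theta_{E,3})$, merely makes precise what the paper asserts in the paragraph preceding the proposition, and is consistent with its subsequent remark that $f$ and $-f$ correspond to inverse cohomology classes.
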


\begin{remark}
The difference between the $\GL(V)^{(-1)}$-orbits of $\Sym^3V$ with invariants $d_4$ and $d_6$ and the $\GL(V)^{(-1)} \times \Gm$-orbits with the same relative invariants is subtle.  For example, under the first action, a ternary cubic form $f$ and its negative $-f$ are not generally in the same orbit (but they are in the same $\GL(V)^{(-1)} \times \Gm$-orbit).  While $f$ and $-f$ cut out the same genus one curve $C$ in the plane, they correspond to inverse elements in $H^1(K, \Theta_{\Jac(C),3})$; this exactly reflects the $\ZZ/2\ZZ$ in the automorphism group of any elliptic curve.
\end{remark}

\begin{remark}
Analogously to Remark \ref{rmk:bqdivisors}, we may replace line bundles with equivalence classes of divisors in the statement of Theorem \ref{thm:tcorbit} and the group $\GL(V)^{(-1)} \times \Gm$ with $\PGL(V) \times \Gm$. Then we obtain a bijection between $\PGL(V) \times \Gm$-orbits of $\Sym^3 V$ and isomorphism classes of pairs $(C,[D])$, where $C$ is a genus one curve with a rational degree $3$ divisor $D$.  The stabilizer of a nondegenerate element of $\Sym^3 V$ corresponding to $(C,[D])$ is the automorphism group of $(C,[D])$, namely the group of $K$-points of a possibly non-split extension of $\Aut(\Jac(C))$ by $\Jac(C)[3]$.

In \cite{arulmanjul-tccount}, this bijection is used to prove that the average size of $3$-Selmer groups for elliptic curves over $\Q$, ordered by height, is $4$, implying an improved upper bound of $7/6$ for the average rank of elliptic curves over $\Q$.
\end{remark}

\begin{remark}
As in Remark \ref{rmk:bqM11}, we actually have an isomorphism between the nondegenerate substack of the quotient stack $[\Sym^3 V \tns (\wedge^3 V)^{-1} / \GL(V) \times \Gm]$ and the quotient stack $[\Moneone/\Theta_{\Euniv,3}]$, where $\Theta_{\Euniv,3}$ is the theta group scheme for the universal elliptic curve $\Euniv$ over $\Moneone$.  See \S \ref{sec:diffbases} for more details.
\end{remark}

\subsection{Pairs of quaternary quadratic forms} \label{sec:deg4}

Next, let $V$ and $W$ be vector spaces of dimensions $4$ and $2$, respectively, over $K$.  We study the space $W \tns \Sym^2 V$.  With a choice of bases for both $V$ and $W$, any element of $W \tns \Sym^2 V$ may be represented as a pair of symmetric $4 \times 4$ matrices, say $A$ and $B$.  There is a natural action of $\GL(W) \times \GL(V)$ on this space, and the $\SL(W) \times \SL(V)$-invariants for this space form a polynomial ring generated by two invariants $d_8$ and $d_{12}$ of degrees $8$ and $12$, respectively \cite{ankim}.  In particular, the $\SL(V)$-invariants of the element represented by the pair $(A,B)$ of symmetric matrices are the coefficients of the binary quartic form $q(x,y) = \det (Ax + By)$ (see, e.g., \cite{ankim,merrimansikseksmart}), and the $\SL(W)$-invariants of this binary quartic form $q(x,y)$ are the polynomials $I(q)$ and $J(q)$ from~\S\ref{sec:binaryquartics}. 

We describe briefly how nondegenerate elements of this space naturally give genus one curves with degree $4$ line bundles (see \cite{merrimansikseksmart} for an excellent exposition of the details).  An element of $W \tns \Sym^2 V$ represents a pencil (parametrized by $\PP(W^\vee)$) of quadrics in $\PP(V^\vee)$.  If this pencil is nontrivial, its base locus is a curve $C$, which is of genus one if smooth, by adjunction.  The curve $C$ is smooth exactly when the discriminant $\Delta(q) = 4I(q)^3 - J(q)^2$ is nonzero, just as for binary quartic forms, and elements of $W \tns \Sym^2 V$ giving smooth curves are called {\em nondegenerate}.  Pulling back $\OO_{\PP(V^\vee)}(1)$ to the genus one curve $C$ gives a degree $4$ line bundle.  Furthermore, the rulings on these quadrics parametrize a double cover $D$ of $\PP(W^\vee)$ ramified at the degree $4$ subscheme given by the binary quartic form $q(x,y)$.  This curve $D$ is also a genus one curve, and in fact, as elements of $H^1(K,E)$, the class of $D$ is double that of $C$.

On the other hand, starting with a genus one curve $C$ and a degree $4$ line bundle $L$, the embedding of $C$ into $\PP(H^0(C,L)^\vee) \cong \PP^3$
gives rise to the exact sequence of sheaves
	\begin{equation*}
		0 \ra \mathcal{I}_C \ra \OO_{\PP(H^0(C,L)^\vee)} \ra \OO_C \ra 0,
	\end{equation*}
and twisting by $\OO_{\PP(H^0(C,L)^\vee)}(2)$ shows that $H^0(\PP(H^0(C,L)^\vee),\mathcal{I}_C(2))$ is at least $2$-dimensional.  It is easy to check
that for $C$ to be a smooth irreducible genus one curve, this space is exactly $2$-dimensional (e.g., by computing the free resolution for $\OO_C$ over $\PP(H^0(C,L)^\vee) \cong \PP^3$).  We thus obtain a $2$-dimensional subspace of $\OO_{\PP(H^0(C,L)^\vee)}(2)$ (the space of quadratic forms on $\PP^3$) as desired.

Therefore, we have an isomorphism between nondegenerate elements of $W \tns \Sym^2 V$ and the moduli problem for triples $(C,L,\phi, \psi)$, where $C$ is a genus one curve, $L$ is a degree $4$ line bundle, and $\phi: H^0(C,L) \to V$ and $\psi: H^0(\PP(H^0(C,L)^\vee),\mathcal{I}_C) \to W^\vee$  are isomorphisms.  The group $\GL(W) \times \GL(V)$ acts on $W \tns \Sym^2 V$ in the standard way, and it acts on $\phi$ and $\psi$ by the actions on $V$ and $W^\vee$, respectively.  Therefore, taking the quotients of both sides of this isomorphism by $\GL(W) \times \GL(V)$ gives a correspondence between the orbits and the isomorphism class of pairs $(C,L)$.

\begin{thm} \label{thm:deg4orbit}
Let $W$ and $V$ be $K$-vector spaces of dimensions $2$ and $4$, respectively.  There exists a bijection from nondegenerate
$\GL(W) \times \GL(V)$-orbits of $W \tns \Sym^2 V$ and isomorphism classes of pairs $(C,L)$, where $C$ is a smooth irreducible genus one curve and $L$ is
a degree $4$ line bundle on $C$.  The stabilizer group $($as a $K$-scheme$)$ of a nondegenerate element of $W \tns \Sym^2 V$ corresponding to $(C,L)$ is an extension of $\Aut(\Jac(C))$ by $\Theta_{\Jac(C),4}$, where $\Jac(C)$ is the Jacobian of $C$, $\Aut(\Jac(C))$ is its automorphism group as an elliptic curve and $\Theta_{\Jac(C),4}$ is the degree $4$ Heisenberg group of $\Jac(C)$.
\end{thm}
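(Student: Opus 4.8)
The plan is to follow the pattern of Theorems~\ref{thm:bqorbit} and~\ref{thm:tcorbit}: first establish a $\GL(W) \times \GL(V)$-equivariant isomorphism between the nondegenerate locus of $W \tns \Sym^2 V$ and a rigidified moduli problem, then descend to orbits to obtain the bijection, and finally read off the stabilizer as the automorphism group of the underlying pair $(C,L)$.

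For the bijection, I would verify that the two constructions sketched above are mutually inverse. Starting from a nondegenerate element, the base locus $C$ of the associated pencil of quadrics in $\PP(V^\vee)$ is a smooth genus one curve (smoothness being equivalent to $\Delta(q) \neq 0$), and $L := \iota^* \OO_{\PP(V^\vee)}(1)$ is a degree $4$ line bundle with $H^0(C,L)$ identified with $V$ and $H^0(\PP(V^\vee), \mathcal{I}_C(2))$ identified with $W^\vee$. Conversely, a degree $4$ line bundle $L$ embeds $C$ into $\PP(H^0(C,L)^\vee) \cong \PP^3$ as a $(2,2)$ complete intersection, so that the space $H^0(\PP(H^0(C,L)^\vee), \mathcal{I}_C(2))$ of quadrics through $C$ is exactly two-dimensional and recovers the pencil. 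Since $C$ and $L$ are intrinsic to the base locus and independent of the choice of bases, these constructions identify the nondegenerate locus with the moduli problem for quadruples $(C, L, \phi, \psi)$, where $\phi : H^0(C,L) \to V$ and $\psi : H^0(\PP(H^0(C,L)^\vee), \mathcal{I}_C(2)) \to W^\vee$ are isomorphisms, and this identification is $\GL(W) \times \GL(V)$-equivariant. As nondegeneracy is preserved by the group action, passing to the quotient by $\GL(W) \times \GL(V)$ yields the claimed bijection with isomorphism classes of pairs $(C,L)$.

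For the stabilizer, the rigidifications $(\phi, \psi)$ form a torsor under $\GL(V) \times \GL(W)$, and an automorphism of $(C,L)$ acts functorially on $H^0(C,L)$ and hence, through the induced action on the quadrics containing $C$, on $H^0(\PP(V^\vee), \mathcal{I}_C(2)) = W^\vee$. Thus the action on $W$ is determined by the action on $(C,L)$, and the stabilizer of a nondegenerate element corresponding to $(C,L)$ is canonically the automorphism group $\Aut(C,L)$ of the pair, embedded faithfully in $\GL(V) \times \GL(W)$. It remains to identify $\Aut(C,L)$ with an extension of $\Aut(\Jac(C))$ by $\Theta_{\Jac(C),4}$. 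Writing $E = \Jac(C)$ and working over $\Kbar$, every automorphism of $C$ is a translation $t_P$ followed by an element of $\Aut(E)$ fixing the origin. An element of $\Aut(E)$ fixing the origin preserves $L$, while $t_P^* L \cong L$ if and only if $P \in E[4]$, since $P \mapsto t_P^* L \tns L^{-1}$ is multiplication by $\deg L = 4$ on $E \cong \Pic^0 E$. Recording, for each automorphism preserving $L$, the scalar $\Gm$-ambiguity in an isomorphism $\sigma^* L \cong L$ gives a central $\Gm$-extension whose restriction over the translation subgroup $E[4]$ is, by construction, the theta group of $(E,L)$, namely the Heisenberg group $\Theta_{E,4}$ defined analogously to~\eqref{eq:Heisenberggroup}. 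We thus obtain the exact sequence
	\begin{equation*}
		1 \ra \Theta_{E,4} \ra \Aut(C,L) \ra \Aut(E) \ra 1,
	\end{equation*}
which is Galois-equivariant and therefore descends to the $K$-group scheme stabilizer.

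I expect the main obstacle to be the precise identification of the central $\Gm$-extension of $E[4]$ with the Heisenberg group $\Theta_{E,4}$ of~\eqref{eq:Heisenberggroup}---in particular, verifying that its commutator pairing is the Weil $e_4$-pairing---together with the descent of the entire extension from $\Kbar$ to $K$ as a group scheme. The remaining structural inputs (that a degree $4$ line bundle cuts out $C$ as a pencil of quadrics with $H^0(\mathcal{I}_C(2))$ exactly two-dimensional, and that this pencil recovers $C$) are standard for genus one curves and may be cited as in the references above.
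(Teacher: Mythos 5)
Your proposal follows essentially the same route as the paper: identify the nondegenerate locus with the rigidified moduli problem for $(C,L,\phi,\psi)$ via the pencil-of-quadrics construction and its inverse (with $H^0(\mathcal{I}_C(2))$ exactly two-dimensional), then quotient by $\GL(W)\times\GL(V)$, reading off the stabilizer as $\Aut(C,L)$. Your explicit identification of $\Aut(C,L)$ over $\Kbar$ as the $\Gm$-extension of $E[4]\rtimes\Aut(E)$ supplies detail the paper leaves implicit, but it is the same argument pattern established there for the degree $2$ and $3$ cases.
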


\begin{remark}
As in the ternary cubic case, the automorphism group of a pair $(C,L)$ is made up of the $K$-points of the group scheme given by a possibly non-split extension of $\Aut(\Jac(C))$ by $\Theta_{\Jac(C),4}$.  In this case, part of the extension splits more often, e.g., if the curve $C$ has a degree $2$ line bundle $M$, then the pair $(C, M^{\tns 2})$ has automorphism group $\Theta_{\Jac(C),4} \rtimes \ZZ/2\ZZ$ if $\Jac(C)$ does not have $j$-invariant $0$ or $1728$.
\end{remark}

\begin{remark} \label{rmk:deg4divisors}
Again, in Theorem \ref{thm:deg4orbit}, we may replace the line bundle $L$ with the equivalence class of a rational degree $4$ divisor $D$, and the group $\GL(W) \times \GL(V)$ with its quotient by the kernel of the multiplication map $\Gm \times \Gm \to \Gm$, sending $(\gamma_1, \gamma_2)$ to $\gamma_1 \gamma_2^2$.  The stabilizer of a nondegenerate element of $W \tns \Sym^2 V$ corresponding to $(C,[D])$ then consists of the $K$-points of the group scheme given by the induced extension of $\Aut(\Jac(C))$ by $\Jac(C)[4]$.

This correspondence is used in \cite{arulmanjul-4Sel} to compute the average size of the $4$-Selmer group of elliptic curves over $\Q$.
\end{remark}

As in the previous cases, the Jacobian of the curve $C$ corresponding to an element of $W \tns \Sym^2 V$ is given by its $\SL(W) \times \SL(V)$-invariants $d_8$ and $d_{12}$:
	$$\Jac(C): y^2 = x^3 - 27 d_8 x - 27 d_{12}.$$
This follows easily from the fact that $d_8$ and $d_{12}$ are the $\SL(W)$-invariants $I(q)$ and $J(q)$ of the binary quartic form $q$.

\subsection{Quintuples of \texorpdfstring{$5 \times 5$}{5x5} skew-symmetric matrices} \label{sec:deg5}

The degree $5$ problem was studied extensively by Fisher in \cite{fisher-genus1pf, fisher-invts} and much of the following can be deduced from the work of  Buchsbaum-Eisenbud in \cite{buchsbaumeisenbud1}.  For completeness, we very briefly remind the reader of the constructions involved.  In this section, let $K$ be a field not of characteristic $2$, $3$, or $5$.

Let $V$ and $W$ be $5$-dimensional $K$-vector spaces.  We consider the space $V \tns \wedge^2 W$, which has $\SL(V) \times \SL(W)$-invariant ring generated by two generators $d_{20}$ and $d_{30}$ of degrees $20$ and $30$, respectively.  An element of $V \tns \wedge^2 W$, with a choice of bases for $V$ and $W$, may be thought of as five $5 \times 5$ skew-symmetric matrices, or a single $5 \times 5$ skew-symmetric matrix of linear forms on $V^\vee$.  Then generically the $4 \times 4$ Pfaffians intersect in a genus one curve in $\PP(V^\vee) = \PP^4$, and pulling back $\OO_{\PP(V^\vee)}(1)$ to this curve gives a degree $5$ line bundle on the curve.  The isomorphism class of the curve and the degree $5$ line bundle are clearly $\GL(V) \times \GL(W)$-equivariant.  The nondegeneracy required here is the nonvanishing of the degree $60$ discriminant formed in the usual way from the generators of the invariant ring.

To construct an element of $V \tns \wedge^2 W$ from a smooth irreducible genus one curve $C$ and a degree $5$ line bundle $L$, one identifies $V = H^0(C,L)$ and $W = H^0(\PP(V^\vee),\mathcal{I}_C(2))$, where $\mathcal{I}_C$ is the ideal sheaf for $C$.  Then one immediately obtains a section of $\OO_{\PP(V^\vee)}(1) \tns \wedge^2 W$ from the free resolution 
$$0 \to \OO(5) \to \OO(-3) \tns W^\vee \to \OO(-2) \tns W \to \OO \to \OO_C$$
of $\OO_C$ over $\PP(H^0(C,L)^\vee)$.

Therefore, there is an isomorphism between elements of $V \tns \wedge^2 W$ and the moduli problem parametrizing $(C,L,\phi,\psi)$, where $C$ is a genus one curve, $L$ is a degree $5$ line bundle, and $\phi: H^0(C,L) \to V$ and $\psi: H^0(\PP(H^0(C,L)^\vee), \mathcal{I}_C(2)) \to W$ are isomorphisms.  Quotienting both sides by the natural actions of $\GL(V) \times \GL(W)$ gives

\begin{thm} \label{thm:deg5moduli}
Let $V$ and $W$ be $5$-dimensional $K$-vector spaces.  Then there is a canonical bijection between nondegenerate $\GL(V) \times \GL(W)$-orbits of $V \tns \wedge^2 W$ and isomorphism classes of pairs $(C,L)$, where $C$ is a genus one curve and $L$ is a degree $5$ line bundle on $C$.  The stabilizer group $($as a $K$-scheme$)$ of a nondegenerate element of $V \tns \wedge^2 W$ corresponding to $(C,L)$ is an extension of $\Aut(\Jac(C))$ by $\Theta_{\Jac(C),5}$, where $\Jac(C)$ is the Jacobian of $C$, $\Aut(\Jac(C))$ is its automorphism group as an elliptic curve, and $\Theta_{\Jac(C),5}$ is the degree $5$ Heisenberg group of $\Jac(C)$.
\end{thm}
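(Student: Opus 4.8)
The plan is to follow the same template as the degree $2$, $3$, and $4$ cases (Theorems~\ref{thm:bqorbit}, \ref{thm:tcorbit}, and~\ref{thm:deg4orbit}): I first exhibit mutually inverse, $\GL(V) \times \GL(W)$-equivariant constructions between the representation $V \tns \wedge^2 W$ and the rigidified moduli problem parametrizing quadruples $(C,L,\phi,\psi)$, where $\phi : H^0(C,L) \xrightarrow{\sim} V$ and $\psi : H^0(\PP(V^\vee),\mathcal{I}_C(2)) \xrightarrow{\sim} W$; then I descend to the quotient by $\GL(V) \times \GL(W)$, which forgets $\phi$ and $\psi$ and produces the bijection at the level of isomorphism classes of pairs $(C,L)$; and finally I identify the stabilizer of an element with the automorphism group of the corresponding $(C,L)$.

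For the forward construction, a nondegenerate $A \in V \tns \wedge^2 W$ is a $5 \times 5$ skew-symmetric matrix $M_A$ of linear forms on $V^\vee$, and I take $C = C(A) \subset \PP(V^\vee) = \PP^4$ to be the common zero locus of its five $4 \times 4$ sub-Pfaffians. Nondegeneracy (nonvanishing of the degree $60$ discriminant) guarantees that $C$ is smooth, and the self-dual Pfaffian resolution displayed above forces $\omega_C \cong \OO_C$, so $C$ is a genus one curve; pulling back $\OO_{\PP(V^\vee)}(1)$ gives the degree $5$ line bundle $L$, and the assignment is manifestly $\GL(V) \times \GL(W)$-equivariant. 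The reverse construction rests on the Buchsbaum--Eisenbud structure theorem for Gorenstein ideals of codimension $3$: given $(C,L)$, setting $V = H^0(C,L)$ and $W = H^0(\PP^4,\mathcal{I}_C(2))$, the minimal free resolution of $\OO_C$ over $\PP^4$ is the self-dual skew complex whose middle map is multiplication by a $5 \times 5$ skew-symmetric matrix of linear forms, which is exactly an element of $V \tns \wedge^2 W$. The structure theorem also supplies the uniqueness of this Pfaffian presentation up to the $\GL(V) \times \GL(W)$-action, which is what makes the two constructions mutually inverse.

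Descending both sides of this equivariant isomorphism of rigidified moduli problems by $\GL(V) \times \GL(W)$ yields the claimed bijection. For the stabilizer, the subgroup of $\GL(V) \times \GL(W)$ fixing $A$ is identified, through the canonical identifications $V = H^0(C,L)$ and $W = H^0(\PP^4,\mathcal{I}_C(2))$, with the automorphism group $\Aut(C,L)$ consisting of pairs $(\alpha,\beta)$ with $\alpha \in \Aut(C)$ and $\beta : \alpha^* L \xrightarrow{\sim} L$. Translations of $C$ by $\Jac(C)[5]$ preserve the class of the degree $5$ bundle $L$ and lift, up to the $\Gm$ of scalar automorphisms of $L$, to the theta group $\Theta_{\Jac(C),5}$; the automorphisms of $\Jac(C)$ as an elliptic curve contribute the remaining quotient, exactly as in the earlier cases, giving the asserted extension of $\Aut(\Jac(C))$ by $\Theta_{\Jac(C),5}$, with the Jacobian determined by the generating invariants $d_{20}$ and $d_{30}$.

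The main obstacle is the reverse construction over an arbitrary field $K$ of characteristic prime to $30$: I need the Buchsbaum--Eisenbud structure theorem in the precise form that the homogeneous ideal of every smooth genus one quintic normal curve in $\PP^4$ is generated by the $4 \times 4$ Pfaffians of a skew-symmetric matrix of linear forms, \emph{and} that this presentation is canonical up to $\GL(V) \times \GL(W)$. Verifying smoothness and the genus-one property exactly on the nonvanishing locus of the discriminant, together with the careful bookkeeping of the theta-group extension in the stabilizer, are the remaining technical points; all of these can be imported from the work of Fisher~\cite{fisher-genus1pf, fisher-invts} and Buchsbaum--Eisenbud~\cite{buchsbaumeisenbud1}.
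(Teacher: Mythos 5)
Your proposal follows essentially the same route as the paper: the forward map via the $4\times 4$ sub-Pfaffians of the skew-symmetric matrix of linear forms, the reverse map via the Buchsbaum--Eisenbud free resolution of $\OO_C$ over $\PP(H^0(C,L)^\vee)$ with $W = H^0(\PP^4,\mathcal{I}_C(2))$, descent of the rigidified equivalence by $\GL(V)\times\GL(W)$, and identification of the stabilizer with the automorphism group of $(C,L)$ as an extension of $\Aut(\Jac(C))$ by $\Theta_{\Jac(C),5}$. The paper is in fact even terser than your write-up, deferring the same technical points (canonicity of the Pfaffian presentation, smoothness on the nonvanishing locus of the discriminant) to Fisher and Buchsbaum--Eisenbud exactly as you do.
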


\begin{remark} \label{rmk:deg5divisors}
As in the previous three cases, Theorem \ref{thm:deg5moduli} may be rephrased using isomorphism classes of pairs $(C,[D])$, where $D$ is a degree $5$ rational divisor on $C$, where the group $\GL(V) \times \GL(W)$ is replaced by its quotient by the kernel of the multiplication map $\Gm \times \Gm \to \Gm$ sending $(\gamma_1, \gamma_2)$ to $\gamma_1 \gamma_2^2$.  This correspondence is used in \cite{arulmanjul-5Sel} to determine the average size of the $5$-Selmer group of elliptic curves over $\Q$.
\end{remark}

Again, the Jacobian of the curve $C$ associated to a nondegenerate element of $V \tns \wedge^2 W$ is given by the $\SL(V) \times \SL(W)$-invariants $d_{20}$ and $d_{30}$:
	$$\Jac(C) : y^2 = x^3 + d_{20} x + d_{30}.$$
See \cite{fisher-invts} for details and methods for computing these invariants.

\subsection{Some remarks on different bases} \label{sec:diffbases}

This short section may be safely skipped for readers interested in the main theorems of this paper over fields $K$ (as opposed to more general base rings or schemes).  Here, we simply comment on how one can vary the base schemes to study the moduli problems in Section \ref{sec:classical}.

All of the constructions discussed in this section may be made precise over arbitrary $\ZZ[1/30]$-schemes $S$.  In particular, let $\mathcal{M}_d$ denote the moduli stack of genus one curves with degree $d$ line bundles.  Then for $2 \leq d \leq 5$, we claim that $\mathcal{M}_d$ is isomorphic to an open substack of a certain quotient stack $[V/G]$ for a group $G$ and representation $V$ of $G$.  The constructions are straightforward generalizations of those over fields in \S \ref{sec:binaryquartics} through \S \ref{sec:deg5}, e.g., as in Remark \ref{rmk:bqM11}.

For example, the case of ternary cubics is discussed in detail in \cite[\S 2.A.2]{wei-thesis}: the idea is that the $T$-points  of the double quotient $[\Gm \setminus \Sym^3 V \tns (\wedge^3 V)^{-1} / \GL(V)]$ are triples $(\VV, L_T, f)$, where $\VV$ is a rank $3$ vector bundle over $T$, $L_T$ is a line bundle on $T$, and $f$ is a section of $\Sym^3 \VV \tns (\wedge^3 \VV)^{-1} \tns L_T$, and nondegenerate triples correspond to genus one curves over $T$ with degree $3$ line bundles.  The curve $C$ is the zero set of the section of $f$ in $\PP(\VV^\vee)$, and the line bundle $L$ is the pullback of $\OO_{\PP(\VV^\vee)}(1)$ via $C \to \PP(\VV^\vee)$; conversely, given the curve $\pi: C \to T$ and $L$, the construction gives the vector bundle $\pi_*L$ with $L_T := \pi_* \Omega^1_{C/T}$ and an appropriate section.

In each of these cases $2 \leq d \leq 5$, because the groups $G$ appearing have vanishing Galois cohomology group $H^1(K,G)$, this description of $\mathcal{M}_d$ as an open substack of $[V/G]$ immediately implies that isomorphism classes of objects parametrized by $\mathcal{M}_d(K)$ are in bijection with the nondegenerate elements of $V(K)/G(K)$.  The situation is quite different for the moduli space $\mathcal{N}_d$ of genus one curves with degree $d$ rational divisor classes.  For $2 \leq d \leq 5$, it is possible to also describe $\mathcal{N}_d$ as an open substack of a quotient stack $[V/G]$; however, there are genus one curves over $K$ with degree $d$ rational divisor classes that cannot be represented by an element of $V(K)$.  That is, the groups $G$ no longer have connected centers, implying that the quotient map $V \to \mathcal{N}_d$ is not necessarily surjective on $K$-points.

In the remainder of this paper, even though we restrict our attention to working over a base field $K$ not of characteristic $2$ or $3$, most of the constructions we describe may be generalized to other base schemes.  Just as for $\mathcal{M}_d$ here, one may show that the moduli spaces we encounter, of genus one curves with extra data, may be given as open substacks of quotient stacks.


\section{Representations associated to degree \texorpdfstring{$3$}{3} line bundles}
\label{sec:hermRC}

In this section, we study a class of representations whose orbits are related to genus one curves with degree $3$ line bundles.  The main results in this section are summarized in Section \ref{sec:RCpreview}.

We begin by studying the space of Rubik's cubes, which is one of the fundamental examples of this paper, and some of its simpler variants.  The orbit space for Rubik's cubes (also called $3 \times 3 \times 3$ boxes) is related to the moduli space of genus one curves with two non-isomorphic degree~$3$ line bundles.  This may also be identified with the moduli space of genus one curves with a single degree~$3$ line bundle and one nonzero point on the Jacobian (lying in the appropriate period-index subgroup).

In \S \ref{sec:cubicjordan}, we then recall the theory of cubic Jordan algebras, in preparation for the general case.  The main general theorem is in \S \ref{sec:deg3moduli}, where we describe how the orbit spaces of these representations, built from cubic Jordan algebras, are moduli spaces for genus one curves with degree $3$ line bundles and additional vector bundles.
In later subsections, we then explain how to recover the earlier cases from the general theorem, and we also describe another special case that gives rank $2$
vector bundles on genus one curves.

Many of the orbit problems described in this section
are used in \cite{cofreecounting}
to determine average sizes of $3$-Selmer groups in certain families of elliptic curves over $\Q$.


\subsection{Rubik's cubes, or \texorpdfstring{$3 \times 3 \times 3$}{3x3x3} boxes} \label{sec:333}

We study an important example of a representation whose orbits naturally produce
genus one curves with degree $3$ line bundles.  This representation was studied by K.\;Ng in
\cite{ng} over $\CC$, and we extend his results to more general fields $K$.  
The main theorems of this section (Theorem \ref{thm:333bij} and Corollary \ref{cor:333CLP})
are joint with C.\;O'Neil.

Let $V_1$, $V_2$, and $V_3$ be three-dimensional $K$-vector spaces, so $G := \GL(V_1) \times \GL(V_2) \times \GL(V_3)$
acts on the representation $V := V_1 \tns V_2 \tns V_3$.
The following theorem, which is proved for $K=\CC$ in \cite{ng}, 
describes the orbits of these ``Rubik's cubes'' over more general fields $K$.
Here, nondegeneracy is equivalent to the nonvanishing of a degree $36$ invariant, 
which we will describe in more detail below.

\begin{thm} \label{thm:333bij}
	Let $V_1$, $V_2$, and $V_3$ be $3$-dimensional vector spaces over $K$.  Then
	nondegenerate $G$-orbits of $V_1 \tns V_2 \tns V_3$ are in bijection
	with isomorphism classes of quadruples $(C,L_1,L_2,L_3)$, where $C$ is a genus one curve over $K$ and $L_1$, $L_2$, and $L_3$
	are degree $3$ line bundles on $C$ with $L_1$ not isomorphic to $L_2$ or $L_3$ and satisfying $L_1^{\tns 2} \cong L_2 \tns L_3$.
\end{thm}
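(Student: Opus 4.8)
The plan is to exhibit mutually inverse, $G$-equivariant maps between nondegenerate orbits and isomorphism classes of quadruples. In the direction from cubes to quadruples I would simply package the construction recalled in \S\ref{sec:g1fromRC}: given a nondegenerate $B \in V_1 \otimes V_2 \otimes V_3$, set $C := C_{12}$ and use the three projections to obtain embeddings $C \hookrightarrow \PP(V_i^\vee) \cong \PP^2$, letting $L_i$ be the pullback of $\OO_{\PP^2}(1)$. Each $L_i$ is a degree $3$ line bundle, and the hyperplane divisors $D_i$ satisfy $2D_1 \sim D_2 + D_3$ --- this is exactly the statement that the two points $P = D_2 - D_1$ and $P' = D_3 - D_1$ of the fundamental triangle sum to zero --- whence $L_1^{\otimes 2} \cong L_2 \otimes L_3$. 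Nondegeneracy forces the three projections to be honest, pairwise distinct embeddings, so $L_1 \not\cong L_2$ and $L_1 \not\cong L_3$. Since $C$, the $L_i$, and these divisor classes are intrinsic to $B$ and untouched by the $\GL(V_i)$-action, the assignment descends to orbits; all of it is defined over $K$ because $C$ and the $L_i$ are cut out by $K$-rational data.

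For the reverse direction --- the heart of the argument --- I start from $(C,L_1,L_2,L_3)$ satisfying the constraints and set $V_i := H^0(C,L_i)$, each of dimension $3$ by Riemann--Roch. Embed $\iota : C \hookrightarrow \PP(V_1^\vee) \cong \PP^2$ by $|L_1|$, so that $\OO_C(1) \cong L_1$ and $C$ is a plane cubic. The sheaf $\iota_* L_2$ is a Cohen--Macaulay sheaf of codimension one on $\PP^2$, so its minimal free resolution has Hilbert--Burch form, and the condition $h^0(L_2 \otimes L_1^{-1}) = 0$ --- equivalent to $L_1 \not\cong L_2$ --- guarantees that it reads
$$0 \to V_3^\vee \otimes \OO_{\PP^2}(-1) \xrightarrow{\,B\,} V_2 \otimes \OO_{\PP^2} \to \iota_* L_2 \to 0,$$
which is the classical theory of linear determinantal representations of plane curves. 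Here $B$ is a $3 \times 3$ matrix of linear forms on $\PP(V_1^\vee)$, i.e.\ an element of $V_1 \otimes V_2 \otimes V_3$, and $\det B$ cuts out $C$, recovering $L_1$. Changing bases of the $H^0(C,L_i)$ alters $B$ only through the $\GL(V_i)$-action, so its orbit is canonical.

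It then remains to identify the third bundle and to check the two maps are inverse. Dualizing the resolution and applying relative duality for $\iota$ (with $\omega_{\PP^2} \cong \OO_{\PP^2}(-3)$ and $\omega_C \cong \OO_C$) identifies the cokernel of the transpose $B^{\mathrm{T}}$ along $C$ with $L_2^\vee \otimes L_1^{\otimes 2}$; the hypothesis $L_1^{\otimes 2} \cong L_2 \otimes L_3$ makes this precisely $L_3$, while $L_1 \not\cong L_3$ ensures the dual resolution is again minimal, so indeed $V_3 = H^0(C,L_3)$. Comparing with the forward map --- where the cokernel of $B(w_1,\cdot,\cdot)$ restricted to $C$ is exactly the pullback giving $L_2$, and its transpose gives $L_3$ --- shows the two constructions are mutually inverse. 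Finally $\det B$ is smooth if and only if the degree-$36$ invariant is nonzero, so the nondegeneracy conditions match on both sides.

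I expect the main obstacle to be precisely this reverse construction over a general field: producing the determinantal resolution and canonically matching both the column and row cokernel bundles with $L_2$ and $L_3$. The duality computation identifying the transpose cokernel with $L_2^\vee \otimes L_1^{\otimes 2}$ is where all three hypotheses (the two non-isomorphisms together with the tensor relation) are used at once, and it is the step most prone to sign and twist errors. Rationality over $K$ is comparatively mild, since forming $H^0$, the minimal resolution, and the duality all commute with base change and hence descend from $\overline{K}$; still, one must verify that the correspondence is a genuine bijection on $K$-rational objects and that it captures $G(K)$-orbits faithfully, rather than only the $G(\overline{K})$-orbits handled in Ng's complex-analytic treatment.
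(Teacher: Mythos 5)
Your proposal is correct, and the forward direction is the paper's own construction, but your reverse direction takes a genuinely different (though secretly equivalent) route. The paper defines the cube as the inclusion of $\ker\mu_{12}$ into $H^0(C,L_1)\tns H^0(C,L_2)$, where $\mu_{12}$ is the multiplication map on sections (surjective by Mumford's theorem, with $3$-dimensional kernel by Riemann--Roch), and then verifies by a direct equational argument that the resulting trilinear form is nondegenerate and cuts out the original curve with the original bundles. Your Hilbert--Burch resolution of $\iota_*L_2$ produces the same tensor --- taking global sections of the twist by $\OO_{\PP^2}(1)$ identifies $V_3^\vee$ with $\ker\mu_{12}$ --- but it imports the classical theory of linear determinantal representations, so that ``$\det B$ cuts out $C$'' and the identification of the transpose cokernel with $L_2^\vee\tns L_1^{\tns 2}\cong L_3$ come for free from the resolution and relative duality, making the mutual-inverse check essentially formal. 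What your route buys is a cleaner, more conceptual identification of the third bundle; what the paper's route buys is self-containedness and portability, since the same kernel-of-multiplication device is reused later for hypercubes and Hermitian generalizations where no determinantal theory is available. Two small cautions: the determinantal machinery is usually stated over $\overline{K}$, so you should say explicitly that minimal free resolutions and the duality isomorphism are compatible with base change (they are, which is why descent to $K$ works); and in the forward direction the claim $L_1\not\cong L_2$ does not follow merely from the projections being ``distinct embeddings'' --- the paper's actual argument is that $L_1\cong L_2$ would put $C_{12}$ on a diagonal of $\PP^2\times\PP^2$, forcing $\AA(\cdot,\cdot,y)$ to be skew-symmetric of odd size and hence everywhere singular, a contradiction with nondegeneracy.
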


Note that the action of the group $G$ on $V$ is clearly not faithful: the kernel of the multiplication map
	$$\Gm(V_1) \times \Gm(V_2) \times \Gm(V_3) \to \Gm \subset \GL(V),$$
where each $\Gm(V_i)$ is the group of scalar transformations of $V_i$ for $i = 1,2,3$, fixes
every element in $V_1 \tns V_2 \tns V_3$. For the sole purpose of classifying the orbits, this does not 
matter, since the orbits of $G$ and of the quotient by this kernel on the representation $V$ are identical.
However, this kernel, isomorphic to $\Gm^2$, shows up as part of the automorphism group of the geometric data; in particular, the stabilizer in $G$ of a generic nondegenerate element of $V_1 \tns V_2 \tns V_3$ giving the curve $C$ corresponds to the $K$-points of an extension $H$ of the group scheme $\Jac(C)[3]$ by this $\Gm^2$.  More generally, the stabilizer of a nondegenerate element consists of the $K$-points of a possibly non-split extension of $\Aut(\Jac(C))$ by the group scheme $H$.
For each nondegenerate element, the stabilizer group coincides exactly with the group of automorphisms of the geometric data (if we also record the isomorphism $L_1^{\tns 2} \cong L_2 \tns L_3$ in Theorem \ref{thm:333bij}).

The rest of this section is devoted to describing the construction of the genus one curve and its line bundles from an orbit.  In particular, we prove Theorem \ref{thm:333bij}.

\subsubsection{Geometric construction} \label{sec:RCgeom}

We first describe the construction of the genus one curve and the line bundles from a $G$-orbit.
Let $\AA \in V = V_1 \tns V_2 \tns V_3$, so $\AA$ induces a linear map from $V_1^\vee$ to $V_2 \tns V_3$.  There is a natural {\em determinantal} cubic hypersurface $Y$ in $\PP(V_2 \tns V_3)$; with the choice of bases for $V_2$ and $V_3$, it can be described as the vanishing of the determinant of the $3 \times 3$ matrices that comprise $V_2 \tns V_3$.  Then the intersection of $Y$ with the image of $\PP(V_1^\vee) \to \PP(V_2 \tns V_3)$ is generically a cubic curve $C_1$ on the image of $\PP(V_1^\vee)$, given as the vanishing of a covariant ternary cubic form in $\Sym^3 V_1$.

In other words, the curve $C_1$ is a determinantal variety, given by the determinant of a matrix of linear forms on $\PP(V_1^\vee)$.  Explicitly, with a choice of bases for $V_1$, $V_2$, and $V_3$, one can denote $\AA$ as a $3 \times 3 \times 3$ cube $(a_{rst})_{1 \leq r,s,t \leq 3}$ of elements $a_{rst} \in K$.  Then this ternary cubic form in $\Sym^3 V_1$ may be written simply as
	$$f_1(v) := \det (\AA(v,\cdot,\cdot)),$$
for $v \in V_1^\vee$.  One may similarly define cubic curves $C_2 \subset \PP(V_2^\vee)$ and $C_3 \subset \PP(V_3^\vee)$,
cut out by ternary cubic forms $f_2 \in \Sym^3 V_2$ and $f_3 \in \Sym^3 V_3$.

We call a Rubik's cube $\AA$ {\em nondegenerate} if the variety $C_1$
(equivalently, $C_2$ or $C_3$) thus defined is smooth and one-dimensional, which
corresponds to the nonvanishing of a degree $36$ polynomial in
$a_{rst}$.  This polynomial is called the {\em discriminant} of the
Rubik's cube $\AA$, and it coincides with the usual degree $12$
discriminant $\Delta(f_1)$ of the ternary cubic form $f_1$ (which is 
equal to $\Delta(f_2)$ and $\Delta(f_3)$).

If $\AA$ is nondegenerate, then the degree $3$ plane curve $C_1$ is smooth
of genus one.  For all points $w^\dagger \in C_1$, we claim that
the singular matrix $\AA(w^\dagger,\cdot,\cdot)$ has exactly rank $2$.  If
not, then the $2 \times 2$ minors of $\AA(w,\cdot,\cdot)$ would vanish on
$w^\dagger$, and so would all the partial derivatives
\begin{equation*}
  \left. \frac{\partial f}{\partial w_i} \right|_{w=w^\dagger} = \sum_{s,t=1}^3 a_{ist} A_{ij}^*(w^\dagger)
\end{equation*}
where $A_{ij}^*(w^\dagger)$ is the $(i,j)$th $2 \times 2$ minor of
$\AA(w^\dagger,\cdot,\cdot)$.  Thus, since $C_1$ was assumed to be smooth, we see that
the rank of the matrix $\AA(w^\dagger,\cdot,\cdot)$ cannot drop by two.

In other words, the nondegeneracy condition is equivalent to the condition that the image of $\PP(V_1^\vee)$ in $\PP(V_2 \tns V_3)$
not intersect the image of the Segre variety $\PP(V_2) \times \PP(V_3) \hookrightarrow \PP(V_2 \tns V_3)$.
In the sequel, we assume that $\AA$ is nondegenerate.  (Note that nondegeneracy is preserved
by the group action.)

Given a nondegenerate Rubik's cube $\AA$, define the variety
\begin{equation*}
  C_{12} := \{(w,x) \in \PP(V_1^\vee) \times \PP(V_2^\vee) : \AA(w,x,\cdot) = 0 \} \subset \PP(V_1^\vee) \times \PP(V_2^\vee).
\end{equation*}
Because $\AA$ is a trilinear form and the locus on which it
vanishes in $V_1 \times V_2$ is invariant under scaling, this is a
well-defined locus in $\PP(V_1^\vee) \times \PP(V_2^\vee)$.  Since
$\AA$ is nondegenerate, the projection
	\begin{equation*}
	C_{12} \ra \PP(V_1^\vee)
	\end{equation*}
is an isomorphism onto $C_1$.  The inverse map takes a point $w \in
C_1 \subset \PP(V_1^\vee)$ to the pair $(w,x) \in \PP(V_1^\vee) \times
\PP(V_2^\vee)$, where $x$ corresponds to the exactly one-dimensional
kernel of the linear map $\AA(w,\cdot,\cdot) \in V_2 \tns V_3 \cong \Hom
(V_2^\vee,V_3)$.  This map $C_1 \to C_{12}$ is algebraic, as this
kernel is given as a regular map by the $2 \times 2$ minors of the
matrix $\AA \subs w$.  Therefore, by dimension considerations, the
curve $C_{12}$ is the complete intersection of three bidegree $(1,1)$
forms on $\PP(V_1^\vee) \times \PP(V_2^\vee) = \PP^2 \times \PP^2$.
Similarly, the projection from $C_{12}$ to $\PP(V_2^\vee)$ is an
isomorphism onto $C_2$, which shows that $C_1$ and $C_2$ are
isomorphic.

We may also consider the curves
\begin{align*}
  C_{13} &:= \{(w,y) \in \PP(V_1^\vee) \times \PP(V_3^\vee) : \AA(w,\cdot,y) = 0 \} \\
  C_{23} &:= \{(x,y) \in \PP(V_2^\vee) \times \PP(V_3^\vee) : \AA(\cdot, x, y) = 0 \}
\end{align*}
and the analogous maps between $C_i$, $C_3$, and $C_{i3}$ for $i = 1$ or $2$ are also
isomorphisms.  Thus, all the curves $C_1$, $C_2$, $C_3$, $C_{12}$,
$C_{13}$, and $C_{23}$ are isomorphic, and the nondegeneracy condition is equivalent
to the smoothness of any or all of these curves.  The diagram
\begin{equation} \label{eq:RCcurvediag}
\raisebox{5\baselineskip}{
\xymatrix@R=15pt@C=12pt@M=5pt{
	& & \PP(V_1^\vee) \\
	C_{12} \ar[rr]^{\pi_1^2} \ar[rdd]_{\pi_2^1} & & C_1 \ar@{^{(}->}[u]_{\iota_1} \ar@<0.5ex>[ldd]^{\tau_1^2} \ar@<0.5ex>[rdd]^{\tau_1^3} & & C_{13} \ar[ll]_{\pi_1^3} \ar[ldd]^{\pi_3^1} \\
	\\
	& C_2 \ar@{^{(}->}[ld]_{\iota_2} \ar@<0.5ex>[ruu]^{\tau_2^1} \ar@<0.5ex>[rr]^{\tau_2^3} & & C_3 \ar@{^{(}->}[rd]^{\iota_3} \ar@<0.5ex>[luu]^{\tau_3^1} \ar@<0.5ex>[ll]^{\tau_3^2} \\
	\PP(V_2^\vee) & & & & \PP(V_3^\vee) \\
	       & & C_{23} \ar[uul]^{\tau_2^3} \ar[uur]_{\tau_3^2}
}} \end{equation}
summarizes the relationships between these curves.  By construction,
the maps $\tau_i^j$ and $\tau_j^i$ are inverses to one another.  These
maps from the curve $C_1$ to each projective space give the degree $3$
line bundles
\begin{align*}
  L_1 &:= \iota_1^* \OO_{\PP(V_1^\vee)}(1) \\
  L_2 &:= (\iota_2 \circ \tau_1^2)^* \OO_{\PP(V_2^\vee)}(1) \\
  L_3 &:= (\iota_3 \circ \tau_1^3)^* \OO_{\PP(V_3^\vee)}(1)
\end{align*}
on $C_1$.  For $1 \leq i \leq 3$, all three dimensions of sections of
the degree $3$ bundle $L_i$ arise from pulling back sections from
$\OO_{\PP(V_i^\vee)}(1)$.

\begin{lemma} \label{lem:L1notL2orL3}
	The degree $3$ line bundle $L_1$ on $C_1$ is not isomorphic to either
	of the line bundles $L_2$ or $L_3$.
\end{lemma}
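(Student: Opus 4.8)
The plan is to argue by contradiction: suppose $L_1 \cong L_2$, and I will show that this forces the ternary cubic $f_1$ to vanish identically, contradicting nondegeneracy. The first step is to promote the abstract isomorphism $L_1 \cong L_2$ to a linear one. Since $L_1 = \iota_1^*\OO_{\PP(V_1^\vee)}(1)$ and $L_2 = (\iota_2\circ\tau_1^2)^*\OO_{\PP(V_2^\vee)}(1)$ are both realized by the complete linear systems embedding $C_1$ (respectively $C_2\cong C_1$) into $\PP^2$, with $H^0(C_1,L_i)=V_i$ as recorded just before the lemma, an isomorphism $L_1\cong L_2$ induces an isomorphism on global sections and hence a projective-linear isomorphism $\Phi\colon \PP(V_1^\vee)\to\PP(V_2^\vee)$ satisfying $\iota_2\circ\tau_1^2 = \Phi\circ\iota_1$. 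Identifying $C_1$ with $\iota_1(C_1)$, this says that the isomorphism $\tau_1^2$ is the restriction to $C_1$ of a linear map, for which I also write $\Phi\colon V_1^\vee\to V_2^\vee$.

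Next I would exploit the defining property of $\tau_1^2$: by construction $\tau_1^2(w)\in\PP(V_2^\vee)$ spans the kernel of $\AA(w,\cdot,\cdot)\colon V_2^\vee\to V_3$. Thus for every $w$ on the affine cone over $C_1$ we have $\AA(w,\Phi(w),\cdot)=0$ in $V_3$. Consider the $V_3$-valued quadratic map $B(w):=\AA(w,\Phi(w),\cdot)$ on $V_1^\vee$: each of its three coordinates is a conic vanishing on the irreducible plane cubic $C_1$. An irreducible cubic is contained in no conic (its irreducible components have degree $3$, while those of a conic have degree $\le 2$), so every coordinate vanishes identically and $B\equiv 0$ on all of $V_1^\vee$. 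Polarizing $B$ and using $\mathrm{char}(K)\neq 2$ then shows that the bilinear map $(u,v)\mapsto\AA(u,\Phi(v),\cdot)$ is alternating.

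Finally I would transport this back to the cube. Recoordinatizing the middle factor by $\Phi$ yields a tensor $\widetilde{\AA}\in\wedge^2 V_1\tns V_3$, namely $\widetilde{\AA}(u,v,\cdot)=\AA(u,\Phi(v),\cdot)$, which is skew in $u,v$. In particular $\widetilde{\AA}(w,w,\cdot)=0$, so $w$ lies in the kernel of the map $\widetilde{\AA}(w,\cdot,\cdot)=\AA(w,\cdot,\cdot)\circ\Phi\colon V_1^\vee\to V_3$ for every nonzero $w$. Hence this map is singular for all $w$, and since $\det\bigl(\AA(w,\cdot,\cdot)\circ\Phi\bigr)=(\det\Phi)\,f_1(w)$ with $\det\Phi\neq 0$, we conclude $f_1\equiv 0$. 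This contradicts nondegeneracy, under which $f_1$ cuts out a smooth (in particular nonzero) cubic, so $L_1\not\cong L_2$. The identical argument with $\tau_1^3$ and $V_3$ in place of $\tau_1^2$ and $V_2$ gives $L_1\not\cong L_3$.

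I expect the main obstacle to be the first step: rigorously upgrading the geometric isomorphism $\tau_1^2$ to a genuine linear map $\Phi$ on the ambient projective planes (and confirming that $\tau_1^2$ really pulls $\OO(1)$ back to $L_2$, so that $H^0(C_1,L_2)=V_2$). Once $\tau_1^2=\Phi|_{C_1}$ is established, the remainder is a clean degree count (a conic cannot contain an irreducible cubic) together with the observation that skew-symmetry of the recoordinatized cube forces every slice $\AA(w,\cdot,\cdot)\circ\Phi$ to be singular.
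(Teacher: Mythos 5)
Your proposal is correct and follows essentially the same route as the paper: assume $L_1\cong L_2$, upgrade $\tau_1^2$ to a linear identification so that $\AA(w,\Phi(w),\cdot)$ vanishes on $C_1$, deduce skew-symmetry of the (recoordinatized) bilinear map, and derive a contradiction with nondegeneracy from the fact that odd-size skew matrices are singular. The only differences are cosmetic — you reach the contradiction via $f_1\equiv 0$ where the paper uses $f_3\equiv 0$, and you justify the step "vanishing on $C_1$ implies identically zero" more carefully (an irreducible cubic lies on no conic) than the paper's brief "$C_1$ spans $\PP(V_1^\vee)$".
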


\begin{proof}
It suffices, by symmetry, to show that $L_1$ and $L_2$
are not isomorphic line bundles.  If $L_1 \cong L_2$, then the curve
$C_{12}$ would lie on a diagonal of $\PP^2 \times \PP^2 =
\PP(V_1^\vee) \times \PP(V_2^\vee)$, and with an identification
of the bases for $V_1$ and $V_2$, we have $\AA(w,w,\cdot) = 0$ for all $w
\in C_1$.  Because $C_1$ spans $\PP(V_1^\vee)$, we must
have that $\AA(\cdot,\cdot,y)$ is a skew-symmetric $3 \times 3$ matrix for any
$y \in \PP(V_3^\vee)$.  Since odd-dimensional skew-symmetric matrices
have determinant zero, we would have $C_3 = \PP(V_3^\vee)$, which is a
contradiction.
\end{proof}

\begin{lemma}  \label{lem:RCrelation}
	The line bundles $L_1, L_2, L_3$ on $C_1$ defined above
        satisfy the relation
	\begin{equation} \label{eq:RCrelation}
		L_1 \tns L_1 \cong L_2 \tns L_3.
	\end{equation}
\end{lemma}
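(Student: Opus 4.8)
The plan is to realize all three line bundles inside a single four-term exact sequence of vector bundles on $C_1$ and then read off the relation by taking determinants. First I would package $\AA$ as a $3 \times 3$ matrix of linear forms: viewing $\AA \in V_1 \tns V_2 \tns V_3$ as a map $V_1^\vee \to \Hom(V_2^\vee, V_3)$ and letting $w$ range over $\PP(V_1^\vee)$, the cube $\AA$ determines a morphism of vector bundles
\[
\phi : V_2^\vee \tns \OO_{\PP(V_1^\vee)} \ra V_3 \tns \OO_{\PP(V_1^\vee)}(1),
\]
whose fiber at $w$ is $\AA(w,\cdot,\cdot)$ and whose determinant is the ternary cubic $f_1$ cutting out $C_1$. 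Restricting to $C_1$ and using $L_1 = \iota_1^* \OO_{\PP(V_1^\vee)}(1)$, I obtain $\phi : V_2^\vee \tns \OO_{C_1} \ra V_3 \tns L_1$. The rank-$2$ claim established just above (that $\AA(w^\dagger,\cdot,\cdot)$ has rank exactly $2$ for every $w^\dagger \in C_1$) guarantees that $\phi$ has constant rank $2$ on $C_1$, so its kernel and cokernel are line bundles and we get an exact sequence of bundles
\[
0 \ra \ker \phi \ra V_2^\vee \tns \OO_{C_1} \stackrel{\phi}{\ra} V_3 \tns L_1 \ra \operatorname{coker} \phi \ra 0.
\]

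Next I would identify the two outer terms using the maps $\tau_1^2$ and $\tau_1^3$. The fiber of $\ker\phi$ at $w$ is the one-dimensional kernel of $\AA(w,\cdot,\cdot)$ in $V_2^\vee$, which by the definition of $C_{12}$ is exactly the point $\tau_1^2(w) \in \PP(V_2^\vee)$; since the source $V_2^\vee \tns \OO_{C_1}$ is untwisted, the inclusion $\ker\phi \hookrightarrow V_2^\vee \tns \OO_{C_1}$ is the pullback of the tautological sub-line-bundle along $\iota_2 \circ \tau_1^2$, so (using the paper's convention that $\PP$ parametrizes lines) $\ker\phi \cong (\iota_2\circ\tau_1^2)^* \OO_{\PP(V_2^\vee)}(-1) = L_2^{-1}$. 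Dually, $\operatorname{coker}\phi$ is a quotient line bundle of $V_3 \tns L_1$, so $(\operatorname{coker}\phi)^\vee \tns L_1$ embeds as a sub-line-bundle of $V_3^\vee \tns \OO_{C_1}$, whose fiber at $w$ is the annihilator of $\operatorname{im}(\AA(w,\cdot,\cdot))$, i.e.\ the right kernel $\tau_1^3(w) \in \PP(V_3^\vee)$ defined by $C_{13}$. This gives $(\operatorname{coker}\phi)^\vee \tns L_1 \cong (\iota_3\circ\tau_1^3)^* \OO_{\PP(V_3^\vee)}(-1) = L_3^{-1}$, that is, $\operatorname{coker}\phi \cong L_1 \tns L_3$.

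Finally, taking determinants of the exact sequence — using that $\wedge^3 V_2^\vee$ and $\wedge^3 V_3$ are trivial and $\det(V_3 \tns L_1) = L_1^{\tns 3}$ — yields $\ker\phi \tns (\operatorname{coker}\phi)^{-1} \tns L_1^{\tns 3} \cong \OO_{C_1}$, hence $\operatorname{coker}\phi \cong L_1^{\tns 3} \tns L_2^{-1}$. Comparing the two descriptions of the cokernel gives $L_1^{\tns 3} \tns L_2^{-1} \cong L_1 \tns L_3$, which is exactly $L_1 \tns L_1 \cong L_2 \tns L_3$. The step requiring the most care is the identification of the cokernel: one must dualize correctly and track the twist by $L_1 = \iota_1^*\OO_{\PP(V_1^\vee)}(1)$ (the kernel is untwisted but the target of $\phi$ is not), and apply the lines convention consistently so that the tautological inclusions produce the correct $\OO(-1)$. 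Everything else is routine determinant bookkeeping, and the degree count $\deg\operatorname{coker}\phi = 9 - 3 = 6$ obtained both ways serves as a consistency check.
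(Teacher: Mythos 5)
Your proof is correct, and it takes a genuinely different route from the paper's. The paper argues with explicit effective divisors: it represents $L_2$ and $L_3$ by the two vanishing loci on $C_1$ of a single $2\times 2$ minor $A_{11}^*(w)$ of $\AA(w,\cdot,\cdot)$ (read once as a coordinate of $\tau_1^2(w)$, once as a coordinate of $\tau_1^3(w)$), and then observes that the sum $D_2+D_3$ is exactly $C_1\cap\{A_{11}^*=0\}$, a conic section, so $\OO(D_2+D_3)\cong\iota_1^*\OO_{\PP(V_1^\vee)}(2)=L_1^{\tns 2}$. You instead package the same information functorially: the constant-rank-$2$ morphism $\phi: V_2^\vee\tns\OO_{C_1}\to V_3\tns L_1$ has line-bundle kernel and cokernel, which you correctly identify as $L_2^{-1}$ and $L_1\tns L_3$ (your dualization and the twist by $L_1$ in the cokernel are handled correctly under the paper's lines convention), and the relation falls out of the multiplicativity of determinants in the four-term sequence. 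Your version buys cleanliness — no choice of minor, no genericity caveat about which $A_{i1}^*$ vanish, and an argument that works scheme-theoretically without passing to points over $\Kbar$ — at the cost of invoking the locally-free-kernel/cokernel fact for constant-rank maps; the paper's version is more elementary and makes visible the classical adjugate-matrix computation that also underlies the proof of Lemma \ref{lem:L1notL2orL3} and the explicit formulas used later. Both hinge on the same input, namely the rank-exactly-$2$ statement established just before the lemma.
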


\begin{proof}
  For $w \in C_1 \subset \PP(V_1^\vee)$, each coordinate of
  $\tau_1^2(w) \in \PP(V_2^\vee)$ is given by the $2 \times 2$ minors
  $A_{ij}^*(w)$ of $\AA(w,\cdot,\cdot)$ for some fixed $j$ where not all
  $A_{ij}^*(w)$ vanish.  Let $D_2$ be an effective degree $3$ divisor
  on $C_1$ such that $\OO(D_2) \cong L_2$.  Then the points of $D_2$
  (defined over an appropriate extension of $K$) are the preimage on $C_1$
  of the intersection of a hyperplane with the image of the curve $C_{12}$
  in $\PP(V_2^\vee)$; in particular,
  we may choose $D_2$, without loss of generality, to be the divisor
  defined by the locus where a particular minor, say $A_{11}^*(w)$,
  vanishes on the curve $C_1$ but at least one $A_{i1}^*(w)$ is
  nonzero.  Similarly, we may choose a divisor $D_3$ such that
  $\OO(D_3) \cong L_3$ to be the points $w \in C_1$ where $A_{11}^*(w)
  = 0$ but not all other $A_{j1}^*(w)$ vanish.  Then 
  the points of the degree $6$ effective divisor $D_2 +
  D_3$ are exactly the intersection of the curve $C_1$ and
  $A_{11}^*(w) = 0$, and the line bundle $\OO(D_2 + D_3)$ is isomorphic
  to the pullback of $\OO_{\PP(V_1^\vee)}(2)$ to $C_1$.
\end{proof}

The composition maps arising from traversing the inner triangle in \eqref{eq:RCcurvediag}, such as
\begin{equation*}
	\alpha_{123} := \tau_3^1 \circ \tau_2^3 \circ \tau_1^2 : C_1 \ra C_1,
\end{equation*}
are not the identity map.  A straightforward calculation
using Lemma \ref{lem:RCrelation} and its symmetric analogues shows that the automorphism
$\alpha_{123}$ of $C_1$ is given by translation by the point $P_{123}$ in $\Jac(C_1)$
corresponding to the degree $0$ line bundle $L_2 \tns L_1^{-1} \in \Pic^0(C_1)$.
More generally, the analogous three-cycle $\alpha_{ijk}$ is the automorphism of $C_i$ given by translation
by a point $P_{ijk} \in \Jac(C_i)$, where $P_{ijk}$ is the image of the point $\mathrm{sgn}(ijk) P_{123} = \pm P_{123} \in \Jac(C_1)$
under the canonical isomorphism $\Jac(C_1) \to \Jac(C_i)$.

\subsubsection{Bijections}

Because the geometric constructions in the previous section are $G$-invariant,
we have shown that a $G$-orbit of $V$ produces a genus one curve $C$ with three
degree $3$ line bundles $L_1$, $L_2$, $L_3$, such that $L_1^2 \cong L_2 \tns L_3$
and $L_1$ is not isomorphic to $L_2$ or $L_3$.  We show that this data exactly
determines a $G$-orbit of $V$.

\begin{proof}[Proof of Theorem $\ref{thm:333bij}$]
  We have already shown that there is a well-defined map $\Phi$ from
  $G$-orbits of nondegenerate elements of $V$ to the listed
  geometric data.  In the other direction, given such a quadruple $(C,
  L_1, L_2, L_3)$, we consider the multiplication map (\ie the cup
  product on cohomology)
  \begin{equation}
    \mu_{12}: H^0(C,L_1) \tns H^0(C,L_2) \ra H^0(C,L_1 \tns L_2).
  \end{equation}
  A simple case of \cite[Theorem 6]{mumford} shows that $\mu$ is
  surjective.
  Thus, by Riemann-Roch, the kernel of $\mu_{12}$ has dimension $9 - 6
  = 3$.  Now let $V_1 := H^0(C,L_1)$, \,$V_2 := H^0(C,L_2)$, and $V_3 :=
  (\ker(\mu_{12}))^\vee$, so the injection
  \begin{equation*}\label{kercube}
    \ker(\mu_{12}) \hookrightarrow H^0(C,L_1) \tns H^0(C,L_2)
  \end{equation*}
  gives an element of $\Hom(\ker(\mu_{12}), H^0(C,L_1) \tns
  H^0(C,L_2)) \cong V_1 \tns V_2 \tns V_3$.

  If quadruples $(C,L_1,L_2,L_3)$ and $(C',L_1',L_2',L_3')$ are equivalent, then there
  is an isomorphism $\sigma: C \to C'$ such
  that $\sigma^* L_i' \cong L_i$ for $1 \leq i \leq 3$.  The isomorphisms
  induced on the spaces of sections, \eg $H^0(C,L_1)
  \stackrel{\cong}{\ra} H^0(C',L_1')$, commute with the multiplication
  maps, so the Rubik's cubes constructed by their kernels differ only
  by choices of bases.

  We check that the two functors between $G$-orbits of $V$ and
  the equivalence classes of quadruples are inverse to one another.
  Given a quadruple $(C, L_1, L_2, L_3)$ of the appropriate type,
  let the images of the natural embeddings be $C_1 \subset \PP(H^0(C,L_1)^\vee)$,
  $C_2 \subset \PP(H^0(C,L_2)^\vee)$, and $C_{12} \subset \PP(H^0(C,L_1)^\vee) \times \PP(H^0(C,L_2)^\vee)$.
  We construct the trilinear form $\AA \in H^0(C,L_1)
  \tns H^0(C,L_2) \tns (\ker \mu_{12})^\vee$ as above.  Now let 
  	\begin{align*}
  		C_1' &:= \{w \in \PP(H^0(C,L_1)^\vee) : \det \AA(w,\cdot,\cdot) = 0 \} &&\subset \PP(H^0(C,L_1)^\vee) \\
  		C_2' &:= \{x \in \PP(H^0(C,L_2)^\vee) : \det \AA(\cdot,x,\cdot) = 0 \} &&\subset \PP(H^0(C,L_2)^\vee) \\
  		C_{12}' &:= \{(w,x) \in \PP(H^0(C,L_1)^\vee) \times \PP(H^0(C,L_2)^\vee)&&\hspace{-20pt} : \AA(w,x,\cdot) = 0 \} \\
						 &&&\subset \PP(H^0(C,L_1)^\vee) \times \PP(H^0(C,L_2)^\vee)
  	\end{align*}
  be the varieties cut out by the trilinear form $\AA(\cdot,\cdot,\cdot)$.
  
  We claim that $C_1 = C_1'$, \,$C_2 = C_2'$, and $C_{12} = C_{12}'$ as sets
  and thus as varieties, which implies that the curve $C_1'$ is
  isomorphic to $C$ and that the line bundles on $C_1'$ defined as
  pullbacks of $\OO(1)$ on $\PP(H^0(C,L_1)^\vee)$ and
  $\PP(H^0(C,L_2)^\vee)$ are isomorphic to the pullbacks of $L_1$ and
  $L_2$, respectively, via the isomorphism $C \stackrel{\cong}{\ra}
  C_1'$.  For all $(w, x) \in C_{12}$, the construction
  of $\AA$ as the kernel of $\mu_{12}$ implies that
  $\AA(w, x,\cdot) = 0$, so $C_{12}'$ contains $C_{12}$ and
  also $C_1' \supset C_1$ and $C_2' \supset C_2$.
  
  Now either the polynomial $\det \AA(w,\cdot,\cdot)$ or $\det \AA(\cdot,x,\cdot)$ is
  not identically $0$.  If they both were identically $0$, then
  $\AA(w,x,\cdot) = 0$ for all $(w,x) \in \PP(H^0(C,L_1)^\vee) \times
  \PP(H^0(C,L_2)^\vee)$, which contradicts the fact that $\AA$ must
  have nonzero tensor rank by construction.  Without loss of
  generality, assume $\det \AA(w,\cdot,\cdot)$ is not identically zero.  Then
  both $C_1'$ and $C_1$ are given by nonzero degree $3$ polynomials and
  thus define the same variety, so $C_1'$ is a smooth irreducible genus
  one curve in $\PP^2 = \PP(H^0(C,L_1)^\vee)$.  Because $C_1'$ is
  smooth, the trilinear form $\AA$ is nondegenerate, and $C_{12}'$ is
  also smooth and irreducible, hence exactly the same variety as $C_{12}$.
  
  It remains to show that the geometric data coming from a Rubik's cube produces the $G$-orbit
  of the same cube again. Given nondegenerate $\AA \in V_1 \tns V_2 \tns V_3$, where $V_i$ are
  $3$-dimensional vector spaces for $1 \leq i \leq 3$, we have
  described the associated quadruple $(C, L_1, L_2, L_3)$ as the image
  of~$\Phi$.  Then the vector spaces $V_i$ and $H^0(C,L_i)$ are
  naturally isomorphic for $i = 1$, $2$, and $V_3^\vee$ can be identified
  with the kernel of $\mu_{12}$.  With
  these identifications, the Rubik's cube constructed from this
  quadruple is well-defined and $G$-equivalent to the original
  $\AA$.
\end{proof}

We may also rewrite the geometric data in Theorem \ref{thm:333bij} in terms of $K$-points
on the Jacobian of~$C$.  Indeed, instead of keeping track of the line bundles
$L_2$ and $L_3$, it suffices to record the difference of the line bundles as a point
in the Jacobian.  However, not all points $P \in \Jac(C)(K)$ arise
in this way---only those that are the difference of two degree $3$ line bundles arise.  The point $P$ below is exactly
the same point on $\Jac(C)$ by which the curve is translated via the automorphism~$\alpha_{123}$:

\begin{cor} \label{cor:333CLP}
	Let $V_1$, $V_2$, and $V_3$ be $3$-dimensional vector spaces over $K$.  Then
	nondegenerate $G$-orbits of $V_1 \tns V_2 \tns V_3$ are in bijection
	with isomorphism classes of triples $(C,L,P)$, where $C$ is a genus one curve over $K$, $L$ is a degree
	$3$ line bundle on $C$, and $0 \neq P \in \Jac(C)(K)$ such that $P \in \Jac(C)(K) = \Pic^0(C)(K)$ is
	the difference of two elements of $\Pic^3(C)(K)$.
\end{cor}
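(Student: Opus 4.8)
The plan is to deduce the corollary from the bijection of Theorem~\ref{thm:333bij} by a purely formal reparametrization of the geometric data, so that essentially no new geometry is needed. First I would observe that the relation $L_1^{\tns 2}\cong L_2\tns L_3$ renders $L_3$ redundant, since it is recovered as $L_3\cong L_1^{\tns 2}\tns L_2^{-1}$. Thus a quadruple $(C,L_1,L_2,L_3)$ of the type appearing in Theorem~\ref{thm:333bij} carries exactly the information of the pair $(C,L_1,L_2)$ of degree~$3$ line bundles. I would then set $L:=L_1$ and record the \emph{difference} of the two classes, $P:=[L_2]-[L_1]\in\Pic^0(C)(K)=\Jac(C)(K)$, rather than the two bundles separately. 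The inverse assignment sends $(C,L,P)$ to the quadruple with $L_1:=L$, $\;L_2:=L\tns\OO(P)$, and $L_3:=L\tns\OO(-P)$, where $\OO(P)$ denotes the degree~$0$ line bundle attached to $P$; all three $L_i$ then have degree~$3$, and the relation $L_1^{\tns 2}\cong L_2\tns L_3$ holds automatically.

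Next I would check that the open conditions match. The two non-isomorphism hypotheses of Theorem~\ref{thm:333bij} both translate into a single condition on $P$: we have $L_1\cong L_2$ precisely when $\OO(P)\cong\OO$, i.e. when $P=0$, and likewise $L_1\cong L_3$ precisely when $\OO(-P)\cong\OO$, i.e. again when $P=0$. Hence the pair ``$L_1\not\cong L_2$ and $L_1\not\cong L_3$'' is equivalent to the single requirement $P\neq 0$, and no information is lost in passing between the two descriptions. (In particular $L_2\cong L_3$ is \emph{allowed}, corresponding to $P$ being $2$-torsion, which is exactly the symmetric degeneration treated in \S\ref{sec:symRCpreview}.) It remains to identify which $P$ arise: by construction $P=[L_2]-[L_1]$ is a difference of two $K$-rational degree~$3$ line bundle classes, so $P$ lies in the stated subgroup; conversely any such $P$ is realized by the inverse assignment, since $L\tns\OO(P)$ is again a $K$-rational degree~$3$ line bundle. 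This yields the claimed bijection at the level of objects.

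Finally I would verify that the correspondence respects isomorphisms of data, so that it descends to isomorphism classes: an isomorphism $\sigma\colon C\to C'$ with $\sigma^*L_i'\cong L_i$ for $i=1,2,3$ is the same as an isomorphism with $\sigma^*L'\cong L$ carrying $[L_2']-[L_1']$ to $[L_2]-[L_1]$, i.e. respecting $P$. Composing this object-level bijection with Theorem~\ref{thm:333bij} then gives the corollary. The one point I would treat most carefully—the only genuinely delicate step—is the precise identification of the subgroup of admissible $P$, namely the differences of two classes in $\Pic^3(C)(K)$. Here I would note that a degree~$3$ line bundle on a genus one curve over $K$ has $h^0=3$, hence a nonzero $K$-rational section and therefore a $K$-rational effective divisor; consequently every element of $\Pic^3(C)(K)$ contains a $K$-rational divisor, and the subgroup of differences coincides with the degree~$3$ period-index subgroup $\Jac^3_C(K)$ appearing in Theorem~\ref{thm:RCparam1}. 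I would also remark that the point $P$ produced here is exactly the translation point $P=P_{123}$ arising from the non-commuting triangle \eqref{eq:RCcurvediag}, so that the divisor-difference formulation of the corollary and the period-index formulation of Theorem~\ref{thm:RCparam1} describe the same parametrization.
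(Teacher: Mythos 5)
Your proposal is correct and takes essentially the same route as the paper's own (very brief) derivation: Corollary \ref{cor:333CLP} is obtained from Theorem \ref{thm:333bij} precisely by discarding the redundant $L_3\cong L_1^{\tns 2}\tns L_2^{-1}$ and recording $P=[L_2]-[L_1]$, with both non-isomorphism hypotheses collapsing to the single condition $P\neq 0$. The only caveat concerns your closing remark: the identification of the subgroup of differences with the period-index subgroup $\Jac_C^3(K)$ does not follow from the $h^0=3$ observation alone --- the paper establishes it in Proposition \ref{prop:periodindexsubgp} via the bilinearized obstruction map --- but that identification is only needed for Theorem \ref{thm:RCparam1}, not for the corollary itself, so this does not affect the validity of your argument.
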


In Appendix~\ref{appendix:torsors}, we show in Proposition \ref{prop:periodindexsubgp} that
such points $P \in \Jac(C)(K)$ are exactly the nonzero points in the period-index subgroup $\Jac_C^3(K)$, 
yielding Theorem \ref{thm:RCparam1}.

\subsubsection{Invariant theory} \label{sec:333invthy}

The ring of $\SL(V_1) \times \SL(V_2) \times \SL(V_3)$-invariants of the representation $V_1 \tns V_2 \tns V_3$
for three-dimensional vector spaces $V_1$, $V_2$, $V_3$ is generated freely by polynomial
invariants of degrees $6$, $9$, and $12$, respectively.   These have interpretations
in terms of the  geometric data described in the bijection of
Corollary \ref{cor:333CLP}.

\begin{prop} \label{prop:333invthy}
	There exists a choice of normalization for the relative invariants $c_6$, $c_9$, $c_{12}$
	such that given a nondegenerate tensor in $V_1 \tns V_2 \tns V_3$
	corresponding to $(C,L,P)$ as in Corollary~$\ref{cor:333CLP}$, the Jacobian of $C$
	may be expressed in generalized Weierstrass form as
		\begin{equation} \label{eq:333EC}
		E : y^2 + c_9  y = x^3 + c_6 x^2 + c_{12} x
		\end{equation}
	and the point $P$ on $E$ is given by $(0,0)$.
\end{prop}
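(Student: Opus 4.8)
The plan is to reduce everything to the explicit Weierstrass model of the Jacobian already available from the theory of ternary cubic forms, and then to locate the marked point $P$ inside the graded invariant ring by a weight argument. First I would recall from \S\ref{sec:ternarycubics} that, writing $f_1 = \det(\AA(\,\cdot\,,\cdot,\cdot))$ for the covariant ternary cubic cut out by $\AA$, the Jacobian of $C = C(f_1)$ has Weierstrass model
$$E: y^2 = x^3 - 27 S\, x - 54 T,$$
where $S = S(f_1)$ and $T = T(f_1)$ are the $\SL_3(K)^3$-invariants of the Rubik's cube, of degrees $12$ and $18$ respectively in the entries of $\AA$. Since the invariant ring is freely generated by homogeneous invariants of degrees $6$, $9$, $12$, its graded pieces in degrees $6$ and $9$ are one-dimensional, and its degree-$12$ piece is two-dimensional, spanned by the square of the degree-$6$ generator and an indecomposable degree-$12$ invariant. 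Assigning $x$ weight $6$ and $y$ weight $9$ makes $E$ weighted-homogeneous of weight $18$, consistent with $S$ and $T$ having weights $12$ and $18$.

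Next I would analyze the marked point $P \in \Jac(C)(K)$ of Corollary \ref{cor:333CLP}, which is the translation point of the noncommuting automorphism $\alpha_{123} = \tau_3^1 \circ \tau_2^3 \circ \tau_1^2$ and corresponds to $L_2 \tns L_1^{-1}$. Its coordinates $(x_P, y_P)$ in the model $E$ are $\SL_3(K)^3$-invariant functions of $\AA$, regular on the nondegenerate locus and weighted-homogeneous of weights $6$ and $9$. Granting that $x_P$ and $y_P$ are \emph{polynomial} invariants (the key technical point, discussed below), the one-dimensionality of the degree-$6$ and degree-$9$ graded pieces forces $x_P$ to be a scalar multiple of the degree-$6$ generator and $y_P$ a scalar multiple of the degree-$9$ generator; both are nonzero, since $P$ is generically a nontrivial, non-$2$-torsion point, so $x_P, y_P$ do not vanish identically. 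I would then translate $P$ to the origin via $x \mapsto x + x_P$, $y \mapsto y + y_P$; because $E$ has no $xy$-term and the constant term cancels (as $P \in E$), one obtains
$$y^2 + (2 y_P)\, y = x^3 + (3 x_P)\, x^2 + (3 x_P^2 - 27 S)\, x,$$
with $P$ now at $(0,0)$. Setting $c_6 := 3 x_P$, $c_9 := 2 y_P$, and $c_{12} := 3 x_P^2 - 27 S$ yields invariants of degrees $6$, $9$, $12$ and exactly the desired generalized Weierstrass form; this choice of normalization is what the proposition asserts to exist.

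Finally I would verify that $c_6, c_9, c_{12}$ freely generate the invariant ring, so that they form a legitimate set of generators: $c_6$ and $c_9$ are nonzero multiples of the degree-$6$ and degree-$9$ generators, while $c_{12}$ is indecomposable because $3x_P^2$ is decomposable (a square of a degree-$6$ invariant) and $S$ is an indecomposable degree-$12$ invariant — equivalently, the degree-$12$ generator may be taken to be $-27S$, as recorded in \S\ref{sec:ocrc}. By the graded Nakayama lemma these elements generate, and algebraic independence follows from the known structure of the ring. I expect the main obstacle to lie in the two inputs used above: (i) showing that $x_P$ and $y_P$ are honest polynomial invariants of weights $6$ and $9$, rather than rational invariants with possible poles along the degree-$36$ discriminant; and (ii) confirming that $S$ is genuinely indecomposable. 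For (i), the cleanest route is to compute $\alpha_{123}$ explicitly: the maps $\tau_i^j$ are given by the $2\times 2$ minors of the slices of $\AA$, so translation-by-$P$ is an explicit regular automorphism of $E$ from which $x_P$ and $y_P$ can be read off as polynomial expressions in the entries of $\AA$, with the weight constraints then pinning them down. For (ii), it suffices to invoke the algebraic independence of $S$ and $T$ (equivalently, that the $j$-invariant is nonconstant over the family), which rules out $S \in K[c_6]$.
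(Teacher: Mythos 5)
Your proposal is correct and is essentially the paper's own ``more abstract argument'': the coefficients of the normal form $y^2+c_9y=x^3+c_6x^2+c_{12}x$ with $P$ at $(0,0)$ are determined by the geometric data up to weighted scaling of degrees $6$, $9$, $12$, hence are forced to be (multiples of) the free generators of the invariant ring, with the polynomiality of $x_P,y_P$ deferred to the same explicit computation that the paper also declines to write out. Your normalization $c_6=3x_P$, $c_9=2y_P$, $c_{12}=3x_P^2-27S$ is the paper's up to the admissible rescaling $(x,y)\mapsto(36x,216y)$, consistent with its recorded relations $d_4(f)=16c_6^2-48c_{12}$, $d_6(f)=-64c_6^3-216c_9^2+288c_6c_{12}$ and $P=(12c_6,108c_9)$ on the short Weierstrass model.
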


The $\SL(V_1)$-invariants of the ternary cubic $f = f_1$ given by $(C,L)$ are also clearly invariants 
for the whole space $V_1 \tns V_2 \tns V_3$, since $f$ is fixed under the action of 
$\SL(V_2) \times \SL(V_3)$.  The polynomials $d_4(f)$ and $d_6(f)$
have degrees $12$ and $18$ as invariants of $V_1 \tns V_2 \tns V_3$.  One may check
that with the normalizations above, we have
	\begin{eqnarray*}
		d_4(f) = 16 c_6^2 - 48 c_{12} & \textrm{and} & d_6(f) = - 64 c_6^3 - 216 c_9^2 + 288 c_6 c_{12} 
	\end{eqnarray*}
so that $(E,P)$ may be taken by linear changes of variables to the elliptic curve
	$$E': y^2 = x^3 - 27 d_4(f) x - 54 d_6(f)$$
with the point $P$ becoming $(12 c_6, 108 c_9)$ on $E'$.
This recovers the interpretation of the invariants described in~\S\ref{sec:ocrc}.

One proof of the above proposition is obtained by computing the expression for
the point $P$ in terms of the orbit.  We omit these computations, as all the invariants
have a very large number of terms!%
\footnote{The degree $9$ invariant is also known as the Strassen invariant and has a simple
closed form expression \cite{sturmfels}.  If $\phi$ is represented by three 
$3 \times 3$ matrices $M_1$, $M_2$, and $M_3$
by ``slicing'' in any of the three directions with $\det M_2 \neq 0$, then $c_9(\phi)$ may
be given by the expression $(\det M_2^2(M_1 M_2^{-1} M_3 - M_3 M_2^{-1} M_1))$.}
A more abstract argument is simple: any elliptic curve with a non-identity rational point $P$ may be written in the form
\eqref{eq:333EC}, for some $c_6$, $c_9$, $c_{12} \in K$, where $(0,0)$ is the point $P$.  Then the numbers $c_6$, $c_9$, $c_{12}$ are algebraic invariants of the geometric data $(C, L_1, L_2, L_3)$ coming from a Rubik's cube, up to scaling by $\lambda^6$, $\lambda^9$, $\lambda^{12}$, respectively, for some $\lambda \in K^*$.  Thus, these must be relative $G$-invariants
of the representation $V$.

Given this interpretation of the invariants, we may specialize the correspondence
for a particular elliptic curve $E$ over $K$.  
We think of $c_6$, $c_9$, and $c_{12}$
as polynomial functions of the corresponding degree from $V_1 \tns V_2 \tns V_3 \to K$
(which are, of course, $\SL(V_1) \times \SL(V_2) \times \SL(V_3)$-invariant).  Let
$d_{12} = 16 c_6^2 - 48 c_{12}$ and $d_{18} = - 64 c_6^3 - 216 c_9^2 + 288 c_6 c_{12}$.

\begin{cor} \label{cor:333torsors}
	Let $E$ be an elliptic curve over $K$, given in Weierstrass form as
		$$y^2 = x^3 - 27 a_4 x - 54 a_6.$$
	Then the subset of triples $(\alpha_1,\alpha_2,\alpha_3) \in H^1(K,\Theta_{E,3})^3$ such that 
		\begin{enumerate}[{\rm (i)}]
			\item the sum of the images of $\alpha_i$ under the natural map
					$H^1(K,\Theta_{E,3}) \to H^1(K,E[3])$ is zero,
			\item $\alpha_1$ is not equal to $\alpha_2$ or $\alpha_3$, and 
			\item the images of $\alpha_i$ under $H^1(K,\Theta_{E,3}) \to H^1(K,E[3]) \to H^1(K,E)$ all coincide
		\end{enumerate}
	are in bijection with the $\GL(V_1) \times \GL(V_2) \times \GL(V_3)$-orbits
	of $V_1 \tns V_2 \tns V_3$ that have representatives $\AA$ with
	$d_{12}(\AA) = \lambda^{12}a_4$ and $d_{18}(\AA) = \lambda^{18}a_6$, for some nonzero $\lambda \in K^*$.
\end{cor}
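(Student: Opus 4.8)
The plan is to bootstrap from the two bijections already established: Theorem~\ref{thm:333bij}/Corollary~\ref{cor:333CLP}, which identify nondegenerate $G$-orbits with quadruples $(C,L_1,L_2,L_3)$ (equivalently triples $(C,L,P)$), and Proposition~\ref{prop:ternarycubictorsors} together with the discussion preceding it, which identifies, for a fixed elliptic curve $E$, the pairs $(C,L)$ of a genus one curve with Jacobian $E$ and a degree $3$ line bundle (with a chosen identification $\Jac(C)\cong E$) with the pointed set $H^1(K,\Theta_{E,3})$. First I would observe that, by the invariant-theoretic computation of \S\ref{sec:333invthy} (Proposition~\ref{prop:333invthy} and the formulas for $d_{12},d_{18}$), the condition $d_{12}(\AA)=\lambda^{12}a_4$, $d_{18}(\AA)=\lambda^{18}a_6$ for some $\lambda\in K^*$ says precisely that the Jacobian of the curve attached to the orbit is isomorphic to the given $E$. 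Thus the right-hand side of the corollary is exactly the set of quadruples $(C,L_1,L_2,L_3)$ as in Theorem~\ref{thm:333bij} whose Jacobian is $E$.

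Next I would define the candidate map by sending such a quadruple to the triple $\bigl(\alpha(C,L_1),\alpha(C,L_2),\alpha(C,L_3)\bigr)$, where $\alpha(C,L_i)\in H^1(K,\Theta_{E,3})$ is the class attached to $(C,L_i)$ by Proposition~\ref{prop:ternarycubictorsors}. The three defining conditions should then be read off from the two exact sequences governing $\Theta_{E,3}$: the theta extension $1\to\Gm\to\Theta_{E,3}\to E[3]\to1$, which by Hilbert~$90$ gives an injection $H^1(K,\Theta_{E,3})\hookrightarrow H^1(K,E[3])$ with image the kernel of the connecting map to $\Br(K)=H^2(K,\Gm)$; and the inclusion $E[3]\hookrightarrow E$, which furnishes $H^1(K,E[3])\to H^1(K,E)$. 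Under the composite $H^1(K,\Theta_{E,3})\to H^1(K,E[3])\to H^1(K,E)$ the class $\alpha(C,L_i)$ maps to the torsor class of $C$; hence condition~(iii) says exactly that the three line bundles live on a common curve $C$, which is the content of the isomorphisms $C_1\cong C_2\cong C_3$ in \S\ref{sec:RCgeom}, and condition~(ii) is the translation of Lemma~\ref{lem:L1notL2orL3}.

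The crux is condition~(i). Fixing the common curve $C$, I would use the Kummer sequence $E(K)/3E(K)\hookrightarrow H^1(K,E[3])\to H^1(K,E)[3]$ to place the images $\bar\alpha_i$ of the three classes in a single fibre over the class of $C$, so that $\bar\alpha_i-\bar\alpha_1$ is the Kummer image of $L_i\tns L_1^{-1}\in\Pic^0(C)(K)=E(K)$. Since $E[3]$ is killed by $3$, so is $H^1(K,E[3])$; hence $3\bar\alpha_1=0$ and the sum $\bar\alpha_1+\bar\alpha_2+\bar\alpha_3$ equals the Kummer image of $(L_2\tns L_1^{-1})\tns(L_3\tns L_1^{-1})$. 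Condition~(i) — the vanishing of this sum — therefore corresponds to the relation $L_1^{\tns 2}\cong L_2\tns L_3$ of Lemma~\ref{lem:RCrelation}, modulo the kernel $3E(K)$ of the Kummer map. I would then build the inverse from a triple satisfying (i)–(iii): condition~(iii) produces the common curve, the unobstructedness built into the image of $H^1(K,\Theta_{E,3})$ produces the line bundles, and (i)–(ii) supply the relation and the nonisomorphism, after which Theorem~\ref{thm:333bij} returns an orbit.

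The main obstacle will be exactly this last reconciliation. The orbit remembers the $L_i$ only up to a simultaneous $3E(K)$-twist together with the \emph{exact} isomorphism $L_1^{\tns 2}\cong L_2\tns L_3$, whereas the three cohomology classes remember each $L_i$ only modulo $3E(K)$ independently and record the relation only modulo $3E(K)$ through condition~(i). Turning the correspondence into an honest bijection thus requires a careful accounting of the interplay between $3E(K)$, the three-torsion $E[3]$, and the period-index subgroup $\Jac_C^3(K)$ in which the differences $L_i\tns L_1^{-1}$ lie (by Corollary~\ref{cor:333CLP} and the appendix), together with the residual $\Aut(E)$-action — in particular the involution $f\mapsto-f$ of the remark after Proposition~\ref{prop:ternarycubictorsors}, which sends each $\alpha_i\mapsto-\alpha_i$ and matches the sign ambiguity $P\leftrightarrow P'=-P$ of Theorem~\ref{thm:RCparam1}.
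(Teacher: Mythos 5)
Your route is the one the paper itself takes: the paper offers essentially no argument for this corollary beyond the remark that the invariant condition restricts Theorem~\ref{thm:333bij} to orbits whose Jacobian is $E$, combined with the dictionary of Proposition~\ref{prop:ternarycubictorsors} converting each pair $(C,L_i)$ into a class $\alpha_i\in H^1(K,\Theta_{E,3})$. Your translations of condition (iii) into ``the three line bundles live on a common torsor'' and of (ii) into Lemma~\ref{lem:L1notL2orL3} are what is intended, and your Kummer-sequence computation showing that $\bar\alpha_1+\bar\alpha_2+\bar\alpha_3$ equals the Kummer image of $L_2\tns L_3\tns L_1^{-2}$ is correct. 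In this respect your write-up is more careful than the paper's.

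However, the ``main obstacle'' you flag at the end is a genuine gap, and neither your proposal nor the paper closes it. Since each $\alpha_i$ determines $L_i$ (once the underlying torsor is identified with $C$) only modulo $3E(K)$, condition (i) forces only $L_2\tns L_3\tns L_1^{-2}\in 3E(K)$ rather than the exact relation of Lemma~\ref{lem:RCrelation}, and condition (ii) asserts $\alpha_1\neq\alpha_2$, which is strictly stronger than $L_1\not\cong L_2$. By Corollary~\ref{cor:333CLP} the orbit remembers the point $P=L_2\tns L_1^{-1}\in\Jac_C^3(K)$ on the nose: the quadruples $(C,L_1,L_2,L_3)$ and $(C,L_1,L_2\tns M^{\tns 3},L_3\tns M^{\tns -3})$ for $M\in\Pic^0(C)(K)$ with $M^{\tns 3}\neq\OO$ both satisfy the hypotheses of Theorem~\ref{thm:333bij}, are generically non-isomorphic, yet yield the identical triple $(\alpha_1,\alpha_2,\alpha_3)$ --- so the assignment from orbits to triples is not injective, and if $L_2\tns L_1^{-1}$ happens to lie in $3E(K)$ the image triple even violates (ii). On top of this sits the ambiguity you note at the end: because the invariants are only pinned down up to $\lambda$-scaling, the orbit determines the identification $\Jac(C)\cong E$ only up to $\Aut(E)$, so the triple is well defined only up to a simultaneous $\Aut(E)$-action. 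No amount of ``careful accounting'' produces a literal bijection here; a complete treatment must either quotient both sides appropriately (which is effectively what happens in the Selmer-group application mentioned after the corollary) or record a corrected statement. Your proposal correctly isolates exactly these points of failure but stops short of resolving or repairing them.
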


The condition on the invariants merely ensures that both sides of the bijection are restricted
to exactly those Rubik's cubes corresponding to curves with Jacobian $E$.

Given any elliptic curve over $K$, there may be no such $(\alpha_1,\alpha_2,\alpha_3)$, for example,
if the elliptic curve does not have any non-identity rational points.  However, given an elliptic curve $E$ over $K$
of the form \eqref{eq:333EC}, there always exists a $G$-orbit of $V$ where $E$ is the Jacobian of the associated genus one curve.  In particular, taking
$C$ to simply be the trivial torsor $E$ with the degree $3$ line bundles $\OO(3\cdot O)$ and
$\OO(2\cdot O + P)$, where $P$ is the point $(0,0)$ on $E$, constructs such an orbit.

\begin{cor} \label{cor:333surjorbits}
The map from nondegenerate orbits $V(K)/G(K)$ to elliptic curves of the form
	$$y^2 + c_9 y = x^3 + c_6 x^2 + c_{12} x$$
with $c_6$, $c_9$, $c_{12} \in K$, obtained by
taking the Jacobian of the genus one curve associated to the orbit, is surjective.
\end{cor}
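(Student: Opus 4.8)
The plan is to prove surjectivity by exhibiting, for each elliptic curve $E$ presented in the form \eqref{eq:333EC}, an explicit nondegenerate orbit whose associated genus one curve has Jacobian equal to $E$ in precisely that presentation. This is the construction indicated in the paragraph preceding the corollary: one takes the \emph{trivial} torsor, so that all of the period-index conditions become automatic. The real content is then simply to verify that the resulting triple satisfies the hypotheses of Corollary \ref{cor:333CLP} and that applying the invariant-theoretic dictionary of Proposition \ref{prop:333invthy} returns the same normalized Weierstrass data we started with.

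First I would observe that any $E$ of the form \eqref{eq:333EC} carries the $K$-rational point $P := (0,0)$, which is distinct from the identity $O$ (the point at infinity). Next I would form the triple $(C,L,P)$ with $C := E$, the trivial torsor of itself, $L := \OO(3\cdot O)$ a degree $3$ line bundle, and marked point $P$. Then I would check that $(C,L,P)$ satisfies the hypotheses of Corollary \ref{cor:333CLP}: the curve is smooth of genus one (since $E$ is an elliptic curve, i.e.\ its discriminant is nonzero, which is also what guarantees nondegeneracy of the orbit), $L$ has degree $3$, and $P$ is a nonzero element of $\Jac(C)(K)$ arising as a difference of two degree $3$ line bundles — explicitly $P = [\OO(2\cdot O + P)] - [\OO(3\cdot O)]$ in $\Pic^0(C)(K) = \Jac(C)(K)$, both bundles being defined over $K$ since $O$ and $P$ are $K$-rational. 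Invoking Corollary \ref{cor:333CLP} then produces the desired nondegenerate $G$-orbit of $V = V_1 \tns V_2 \tns V_3$. Since the associated genus one curve is $C = E$, whose Jacobian is $E$ itself, and since by Proposition \ref{prop:333invthy} the orbit's relative invariants $c_6$, $c_9$, $c_{12}$ present $\Jac(C)$ in the form \eqref{eq:333EC} with the marked point at $(0,0)$, the map sends this orbit back to the original $(c_6,c_9,c_{12})$. As every such triple is realized in this way, the map is surjective.

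The only step requiring genuine thought — the rest being essentially bookkeeping — is confirming that $P$ automatically lies in the degree $3$ period-index subgroup. This is exactly where passing to the trivial torsor pays off: on $C = E$ every $K$-point of the Jacobian is represented by an honest $K$-rational divisor, so $\Jac_C^3(K)$ is all of $E(K)$ (cf.\ Appendix \ref{appendix:torsors}, Proposition \ref{prop:periodindexsubgp}), and the divisor-difference condition of Corollary \ref{cor:333CLP} holds for free. I should also remark that the choice of $L = \OO(3\cdot O)$ and the pair $\OO(3\cdot O), \OO(2\cdot O + P)$ of \emph{non-isomorphic} degree $3$ line bundles (non-isomorphic precisely because $P \neq O$) is what makes the construction land in the nondegenerate locus of Theorem \ref{thm:333bij}, completing the argument.
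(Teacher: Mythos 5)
Your proposal is correct and follows exactly the paper's own argument: the paper likewise takes $C$ to be the trivial torsor $E$ with the degree $3$ line bundles $\OO(3\cdot O)$ and $\OO(2\cdot O + P)$, where $P=(0,0)$, and the additional verifications you supply (that $P$ lies in $\Jac_C^3(K)$ because the torsor is trivial, and that the two line bundles are non-isomorphic since $P\neq O$) are the correct bookkeeping the paper leaves implicit.
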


For a global number or function field $K$, if we restrict to orbits where the curve $C$ is everywhere locally soluble
(meaning that $C$ has a $K_\nu$-point for every place $\nu$ of $K$), 
Corollary \ref{cor:333torsors} simplifies significantly
and yields a bijection between certain orbits and elements of the $3$-Selmer group of elliptic curves of the form 
\eqref{eq:333EC}.  See \cite{cofreecounting} for details and applications.

\subsection{Symmetric Rubik's cubes} \label{sec:sym333}

In this subsection, we study ``symmetrized'' Rubik's cubes.  There is a natural $S_3$-action on each Rubik's cube, obtained by
permuting the factors $V_i$ for $i = 1$, $2$, $3$, and we study the subset of Rubik's cubes
that are invariant under the subgroup $S_2\subset S_3$, or under all of $S_3$.

\subsubsection{Doubly symmetric Rubik's cubes} \label{sec:2symRC}

The simplest case is that of doubly symmetric Rubik's cubes, i.e., triples of $3 \times 3$ symmetric matrices.
This is the subrepresentation $V_1 \tns \Sym_2V_2 \subset V_1 \tns V_2 \tns V_2$ of $\GL(V_1) \times \GL(V_2)$, for three-dimensional $K$-vector spaces
$V_1$ and $V_2$.  Away from characteristic $2$, this is the same as the quotient $V_1 \tns \Sym^2V_2$.  We give the orbit parametrization for this space in the following basis-free version of Theorem \ref{thm:2symRCpreview}:

\begin{thm} \label{thm:2symRC}
	Let $V_1$ and $V_2$ be $3$-dimensional vector spaces over $K$.  Then the nondegenerate
	$\GL(V_1)\times\GL(V_2)$-orbits of $V_1 \tns \Sym_2V_2$ are in bijection with 
	isomorphism classes of triples $(C,L,P)$, where $C$ is a genus one curve
	over $K$, $L$ is a degree $3$ line bundle on $C$, and $P$ is a nonzero $2$-torsion point of $\Jac(C)(K)$.
\end{thm}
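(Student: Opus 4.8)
The plan is to deduce this theorem from the parametrization of general Rubik's cubes (Corollary~\ref{cor:333CLP}) by restricting to the symmetric locus. First I would realize $V_1 \tns \Sym_2 V_2$ inside $V_1 \tns V_2 \tns V_3$, with $V_3 := V_2$, as the subspace $\{\AA : \AA(v,\cdot,\cdot) \text{ is symmetric for all } v \in V_1^\vee\}$ of cubes symmetric in the last two factors; equivalently, a triple of symmetric $3 \times 3$ matrices. Under this identification $\GL(V_1) \times \GL(V_2)$ is precisely the subgroup of $G = \GL(V_1) \times \GL(V_2) \times \GL(V_3)$ preserving this subspace, acting diagonally on the last two factors, and the degree $36$ nondegeneracy condition restricts to the one in the statement. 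Thus a nondegenerate doubly symmetric cube is in particular a nondegenerate Rubik's cube, and Corollary~\ref{cor:333CLP} attaches to it a triple $(C, L, P)$ with $L = L_1$ and $P = L_2 \tns L_1^{-1} \in \Jac(C)(K)$.

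Next I would check that the symmetry forces $P$ to be a nonzero $2$-torsion point. The transposition $\sigma \in S_3$ swapping the second and third factors acts on the geometric data of Theorem~\ref{thm:333bij} by interchanging $(C_2, L_2)$ with $(C_3, L_3)$ and fixing $(C_1, L_1)$, since it interchanges the defining conditions of $C_{12}$ and $C_{13}$. For a cube fixed by $\sigma$ the associated data must likewise be $\sigma$-invariant, so $L_2 \cong L_3$. Combining this with the relation $L_1^{\tns 2} \cong L_2 \tns L_3$ of Lemma~\ref{lem:RCrelation} gives $L_1^{\tns 2} \cong L_2^{\tns 2}$, i.e. $P^{\tns 2} \cong \OO_C$; and Lemma~\ref{lem:L1notL2orL3} gives $L_1 \not\cong L_2$, i.e. $P \neq 0$. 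Hence $P$ is a nonzero $2$-torsion point, so the map on orbits lands in the asserted data.

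For surjectivity I would run the inverse construction from the proof of Theorem~\ref{thm:333bij} with $L_1 := L$ and $L_2 := L_3 := L \tns P$ (note $L_2 = L_3$ since $P^{-1} \cong P$), forming the cube $\AA \in H^0(C,L_1) \tns H^0(C,L_2) \tns (\ker \mu_{12})^\vee$ from the kernel of $\mu_{12} : H^0(C,L_1) \tns H^0(C,L_2) \to H^0(C, L_1 \tns L_2)$. The essential point is to produce a natural self-duality $(\ker \mu_{12})^\vee \cong H^0(C,L_3) = H^0(C,L_2)$ and to verify that, under it, $\AA$ genuinely lies in the symmetric subspace $V_1 \tns \Sym_2 V_2$. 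Here I would use that $C$ has genus one, so $\omega_C \cong \OO_C$ and Serre duality identifies $\ker \mu_{12}$ with $H^0(C, L_3)^\vee$; the equality $L_2 = L_3$ then makes the two factors interchangeable and forces $\AA$ to be symmetric. This is exactly the classical Dixon--Beauville correspondence between symmetric determinantal representations of a plane cubic and its non-effective theta characteristics, which on a genus one curve (where $\theta^{\tns 2} \cong \omega_C \cong \OO_C$) are precisely the nonzero $2$-torsion points; I would, however, carry it out intrinsically so that it is valid over $K$ and compatible with the group actions.

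The main obstacle will be the injectivity and orbit-matching step: given two nondegenerate doubly symmetric cubes yielding isomorphic data, Theorem~\ref{thm:333bij} provides a $G$-equivalence between them, and I must upgrade this to a $\GL(V_1) \times \GL(V_2)$-equivalence respecting the symmetry (i.e. lying in the diagonal subgroup of the last two factors). This amounts to showing that the symmetric structure on such a cube is unique up to the smaller group, which I expect to follow from the self-duality identification above together with the description of the stabilizer in $G$ recorded after Theorem~\ref{thm:333bij}. Once these descent and self-duality points are settled, the two constructions are visibly inverse, giving the desired bijection.
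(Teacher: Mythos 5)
Your proposal is correct and follows the same overall strategy as the paper: both directions are obtained by restricting Corollary~\ref{cor:333CLP} to the locus of cubes symmetric in the last two factors, and your forward direction (the symmetry forces $L_2 \cong L_3$, so Lemmas~\ref{lem:L1notL2orL3} and~\ref{lem:RCrelation} make $P$ a nonzero $2$-torsion point) is the paper's argument essentially verbatim. The two treatments diverge only at the genuinely delicate step of the reverse direction, namely exhibiting the cube $\ker \mu_{12} \hookrightarrow H^0(C,L_1) \tns H^0(C,L_2)$ as an element of the symmetric subspace. You propose to identify $(\ker \mu_{12})^\vee$ with $H^0(C,L_3) = H^0(C,L_2)$ via Serre duality (using $\omega_C \cong \OO_C$) and then appeal to the Dixon--Beauville correspondence, under which nonzero $2$-torsion points of $\Jac(C)$ are exactly the classes giving symmetric determinantal representations of the plane cubic $C \subset \PP(H^0(C,L)^\vee)$; this places the symmetry in the correct pair of tensor factors and rests on classical theory, though the intrinsic identification of $\ker\mu_{12}$ with $H^0(C,L_3)^\vee$ still requires the Koszul/determinantal resolution of $C$ in $\PP^2 \times \PP^2$ to be carried out over $K$. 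The paper instead identifies $H^0(C,L_1)$ with $H^0(C,L_2)$ by the linear automorphism of $\PP(H^0(C,L_1)^\vee)$ induced by translation by the $2$-torsion point and checks symmetry by evaluating products of sections pointwise on $C$; your route is arguably more robust, since the paper's pointwise comparison of sections of non-isomorphic line bundles is stated rather loosely. Finally, the orbit-matching issue you flag as the main obstacle is less serious than you fear: the reverse construction is canonical in $(C,L,P)$ and produces a symmetric cube well-defined up to changes of basis of $H^0(C,L)$ and $H^0(C,L \tns P)$ alone, so the two constructions are inverse directly at the level of $\GL(V_1)\times\GL(V_2)$-orbits, with no need to descend an abstract $G$-equivalence to the diagonal subgroup.
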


\begin{proof}
	Given an element $\AA$ of $V_1 \tns \Sym_2V_2 \subset V_1 \tns V_2 \tns V_2$, we construct the genus one
	curve $C$ and line bundles $L_1,L_2,L_3$ as before.  Because of the symmetry, the line bundles
	$L_2$ and $L_3$ coincide.  The relation $L_1^{\tns 2} \cong L_2 \tns L_3$ and the fact that
	$L_1 \not\cong L_2$ shows that the point $P := L_1 \tns L_2^{-1}$ is a nonzero $2$-torsion point of $\Jac(C)$.
	Note that because $3E(K) \subset \Jac_C^3(K) \subset E(K)$, all $2$-torsion points
	of $E(K)$ are contained in $\Jac_C^3(K)$, so requiring $P$ to be in $\Jac_C^3(K)$ is not an extra condition.

	On the other hand, given such $(C,L,P)$ and setting $L' = L \tns P$, the proof of
	Theorem \ref{thm:333bij} shows that 
	the quadruple $(C,L_1,L_2,L_3) = (C,L,L',L')$ recovers a
	$\GL(U_1) \times \GL(U_2) \times \GL(U_3)$-orbit of $U_1 \tns U_2 \tns U_3$, where
	$U_1 = H^0(C,L_1)$, $U_2 = H^0(C,L_2)$, and $U_3$ is the dual of the three-dimensional
	kernel of the multiplication map
	$\mu_{12} : U_1 \tns U_2 \to H^0(C,L_1 \tns L_2)$.  
	There exists a natural identification $\psi_{12}: U_1 \to U_2$, taking a
	section $s \in U_1$ to the section $t \in U_2$ such that
	$s(x) = t(x)$ for any $x \in C$.  This identification exists
	because the $2$-torsion point $P = 3 P$ induces a linear automorphism of
	$\PP(H^0(C,L_1)^\vee)$ preserving the image of $C$ as a variety.
	The map
	$$\mu_{12} \circ (1 \tns \psi_{12}):  U_1 \tns U_1 \to H^0(C,L_1 \tns L_2).$$
	factors through $\Sym_2U_1$ because for all $s_1, s_2 \in U_1$, the image of
	$s_1 \tns s_2$ and of $s_2 \tns s_1$ evaluated on $x \in C$ 
	are both equal to $s_1(x) s_2(x).$
\end{proof}

The representation $V_1 \tns \Sym_2V_2$ has $\SL(V_1) \times \SL(V_2)$-invariant ring 
generated by the two previously defined polynomials $c_6$ and $c_{12}$ of degrees $6$ and $12$,
respectively.  These have the same interpretation as in \S \ref{sec:333invthy}.  That is,
the Jacobian of $C$ may be written in normal form as
	$$E: y^2 = x^3 + c_6 x^2 + c_{12} x$$
with nonzero $2$-torsion point $P$ having coordinates $(x,y) = (0,0)$.  In other words, the symmetric tensors are
simply the elements of $V_1 \tns V_2 \tns V_2$ where the degree $9$ invariant vanishes.
Nondegeneracy is again given by the same degree $36$ discriminant, which now factors:
$$\Delta = 16 c_{12}^2 (-4 c_{12} + c_6^2).$$
The reduced discriminant has degree $24$, and nondegeneracy is determined by the nonvanishing of $c_{12}$ and of $-4 c_{12} + c_6^2$.

\subsubsection{Triply symmetric Rubik's cubes, or ternary cubic forms again} \label{sec:3symRC}

We now consider triply symmetric Rubik's cubes, \ie orbits of $\GL(V)$ on $\Sym_3V$ for a three-dimensional vector space $V$ over $K$.
Although this space, away from characteristic $3$, is isomorphic to the space 
of ternary cubic forms $\Sym^3V$ discussed in \S \ref{sec:ternarycubics},%
\footnote{More precisely, these representations are dual to one another if $V$ is self-dual.}
we treat this space as a subspace of $V \tns V \tns V$, instead of a quotient, to obtain a different moduli interpretation.

Because $\Sym_3 V \hookrightarrow V \tns \Sym_2 V$, for
each tensor $\AA \in \Sym_3 V$ we may construct the associated genus one curve $C$ and
line bundles on $C$.  
We then find that the orbit spaces of these two representations in fact
correspond to the same moduli problem!  The following is a basis-free version of Theorem \ref{thm:3symRCpreview}:

\begin{thm} \label{thm:3symRC}
	Let $V$ be a three-dimensional vector space over $K$.  Then nondegenerate $\GL(V)$-orbits of
	$\Sym_3V$ are in bijection with isomorphism classes of triples $(C,L,P)$, where $C$ is a genus one curve
	over $K$, $L$ is a degree $3$ line bundle on $C$, and $P$ is a nonzero $2$-torsion point of $\Jac(C)(K)$.
\end{thm}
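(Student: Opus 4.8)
The plan is to deduce Theorem~\ref{thm:3symRC} from Theorem~\ref{thm:2symRC} via the inclusion $\Sym_3 V \hookrightarrow V \tns \Sym_2 V$, viewing a triply symmetric tensor as a doubly symmetric one that carries one further symmetry. First I would set up the forward map. A tensor $\AA \in \Sym_3 V$ is in particular an element of $V \tns \Sym_2 V$ (with the two copies of $V$ identified), so the construction preceding Theorem~\ref{thm:2symRC} attaches to it a nondegenerate genus one curve $C$, a degree $3$ line bundle $L := L_1$, and line bundles $L_2 \cong L_3$; by Lemma~\ref{lem:RCrelation} the relation $L_1^{\tns 2} \cong L_2 \tns L_3$ forces the point $P := L_1 \tns L_2^{-1} \in \Jac(C)(K)$ to satisfy $2P = 0$, and Lemma~\ref{lem:L1notL2orL3} gives $P \neq 0$. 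As $\GL(V)$ embeds diagonally in $\GL(V_1) \times \GL(V_2)$ and the construction of Theorem~\ref{thm:2symRC} is equivariant for the larger group, this descends to a well-defined map from $\GL(V)$-orbits of nondegenerate elements of $\Sym_3 V$ to isomorphism classes of triples $(C,L,P)$ with $P$ a nonzero $2$-torsion point.

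The heart of the argument is the inverse, and in particular the assertion that the tensor produced from such a triple is genuinely triply symmetric. Given $(C,L,P)$, set $L_1 := L$ and $L_2 := L_3 := L \tns P$ and form the element of $V_1 \tns \Sym_2 V_2$ exactly as in the proof of Theorem~\ref{thm:2symRC}; what must be added is a symmetry exchanging the first factor with the last two. This is supplied by the translation automorphism $t_P : C \to C$, which is defined over $K$ since $P \in \Jac(C)(K)$ and is an involution since $2P = 0$. Translation by $P$ carries a degree $d$ line bundle to its tensor product with the class of $d\cdot P$; since $\deg L_1 = 3$ and $3P = P$, this yields $t_P^* L_1 \cong L_1 \tns P \cong L_2$, and because translations act trivially on $\Pic^0(C)$ we likewise obtain $t_P^* L_2 \cong L_2 \tns P \cong L_1$. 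Thus $t_P$ intertwines the embeddings of $C$ by $L_1$ and by $L_2$, and on sections recovers the canonical identification $\psi_{12} : H^0(C,L_1) \xrightarrow{\sim} H^0(C,L_2)$ used in the proof of Theorem~\ref{thm:2symRC}; it therefore realizes the transposition $(1\,2)$ on the tensor. Together with the transposition $(2\,3)$ already present from the double symmetry, these generate $S_3$, so after identifying $V_1 \cong V_2 \cong V_3 =: V$ the tensor lies in $\Sym_3 V$.

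It then remains to assemble the bijection. Surjectivity onto triples $(C,L,P)$ is the construction just described, while the identification of the data recovers $L = L_1$ and $P = L_1 \tns L_2^{-1}$ directly; for injectivity and for the fact that the two maps are mutually inverse I would appeal to Theorem~\ref{thm:333bij}, which already exhibits the constructions on $V_1 \tns V_2 \tns V_3$ as inverse to one another, noting that the identifications $\psi_{12}$ (and its analogue tying in $V_3$) are canonical, so that a triply symmetric tensor is recovered up to the single diagonal $\GL(V)$-action and no further ambiguity is introduced. The step I expect to be the main obstacle is the symmetrization in the second paragraph: one must verify that $t_P$-equivariance makes the trilinear form literally invariant under $(1\,2)$ after the canonical identification --- that is, that the symmetry is exact rather than holding only up to a scalar --- which requires tracking the action of $t_P$ through the kernel-of-multiplication description of the tensor and confirming its compatibility with all three projections in diagram~\eqref{eq:RCcurvediag}.
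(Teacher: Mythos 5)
Your proposal is correct and follows essentially the same route as the paper: reduce to the doubly symmetric case of Theorem~\ref{thm:2symRC}, then upgrade the $(2\,3)$-symmetry to full $S_3$-symmetry by identifying $U_1$ with $U_2=U_3$ via the identification $\psi_{12}$ induced by the $2$-torsion point (which you correctly unpack as pullback along the translation $t_P$, using $t_P^*L_1\cong L_1\otimes 3P\cong L_2$). The paper's proof is terser --- it simply invokes $\psi_{12}$ and $\psi_{13}$ and asserts triple symmetry in identified bases --- so your explicit flagging of the ``exact versus up-to-scalar'' issue is a fair reading of where the remaining (routine) verification lives, not a divergence in method.
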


\begin{proof}
We simply strengthen the proof of Theorem \ref{thm:2symRC} with the observation that all the vector spaces in question may be naturally identified.  As in the proof of Theorem \ref{thm:2symRC}, given a triple $(C,L,P)$, we let $U_1 = H^0(C,L)$ and $U_2 = U_3 = H^0(C,L \tns P)$.  Using the identifications $\psi_{12}$ and $\psi_{13}$, and by taking corresponding identified bases for $U_1$, $U_2$, and $U_3$, we obtain a triply symmetric
element of $U_1 \tns U_2 \tns U_3$.
\end{proof}

This curve $C$ is not the same as the curve $Z$ associated to the
ternary cubic via \S \ref{sec:ternarycubics}.  The curve $C$ has
a degree $36$ discriminant, while $Z$ has a discriminant of degree
$12$.  In fact, it is easy to check that the curve $C$ is the zero locus of
the Hessian of the ternary cubic form defining $Z$!  (Recall that the Hessian of a ternary cubic form is given by the determinant of the matrix of second partial
derivatives, which yields another ternary cubic form.)  Therefore, we have
another proof of the following:

\begin{prop}
Given a ternary cubic form $f$ over $K$, let $H(f)$ denote the Hessian ternary cubic form.  If the discriminant of $H(f)$ is nonzero,
then the Jacobian of the genus one curve cut out by $H(f)$ is an elliptic curve with a nonzero rational $2$-torsion point.
\end{prop}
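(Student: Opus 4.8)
The plan is to recognize this proposition as essentially a corollary of Theorem~\ref{thm:3symRC}: the point is that $f$, regarded as a triply symmetric Rubik's cube, has associated determinantal curve $C_1$ equal to the Hessian curve $\{H(f)=0\}$, and Theorem~\ref{thm:3symRC} already equips the Jacobian of that curve with a nonzero rational $2$-torsion point. So first, using $\mathrm{char}(K)\neq 3$ to identify $\Sym^3V \cong \Sym_3V$, I would regard $f$ as a totally symmetric tensor $\AA \in \Sym_3V \subset V\tns V\tns V$ and run the construction of \S\ref{sec:RCgeom} on $\AA$.

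The one computation to carry out is that the ternary cubic $f_1(v) := \det(\AA(v,\cdot,\cdot))$ cutting out $C_1 \subset \PP(V^\vee)$ agrees, up to a nonzero scalar, with the Hessian $H(f)$. Writing $\AA = \sum_{i,j,k} a_{ijk}\,e_i\tns e_j\tns e_k$ with $a_{ijk}$ totally symmetric and $f(w) = \sum_{i,j,k} a_{ijk}\,w_i w_j w_k$, differentiating twice and using the symmetry gives $\partial^2 f/\partial w_j\partial w_k = 6\sum_i a_{ijk}\,w_i$, which is exactly $6$ times the $(j,k)$-entry of the slice matrix $\AA(w,\cdot,\cdot)$. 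Taking determinants yields $H(f) = 216\,\det(\AA(w,\cdot,\cdot)) = 216\,f_1$, so $\{H(f)=0\}$ and $C_1$ coincide as subschemes of $\PP(V^\vee)$. This is precisely the ``easy check'' flagged in the discussion preceding the proposition.

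With this identification the hypotheses line up automatically. Nondegeneracy of the Rubik's cube $\AA$ is by definition the smoothness of $C_1$, i.e. the nonvanishing of $\disc(f_1) = \disc(H(f))$ up to scalar; thus the assumption $\disc(H(f))\neq 0$ is exactly the nondegeneracy needed for Theorem~\ref{thm:3symRC}, and no separate nondegeneracy of $f$ itself is required. Applying Theorem~\ref{thm:3symRC} to the nondegenerate element $\AA \in \Sym_3V$ then produces a triple $(C,L,P)$ with $C = C_1 = \{H(f)=0\}$ and $P$ a nonzero $K$-rational $2$-torsion point of $\Jac(C)$, which is exactly the desired conclusion about the Jacobian of the Hessian curve.

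The argument is short, so the only genuine work is bookkeeping. I would confirm that the $2$-torsion point furnished by the theorem is truly nonzero and $K$-rational: in the symmetric case the construction forces $L_2 \cong L_3$, so $P = L_1\tns L_2^{-1}$ is $2$-torsion via $L_1^{\tns 2}\cong L_2\tns L_3$ and nonzero by $L_1\not\cong L_2$ (Lemma~\ref{lem:L1notL2orL3}). I would also note that the curve constructed abstractly in the proof of Theorem~\ref{thm:3symRC} is literally the determinantal curve $C_1$ rather than merely an isomorphic copy, which is immediate from the definition of $C_1$ in \S\ref{sec:RCgeom}; the harmless scalar $216$ plays no role, since it does not affect any vanishing locus. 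There is no deeper obstacle here—the substance lies entirely in Theorem~\ref{thm:3symRC} and in the elementary identity $H(f) = 216\,\det(\AA(\cdot,\cdot,\cdot))$.
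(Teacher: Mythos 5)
Your proposal is correct and follows exactly the paper's intended argument: the paper derives this proposition as an immediate consequence of Theorem~\ref{thm:3symRC} together with the observation (left as ``easy to check'') that the determinantal curve $C_1$ of the triply symmetric Rubik's cube $\AA$ associated to $f$ is the Hessian curve, which is precisely your identity $H(f)=216\,\det(\AA(w,\cdot,\cdot))$. Your additional bookkeeping --- that nondegeneracy of $\AA$ is by definition the smoothness of $C_1$, i.e.\ $\disc(H(f))\neq 0$, and that the $2$-torsion point is nonzero by Lemma~\ref{lem:L1notL2orL3} --- matches the paper's setup, so there is nothing further to add.
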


\noindent
This fact is classically shown by constructing a fixed-point free involution on the curve cut out by $H(f)$; see \cite[Chapter 3]{dolgachevCAG}.

To describe the invariant theory and the relationship with this curve $Z$, let $\AA$ be a nondegenerate element of $\Sym_3 V$.  Recall that the generators $d_4$ and $d_6$ of the $\SL(V)$-invariant ring of this representation are of degrees $4$ and $6$; we maintain the normalization from \S \ref{sec:ternarycubics}.  Then $\AA$, viewed as a ternary cubic form as in \S \ref{sec:ternarycubics}, gives rise to a genus one curve $Z$ in $\PP(V^\vee)$ whose Jacobian is given by \eqref{eq:JacTC}.

On the other hand, let $C$ denote the genus one curve obtained from viewing $\AA$ as a (symmetric) Rubik's cube.  Then an easy computation shows that the Jacobian of $C$ may be given in Weierstrass form as
\begin{equation} \label{eq:3symRCJac}
E: y^2 = x^3 - 72 d_6 x^2 + 1296 d_4^3 x,
\end{equation}
where the point $(0,0)$ is the nonzero $2$-torsion point on $E$.  The discriminant of $E$ factors as a rational multiple of
$$d_4^6 (d_4^3 - d_6^2),$$
so the reduced discriminant has degree $4 + 12 = 16$.  Note that the factor $d_4^3 - d_6^2$ is a scalar multiple of the discriminant of the elliptic curve $\Jac(Z)$.  Thus, requiring nondegeneracy of $\AA$ as a Rubik's cube implies that both the genus one curve $Z$ and the curve $C$ are smooth.  Also, observe that even without the parametrization of Theorem \ref{thm:3symRC}, we know that all elliptic curves over $K$ with a rational $2$-torsion point may be written in the form \eqref{eq:3symRCJac}, since such an elliptic curve may clearly be written as
$$y^2 = x^3 + a x^2 + b x$$
for any $a, b \in K$ with nonzero discriminant (i.e., $b (a^2 - 4 b) \neq 0$). Taking $d_4 = 1296 / b$ and $d_6 = -23328 a / b^2$ gives \eqref{eq:3symRCJac}.


\subsection{Cubic Jordan algebras} \label{sec:cubicjordan}

A {\em Jordan algebra} over $K$ is a $K$-algebra with a commutative
bilinear product $\bullet$ satisfying the Jordan identity $(x^2
\bullet y) \bullet x = x^2 \bullet (y \bullet x)$.  In this section,
we introduce the ``cubic Jordan algebras'' that will be relevant for the more general degree
$3$ moduli problem.  Some of their connections with geometry
and representation theory will prove vital for describing and proving the
related orbit parametrizations in \S \ref{sec:deg3moduli}.

\subsubsection{Jordan cubic forms and Springer's construction} \label{sec:springer}

Following 
\cite{mccrimmon}, we first briefly describe Springer's
construction of cubic Jordan algebras from cubic forms.
Let $U$ be a finite-dimensional vector space over $K$, and let $\Norm$ be
a cubic form on $U$ such that $\Norm(e) = 1$ for some basepoint $e$.
Then there are naturally associated spur and trace forms, and their
(bi)linearizations, denoted by
	\begin{align} \label{eq:jordanalgforms}
		\Spur(x) &:= \Norm(x,x,e)  &  \Spur(x,y) &:= \Norm(x,y,e) = \Spur(x+y) - \Spur(x) - \Spur(y) \\
		\Tr(x) &:= \Norm(e,e,x)    &  \Tr(x,y)  &:= \Tr(x)\Tr(y) - \Spur(x,y). \nonumber
	\end{align}
In general, an {\em adjoint map} $\sharp$ for the cubic form
$(\Norm,e)$ is a quadratic map $\sharp: U \to U$ satisfying
	\begin{align}
   	\Tr(x^\sharp,y) &= \Norm(x,x,y) \label{eq:tracesharpformula} \\
   	(x^\sharp)^\sharp &= N(x) x  \label{eq:adjointidentity}\\
   	e \times x &= \Tr(x)e - x \label{eq:csharp}
	\end{align}
where $\times$ denotes the bilinearization
	\begin{equation} \label{eq:bilinearsharp}
	x \times y := (x+y)^\sharp - x^\sharp - y^\sharp.
	\end{equation}
An {\em admissible} cubic form is a cubic form $\Norm$ with basepoint $e$ and an
associated adjoint map~$\sharp$.  Such a form gives rise to a natural
Jordan algebra structure on $U$ with unit element $e$ and product given by
	\begin{equation*}
		x \bullet y := \frac{1}{2}\left(x \times y + \Tr(x)y + \Tr(y) x - \Spur(x,y)e \right).
	\end{equation*}
We also have the identity
	$$x^3 - \Tr(x) x^2 + \Spur(x) x - \Norm(x) e=0,$$
i.e., the ``characteristic polynomial of $x$'', evaluated at $x$, vanishes for all $x \in U$ (Cayley--Hamilton Theorem).

For cubic forms, there is often a natural choice of an adjoint map.
In particular, a cubic form is called {\em nondegenerate} if its
associated bilinear trace form $\Tr(\cdot,\cdot)$ is nondegenerate, in
which case $U$ and its dual $U^\vee$ may be canonically identified.
In other words, for fixed $x$, the linear functional $y \mapsto \Norm(x,x,y)$
corresponds to an element $x^\sharp \in U$, giving a quadratic map
	\begin{equation} \label{eq:sharpmap} \xymatrix@R=0pt@C=18pt{
	  \sharp:& U \ar[r] & U^\vee \ar[r]^\cong &  U \\
		&x \ar@{|->}[r] & \Norm(x,x,\cdot) \ar@{|->}[r] & x^\sharp.
	}\end{equation}
This map $\sharp$ by definition satisfies
\eqref{eq:tracesharpformula}; if $\sharp$ also satisfies
\eqref{eq:adjointidentity}, then it is easy to check that
\eqref{eq:csharp} holds as well and thus $\sharp$ is an adjoint map
\cite[\S 4.3]{mccrimmon}.  Therefore, given a nondegenerate cubic form $\Norm$
on $U$ with basepoint $e$, there is a natural Jordan algebra structure
on $U$.

By an abuse of notation, sometimes the map $\sharp$ will refer to
just the first map in \eqref{eq:sharpmap}, that is, the map from $x \in U$
to the linear functional $y \mapsto \Norm(x,x,y)$ in $U^\vee$,
since $U$ and $U^\vee$ are naturally identified.

\subsubsection{Composition algebras and Hermitian matrices} \label{sec:compalgs}

We now describe a class of cubic Jordan algebras that will play a crucial role in 
the representations we study.  We begin with some remarks about composition algebras,
which are used to construct these Jordan algebras.

A {\em composition algebra} $A$ over a field $K$ is a $K$-algebra $A$
with identity element $e$ and a nondegenerate quadratic norm form $q$
on $A$ that satisfies
	\begin{eqnarray*}
	q(e) = 1 & \textrm{and} & q(ab) = q(a)q(b)
	\end{eqnarray*}
for any elements $a,b \in A$.
By a theorem of Hurwitz, such an algebra $A$ is 
either $K$ itself, a quadratic \'{e}tale $K$-algebras, a quaternion
algebra over $K$, or a Cayley algebra over $K$.

\begin{example} \label{ex:compalgsKbar}
  If $K = \overline{K}$ is algebraically closed, then the only
  composition algebras are the four {\em split} composition algebras
  of dimensions $1$, $2$, $4$, and $8$, namely the split unarions
  $\mathscr{U}(K) := K$, the split binarions $\mathscr{B}(K) \cong K
  \times K$, the split quaternions $\mathscr{Q}(K) \cong \Mat_{2
    \times 2}(K)$, and the split octonions $\mathscr{O}(K)$ with the
  natural norm forms.
\end{example}

\begin{example}
  Over $K = \mathbb{R}$, in addition to the split composition
  algebras, there exist the usual division algebras of dimensions $2$,
  $4$, and $8$: the complex numbers $\mathbb{C}$, the Hamiltonian
  quaternions $\mathbb{H}$, and the Cayley octonions $\mathbb{O}$,
  respectively.
\end{example}

If the quadratic form $q$ on the composition algebra $A$ is
nondegenerate, then $A$ is alternative separable of degree $2$ and has
an involution $\star$ sending $a \in A$ to $a^{\star} = \overline{a}$
\cite[Prop.~33.9]{bookofinvolutions}.  Any element $a \in A$ hence
satisfies the equation
	\begin{equation*}
		a^2 - \tr_A(a) a + \norm_A(a) e = 0,
	\end{equation*}
where the trace and norm are defined in terms of the quadratic norm
form $q$ as
	\begin{eqnarray*}
		\tr_A(a) := q(a+e) - q(a) - q(e) &\textrm{and} & \norm_A(a):= q(a).
	\end{eqnarray*}

For a composition algebra $A$ over $K$ as above, the Hermitian matrix
algebra $\HH_n(A)$ consists of the $n \times n$ matrices $M =
(m_{ij}) \in \Mat_{n \times n}(A)$ with $M = \overline{M}^t$, or
equivalently, $m_{ji} = m_{ij}^\star$ for $1 \leq i, j \leq n$.  The
multiplicative structure of the algebra $\HH_n(A)$ is
commutative but not associative, given by
	\begin{equation} \label{eq:hermmultiplication}
		M \bullet M' := \frac{1}{2}(M \cdot M' + M' \cdot M)
	\end{equation}
where $\cdot$ denotes usual matrix multiplication.  Under this algebra structure, the
Hermitian matrices $\HH_n(A)$ form a Jordan algebra.

We now restrict to the case of cubic Jordan algebras given as a Hermitian matrix algebra for a composition algebra $A$.  There is one Jordan algebra structure on $\HH_3(A)$ inherited from the composition law
\eqref{eq:hermmultiplication}.  Also, on $\HH_3(A)$, we may
define a natural admissible cubic form $(\Norm, e, \sharp)$:
	\begin{align}
		\Norm \!\begin{pmatrix} c_1 & a_3 & a_2^\star \\ a_3^\star & c_2 & a_1 \\ a_2 & a_1^\star & c_3 \end{pmatrix}
		  &:= c_1 c_2 c_3 - c_1 \norm_A(a_1) - c_2 \norm_A(a_2) - c_3 \norm_A(a_3) + \tr_A(a_1 a_2 a_3) \label{eq:normJA} \\[.02in]
		  		\!e := \!\begin{pmatrix} 1 & & \\ & 1 & \\ & & 1 \end{pmatrix}\:\! & \,\nonumber \\[.0165in]
		\sharp: \!\begin{pmatrix} c_1 & a_3 & a_2^\star \\ a_3^\star & c_2 & a_1 \\ a_2 & a_1^\star & c_3 \end{pmatrix}\!\!\!
		  &\,\,\mapsto \!\begin{pmatrix}
		  	c_2 c_3 - \norm_A(a_1) & a_2^\star a_1^\star - c_3 a_3 & a_3 a_1 - c_2 a_2^\star \\
		  	a_1 a_2 - c_3 a_3^\star & c_1 c_3 - \norm_A(a_2) & a_3^\star a_2^\star - c_1 a_1 \\ 
		  	a_1^\star a_3^\star - c_2 a_2 & a_2 a_3 - c_1 a_1^\star & c_1 c_2 - \norm_A(a_3) 
		  \end{pmatrix} \label{eq:sharpJA}
	\end{align}
for $c_1, c_2, c_3 \in K$ and $a_1, a_2, a_3 \in A$.  For example, if
$A$ were commutative, then the norm form $\Norm$ is the usual
determinant of the matrix and the map $\sharp$ coincides with the
usual adjoint map for $3 \times 3$ matrices.  Springer's construction
then gives a Jordan algebra structure on $\HH_3(A)$ using this
admissible cubic form.

\begin{prop}[\mbox{\cite[\S 4.4]{mccrimmon}}]
	The two Jordan algebra structures on $\HH_3(A)$, as
        defined above, are the same.
\end{prop}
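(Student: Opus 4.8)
The plan is to reduce the statement to a single, transparent matrix identity. Since $\mathrm{char}(K) \neq 2$ and both candidate products on $\HH_3(A)$ are commutative, each product is determined by its associated squaring map through the polarization $x \bullet y = \tfrac12\bigl((x+y)^{\bullet 2} - x^{\bullet 2} - y^{\bullet 2}\bigr)$; hence it suffices to check that the two squaring maps agree. For the product \eqref{eq:hermmultiplication} the square of $M$ is just the ordinary matrix square $M \cdot M$, which one checks lies again in $\HH_3(A)$ using that the involution on $A$ is an anti-automorphism. For the Springer product $x\bullet y = \tfrac12(x\times y + \Tr(x)y + \Tr(y)x - \Spur(x,y)e)$, setting $y = x$ and using that $\sharp$ is homogeneous of degree $2$ (so that $x\times x = 2x^\sharp$ by \eqref{eq:bilinearsharp}) together with $\Spur(x,x) = 2\Spur(x)$ gives the square $M^\sharp + \Tr(M)M - \Spur(M)e$. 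Thus the whole proposition collapses to verifying the classical adjugate relation
\begin{equation*}
M \cdot M = M^\sharp + \Tr(M)\,M - \Spur(M)\,e
\end{equation*}
for every Hermitian $M \in \HH_3(A)$.

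Next I would pin down $\Tr$ and $\Spur$ explicitly from the norm form \eqref{eq:normJA} by computing its polarizations as in \eqref{eq:jordanalgforms}. A direct calculation gives $\Tr(M) = c_1 + c_2 + c_3$ and $\Spur(M) = (c_2c_3 - \norm_A(a_1)) + (c_1c_3 - \norm_A(a_2)) + (c_1c_2 - \norm_A(a_3))$; note that $\Spur(M)$ is exactly the trace of the matrix $M^\sharp$ of \eqref{eq:sharpJA}, which serves as a first consistency check.

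With these formulas in hand I would verify the displayed identity entrywise, comparing $M\cdot M$ against $M^\sharp + \Tr(M)M - \Spur(M)e$ via \eqref{eq:sharpJA}. On the diagonal the only nontrivial input is the norm identity $a\,a^\star = a^\star a = \norm_A(a)\,e$ coming from the degree-$2$ minimal equation for elements of $A$: for instance $(M\cdot M)_{11} = c_1^2 + \norm_A(a_2) + \norm_A(a_3)$, which matches $(c_2c_3 - \norm_A(a_1)) + (c_1+c_2+c_3)c_1 - \Spur(M)$. Off the diagonal each entry of $M\cdot M$ is a scalar combination of a single $A$-entry plus one genuine product of two $A$-entries; e.g. the $(1,3)$ entry equals $(c_1+c_3)a_2^\star + a_3a_1$, which matches $(M^\sharp)_{13} + \Tr(M)\,a_2^\star = (a_3a_1 - c_2a_2^\star) + (c_1+c_2+c_3)a_2^\star$. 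The remaining entries follow by the same bookkeeping together with the evident $S_3$-symmetry of the data.

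The computation is entirely routine, and the only point demanding care is bookkeeping rather than depth. Because $A$ is in general noncommutative and nonassociative, one must keep the factors of each off-diagonal product in precisely the order dictated by \eqref{eq:sharpJA}; however, each entry of $M\cdot M$ involves only a \emph{single} product of two elements of $A$ and never a triple product, so no associativity or alternativity identity is actually invoked, and the terms match literally. The one place where an error could plausibly creep in is the constant-tracking when reading off $\Tr$ and $\Spur$ from the polarizations of $\Norm$ with the normalizations of \eqref{eq:jordanalgforms}; getting those normalization factors right is the crux of an otherwise mechanical verification.
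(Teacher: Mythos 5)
Your argument is correct, and it is worth noting that the paper itself offers no proof of this proposition at all --- it simply cites McCrimmon [\S 4.4] --- so your write-up is a genuinely self-contained verification rather than a variant of an argument in the text. The reduction is sound: in characteristic $\neq 2$ both products are commutative and hence determined by their squaring maps via polarization; the Springer square is $M^\sharp + \Tr(M)M - \Spur(M)e$ (using $M \times M = 2M^\sharp$ and $\Spur(M,M) = 2\Spur(M)$, both of which you justify correctly from \eqref{eq:bilinearsharp} and \eqref{eq:jordanalgforms}); and the matrix square is $M\cdot M$. Your identifications $\Tr(M) = c_1+c_2+c_3$ and $\Spur(M) = \sum_i (c_jc_k - \norm_A(a_i))$ are consistent with the normalization implicit in \eqref{eq:tracesharpformula} (namely that $\Norm(x,x,y)$ is the directional derivative of $\Norm$ at $x$ in the direction $y$), and the entrywise check of $M\cdot M = M^\sharp + \Tr(M)M - \Spur(M)e$ goes through exactly as you indicate: the diagonal entries use only $a a^\star = \norm_A(a)$, the off-diagonal entries match term-by-term against \eqref{eq:sharpJA}, and the lower-triangular entries follow because both sides are Hermitian (using that $\star$ is an anti-automorphism). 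Your observation that no entry ever involves a triple product in $A$ is the key point that makes the argument valid uniformly, including for octonion algebras where associativity fails; this is essentially the same mechanism that makes the classical treatment work, so your proof both fills the gap left by the citation and makes transparent why the statement is insensitive to the (non)associativity of $A$.
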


\subsubsection{Hermitian tensor spaces}

In this section, we describe in a basis-free manner the algebra $\HH_n(A)$ 
for {\em associative} composition algebras $A$ over
$K$, \ie composition algebras of dimensions $1$, $2$, or $4$ over $K$.
For such $A$, we introduce the notion of a {\em
  Hermitian tensor space} of an $A$-bimodule $\mathcal{M}$.  Just as
for symmetric and alternating tensor products, the idea is to
construct a subspace (or a quotient space) of a tensor space that
corresponds to Hermitian matrices.

If $\mathfrak{M}$ is an $A$-bimodule, then we define
$\mathfrak{M}^\star$ to be its twist by the involution $\star$ on $A$.
In other words, there is $K$-vector space (but not $A$-module)
isomorphism
	\begin{align*}
		\mathfrak{M} &\ra \mathfrak{M}^\star \\
		m &\longmapsto m^\star
	\end{align*}
but the left and right $A$-actions on $\mathfrak{M}^\star$ are given by
	\begin{align*}
		a(m^\star) = ((m)a^\star)^\star && \textrm{and} && (m^\star)a = (a^\star(m))^\star
	\end{align*}
for all $a \in A$ and $m \in \mathfrak{M}$. For any two $A$-bimodules,
the tensor product is again an $A$-bimodule; in our case, we have that
$\mathfrak{M} \tns_A \mathfrak{M}^\star$ is an $A$-bimodule with left
and right $A$-actions described by
	\begin{align*}
		a(m_1 \tns m_2^\star) = a(m_1) \tns m_2^\star && \textrm{and} && (m_1 \tns m_2^\star)a = m_1 \tns (m_2^\star)a = m_1 \tns (a^\star(m_2))^\star
	\end{align*}
for all $a \in A$ and $m_1, m_2 \in \mathfrak{M}$.  Note that elements
of $A$ can ``pass through'' the tensor product, \ie the relation
$(m_1)a \tns m_2^\star = m_1 \tns a(m_2^\star)$ holds for all $a \in
A$ and $m_1, m_2 \in \mathfrak{M}$.  There is a natural involution
$\tau$ on elements of $\mathfrak{M} \tns_A \mathfrak{M}^\star$ sending
	$$m_1 \tns m_2^\star \mapsto m_2 \tns m_1^\star.$$

\begin{defn} \label{def:hermtensor}
	The {\em Hermitian tensor space} $\Herm^2(\mathfrak{M})$ of
	$\mathfrak{M}$ is the sub-bimodule of $\mathfrak{M}~\tns_A~\mathfrak{M}^\star$
	consisting of elements $M$ satisfying $\tau(M) = M$.
\end{defn}

\begin{remark}
	One could also define a similar Hermitian tensor space as a
	quotient
		$$\mathfrak{M} \tns_A \mathfrak{M}^\star / \mathcal{I}$$
	where $\mathcal{I}$ is the submodule generated by all tensors
	of the form $m_1 \tns m_2^\star - m_2 \tns m_1^\star$ for $m,
	m' \in \mathfrak{M}$.  Over a field of characteristic not $2$,
	these notions are the same (just like for symmetric tensors).
	For our purposes, the subspace defined in Definition
  \ref{def:hermtensor} is more useful.
\end{remark}

For any $\mathfrak{M}$, there is a Segre-like map
	\begin{align} \label{eq:definesegre}
		\Seg: \mathfrak{M} &\ra \Herm^2(\mathfrak{M}) \subset \mathfrak{M} \tns_A \mathfrak{M}^\star \\
		m &\longmapsto m \tns m^\star \nonumber
	\end{align}
whose image consists precisely of the ``rank one'' tensors in $\mathfrak{M}
\tns_A \mathfrak{M}^\star$.  This is, of course, not a linear map.  In
fact, the right $A$-action on $\mathfrak{M}$ scales the image of
$\Seg$ by elements of the field $K$; in particular,
	\begin{equation} \label{eq:SegAaction}
		\Seg((m)a) = (m)a \tns ((m)a)^\star = (m)a \tns a^\star(m^\star) = \norm_A(a) (m \tns m^\star).
	\end{equation}

If $\mathfrak{M}$ is a free rank $r$ $A$-bimodule with a choice of
basis, then the Hermitian tensor space $\Herm^2(\mathfrak{M})$ is
visibly just the space $\HH_r(A)$ of $r \times r$ Hermitian
matrices over $A$, as the involution $\tau$ corresponds to taking the
conjugate transpose of a matrix.

In the sequel, let $\mathfrak{M}$ denote a free self-dual rank $3$
$A$-bimodule, and let $\JA := \Herm^2(\mathfrak{M})$.  Then $\JA$ also
has the structure of a cubic Jordan algebra as in \S
\ref{sec:compalgs}.  When we refer to $\JA$ for associative $A$, the module
$\mathfrak{M}$ will be assumed.  For an octonion algebra $A$ over $K$, the notation
$\JA$ will refer to the exceptional Jordan algebra $\HH_3(A)$.

\begin{remark}
	Because $\mathfrak{M}$ is assumed to be self-dual,
	we may view elements of the tensor space $\mathfrak{M} \tns_A
	\mathfrak{M}^\star$ as maps from $\mathfrak{M}^\vee \cong
	\mathfrak{M}$ to $\mathfrak{M}^\star$.  So there are also
	basis-free definitions of the norm, trace, and spur forms, as
	well as the adjoint map $\sharp$ in \eqref{eq:sharpmap}, for $\JA$.
\end{remark}

\subsubsection{Rank} \label{sec:rank}

Elements of $\JA$ inherit the notion of {\em rank} from the ambient
tensor space $\mathfrak{M} \tns \mathfrak{M}^\star$.  In this section, we discuss the stratification of
$\JA$ by rank, which is closely related to Severi varieties and their
tangent and secant varieties.

By forgetting the Jordan algebra and the $A$-module structure on
$\JA$, it makes sense to think of $\JA$ as a $K$-vector space of
dimension $3 \dim(A) + 3$.  The projective space $\PP(\JA)$ will denote
the space of $K$-lines in $\JA$ as a $K$-vector space and thus has
dimension $3 \dim A + 2$ over $K$.

Let $X_A \in \PP(\JA)$ correspond to the rank one elements of $\JA$,
so $X_A$ is cut out by quadrics in $\PP(\JA)$.  Let $Y_A \in \PP(\JA)$
consist of the elements of $\JA$ having rank at most two. Then $Y_A$ is visibly the secant
variety of $X_A$ in $\PP(\JA)$.  On the other hand, the variety $Y_A$
is defined by the cubic norm form $\Norm$ on $\JA$, so it is a cubic
hypersurface in $\PP(\JA)$.

If the composition algebra $A$ has dimension $d$ over $K$, then $X_A$,
$Y_A$, and $\PP(\JA)$ have dimensions $2d$, $3d+1$, and $3d+2$,
respectively.  Since $d = 1$, $2$, $4$, or $8$, in all these cases, the
secant variety $Y_A$ of $X_A$ is defective, and by a theorem of
Zak \cite{zak-book}, the secant variety is also the tangent variety.

\begin{example}
	Over an algebraically closed field of characteristic $0$, the varieties $X_A \subset \PP(\JA)$ for the
	four composition algebras $A$ (see Example \ref{ex:compalgsKbar})
	are exactly the four Severi varieties \cite{zak-severivarieties}:
		\begin{enumerate}[(i)]
			\item the Veronese surface $\PP^2 \subset \PP^5$,
			\item the Segre fourfold $\PP^2 \times \PP^2 \subset \PP^8$,
			\item the Grassmannian $\Gr(2,6) \subset \PP^{14}$, and
			\item the $16$-dimensional variety $E^{16} \subset \PP^{26}$ discovered by Lazarsfeld~\cite{lazarsfeldE16}.
		\end{enumerate}
	Over a general field $K$, the varieties that arise are twisted forms of these.
\end{example}

From the geometric perspective, the adjoint map $\sharp: \JA \to
\JA^\vee$ is essentially (up to scaling) the birational map
	\begin{equation} \label{eq:beta}
		\beta_A: \PP(\JA) \dashrightarrow \PP(\JA^\vee)
	\end{equation}
given by the linear system of quadrics passing through $X_A$, or in
other words, the partial derivatives of the norm form $\Norm$ on $\JA$
(see \cite{ein-shepherdbarron} or \cite{zak-severivarieties}).  By definition, the map $\beta_A$
blows down $Y_A$ to $X_A$, and the inverse blows up $X_A$ to $Y_A$, so $X_A$ is
naturally isomorphic to the dual variety of $Y_A$ and vice versa.

These varieties $X_A$ have a simple moduli interpretation, based on
their definition as rank one elements of $\JA$ up to $K$-scaling.

\begin{lemma}\label{compar}
  For any composition algebra $A$ of dimension $1, 2$, or $4$ over
  $K$, the variety $X_A$ parametrizes elements of $\mathfrak{M}$
  up to right $A$-scaling.
\end{lemma}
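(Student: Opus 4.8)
The plan is to realize the Segre map $\Seg$ of \eqref{eq:definesegre} as the quotient map defining $X_A$, and to invert it. First I would observe that, restricted to nonzero $m$, the map $m \mapsto m \tns m^\star$ lands among the rank one elements of $\JA$, so its projectivization is by construction a surjection onto $X_A$. By the scaling identity \eqref{eq:SegAaction}, replacing $m$ by $(m)a$ for a unit $a \in A^\times$ multiplies $\Seg(m)$ by $\norm_A(a) \in K^\times$ (recall that in a composition algebra $a$ is invertible exactly when $\norm_A(a) \neq 0$). Hence $\Seg$ carries right $A^\times$-orbits into $K^\times$-orbits and descends to a well-defined surjection $\psi$ from elements of $\mathfrak{M}$ modulo right $A$-scaling onto $X_A$.

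The substance of the argument is to construct an inverse to $\psi$, using the interpretation (noted in the remark following Definition \ref{def:hermtensor}) of an element of $\mathfrak{M} \tns_A \mathfrak{M}^\star$ as an $A$-linear map $\mathfrak{M}^\vee \cong \mathfrak{M} \to \mathfrak{M}^\star \cong \mathfrak{M}$. Writing $\mathfrak{M} = A^3$ with $m = (m_1,m_2,m_3)$, the rank one Hermitian element $M := m \tns m^\star$ has matrix $(m_i m_j^\star)$, and acting on a column $v$ gives $Mv = (m)s$ with $s = \sum_j m_j^\star v_j \in A$. Thus the image of $M$ is the cyclic right submodule $mA$, provided $m$ is unimodular, i.e. the $m_j$ generate $A$ as a right ideal — which is exactly the locus parametrized by $X_A$. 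This image is unchanged by $K^\times$-scaling of $M$, so $[M] \mapsto \operatorname{im}(M) = mA$ is well defined on $X_A$; since $mA$ is a free rank one right $A$-module, it recovers $m$ up to right multiplication by $A^\times$, which gives the inverse of $\psi$ and in particular its injectivity.

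Finally I would verify that $\psi$ and this construction are mutually inverse morphisms defined over $K$: a rank one Hermitian $M$ with column space $mA$ must equal $\lambda\,(m \tns m^\star)$ for some $\lambda \in K$, because the Hermitian condition forces the recovered row to be a $K$-multiple of $m^\star$; hence the two constructions compose to the identity, and all maps are manifestly Galois-equivariant, yielding an isomorphism of $K$-varieties. As a consistency check, for $A = K$, $A$ quadratic étale, and $A$ quaternion one recovers the Veronese $\PP^2$, the Segre $\PP^2 \times \PP^2$, and $\Gr(2,6)$ (or their twists), in agreement with the dimension count $2 \dim_K A$. The main obstacle is the bookkeeping around zero divisors in the split algebras: one must check that for unimodular $m$ the element $m \tns m^\star$ genuinely has column space exactly $mA$, and conversely that every point of $X_A$ arises from a unimodular $m$, so that passing to the column space is everywhere defined and inverse to $\Seg$.
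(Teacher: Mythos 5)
Your proposal follows essentially the same route as the paper: the paper's proof simply observes that $\Seg$ descends to a well-defined map $(\mathfrak{M}\setminus\{0\})/A \to \PP(\JA)$ by the computation \eqref{eq:SegAaction}, that its image is $X_A$ by definition, and that injectivity is "easy to check." Your recovery of $m$ (up to right $A^\times$-multiplication) from the column space $mA$ of the rank one Hermitian element supplies exactly the injectivity detail the paper leaves to the reader, and your attention to zero divisors in the split cases is a reasonable refinement rather than a departure.
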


\begin{proof}
	The map $\Seg$ defined in \eqref{eq:definesegre} descends to a
        well-defined map
		\begin{equation*}
			(\mathfrak{M} \setminus \{0\}) / A \ra \PP(\JA)
		\end{equation*}
	by the computation in \eqref{eq:SegAaction}.  (Here
	$(\mathfrak{M} \setminus \{0\}) / A$ denotes nonzero elements
	of the module $\mathfrak{M}$ up to right $A$-scaling.)  The
	image of this map is by definition $X_A$, and it is easy to
	check that this map is injective.
\end{proof}

In fact, the variety $X_A$ for all composition algebras $A$ (including
those of dimension $8$) over an algebraically closed field of
characteristic $0$ is often considered an embedding of the projective
plane $\PP^2(A)$ over $A$ into $\PP(\JA)$ \cite{zak-severivarieties}.
Here, we work over a more general base field $K$, but the above
argument only holds for associative composition algebras.  For
octonion algebras $A$ over $K$, an explicit computation shows that the
variety $X_A$ here shares the same points as the usual definition of
an octonionic projective plane \cite[Chapter 12]{conway-quatoct},
which can also be described as right (or left) $A$-lines in $A^3$.

\subsubsection{Linear transformations of Jordan algebras}

Let $\SL(J)$ be the group of norm-preserving $K$-linear automorphisms
of a Jordan algebra $J$.  In this way, the Jordan algebra $J$ may
be thought of as a representation of the group $\SL(J)$.  In the case $J=\mathcal H_3(A)$,
we also denote $\SL(J)$ by $\SL_3(A)$.

Over an algebraically closed field $K = \overline{K}$, the cubic
Jordan algebras $\JA$ built from the four split composition algebras
$A$ correspond to the following groups and representations:
\begin{center}
	\begin{tabular}{c|c|c}
	$A$ & $\SL(\JA)$ &$\JA$ \\
	\hline
	$\mathscr{U}(K) = K$ & $\SL_3(K)$ & $\Sym^2(3)$  \\
	$\mathscr{B}(K) \cong K \times K$ & $\SL_3(K) \times \SL_3(K)$ & $3 \tns 3$\\
	$\mathscr{Q}(K) \cong \Mat_{2 \times 2}(K)$ & $\SL_6(K)$ & $\wedge^2(6)$ \\
	$\mathscr{O}(K)$ & $E_6(K)$ & $27$ \\
	\end{tabular}
\end{center}
Over a general field, we may consider various forms of these groups and the corresponding representations.

For the cubic Jordan algebras $\JA$ described in \S
\ref{sec:rank}, the cone on $X_A$ --- that is, the set of rank one
tensors --- in $\JA$ is exactly the orbit of the highest weight vector
of the representation $\JA$ of $\SL(\JA)$ (see \cite[Chapter
  3]{zak-book} for an explanation over an algebraically closed field).  
Over the algebraic closure, it is the unique closed orbit of the action of $\SL(\JA)$ on
$\PP(\JA)$.  Thus, the variety $X_A \subset \PP(\JA)$ is isomorphic
to the flag variety $\SL(\JA)/P$, where $P$ is the parabolic subgroup of $\JA$
that stabilizes the highest weight vector.

More generally, over the algebraic closure, the orbits of the action of $\SL(\JA)$ on $\JA$
give a stratification of $\JA$ by rank. We obtain another description of the rank two 
tensors of the representation $\JA$, which together form another orbit of $\SL(\JA)$ on $\PP(\JA)$.

Note that this description of $X_A$ automatically gives a
moduli interpretation of $X_A$, since it is a generalized flag variety;
the moduli interpretation given in Lemma~\ref{compar} is slightly
stronger, since it also includes the action of the composition algebra $A$ on the
vector bundle.  


\subsection{Doubly Hermitian Rubik's cubes} \label{sec:deg3moduli}

Our goal in this section is to study, in a uniform way, the orbits of
the representations $V \tns \JA$ of the group $G_\JA := \GL(V) \times
\SL(\JA)$, where $V$ is a three-dimensional vector space over the
field $K$ and $J(A)$ is the cubic Jordan algebra obtained from a composition algebra $A$ as
defined in \S \ref{sec:cubicjordan}.  
We will restrict ourselves to nondegenerate elements of
the tensor space, as these will correspond exactly to nonsingular curves.

\begin{defn}
	An element $\phi \in V \tns \JA$ is called {\em nondegenerate}
        if the induced composition map
	 $$\sharp \circ \phi : V^\vee \to \JA \to \JA^\vee$$
	is injective.
\end{defn}

We will show that nondegenerate elements of $V \tns \JA$ correspond to
genus one curves with extra data, including what we call an
$A$-line on the curve.  Intuitively, an $A$-line on a variety $Z$ is
like a rank one (left or right) $A$-module, if the notion of rank were
well defined for noncommutative rings.

\begin{defn}
	Let $A$ be a dimension $d$ composition algebra over $K$ and
	let $Z$ be a variety defined over $K$.  Then an {\em $A$-line}
	over $Z$ is a rank $d$ vector bundle $E$ over $Z$ with a
	global faithful right $A$-action.  
\end{defn}

We say that an $A$-line $E$ on $Z$ has {\em size $s$} if $E$ is a subbundle
of the trivial rank $ds$ bundle $B$ over $Z$, such that $B$ has
a global faithful right $A$-action that restricts on $E$ to
the given $A$-action on $E$.  A size $3$ $A$-line $E$ on $Z$ is {\em very ample} if there
is an immersion $\kappa: Z \to X_A$ such that the pullback of the
universal $A$-line on $X_A$ is isomorphic to $E$.  Finally, for a very ample size $3$
$A$-line $E$ on $Z$, we denote by $\lin E$ the line bundle on $Z$ given by pulling back 
$\OO_{\PP(\JA)}(1)$ to $Z$ via the composition $Z \stackrel{\kappa}{\ra} X_A \hookrightarrow \PP(\JA)$.
We will show that $\lin E$ is closely related to the determinant bundle $\det E$ (of $E$ as a
vector bundle), in each case.

From an element of $V \tns \JA$, we will construct a genus one curve with a degree~$3$ line bundle and an $A$-line.  This construction will be automatically invariant under the action of $G_\JA$.  In fact, this is
exactly the data that determines a $G_\JA$-orbit!

\begin{thm} \label{thm:hermRC}
	The nondegenerate $G_\JA$-orbits of $V \tns \JA$ are in
	bijection with isomorphism classes of nondegenerate triples $(C,L,E)$, where
	$C$ is a smooth genus one curve over $K$, $L$ is a degree~$3$
	line bundle on $C$, and $E$ is a very ample size $3$
	$A$-line over $C$ satisfying $\lin E \cong L^{\tns 2}$.
\end{thm}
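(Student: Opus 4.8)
The plan is to exhibit two mutually inverse constructions: a forward map $\Phi$ from nondegenerate $G_\JA$-orbits on $V \tns \JA$ to isomorphism classes of triples $(C,L,E)$, and an inverse $\Psi$. Throughout I would use the nondegenerate trace form to identify $\JA$ with $\JA^\vee$, so that the adjoint $\sharp$ and the associated birational map $\beta_A$ of \eqref{eq:beta} become self-maps of $\PP(\JA)$, and I would use repeatedly that $X_A = \{x \in \PP(\JA) : x^\sharp = 0\}$ is exactly the rank-one locus while $Y_A = \{\Norm = 0\}$ is the rank-at-most-two locus.

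For the forward map, given nondegenerate $\phi \in V \tns \JA = \Hom(V^\vee,\JA)$, the hypothesis that $\sharp \circ \phi$ is injective forces $\phi$ itself to be injective, so $\phi$ defines a linear plane $\PP(V^\vee) \hookrightarrow \PP(\JA)$. I set $C := \{v \in \PP(V^\vee) : \Norm(\phi(v)) = 0\}$, cut out by the ternary cubic $f := \Norm \circ \phi \in \Sym^3 V$, which is smooth precisely under nondegeneracy; then $L := \iota^*\OO_{\PP(V^\vee)}(1)$ is a degree $3$ line bundle. Injectivity of $\sharp \circ \phi$ guarantees that the plane misses $X_A$, so every point of $C$ has rank exactly two — this is the exact analogue of the rank-drop argument in the proof of Theorem \ref{thm:333bij}. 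Hence $v \mapsto [\phi(v)^\sharp]$ is a morphism $\kappa : C \to X_A$, and since $\sharp$ is quadratic, $\kappa^*\OO_{\PP(\JA)}(1) \cong L^{\tns 2}$. Pulling back the universal $A$-line along $\kappa$, via the moduli interpretation of $X_A$ in Lemma \ref{compar} (and its octonionic analogue for $\dim A = 8$), yields a very ample size $3$ $A$-line $E$ with $\lin E \cong L^{\tns 2}$. Each step is canonical, so $\Phi$ is well defined on orbits and $G_\JA$-invariant.

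For the inverse, given $(C,L,E)$ I set $V := H^0(C,L)$, so $C \hookrightarrow \PP(V^\vee)$ with plane cubic equation $f \in \Sym^3 V$, and very ampleness of $E$ supplies $\kappa : C \to X_A \subset \PP(\JA)$ with $\kappa^*\OO(1) \cong \lin E \cong L^{\tns 2}$. Since a plane cubic is projectively normal, multiplication gives an isomorphism $\Sym^2 V \xrightarrow{\;\sim\;} H^0(C,L^{\tns 2})$, so pulling back coordinates along $\kappa$ produces a genuine quadratic map $q : V^\vee \to \JA$ (an element of $\Sym^2 V \tns \JA$) restricting to $\kappa$ on $C$. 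The crucial step is to recover the \emph{linear} $\phi$: because $q(C) \subset X_A = \{x^\sharp = 0\}$, the $\JA$-valued quartic $q(\cdot)^\sharp$ vanishes on the irreducible cubic $C$ and is therefore divisible by $f$, and I define $\phi := q(\cdot)^\sharp / f \in V \tns \JA$. This is the correct quotient because the identity $(x^\sharp)^\sharp = \Norm(x)\,x$ of \eqref{eq:adjointidentity} forces $q(\cdot)^\sharp = f \cdot \phi$ whenever $q = \sharp \circ \phi$. I would then check that $\phi$ is nonzero and nondegenerate, that its image under $\Phi$ is the original $(C,L,E)$, and that the whole reconstruction depends only on the data up to $\GL(V) \times \SL(\JA)$.

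Finally I would establish $\Phi \circ \Psi = \id$ and $\Psi \circ \Phi = \id$ by showing that the curves, line bundles, and $A$-lines produced on each side coincide as subvarieties and subbundles, exactly as at the end of the proof of Theorem \ref{thm:333bij} (using surjectivity of the pertinent multiplication maps from \cite{mumford}). The hard part will be the inverse direction: proving that the quadratic map $q$ coming from $\kappa$ genuinely factors as $\sharp \circ \phi$ for a linear $\phi$ — that is, that $q(\cdot)^\sharp$ is divisible by $f$ with nonzero linear quotient — and that the resulting $\phi$ is nondegenerate and recovers $E$. This demands a careful analysis of the rank stratification and of the base locus of the quadrics defining $\beta_A$, carried out uniformly for all composition algebras $A$; the octonionic case ($\SL(\JA) = E_6$, $A$ non-associative) is the most delicate, since Lemma \ref{compar} does not apply directly and one must instead use the description of $X_A$ as an octonionic projective plane.
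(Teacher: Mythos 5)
Your forward construction matches the paper's: both cut out $C$ as $\{\Norm\circ\phi=0\}$ in the plane $\PP(\phi)(\PP(V^\vee))$, use injectivity of $\sharp\circ\phi$ to keep the plane off $X_A$, and obtain $\kappa:C\to X_A$ from the adjoint, with $\lin E\cong L^{\tns 2}$ because $\sharp$ is quadratic. Where you genuinely diverge is the inverse direction. The paper passes from $(C,L,E)$ to the Veronese surface $\mathscr{V}=\rho(\PP(H^0(C,L)^\vee))\subset\PP(\JA)$ and then argues softly: $\beta_A$ is birational off $Y_A$, the valuative criterion of properness extends $\beta_A|_{\mathscr{V}}$ to all of $\mathscr{V}$, and the image is asserted to be a linear $\PP^2$. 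You instead work with the quadratic map $q\in\Sym^2 V\tns\JA$ underlying $\kappa$ and observe that the $\JA$-valued quartic $q(\cdot)^\sharp$ vanishes on the irreducible cubic $f$, hence equals $f\cdot\phi$ for a linear $\phi\in V\tns\JA$; the identity $(x^\sharp)^\sharp=\Norm(x)\,x$ then shows this $\phi$ is the right one. This is the same resolution of the same rational map, but your divisibility argument actually \emph{explains} why the resolved image is linear (the common factor $f$ is exactly what gets removed), whereas the paper's valuative-criterion argument produces a morphism without by itself forcing linearity of the image; your version also makes the verification $\Phi\circ\Psi=\id$ essentially a one-line computation, at the cost of having to check separately (as you note) that $c:=\Norm(q(\cdot))/f^2$ is a nonzero constant — which is precisely the paper's nondegeneracy condition that $\mathscr{V}\not\subset Y_A$ — so that $\sharp\circ\phi=c\,q$ and $E$ is recovered. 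One small point you should not skip in the forward direction: for $E$ to be \emph{very ample} in the paper's sense, $\kappa:C\to X_A$ must be an immersion; the paper proves this by showing $C\to C^\sharp$ is an isomorphism via a Zariski-connectedness argument on the blow-down $\beta_A|_W$, and you would need either that lemma or an argument that the linear system $\JA^\vee\to H^0(C,L^{\tns 2})$ defining $\kappa$ is surjective (so that $\kappa$ is the degree-$6$ embedding composed with a linear injection).
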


The nondegeneracy condition for a triple $(C,L,E)$ will be discussed more in the proof; it is an open condition, so the theorem
may be rephrased as a bijection between the orbits of $V \tns \JA$ and the $K$-points of an open substack of the
moduli space of such triples (with the isomorphism between $\lin E$ and $L^{\tns 2}$).  For some choices of $A$, we will work out a relatively simple interpretation of this condition.

\subsubsection{Geometric construction}

We now show that a nondegenerate element $\phi$ of $V
\tns \JA$ naturally gives rise to the geometric data of a genus one
curve $C$, a degree $3$ line bundle on $C$, and an $A$-bundle $E$.
Let $d$ be the dimension of the composition algebra $A$ over $K$.

Given $\phi \in V \tns \JA$, we may instead think of $\phi$ as a
linear map in $\Hom(V^\vee,\JA)$.  Nondegeneracy of $\phi$ immediately
implies that this map is injective, so we obtain a linear map
	\begin{equation*}
	\PP(\phi) : \PP(V^\vee) \ra \PP(\JA).
	\end{equation*}
Let $W$ be the image of $\PP(\phi)$ in $\PP(\JA)$.  Then the secant
variety $Y_A$ of $X_A$ cuts out a cubic plane curve $C$ on $W$.  In
other words, the curve $\iota: C \hookrightarrow \PP(\JA)$ is defined by the vanishing of the cubic
norm form $\Norm$ on the plane $W$.  Let $L$ be the pullback of
$\OO(1)$ from the projective plane $W$ to $C$, so $L$ is a degree $3$
line bundle on $C$.

We claim that for $\phi$ nondegenerate, this curve $C$ is smooth and
irreducible, and thus of genus one.  Nondegeneracy of $\phi$ implies
that $W$ does not intersect the variety $X_A$, which is the base locus
for the rational map $\beta_A$ in \eqref{eq:beta}, or equivalently,
that the partials of the norm form $\Norm$ do not simultaneously
vanish.  In this case, the curve $C$ is nonsingular.

\begin{lemma}
	The image $C^\sharp$ of the curve $C \subset Y_A$ under the
	adjoint map $\beta_A$ is isomorphic to $C$, and hence is
	a smooth irreducible genus one curve in $X_A \subset \PP(\JA^\vee)$.
\end{lemma}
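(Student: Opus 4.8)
The plan is to show that $\beta_A$ restricts to a closed immersion on $C$, so that $C^\sharp=\beta_A(C)$ is a smooth curve isomorphic to $C$, with smoothness, irreducibility, and genus one inherited directly from $C$. First I would check that $\beta_A$ is genuinely a morphism along $C$. Its indeterminacy locus is exactly $X_A$, and nondegeneracy of $\phi$ means $W\cap X_A=\emptyset$ (this is precisely what was used to show $C$ is smooth), so no point of $C\subset W$ lies in the base locus and $\beta_A|_C$ is a morphism. Next I would pin down its target: for $[x]\in C$ the element $x$ lies on $Y_A$ but not on $X_A$, hence has rank exactly $2$, and the adjoint of a rank $2$ element is a nonzero rank $1$ element. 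Thus $x^\sharp$ defines a point of the rank one variety $X_A\subset\PP(\JA^\vee)$, and $\beta_A|_C$ is a morphism $C\to X_A$.

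The substance is injectivity together with unramifiedness. Here the identity $(x^\sharp)^\sharp=\Norm(x)\,x$ of \eqref{eq:adjointidentity}, which makes $\beta_A$ birational with inverse the adjoint $\beta_A^\vee$ on $\PP(\JA^\vee)$, is of no direct help on $C$: since $C\subset Y_A$ we have $\Norm\equiv 0$ on $C$, and indeed $C^\sharp\subset X_A$ lands precisely in the indeterminacy locus of $\beta_A^\vee$, so the naive inverse degenerates. I would instead describe $\beta_A|_W$ concretely. Restricted to the plane $W\cong\PP^2$, the components of $\beta_A$ are the partials of $\Norm$, which are quadrics; hence $\beta_A|_W$ factors as the Veronese embedding $\nu_2\colon W\hookrightarrow\PP^5$ followed by a linear projection $\lambda\colon\PP^5\dashrightarrow\PP(\JA^\vee)$ induced by expressing the restricted partials $\partial\Norm|_W$ in Veronese coordinates. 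Since $\nu_2$ is a closed immersion, it suffices to show that the center of $\lambda$ is disjoint from the secant and tangent lines of the sextic elliptic curve $\nu_2(C)$; this is exactly the translation of $W\cap X_A=\emptyset$, and gives both injectivity and the immersion property at once. Equivalently, one may use that $\beta_A$ blows $Y_A$ down onto $X_A$ with linear fibers $\Pi_\xi$ (the entry loci), and check by a dimension count from $W\cap X_A=\emptyset$ that $W$, and hence $C$, meets each $\Pi_\xi$ in at most one point.

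The main obstacle is precisely this injectivity/immersion step, because both natural shortcuts fail: the Cremona inverse $\beta_A^\vee$ is undefined along $C^\sharp$, and $\beta_A|_{Y_A}$ genuinely contracts positive-dimensional linear spaces, so isomorphy onto the image is a special feature of the nondegenerate plane section $C$ rather than of $Y_A$ itself. I would handle it by base change to $\overline{K}$ and reduction to the split composition algebras, where $X_A$ and $\beta_A$ are the four classical cases (the Veronese with the adjugate of symmetric $3\times 3$ matrices; the Segre $\PP^2\times\PP^2$ with the ordinary matrix adjugate; $\Gr(2,6)$ with its quadratic self-map; and $E^{16}$), in each of which the required separation of points and tangents along a smooth cubic missing $X_A$ is a direct computation, identical in spirit to the kernel and $2\times2$-minor isomorphisms $\tau_i^j$ established for Rubik's cubes in \S\ref{sec:333}. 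Because the entire construction of $C$, $\beta_A$, and $C^\sharp$ is $G_\JA$-equivariant and the assertion is geometric, the statement over $K$ follows by descent once it is known over $\overline{K}$. Finally, a closed immersion of the smooth projective curve $C$ identifies $C^\sharp$ with $C$, so $C^\sharp$ is a smooth irreducible genus one curve contained in $X_A\subset\PP(\JA^\vee)$, as claimed.
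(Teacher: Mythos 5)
Your overall strategy---show that $\beta_A|_C$ is a closed immersion rather than argue on all of $W$ at once---is legitimate, but the step that carries all the weight, namely injectivity plus unramifiedness of $\beta_A|_C$, is not actually established by either of the two justifications you offer. In the Veronese-plus-projection route, the condition for $\lambda\circ\nu_2|_C$ to be a closed immersion is that the center of $\lambda$ avoid the secant and tangent lines of the sextic curve $\nu_2(C)\subset\PP^5$; this is \emph{not} ``exactly the translation of $W\cap X_A=\emptyset$.'' That hypothesis only says the center misses the Veronese surface $\nu_2(W)$ itself, whereas the secant lines of $\nu_2(C)$ sweep out a locus that is not contained in $\nu_2(W)$, so the center could meet a secant line away from the Veronese surface. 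In the entry-locus route, a dimension count shows only that a \emph{general} fiber $\Pi_\xi$ misses the (non-general) plane $W$; it cannot rule out some special $\Pi_\xi$ meeting $W$ in two points or in a line. The argument that actually closes the gap is the one you gesture at but do not deploy: the fibers $\Pi_\xi$ of $\beta_A|_{Y_A}$ over points of $X_A$ are \emph{linear} subspaces contained in $Y_A$, so $W\cap\Pi_\xi$ is a linear space; if it were positive-dimensional it would contain a line inside $W\cap Y_A=C$, impossible since $C$ is a smooth plane cubic. The same observation applied to tangent directions (a tangent vector at $p\in C$ killed by $d\beta_A$ points along $\Pi_\xi$, hence along a line of $W$ contained in $Y_A$) gives unramifiedness. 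Note that the operative input here is $W\cap Y_A=C$ with $C$ smooth, not $W\cap X_A=\emptyset$ per se; and your closing reduction to ``a direct computation in the four split cases'' leaves the octonionic case $E^{16}$, the least computable of the four, entirely unaddressed.

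For comparison, the paper's proof avoids all of this with a softer argument on the whole plane: since $\beta_A\circ\beta_A=\mathrm{id}$ away from $Y_A$ and $W\cap Y_A=C$, the restriction $\beta_A|_W$ is birational onto its image, so its generic fiber is a single point; Zariski connectedness then forces every fiber to be connected, and a positive-dimensional connected fiber would be a contractible curve in $W\cong\PP^2$, which does not exist. Hence $\beta_A|_W$, and in particular $\beta_A|_C$, has singleton fibers and $C^\sharp\cong C$. If you want to keep your finer, fiberwise route, replace the dimension count by the linearity-of-fibers argument above and drop the claimed equivalence with $W\cap X_A=\emptyset$.
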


\begin{proof}
	Recall that $\JA$ and its dual may be naturally identified, and
	from \S \ref{sec:rank}, the adjoint map
	$\beta_A$ is a birational map from $\PP(\JA)$ to $\PP(\JA^\vee)$,
	which blows down $Y_A \setminus X_A$ to $X_A$.  The image of
	the plane $W$ under $\beta_A$ is birational on $W \setminus C$.  In other words, the generic
	fiber of $\beta_A$ restricted to $W$ is connected, so by
	Zariski's connectedness theorem, all the special fibers are
	connected; since there are no contractible curves in $W \cong
	\PP^2$, each fiber is a single point.  Thus, the image
	$C^\sharp$ of $C$ is also a smooth irreducible genus one
	curve, and it is contained in $X_A$.
\end{proof}

By the moduli interpretation of the variety $X_A$, the closed immersion
$C^\sharp \hookrightarrow X_A$ is equivalent to the data of a
very ample size $3$ $A$-line on $C^\sharp$, which pulls back to a very ample
size $3$ $A$-line $E$ on $C$.  In other words, we have
produced the triple $(C,L,E)$ as desired.

It is clear that the isomorphism class of the triple $(C,L,E)$ is preserved under
the action of $G_{\JA}$, where two such triples $(C,L,E)$ and $(C',L',E')$ are isomorphic
if there is an isomorphism $\sigma: C \to C'$ such that $\sigma^* L' = L$ and $\sigma^* E' = E$
and the $A$-actions on $E$ and $E'$ are related by $\sigma$.
The groups $\GL(V)$ and $\SL(\JA)$ act by linear transformations on $V$ and on $\JA$, respectively,
and the action of $\SL(\JA)$ fixes the varieties $X_A$ and $Y_A$.  Thus, both actions do 
not affect the geometric data, up to isomorphism.

Now we show that the triple $(C,L,E)$ satisfies the last condition of the theorem.

\begin{lemma}
	There is an isomorphism
		$$\lin E \cong L^{\tns 2}$$
	of line bundles on the curve $C$.
\end{lemma}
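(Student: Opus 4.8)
The plan is to compute both $\lin E$ and $L^{\tns 2}$ as restrictions to $C$ of tautological bundles on the ambient projective spaces, and then to compare them using the single fact that the adjoint map $\beta_A$ is defined by quadrics. By construction $L = \iota^* \OO_{\PP(\JA)}(1)$ is the hyperplane bundle of the plane $W \cong \PP^2$ restricted to $C$; since $C \subset W$, restriction commutes with tensor powers, so $\OO_{\PP(\JA)}(n)|_C \cong L^{\tns n}$ for all $n$, and in particular $L^{\tns 2}$ is the restriction of $\OO_{\PP(\JA)}(2)$ to $C$. On the other side, $\lin E$ is by definition the pullback of $\OO_{\PP(\JA^\vee)}(1)$ along the immersion $\kappa : C \to X_A \hookrightarrow \PP(\JA^\vee)$, and for our $E$ this immersion factors as $C \xrightarrow{\cong} C^\sharp \hookrightarrow X_A$, where the first arrow is $\beta_A|_C$ (an isomorphism onto $C^\sharp$ by the previous lemma). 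Thus it suffices to show $\lin E \cong (\beta_A|_C)^* \OO_{\PP(\JA^\vee)}(1) \cong \OO_{\PP(\JA)}(2)|_C$.

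The crux is therefore the computation of $\beta_A^* \OO_{\PP(\JA^\vee)}(1)$. Recall from \S\ref{sec:rank} that $\beta_A$ is the rational map given by the linear system of quadrics through $X_A$, namely the partial derivatives of the cubic norm form $\Norm$, which are sections of $\OO_{\PP(\JA)}(2)$. Hence, on the open locus where $\beta_A$ is a morphism, one has $\beta_A^* \OO_{\PP(\JA^\vee)}(1) \cong \OO_{\PP(\JA)}(2)$. First I would verify that $C$ lies in this locus: the indeterminacy locus of $\beta_A$ is exactly its base locus $X_A$, and nondegeneracy of $\phi$ gives $W \cap X_A = \emptyset$ (as already used to establish smoothness of $C$), so $C$ avoids $X_A$ and $\beta_A|_C$ is a genuine morphism. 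Pulling back $\OO_{\PP(\JA^\vee)}(1)$ along $\beta_A|_C$ and invoking the identity above then yields $\lin E \cong \OO_{\PP(\JA)}(2)|_C \cong L^{\tns 2}$, as desired.

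The main obstacle is making the pullback of line bundles along the merely birational map $\beta_A$ rigorous: the clean identity $\beta_A^*\OO(1)\cong\OO(2)$ holds only away from the indeterminacy locus. The essential point is thus the observation that the entire plane $W$ (and in particular $C$) misses the base locus $X_A$, which is precisely the nondegeneracy hypothesis; once this is in place, the computation is forced. A secondary point to confirm is the degree-$2$ normalization — that $\beta_A$ pulls $\OO(1)$ back to $\OO(2)$ and not some other twist — but this is immediate from $\beta_A$ being given by the quadratic partials of $\Norm$. As a consistency check, this also matches degrees: $\deg L^{\tns 2} = 6$ equals the degree of $C^\sharp$ as a curve in $\PP(\JA^\vee)$, which is the image of the plane cubic $C$ under the quadratic system defining $\beta_A|_C$.
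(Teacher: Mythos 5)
Your proof is correct and follows essentially the same route as the paper's: both arguments reduce to the observation that $\beta_A$ is given by the quadratic partials of $\Norm$, so the pullback of $\OO_{\PP(\JA^\vee)}(1)$ along $C \to C^\sharp$ is $\iota^*\OO_{\PP(\JA)}(2)$, while $L = \iota^*\OO_{\PP(\JA)}(1)$ since $W$ sits linearly in $\PP(\JA)$. Your extra care in checking that $C$ misses the base locus $X_A$ (via nondegeneracy) is a point the paper uses implicitly, so the two arguments are substantively identical.
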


\begin{proof}
	This relation follows from the fact that the map $\beta_A$ is given by quadratic polynomials.
	The line bundle $\lin E$ on $C$ is the pullback of $\OO_{\PP(\JA)}(1)$ via
		\begin{equation*}
			C \stackrel{\beta_A}{\ra} C^\sharp \hookrightarrow X_A \hookrightarrow \PP(\JA^\vee) \cong \PP(\JA),
		\end{equation*}
	which is isomorphic to $\iota^*\OO_{\PP(\JA)}(2)$.  On the other hand, the
	line bundle $L$ is defined as the pullback of $\OO_W(1)$ to $C$, 
	and since $W$ lies linearly in $\PP(\JA)$, in fact $L$ is isomorphic
	to $\iota^*\OO_{\PP(\JA)}(1)$.
\end{proof}

This line bundle $\lin E$ on $C$ is closely related to the determinant bundle of $E$.
\begin{lemma}
	For a very ample size $3$ $A$-line $E$ on a projective variety $Z$, where $A$ is a composition
	algebra over $K$ of dimension $d$, if $\dim A = 1$, then 
	$$(\det E)^{\tns 2} \cong \lin E$$
	and if $\dim A = 2$, $4$, or $8$, then
	$$\det E \cong (\lin E)^{\tns (d/2)}.$$
\end{lemma}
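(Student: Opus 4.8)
The plan is to reduce the stated relation to a universal identity of line bundles on $X_A$ and then transport it to $Z$ by pullback. By the very ampleness hypothesis there is an immersion $\kappa\colon Z\to X_A$ with $E\cong\kappa^*\E$, where $\E$ denotes the universal $A$-line on $X_A$, and by definition $\lin E=\kappa^*\OO_{X_A}(1)$, writing $\OO_{X_A}(1):=\OO_{\PP(\JA)}(1)|_{X_A}$. Since formation of the determinant commutes with pullback, $\det E\cong\kappa^*(\det\E)$, so it suffices to prove on $X_A$ itself that $(\det\E)^{\tns 2}\cong\OO_{X_A}(1)$ when $\dim_K A=1$, and $\det\E\cong\OO_{X_A}(1)^{\tns(d/2)}$ when $\dim_K A=d\in\{2,4,8\}$; applying $\kappa^*$ then gives the lemma verbatim.

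Next I would reduce to the algebraically closed case. Both $\det\E$ and $\OO_{X_A}(1)$ are line bundles defined over $K$, and $X_A$ is proper and geometrically integral with $H^0(X_{A,\bar K},\OO)=\bar K$. Consequently an isomorphism between two such line bundles over $\bar K$ is unique up to a nonzero scalar, and the Galois-equivariance of this canonical isomorphism together with Hilbert~90 shows that a geometric isomorphism descends to a $K$-isomorphism. Thus I may base change to $\bar K$, where $A$ becomes one of the four split composition algebras and $X_A$ becomes the corresponding Severi variety.

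It then remains to carry out the four computations, in each case identifying $\E$ with the relevant tautological bundle and reading off $\det\E$. When $\dim A=1$, $X_A$ is the Veronese surface $v_2(\PP^2)\subset\PP^5$; here $\Pic X_A=\Z h$ with $h=\OO_{\PP^2}(1)$, the embedding gives $\OO_{X_A}(1)=h^{\tns 2}$, and $\E=h$, so $(\det\E)^{\tns2}=h^{\tns2}=\OO_{X_A}(1)$. When $\dim A=2$, $X_A=\PP^2\times\PP^2$ is the Segre fourfold in $\PP^8$, $\OO_{X_A}(1)=\OO(1,1)$, and $\E\cong\OO(1,0)\oplus\OO(0,1)$, whence $\det\E=\OO(1,1)=\OO_{X_A}(1)$. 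When $\dim A=4$, $X_A=\Gr(2,6)$ is Plücker-embedded in $\PP^{14}$, $\Pic X_A=\Z\sigma$ with $\OO_{X_A}(1)=\sigma$, and the rank $4$ bundle $\E$ has $\det\E=\sigma^{\tns2}=\OO_{X_A}(1)^{\tns2}$. When $\dim A=8$, $X_A$ is the Cayley plane in $\PP^{26}$, $\Pic X_A=\Z\rho$ with $\OO_{X_A}(1)=\rho$, and the rank $8$ bundle $\E$ has $\det\E=\rho^{\tns4}=\OO_{X_A}(1)^{\tns4}$. In each of the last three cases this is exactly $\OO_{X_A}(1)^{\tns(d/2)}$. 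The orientation of $\E$---that $\det\E$ is a positive rather than negative multiple of $\OO_{X_A}(1)$---is pinned down by the fact that under the three coordinate projections of the Rubik's cube construction $\E$ restricts to the \emph{ample} degree $3$ line bundles $L_i$ on the curve; keeping careful track of this sign is the principal piece of bookkeeping.

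The main obstacle is the octonion case $\dim A=8$: because $A$ is non-associative there is no honest $A$-module or Morita description of $\E$, so I cannot compute $\det\E$ by the tautological-bundle manipulations that dispatch $d=1,2,4$. I expect to treat it either by restricting $\E$ and $\OO_{X_A}(1)$ to a rational curve (a line $\ell\cong\PP^1$ in the Cayley plane) and comparing degrees, using that $\Pic(X_A)=\Z$, or, more uniformly, by realizing $X_A=\SL(\JA)/P$ as a generalized flag variety, viewing $\E$ as the homogeneous bundle attached to a character of $P$, and identifying $\det\E$ and $\OO_{X_A}(1)$ with the associated weights; the desired relation then reduces to a single identity of weights that can be verified simultaneously for all four algebras.
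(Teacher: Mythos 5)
Your argument is correct and shares the paper's opening move (reduce everything to the universal $A$-line $\E$ and $\OO_{\PP(\JA)}(1)$ on $X_A$ itself), but from there the two proofs diverge. The paper handles all four algebras at once by citing the general fact that for a homogeneous variety embedded via a highest-weight orbit, the pullback of $\OO(1)$ is the product of the determinants of the bundles in the universal flag, and then ``comparing'' $\E$ to that flag; you instead insert a descent step to $\overline{K}$ (so that $A$ splits and $X_A$ becomes a Severi variety in standard coordinates) and compute $\det\E$ case by case from explicit tautological bundles, falling back on the weight/homogeneous-bundle argument only for the Cayley plane --- which is essentially the paper's uniform method in the one case where no elementary model is available. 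What your route buys is transparency and a correct accounting of the dualization and sign that the paper elides: note that the paper's parenthetical describing the rank-$4$ bundle on $\Gr(2,6)$ as the ``second tensor power'' of the universal rank-$2$ bundle $S$ would give $\det\E\cong\OO_{X_A}(\pm4)$, contradicting the lemma; your implicit identification $\E\cong S^{\oplus 2}$ (i.e.\ $S\tns_K K^2$, consistent with the Morita equivalence and the relation $\det F\cong L^{\tns 2}$ in \S\ref{sec:deg3special}) is the one that yields the stated $\det\E\cong\OO_{X_A}(2)$. The only incomplete step is the octonion case: the degree-on-a-line alternative still requires the splitting type of $\E$ on a line of the Cayley plane, which you have not supplied, but your second alternative --- identifying $\det\E$ with a character of the parabolic and comparing weights with $\OO(1)$ --- does close that gap and is precisely what the paper's appeal to \cite{fulton-youngtableaux} amounts to.
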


\begin{proof}
	It suffices to prove this lemma for the variety $X_A$ itself.  Recall that $X_A$
	is a homogeneous variety in $\PP(\JA)$ given as the projectivization of the orbit
	of the highest weight vector of the representation $\JA$ of $\SL(\JA)$.  It is well known
	that the pullback of $\OO_{\PP(\JA)}(1)$ to $X_A$ is the product of the determinants of
	the vector bundles in the universal flag on $X_A$ \cite[Chapter 9]{fulton-youngtableaux}.
	Comparing the universal $A$-line $E$ on $X_A$ to the vector bundles in the universal flag
	under the typical moduli interpretation gives the lemma.  (For example, when $A$ is the split
	quaternions over $K$, our $A$-line $E$ is a rank $4$-vector bundle on $X_A = \Gr(2,6)$ and is the second
	tensor power of the typical universal rank $2$ bundle defined on the Grassmannian.)
\end{proof}

\subsubsection{Bijection}

The geometric construction described above gives one direction of the bijection.
We now prove Theorem \ref{thm:hermRC} (and its weaker version, Theorem \ref{thm:2hermRCpreview}).

\begin{proof}[Proof of Theorem $\ref{thm:hermRC}$]
	Suppose $(C,L,E)$ is a triple satisfying the conditions of the theorem.  We wish to recover
	a plane $W$ in $\PP(\JA)$ such that $C$ is isomorphic to the curve cut out by $W$ and
	the cubic hypersurface $Y_A$.
	
	The degree $3$ line bundle $L$ on $C$ gives a closed immersion $\eta: C \hookrightarrow \PP(H^0(C,L)^\vee)
	\cong \PP^2$.	  The $A$-line $E$ on $C$ implies that there is a closed immersion
		\begin{equation} \label{eq:kappa}
			\kappa: C \to X_A \hookrightarrow \PP(\JA) \cong \PP(\JA^\vee)
		\end{equation}
	given by the sections of $\lin E$.  More precisely, since $\lin E \cong \kappa^* \OO_{\PP(\JA)}(1)$
	and $\deg (\lin E) = 6$ by assumption, the image of $\kappa$ lies in a
	$\PP^5$ lying linearly in $\PP(\JA)$, namely the image of the map $\lambda: \PP(H^0(C,\lin E)^\vee) \hookrightarrow \PP(\JA)$.
	
	Recall that we have an isomorphism $L^{\tns 2} \cong \lin E$.  The multiplication map
		$$H^0(C,L) \tns H^0(C,L) \ra H^0(C,L^{\tns 2}) \cong H^0(C,\lin E)$$
	is surjective and factors through $\Sym^2 H^0(C,L)$, so a dimension count shows that
	the spaces $H^0(C,\lin E)$ and $\Sym^2 H^0(C,L)$ are naturally isomorphic.  Thus,
	we have a natural quadratic map
		\begin{equation}
			\rho: \PP(H^0(C,L)^\vee) \ra \PP(\Sym^2 H^0(C,L)^\vee) \stackrel{\cong}{\ra} \PP(H^0(C,\lin E)^\vee).
		\end{equation}
	Pulling back the line bundle $\OO_{\PP(\JA)}(1)$
	to the curve $C$ via the composition $\lambda \circ \rho \circ \eta$ and via $\kappa$
	give isomorphic line bundles on $C$, so under an appropriate change of basis, the
	images of $C$ in $\PP(\JA)$ via these two maps coincide.  In other words, the following diagram commutes:
		\begin{equation*}
		\raisebox{2\baselineskip}{ \xymatrix@C=0.6in{
			C \ar[d]^{\eta} \ar[r] & X_A \ar[d] \\
			\PP(H^0(C,L)^\vee) \ar[r]^-{\lambda \circ \rho}_-{\mathrm{quadratic}} & \PP(\JA)
		}} \qquad .\end{equation*}	

	The image of $\PP(H^0(C,L)^\vee)$ under $\rho$ is a Veronese surface $\mathscr{V}$
	in $\PP(H^0(C,\lin E)^\vee) \subset \PP(\JA)$.  The nondegeneracy condition we require
	is that this Veronese $\mathscr{V}$ is not contained in $Y_A$.  Under the inverse of the adjoint
	map $\beta_A$, we claim that $\mathscr{V}$ gives a $\PP^2$ lying linearly in $\PP(\JA)$.	
	Recall that $\beta_A \circ \beta_A$ is the identity on $\PP(\JA) \setminus Y_A$.
	By assumption, outside of the image of $C$ on $\mathscr{V}$, the map $\beta_A$ is birational,
	so by the valuative criterion of properness, there is a well-defined map from
	all of the surface $\mathscr{V}$ to $\PP(\JA)$, whose image is a linear subspace of $\PP(\JA)$.
	This plane may be identified with $\PP(H^0(C,L)^\vee)$ under some choice of basis.
	
	Note that the nondegeneracy condition on the triple $(C,L,E)$ is satisfied when constructed from
	an element of $V \tns \JA$.  Finally, the constructions in each direction are inverse to one another, 
	since $\beta_A \circ \beta_A$ is the identity on an open set of $\PP(\JA)$.
\end{proof}

\subsection{Specializations} \label{sec:deg3special}

For particular choices of $A$, Theorem \ref{thm:hermRC} recovers some of the spaces considered earlier,
while some other choices of $A$ give new parametrization theorems.  In this section, we describe
some of the cases where $A$ is split.

For example, the case where $A = K \times K$ and $d = 2$ recovers the space of Rubik's cubes studied
in \S \ref{sec:333}.  For $A = K \times K$, it is easy to check that the Jordan algebra $\JA$ is
isomorphic to the Jordan algebra $\Mat_{3 \times 3}(K)$ of $3 \times 3$ matrices
over $K$, where the norm, spur, and trace forms coming from the characteristic polynomial in the standard
way. The variety $X_A$ is isomorphic to the Segre fourfold $\PP^2 \times \PP^2 \hookrightarrow \PP^8$, and its secant
variety $Y_A$ is the cubic hypersurface given by the vanishing of the determinant.  Then the curve obtained as a intersection of a plane and 
$Y_A$ in $\PP(\JA)$ may be thought of as a determinantal variety, just as before.

The specialization of Theorem \ref{thm:hermRC} to $A = K$ and $d = 1$ gives the space of doubly symmetric
Rubik's cubes, which we studied in \S \ref{sec:2symRC}.  Here, the variety $X_A$
is the Veronese surface $\PP^2 \hookrightarrow \PP^5$.  The $A$-line $E$ is another degree $3$ line bundle, 
given as the pullback of $\OO(1)$ from the Veronese surface to the curve.  The squares of the
line bundles $L$ and $E$ are isomorphic, and thus their difference is a $2$-torsion point on the Jacobian of the curve,
as described in Theorem \ref{thm:2symRC}.

We next study Theorem \ref{thm:hermRC} in the case where $d = 4$ and the composition algebra $A$ is the algebra of 
split quaternions over $K$, i.e., the algebra $\Mat_{2 \times 2}(K)$ of $2 \times 2$ matrices over $K$, in order
to recover Theorem \ref{thm:2skewRCpreview}.
In this case, the algebra $\JA$ is isomorphic to the algebra of $6 \times 6$ skew-symmetric matrices
over $K$, where the cubic form is the degree $3$ Pfaffian of such a matrix.  The moduli problem
becomes one of determining so-called {\em Pfaffian representations}, which have been studied
over an algebraically closed field in \cite{buckley-pfaff}.

\begin{thm}
	Let $V$ and $W$ be $K$-vector spaces of dimensions $3$ and $6$, respectively.  Then nondegenerate
	$\GL(V) \times \SL(W)$-orbits of $V \tns \wedge^2 W$ are in bijection with isomorphism classes
	of nondegenerate triples $(C,L,F)$, where $C$ is a genus one curve over $K$, $L$ is a degree $3$ line bundle on $C$,
	and $F$ is a rank $2$ vector bundle on $C$, with $\det F \cong L^{\tns 2}$.
\end{thm}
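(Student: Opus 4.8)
The plan is to deduce this theorem directly from the general parametrization of Theorem \ref{thm:hermRC} by specializing to the split quaternion algebra $A = \Mat_{2\times 2}(K)$, for which $d = \dim_K A = 4$. First I would record the relevant dictionary from \S\ref{sec:cubicjordan}: for this $A$ the cubic Jordan algebra $\JA = \Herm^2(\mathfrak{M})$ is the space of $6\times 6$ skew-symmetric matrices, i.e.\ $\wedge^2 W$ with $W$ a $6$-dimensional $K$-space; the cubic norm $\Norm$ is the degree-$3$ Pfaffian; the group $\SL(\JA) = \SL_3(A)$ is $\SL(W) = \SL_6(K)$ acting by $g \cdot \omega = g\,\omega\,g^{t}$, which preserves the Pfaffian since $\det g = 1$; the variety $X_A$ of rank-one tensors is the Grassmannian $\Gr(2,W) = \Gr(2,6)$ under its Plücker embedding; and $Y_A$ is the Pfaffian cubic hypersurface. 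With these identifications the ambient representation $V\tns\JA$ of $G_\JA = \GL(V)\times\SL(\JA)$ is exactly $V\tns\wedge^2 W$ under $\GL(V)\times\SL(W)$, so it remains only to translate the geometric data $(C,L,E)$ of Theorem \ref{thm:hermRC} into the data $(C,L,F)$ of the present statement.

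The heart of the argument is a Morita translation of the $A$-line $E$ into an ordinary rank-$2$ bundle $F$. Since $A=\Mat_{2\times 2}(K)$ is Morita-equivalent to $K$, a rank-$4$ vector bundle $E$ on $C$ carrying a global faithful right $A$-action is the same datum as a rank-$2$ vector bundle $F$ on $C$, with $E \cong F\tns_K K^2$ (so $E\cong F\oplus F$ as a $K$-bundle, the $A$-action being matrix multiplication on the $K^2$ factor); this is an equivalence of categories, and faithfulness is automatic. I would then check that the two auxiliary conditions match up. The \emph{size $3$} condition says $E$ is a subbundle of the trivial rank-$12$ bundle with compatible $A$-action; applying Morita, this says precisely that $F$ is a rank-$2$ subbundle of $\OO_C^{\oplus 6}$, i.e.\ the datum of a morphism $C\to\Gr(2,6)$ with $F$ the pulled-back tautological bundle. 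The \emph{very ample} condition then says this morphism is the closed immersion $\kappa$ of Theorem \ref{thm:hermRC}, and the universal $A$-line on $X_A=\Gr(2,6)$ corresponds under Morita to the universal rank-$2$ subbundle, matching Lemma \ref{compar}.

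It remains to convert the relation $\lin E \cong L^{\tns 2}$ into $\det F \cong L^{\tns 2}$. Here I would combine two facts: the determinant lemma of \S\ref{sec:deg3moduli}, which for $d=4$ gives $\det E \cong (\lin E)^{\tns 2}$, and the Morita identity $\det E \cong (\det F)^{\tns 2}$ coming from $E\cong F\oplus F$. Together these force $(\det F)^{\tns 2}\cong(\lin E)^{\tns 2}\cong L^{\tns 4}$, so $\det F$ and $L^{\tns 2}$ agree up to a $2$-torsion line bundle. To remove this ambiguity I would compute on the universal family over $X_A=\Gr(2,6)$ directly: there $\lin E$ is the Plücker bundle $\kappa^*\OO_{\PP(\JA)}(1)$, and with the correct convention for $F$ (the tautological subbundle, or its dual) one has $\det F \cong \lin E$ on the nose, whence $\det F \cong L^{\tns 2}$ exactly for the pulled-back bundles. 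Granting this, the map $(C,L,E)\mapsto(C,L,F)$ and its Morita inverse $(C,L,F)\mapsto(C,L,F\oplus F)$ are mutually inverse bijections on isomorphism classes, and composing with Theorem \ref{thm:hermRC} yields the stated bijection between nondegenerate $\GL(V)\times\SL(W)$-orbits of $V\tns\wedge^2 W$ and nondegenerate triples $(C,L,F)$; note $\deg\det F = 2\deg L = 6$ and $\mathrm{rk}\,F=2$, recovering the preview Theorem \ref{thm:2skewRCpreview}.

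The step I expect to be the main obstacle is precisely this last determinant bookkeeping: ruling out the $2$-torsion discrepancy between $\det F$ and $L^{\tns 2}$ requires the explicit universal computation on $\Gr(2,6)$ (including fixing the sub/quotient and duality conventions so that the Plücker line bundle really is $\det F$ rather than its inverse), rather than following formally from the degree count or from the general determinant lemma alone. Everything else is the mechanical, characteristic-free Morita equivalence together with the already-established content of Theorem \ref{thm:hermRC}, which also supplies the nondegeneracy open condition on $(C,L,F)$ and, over an algebraically closed field, recovers the Pfaffian representations of \cite{buckley-pfaff}.
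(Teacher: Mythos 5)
Your proposal follows essentially the same route as the paper: specialize Theorem \ref{thm:hermRC} to the split quaternion algebra $A=\Mat_{2\times 2}(K)$, identify $X_A$ with $\Gr(2,6)$ in its Pl\"ucker embedding, and use the Morita equivalence between modules over the rank $4$ Azumaya algebra and rank $2$ vector bundles to trade the $A$-line $E$ for $F$. The determinant bookkeeping you flag as the main obstacle is exactly what the paper's determinant lemma in \S\ref{sec:deg3moduli} settles by the same universal computation on $\Gr(2,6)$ (identifying $E$ with $F^{\oplus 2}$ and $\lin E$ with $\det F$ under the standard convention), so your argument is correct and matches the paper's.
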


The interpretation of $X_A$ as a moduli space of rank $2$ vector bundles is straightforward.
Here $X_A \hookrightarrow \PP(\JA)$ is the Pl\"{u}cker embedding of the Grassmannian $\Gr(2,W)$ in
$\PP(\wedge^2(W)) \cong \PP^{14}$.  The $A$-line $E$ coming from Theorem \ref{thm:hermRC} is a 
rank $4$ --- not rank $2$! --- vector bundle over $C$, but there is an equivalence of categories 
between modules $E$ over rank $4$ Azumaya algebras and rank $2$ vector bundles $F$.  This
phenomenon is special to the case of the split quaternion algebra; for nonsplit quaternion algebras, the minimal rank vector
bundle recovered from this data will have rank $4$.

We may also set $A$ to be the split octonion algebra $\mathscr{O}$ over $K$, although the data is certainly less familiar
to most people!  Then the algebra $\JA$ is the exceptional Jordan algebra, and the variety $X_A$ is the fourth Severi variety
found by Lazarsfeld \cite{lazarsfeldE16, zak-severivarieties}.  The variety $X_A$ is $16$-dimensional, and
it is often interpreted as a projective plane $\PP^2(\mathscr{O})$ over $\mathscr{O}$ \cite{compprojplanes}.  This interpretation
is exactly how we recover a $\mathscr{O}$-line on a curve $C$ from a map $C \to \PP^2(\mathscr{O})$.

\begin{thm}
	Let $V$ and $W$ be $K$-vector spaces of dimensions $3$ and $27$, respectively,
	with the split algebraic group $E_6$ acting on $W$.
	Then nondegenerate $\GL(V) \times E_6$-orbits of $V \tns W$ are in bijection with isomorphism classes of 
	nondegenerate quadruples $(C,L,\xi, E)$, where $C$ is a genus one curve over $K$, 
	$L$ is a degree $3$ line bundle on $C$, $\xi: C \to \PP^2(\mathscr{O}) \subset \PP(W)$
	is a map, and $E$ is the rank $8$ vector bundle on $C$ obtained from pulling back the universal bundle on $\PP^2(\mathscr{O})$,
	and $\det E \cong L^{\tns 8}$.
\end{thm}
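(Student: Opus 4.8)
The plan is to specialize the general result, Theorem~\ref{thm:hermRC}, to the case $A = \mathscr{O}$, the split octonion algebra over $K$. As recorded in the table of \S\ref{sec:rank}, for this choice the norm-preserving group $\SL(\JA)$ is the split form of $E_6$, and the cubic Jordan algebra $\JA = \HH_3(\mathscr{O})$ is precisely the $27$-dimensional representation $W$. Thus $G_\JA = \GL(V) \times E_6$ acts on $V \tns \JA = V \tns W$, and the nondegenerate orbits are exactly those appearing in the theorem. The associated variety $X_A \subset \PP(\JA) = \PP(W)$ is the sixteen-dimensional Severi variety of Lazarsfeld, which, as discussed in \S\ref{sec:rank}, is interpreted as the octonionic projective plane $\PP^2(\mathscr{O})$.

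First I would invoke the bijection of Theorem~\ref{thm:hermRC}, which matches nondegenerate $G_\JA$-orbits with nondegenerate triples $(C,L,E)$, where $E$ is a very ample size $3$ $A$-line on $C$ with $\lin E \cong L^{\tns 2}$. By the definition of a very ample size $3$ $A$-line, the datum of $E$ is equivalent to an immersion $\kappa : C \to X_A$ together with an identification of $E$ with the pullback along $\kappa$ of the universal $A$-line on $X_A$. Composing $\kappa$ with the inclusion $X_A = \PP^2(\mathscr{O}) \hookrightarrow \PP(W)$ produces the map $\xi : C \to \PP^2(\mathscr{O}) \subset \PP(W)$ of the statement, and $E$ is exactly the rank $8$ vector bundle pulled back from the universal bundle on $\PP^2(\mathscr{O})$. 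Hence the triple $(C,L,E)$ of the general theorem is repackaged verbatim as the quadruple $(C,L,\xi,E)$.

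Next I would deduce the determinant relation $\det E \cong L^{\tns 8}$. Applying the determinant lemma from the proof of Theorem~\ref{thm:hermRC} with $d = \dim \mathscr{O} = 8$ gives $\det E \cong (\lin E)^{\tns (d/2)} = (\lin E)^{\tns 4}$; combining this with $\lin E \cong L^{\tns 2}$ yields $\det E \cong L^{\tns 8}$, as claimed. Since the isomorphism class of each component of the quadruple is preserved by the $G_\JA$-action (as already verified in the general setting), no further bookkeeping is required to transport the bijection.

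The hard part will be verifying that the geometric construction underlying Theorem~\ref{thm:hermRC} remains valid for the \emph{non-associative} algebra $\mathscr{O}$, since the clean basis-free framework of Hermitian tensor spaces and Lemma~\ref{compar} was developed only for associative composition algebras. The construction itself, however, relies only on features that persist for the exceptional Jordan algebra: the variety $X_A$ is still the unique closed $E_6$-orbit in $\PP(W)$, being the projectivized orbit of the highest weight vector (see \S\ref{sec:rank}); the secant variety $Y_A$ is still the cubic hypersurface cut out by the norm form $\Norm$, and by Zak's theorem it coincides with the tangent variety; and the adjoint map $\beta_A$ is still a birational involution of $\PP(W)$ blowing down $Y_A \setminus X_A$ to $X_A$. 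Granting these, the genus one curve $C$ arises as the plane section of $Y_A$, its image $C^\sharp = \beta_A(C)$ lies in $X_A = \PP^2(\mathscr{O})$, and the moduli interpretation of $X_A$ as the octonionic projective plane supplies the $\mathscr{O}$-line $E$, exactly as in the associative cases treated in \S\ref{sec:deg3special}.
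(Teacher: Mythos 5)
Your proposal is correct and follows essentially the same route as the paper: the theorem is obtained by specializing Theorem~\ref{thm:hermRC} to $A = \mathscr{O}$, identifying $X_A$ with the Lazarsfeld Severi variety $\PP^2(\mathscr{O})$ so that the very ample size $3$ $\mathscr{O}$-line becomes the map $\xi$ together with the pulled-back rank $8$ universal bundle, and deducing $\det E \cong (\lin E)^{\tns 4} \cong L^{\tns 8}$ from the determinant lemma with $d=8$. Your caution about non-associativity is also how the paper handles it, namely by noting that $X_A$ for octonionic $A$ still agrees with the usual octonionic projective plane and that the relevant geometric inputs (closed orbit, Zak's theorem, the adjoint map) persist for the exceptional Jordan algebra.
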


One may also take $A$ to be a nonsplit quadratic algebra over $K$.  We plan to discuss some applications of these cases in future work.


\section{Representations associated to degree \texorpdfstring{$2$}{2} line bundles}
\label{sec:hermHC}

In this section, analogously to Section \ref{sec:hermRC}, 
we study a class of representations whose orbits are related to genus one curves with {degree $2$} line bundles. The main theorems in this section are summarized in Section~\ref{sec:HCpreview}.

We begin in \S\ref{sec:bideg22forms} by considering the space of bidegree $(2,2)$ forms on $\PP^1 \times \PP^1$.
We show that the orbits here
correspond to genus one curves equipped with a degree $2$ line bundle
and a nonzero point on the Jacobian.  (This is analogous to the interpretation of orbits for the space of Rubik's cubes!)
In~\S\ref{sec:hypercube}--\ref{sec:symhypercubes}, we then examine the space of hypercubes ($2 \times 2 \times 2 \times 2$ matrices) and some of its simpler variants; this space of hypercubes is the fundamental space for many degree $2$ moduli problems, just as the
space of Rubik's cubes, considered in the previous section, was the fundamental space for many degree $3$ moduli problems. 

In preparation for the general case, in \S\ref{subsec:Hcubes}
we then introduce the notion of ``triply Hermitian cubes'' with respect to a cubic Jordan algebra $J$, which form a vector space $\mathscr{C}=\mathscr{C}(J)$, and we describe a flag variety inside
this space up to scaling.  In~\S\ref{sec:deg2moduli}, given an element of the space $V \tns \mathscr{C}$ where $V$ is a two-dimensional vector space, we then 
construct genus one curves with a projection to that flag variety.  This yields bijections between orbits on $V\tns\mathscr{C}$ (i.e., the space of ``triply Hermitian hypercubes'') and isomorphism classes of genus one curves equipped with degree 2 line bundles and additional vector bundles.    
After uniformly treating the bijections for such spaces, 
we then specialize to several of the split cases, for which the geometric data becomes easier to describe; many of these are related to interesting arithmetic structures.

Many of the orbit problems described in
this section are used in \cite{cofreecounting} to determine average sizes of $2$-Selmer groups for certain families of elliptic curves over $\Q$.

\subsection{Bidegree \texorpdfstring{$(2,2)$}{(2,2)} forms} \label{sec:bideg22forms}

Let $V_1$ and $V_2$ be two-dimensional vector spaces over $K$.  A {\em
  $(2,2)$ form} $f$ over $K$ is an element of $\Sym^2 V_1 \tns \Sym^2
V_2$.  With a choice of bases $\{w_1,w_2\}$ and $\{x_1,x_2\}$ of $V_1$ and $V_2$,
respectively, such a form $f$ may be represented as a polynomial
	\begin{align} \label{eq:22form}
		f(w_1,w_2,x_1,x_2) = a_{22} w_1^2 x_1^2 &+ a_{32} w_1 w_2 x_1^2 + a_{42} w_2^2 x_1^2
												 + a_{23} w_1^2 x_1 x_2 + a_{33} w_1 w_2 x_1 x_2 \\ &+ a_{43} w_2^2 x_1 x_2
												 + a_{24} w_1^2 x_2^2 + a_{34} w_1 w_2 x_2^2 + a_{44} w_2^2 x_2^2 . \nonumber
	\end{align}
The group $\GL(V_1) \times \GL(V_2)$ acts on the space
of $(2,2)$ forms by the standard action on each factor.  We will also consider a
twisted action of $(g_1,g_2) \in \GL(V_1) \times \GL(V_2)$:
	\begin{equation*}
		(g_1,g_2) f(w,x) = \det(g_1)^{-1} \det(g_2)^{-1} f(g_1(w), g_2(x)).
	\end{equation*}
This is the representation $\Sym^2 V_1 \tns \Sym^2 V_2 \tns (\wedge^2 V_1 \tns \wedge^2 V_2)^{-1}$
of $\GL(V_1) \times \GL(V_2)$; by abuse of notation, we will refer to this as the twisted
action on $(2,2)$ forms.  This twisted action is not faithful; for example, the scalars $\Gm(V_i)$ of $\GL(V_i)$
act trivially on all $(2,2)$ forms.  Finally, the standard
scaling action of $\Gm$ on such bidegree $(2,2)$ forms $f$
will be relevant in the sequel.  The group $G$ for the moduli problem here will be the product of the scaling $\Gm$ and $\GL(V_1) \times \GL(V_2)$ acting by the twisted action.

\subsubsection{Geometric construction and bijection}

The $(2,2)$ form $f$ cuts out a bidegree $(2,2)$ curve $C$ in
$\PP(V_1^\vee) \times \PP(V_2^\vee)$.  If the curve $C$ is smooth, then a
standard computation shows that $C$ has genus $(2-1)(2-1) =
1$.  Pulling back line bundles via the embedding $\iota: C
\hookrightarrow \PP(V_1^\vee) \times \PP(V_2^\vee)$ gives two degree
$2$ line bundles on $C$, namely 
	\begin{align*}
		L_1 := \iota^* \OO_{\PP(V_1^\vee) \times \PP(V_2^\vee)}(1,0) && \textrm{and} &&
		L_2 := \iota^* \OO_{\PP(V_1^\vee) \times \PP(V_2^\vee)}(0,1).
	\end{align*}
Each of the projection maps $\mathrm{pr}_i : C \to \PP(V_i^\vee)$, for
$i = 1$ or $2$, is a degree $2$ cover of $\PP(V_i^\vee)$, ramified at
four points over a separable closure of $K$.  A binary quartic $q_1$
on $V_1$ associated to the ramification locus in $\PP(V_1^\vee)$
may be computed by taking the discriminant of $f$ as a quadratic polynomial on $V_2$:
	\begin{align} \label{eq:bqof22form}
		q_1(w_1,w_2) := \disc (f(x_1,x_2)) = &(a_{23} w_1^2 + a_{33} w_1 w_2 + a_{43} w_2^2)^2 \\
									 &- (a_{22} w_1^2 + a_{32} w_1 w_2 + a_{42} w_2^2)(a_{24} w_1^2 + a_{34} w_1 w_2 + a_{44} w_2^2), \nonumber
	\end{align}
and similarly for $q_2(x_1, x_2)$ as a binary quartic form on $V_2$.
The nonsingular genus one curve obtained from each of these binary quartics,
as in \S \ref{sec:binaryquartics}, is isomorphic to the curve $C$.  
Via those isomorphisms, the line bundles $L_1$ and $L_2$ coincide with 
the natural degree $2$ line bundles
on the genus one curves coming from these binary quartics.

We call a $(2,2)$ form $f$ or its associated curve $C$ {\em
  nondegenerate} if both of the associated binary quartics are
nondegenerate, \ie have four distinct roots over a separable closure.
For each of the binary quartics, this condition is given by the
nonvanishing of the discriminant $\Delta(q_i)$.  As the binary quartic
$q_i$ is invariant under the action of $\SL(V_j)$ on $f$, the
discriminant $\Delta(q_i)$ is a degree $12$ $\SL(V_i) \times
\SL(V_j)$-invariant for $f$.  Moreover, it is easy to check that
$I(q_1) = I(q_2)$ and $J(q_1) = J(q_2)$, so $\Delta(q_1) =
\Delta(q_2)$.  Thus, the polynomials $I(f) := I(q_i)$ and $J(f) :=
J(q_i)$ for $i=1$ or $2$ are 
$\SL(V_1) \times
\SL(V_2)$-invariants of $f$ having degrees $4$ and $6$, respectively.  The {\em discriminant}
$\Delta(f) = \Delta(q_i)$ of the $(2,2)$ form $f$ is a degree $12$
invariant, and a {\em nondegenerate} $(2,2)$ form is one with nonzero
discriminant.%
\footnote{Neither $J(f)$ nor $\Delta(f)$ is a generator for the ring of
$\SL(V_1) \times \SL(V_2)$-invariants of $\Sym^2 V_1 \tns \Sym^2 V_2$.  The invariant
ring is a polynomial ring with generators in degrees $2$, $3$, and $4$, and will be discussed
more carefully in \S \ref{sec:bideg22invtheory}.}
The nonvanishing of this discriminant is also equivalent to the condition that
the curve $C$ cut out by $f$ is nonsingular.

Thus, from a nondegenerate $(2,2)$ form $f$, we have constructed a
genus one curve in $\Poneone$, and the $G$-action preserves
the isomorphism class of this curve and the line bundles.  Conversely, given a genus one curve $C$ and two
degree $2$ line bundles $L_1$ and $L_2$ on $C$, there are natural
degree $2$ maps $\eta_i : C \to \PP(H^0(C,L_i)^\vee) = \Pone$ and the product map
	\begin{equation*}
	\xymatrix{
		(\eta_1,\eta_2) : C \ar[r]& \PP(H^0(C,L_1)^\vee) \times \PP(H^0(C,L_2)^\vee)
	}.
	\end{equation*}
If $L_1 \cong L_2$, then $(\eta_1,\eta_2)$ is a degree $2$ cover of a diagonal
in $\Poneone$, \ie the image of this map is isomorphic to $\Pone$.
Otherwise, we claim that this map is a closed immersion.

\begin{lemma} \label{lem:ijembedding}
For a smooth irreducible genus one curve $C$ and
  non-isomorphic degree $2$ line bundles $L_1$ and $L_2$ on $C$,
  the composition
		\begin{equation*}
			\xymatrix{
					\kappa: C \ar[rr]^-{(\eta_1,\eta_2)}&& \PP(H^0(C,L_1)^\vee) \times \PP(H^0(C,L_2)^\vee)
						\ar @{^{(}->}[r]^-{\mathrm{Segre}} & \PP(H^0(C,L_1)^\vee \tns H^0(C,L_2)^\vee)
			}
		\end{equation*}
	is a closed immersion.
\end{lemma}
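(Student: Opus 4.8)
The plan is to show that $\kappa$ is nothing other than the embedding of $C$ by the complete linear system of the degree $4$ line bundle $M := L_1 \tns L_2$, from which the conclusion is immediate. First I would record that the pullback of $\OO(1)$ under the Segre map is $\OO(1,1)$, whose pullback along $(\eta_1,\eta_2)$ to $C$ is $L_1 \tns L_2 = M$; the linear forms cutting out $\kappa$ are then the image of the multiplication (cup-product) map $\mu : H^0(C,L_1) \tns H^0(C,L_2) \to H^0(C,M)$. Thus $\kappa$ is the morphism attached to $M$ together with the subspace $\mathrm{im}(\mu) \subseteq H^0(C,M)$, and it suffices to prove that $\mu$ is an isomorphism: in that case $\kappa$ is the complete embedding $\phi_{|M|} : C \to \PP(H^0(C,M)^\vee)$, and since $\deg M = 4 \geq 3 = 2\cdot 1 + 1$, the line bundle $M$ is very ample on the genus one curve $C$, so $\kappa$ is a closed immersion.

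To see that $\mu$ is an isomorphism, note that by Riemann--Roch both source and target have dimension $4$, so it is enough to prove injectivity, and this is the crux of the argument. Here I would invoke the base-point-free pencil trick. Since $L_1$ has degree $2 = 2g$ on $C$, it is globally generated, so $V := H^0(C,L_1)$ is a base-point-free pencil; evaluation gives a short exact sequence $0 \to L_1^{-1} \to V \tns \OO_C \to L_1 \to 0$. Tensoring with $L_2$ and taking cohomology yields the left-exact sequence $0 \to H^0(C, L_2 \tns L_1^{-1}) \to V \tns H^0(C,L_2) \to H^0(C,M)$, whose last arrow is exactly $\mu$, so $\ker \mu \cong H^0(C, L_2 \tns L_1^{-1})$.

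The main obstacle — and the only place the hypothesis enters — is therefore the computation of this kernel: $L_2 \tns L_1^{-1}$ has degree $0$ on $C$, hence admits a nonzero global section if and only if it is trivial, i.e. if and only if $L_1 \cong L_2$. Under the standing assumption $L_1 \not\cong L_2$ this cohomology group vanishes, so $\mu$ is injective and thus an isomorphism, which finishes the proof. The one technical point I would flag is that global generation and very ampleness are geometric properties and so may be checked after base change to $\overline{K}$, where they reduce to the standard facts that line bundles of degree $\geq 2g$ (resp. $\geq 2g+1$) on a smooth genus $g$ curve are globally generated (resp. very ample); the vanishing of $H^0(C, L_2 \tns L_1^{-1})$ likewise descends, since a degree $0$ line bundle is trivial over $K$ precisely when it is so over $\overline{K}$ and $L_1,L_2$ are assumed non-isomorphic over $K$.
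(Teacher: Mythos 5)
Your argument is correct and is essentially the same as the paper's: the paper also reduces the claim to showing that the multiplication map $\mu_{12}: H^0(C,L_1)\otimes H^0(C,L_2)\to H^0(C,L_1\otimes L_2)$ is an isomorphism via the base-point-free pencil trick, and then uses very ampleness of the degree $4$ bundle $L_1\otimes L_2$ to conclude that $\kappa$ is a closed immersion. You merely make explicit the kernel computation $\ker\mu\cong H^0(C,L_2\otimes L_1^{-1})=0$ that the paper leaves to the citation.
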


\begin{proof}
	By Riemann-Roch, the spaces of sections $H^0(C,L_1),
  H^0(C,L_2)$, and $H^0(C,L_1 \tns L_2)$ have dimensions $2$, $2$,
  and $4$, respectively.  The multiplication map
		$$\mu_{12}: H^0(C,L_1) \tns H^0(C,L_2) \ra H^0(C,L_1 \tns L_2)$$
	is an isomorphism, by Castelnuovo's basepoint-free pencil trick
	(see \cite[p.\ 126]{acgh}).
	Since $\deg (L_1 \tns L_2) = 4$, the curve $C$ is isomorphic to its image 
	in $\PP(H^0(C,L_1 \tns L_2)^\vee) = \PP^3$.  Since $\kappa$ is the composition
	of this map to $\PP(H^0(C,L_1 \tns L_2)^\vee)$ with the isomorphism $\PP(\mu_{12}^\vee)$,
	it is a closed immersion.
\end{proof}

The image $C_{12}$ of the curve $C$ in $\PP(H^0(C,L_1)^\vee) \times
\PP(H^0(C,L_2)^\vee)$ is cut out by a $(2,2)$ form, via the exact
sequence 
$$0 \to \mathcal{I}_{C} \to \OO_{\PP(H^0(C,L_1)^\vee) \times \PP(H^0(C,L_2)^\vee)} \to \OO_{C} \to 0$$
where $\mathcal{I}_{C}$ is the ideal defining $C_{12}$.  Tensoring with $\OO(2,2)$, taking cohomology,
and tensoring by the dual of $H^0(\PP(H^0(C,L_1)^\vee) \times \PP(H^0(C,L_2)^\vee),\mathcal{I}_{C}(2,2))$
gives a map from $K$ to 
	\begin{align*}
	 H^0(\PP(H^0(C,L_1)^\vee) &\times \PP(H^0(C,L_2)^\vee),\OO(2,2)) \\
	 & \qquad \tns (\wedge^2(H^0(C,L_1)))^{-1} \tns (\wedge^2(H^0(C,L_2)))^{-1} \tns \omega_C,
	\end{align*}
where $\omega_C$ is the usual Hodge bundle for $C$.  We thus obtain a bidegree $(2,2)$ form, i.e.,
an element of $\Sym^2 (H^0(C,L_1)) \tns \Sym^2 (H^0(C,L_2)) \tns (\wedge^2(H^0(C,L_1)) \tns \wedge^2(H^0(C,L_2)))^{-1}$.  The factor $\omega_C$ fixes the scaling, just as in the cases in Section \ref{sec:classical}.

Thus, a genus one curve and two nonisomorphic degree $2$ line bundles $L_1$ and $L_2$ give rise to a $(2,2)$ form.

\begin{thm} \label{thm:22curvesbij}
	The nondegenerate $G$-orbits of $\Sym^2 V_1 \tns \Sym^2 V_2$ for two-dimensional vector spaces $V_1$ and $V_2$ are in bijection
	with isomorphism classes of triples $(C,L_1,L_2)$, where $C$ is a genus one curve and $L_1$
	and $L_2$ are nonisomorphic degree $2$ line bundles on $C$.
\end{thm}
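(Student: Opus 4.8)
The plan is to follow the template already used for binary quartics (Theorem~\ref{thm:bqorbit}) and for Rubik's cubes (Theorem~\ref{thm:333bij}): both directions of the correspondence have essentially been constructed in the preceding discussion, so what remains is to promote them to an isomorphism of (rigidified) moduli problems and then descend to the quotient by $G$. Concretely, I would first exhibit a $\bigl(\GL(V_1)\times\GL(V_2)\bigr)\times\Gm$-equivariant isomorphism between the nondegenerate locus of $\Sym^2 V_1\tns\Sym^2 V_2$ and the moduli problem parametrizing tuples $(C,L_1,L_2,\phi_1,\phi_2,\delta)$, where $C$ is a smooth genus one curve, $L_1,L_2$ are nonisomorphic degree~$2$ line bundles, $\phi_i\colon H^0(C,L_i)\stackrel{\sim}{\to}V_i$ are isomorphisms, and $\delta$ is a differential on $C$. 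Here $\GL(V_i)$ acts through $\phi_i$ and $\Gm$ acts by scaling $\delta$; matching these to the twisted action and the scaling $\Gm$ that define $G$ is exactly what forces the determinant twists $(\wedge^2 V_1\tns\wedge^2 V_2)^{-1}$ to appear. Descending by $G$ then yields the stated bijection on orbits.

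For the forward direction, a nondegenerate $f$ produces the smooth genus one curve $C=\{f=0\}\subset\PP(V_1^\vee)\times\PP(V_2^\vee)$ together with the pullbacks $L_1,L_2$ of $\OO(1,0)$ and $\OO(0,1)$. The inclusion $C\hookrightarrow\PP(V_1^\vee)\times\PP(V_2^\vee)$ is tautologically the closed immersion $(\eta_1,\eta_2)$, so $L_1\not\cong L_2$ automatically: were they isomorphic, the remark preceding Lemma~\ref{lem:ijembedding} would force the image to be a $(1,1)$-curve isomorphic to $\Pone$, contradicting that $C$ is an embedded bidegree $(2,2)$-curve. All of this data depends only on the $G$-orbit, since $G$ acts $K$-linearly on $V_1,V_2$ and by overall scaling.

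For the reverse direction, Lemma~\ref{lem:ijembedding} shows that nonisomorphic $L_1,L_2$ embed $C$ in $\PP(H^0(C,L_1)^\vee)\times\PP(H^0(C,L_2)^\vee)$ as a curve $C_{12}$, and the bijectivity of $\mu_{12}$ (the basepoint-free pencil trick) guarantees $C_{12}$ is cut out by a single $(2,2)$ form. Tensoring the ideal-sheaf sequence by $\OO(2,2)$, passing to cohomology, and twisting produces this form canonically as an element of $\Sym^2 H^0(C,L_1)\tns\Sym^2 H^0(C,L_2)\tns(\wedge^2 H^0(C,L_1)\tns\wedge^2 H^0(C,L_2))^{-1}\tns\omega_C$, with the factor $\omega_C$ (trivial over a field once $\delta$ is chosen) pinning down the scaling.

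Finally I would check that the two constructions are mutually inverse. Passing from a form and back recovers $f$ up to scalar, because the $(2,2)$ form is the unique defining equation of the embedded curve and the choice of $\delta$ rigidifies that scalar; passing from $(C,L_1,L_2)$ and back recovers the triple, since the curve reappears as the vanishing locus of the reconstructed form and the line bundles reappear as the pullbacks of $\OO(1,0)$ and $\OO(0,1)$, matched to $L_1,L_2$ through $\phi_1,\phi_2$. I expect the main obstacle to be bookkeeping rather than geometry: verifying that the weights of the twisted $\GL(V_1)\times\GL(V_2)$-action and the scaling $\Gm$ match exactly the determinant twists and the $\omega_C$-scaling in the cohomological construction, so that the rigidified isomorphism is genuinely $G$-equivariant and descends cleanly --- the same subtlety handled in Section~\ref{sec:classical} for binary quartics and ternary cubics.
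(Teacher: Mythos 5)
Your proposal is correct and follows essentially the same route as the paper: the forward direction via the bidegree $(2,2)$ vanishing locus with its two pullback line bundles, the reverse direction via Lemma~\ref{lem:ijembedding} (Castelnuovo's basepoint-free pencil trick) and the ideal-sheaf sequence twisted by $\OO(2,2)$, with $\omega_C$ and the determinant twists accounting for the $\Gm$ and twisted $\GL(V_1)\times\GL(V_2)$ actions exactly as in Section~\ref{sec:classical}. The rigidified moduli formulation with $(\phi_1,\phi_2,\delta)$ is the same equivariance bookkeeping the paper invokes implicitly, so there is nothing to add.
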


The stabilizer of the $G$-action corresponds to the automorphism group of the triple $(C,L_1,L_2)$, which
for a generic nondegenerate $(2,2)$ form is the $K$-points of an extension of $\Jac(C)[2]$ by $\Gm^2$.
In general, the stabilizer consists of the $K$-points of a possibly non-split extension of this group
scheme by $\Aut(\Jac(C))$.

By the same argument as in Corollary \ref{cor:333CLP}, we may keep track of the difference $L_2 \tns L_1^{-1}$
instead of $L_2$.  The difference corresponds to a point on the Jacobian of $C$, and in fact, in the
period-index subgroup $\Jac^2_C(K)$.

\begin{cor} \label{cor:22formsCLP}
	The nondegenerate $G$-orbits of $\Sym^2 V_1 \tns \Sym^2 V_2$
	for two-dimensional vector spaces $V_1$ and $V_2$ are in bijection
	with isomorphism classes of triples $(C,L,P)$, where $C$ is a genus one curve, $L$
	is a degree $2$ line bundle on $C$, and $0 \neq P \in \Jac^2_C(K)$.
\end{cor}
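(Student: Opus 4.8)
The plan is to deduce Corollary~\ref{cor:22formsCLP} from Theorem~\ref{thm:22curvesbij} by converting the datum of the second line bundle $L_2$ into the datum of a point on the Jacobian, exactly as in the passage from Theorem~\ref{thm:333bij} to Corollary~\ref{cor:333CLP}. The key observation is that for a fixed smooth genus one curve $C$ and a fixed degree~$2$ line bundle $L := L_1$, specifying a second degree~$2$ line bundle $L_2$ is equivalent to specifying the degree~$0$ line bundle $P := L_2 \tns L_1^{-1} \in \Pic^0(C)(K) = \Jac(C)(K)$. Since $L_2 = L \tns P$, this assignment $L_2 \mapsto L_2 \tns L^{-1}$ is a bijection between degree~$2$ line bundles and points of $\Jac(C)(K)$, and it is clearly compatible with the isomorphisms appearing in the definition of an isomorphism of triples.

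First I would record which points $P$ can arise. The constraint $L_1 \not\cong L_2$ in Theorem~\ref{thm:22curvesbij} translates directly into $P \neq 0$. The remaining issue is the claim that $P$ lies in the period-index subgroup $\Jac^2_C(K)$. Here I would argue that both $L_1$ and $L_2$ are genuine degree~$2$ line bundles \emph{defined over $K$}, so each is represented by a $K$-rational divisor class of degree~$2$; their difference $P = L_2 \tns L_1^{-1}$ is therefore a difference of two elements of $\Pic^2(C)(K)$, which by definition (cf.\ the description of $\Jac^2_C(K)$ as differences of $K$-rational degree~$2$ divisor classes) places $P$ in the degree~$2$ period-index subgroup. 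Conversely, given any nonzero $P \in \Jac^2_C(K)$, setting $L_2 := L \tns P$ produces a degree~$2$ line bundle on $C$ with $L_2 \not\cong L_1$, so the map is surjective onto the stated target.

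The main content is thus checking that the correspondence $(C,L_1,L_2) \leftrightarrow (C,L,P)$ is a bijection on isomorphism classes, and that the nonzero points of $\Jac^2_C(K)$ are \emph{exactly} the differences of degree~$2$ $K$-rational line bundle classes. The forward direction of the latter is the divisor-difference argument above; the reverse containment---that every element of $\Jac^2_C(K)$ so arises---is the genuinely arithmetic part, and I expect this to be the main obstacle. I would handle it by appealing to the general characterization of the period-index subgroup established in the appendix (the analogue of Proposition~\ref{prop:periodindexsubgp} invoked for the degree~$3$ case just after Corollary~\ref{cor:333CLP}), which identifies $\Jac^d_C(K)$ precisely with the subgroup of $\Jac(C)(K)$ consisting of differences of $K$-rational divisor classes of degree~$d$. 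Applying this with $d = 2$ closes the gap and yields the stated bijection.

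Finally I would note that the group $G$ and the notion of isomorphism of triples are unchanged between the theorem and the corollary, since replacing $L_2$ by $P = L_2 \tns L_1^{-1}$ is a purely formal bookkeeping change that commutes with the action of $\GL(V_1) \times \GL(V_2) \times \Gm$ on the orbit space; no new group-theoretic or stabilizer computation is required beyond what Theorem~\ref{thm:22curvesbij} already supplies. Thus Corollary~\ref{cor:22formsCLP} follows immediately once the translation of data and the period-index identification are in place.
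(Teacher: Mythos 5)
Your proposal is correct and follows essentially the same route as the paper: the paper deduces the corollary from Theorem~\ref{thm:22curvesbij} by the same bookkeeping substitution $P = L_2 \tns L_1^{-1}$ used to pass from Theorem~\ref{thm:333bij} to Corollary~\ref{cor:333CLP}, with the identification of the image as the nonzero elements of $\Jac^2_C(K)$ supplied by Proposition~\ref{prop:periodindexsubgp} in the appendix. Nothing further is needed.
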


\subsubsection{Invariant theory} \label{sec:bideg22invtheory}

The $\SL(V_1) \times \SL(V_2)$-invariants of the representation $\Sym^2 V_1 \tns \Sym^2 V_2$ form
a polynomial ring generated by invariants $\delta_2$, $\delta_3$, and $\delta_4$ of degrees $2$, $3$, and $4$
(see \cite{vinberg}, for example).  These same polynomials are relative invariants under the standard
action of $\GL(V_1) \times \GL(V_2)$, but are invariant under the twisted action of $\GL(V_1) \times \GL(V_2)$ described above.
When $f$ is a $(2,2)$ form given by \eqref{eq:22form}, we may take the generators to be
	\begin{align*}
		\delta_2 &= a_{33}^2 - 4 a_{32} a_{34} + 8 a_{24} a_{42} - 4 a_{23} a_{43} + 8 a_{22} a_{44} \\
		\delta_3 &= a_{24} a_{33} a_{42} - a_{23} a_{34} a_{42} - a_{24} a_{32} a_{43} + a_{22} a_{34} a_{43} + a_{23} a_{32} a_{44} - a_{22} a_{33} a_{44} \\
		\delta_4 &= I(f)
	\end{align*}
although any linear combination of $I(f)$ and $\delta_2^2$ is a degree $4$ generator of the invariant ring.  
Given a $(2,2)$ form $f$, these invariants are essentially described by the triple $(C,L,P)$ of Corollary \ref{cor:22formsCLP},
or in particular, the Jacobian $E$ of $C$ and the coordinates of $P$ on some form of $E$.

Recall that $I(f)$ and $J(f)$ are $\SL(V_1) \times \SL(V_2)$-invariants of degrees $4$ and $6$,
obtained from the binary quartics $q_i$ for $i = 1$ or $2$.  The Jacobian of the curve $C$
given by $f = 0$ is the Jacobian of the genus one curve associated to $q_i$, and it can be written
in Weierstrass form as
	\begin{equation*}
		y^2 = x^3 - 27 I(f) x - 27 J(f).
	\end{equation*}
The nonzero point $P$ has coordinates $(x(f),y(f))$ satisfying $E$, which are $\SL(V_1) \times \SL(V_2)$-invariants
of degrees $2$ and $3$, respectively, \ie scalar multiples of the generators $\delta_2$ and $\delta_3$ of the invariant ring!  The relation
  $$(108 \delta_3)^2 = (3 \delta_2)^3 - 27 I(f) (3 \delta_2) - 27 J(f)$$
shows that $(x(f),y(f)) = (3 \delta_2, 108 \delta_3)$.

We may also write the Jacobian of the genus one curve!$C$ in generalized Weierstrass form as
	\begin{equation} \label{eq:weierstrass234}
		y^2 + a_3 y = x^3 + a_2 x^2 + a_4 x
	\end{equation}
where the coefficients satisfy
	\begin{align*}
		a_2 = 9 \delta_2,  &&
		a_3 = 216 \delta_3, &&
		a_4 = 27 \delta_2^2 - 27 \delta_4,
	\end{align*}
and are different generators of the invariant ring (since we are working over a field $K$ not of characteristic $2$ or $3$).


\subsection{Hypercubes} \label{sec:hypercube}

We now consider $2 \times 2 \times 2 \times 2$ boxes, or hypercubes.  
This space is 
the fundamental representation for the degree $2$ cases,
and we will study a number of variants and generalizations in the subsections that follow.

The representation in question is
$V := V_1 \tns V_2 \tns V_3 \tns V_4$, where each $V_i$ is a $2$-dimensional $K$-vector space,
with the natural action by $G := \GL(V_1) \times \GL(V_2) \times \GL(V_3) \times \GL(V_4)$.
We prove the following theorem, where nondegeneracy corresponds to the nonvanishing of
a certain degree $24$ invariant, described in more detail below.

\begin{thm} \label{thm:hypercube}
  Let $V_1$, $V_2$, $V_3$, $V_4$ be $2$-dimensional vector spaces over $K$.
  Then nondegenerate $\GL(V_1) \times \GL(V_2) \times \GL(V_3) \times \GL(V_4)$-orbits
  of $V_1 \tns V_2 \tns V_3 \tns V_4$ are in bijection
  with isomorphism classes of quintuples $(C, L_1, L_2, L_3, L_4)$, where
  $C$ is a genus one curve over $K$, and the $L_i$ are degree $2$
  line bundles on $C$, satisfying $L_1 \tns L_2 \cong L_3 \tns
  L_4$ and $L_i \not\cong L_j$ for $i \neq j, 1 \leq i \leq 2, 1 \leq j \leq 4$.
\end{thm}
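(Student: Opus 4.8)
The plan is to follow the same two-functor strategy used in Theorem~\ref{thm:333bij} for Rubik's cubes, adapting the construction to the four-factor setting. First I would recall from the discussion in \S\ref{sec:hypergenusone} that a nondegenerate hypercube $\AA \in V_1 \tns V_2 \tns V_3 \tns V_4$ yields, by the slicing construction, four binary quartics $f_1, f_2, f_3, f_4$ sharing the invariants $I$ and $J$, hence a common Jacobian $E$. The geometric curve is $C = C_{12}$, the bidegree $(2,2)$ curve in $\PP(V_1^\vee) \times \PP(V_2^\vee)$ cut out by $\det \AA(w_1,w_2,\cdot,\cdot) = 0$, which by Theorem~\ref{thm:22curvesbij} is smooth of genus one precisely when $\AA$ is nondegenerate. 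The four line bundles $L_i := \iota_i^* \OO(1)$ come from the four projections $C \to \PP(V_i^\vee)$, each a degree $2$ map. The bulk of this direction is already done in \S\ref{p1xp1}: the kernel constructions show all four curves $C_i$ are canonically isomorphic to $C$, giving the four degree $2$ line bundles. This construction is manifestly $G$-equivariant, so it descends to a well-defined map $\Phi$ from nondegenerate orbits to the stated geometric data.

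The key step is to verify the relations on the line bundles and the inequalities. For the relation $L_1 \tns L_2 \cong L_3 \tns L_4$, I would argue as in Lemma~\ref{lem:RCrelation}: writing $\AA$ as a bilinear form $\AA(w_1,w_2,\cdot,\cdot)$ on $V_3 \times V_4$, the pullback divisor of $\OO_{\PP(V_3^\vee)}(1)$ via the kernel map is cut out on $C$ by a minor of this matrix, and a degree count on the $2 \times 2$ determinantal locus shows that $L_3 \tns L_4$ is isomorphic to the pullback of $\OO_{\PP(V_1^\vee) \times \PP(V_2^\vee)}(1,1)$, which is exactly $L_1 \tns L_2$. For the non-isomorphism $L_i \not\cong L_j$, I would use the analogue of Lemma~\ref{lem:L1notL2orL3}: if $L_i \cong L_j$ the curve $C_{ij}$ would lie on a diagonal, forcing a skew-symmetry on a slice of $\AA$, and since the determinant of a $2 \times 2$ skew-symmetric matrix that is identically forced to vanish would collapse the relevant quartic, contradicting nondegeneracy. (Only the inequalities for $i \in \{1,2\}$, $j \neq i$ are asserted, matching the asymmetry in the construction of $C = C_{12}$.)

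For the reverse functor I would mimic the kernel-of-multiplication construction from the proof of Theorem~\ref{thm:333bij}, but now iterated. Given $(C, L_1, L_2, L_3, L_4)$ with $L_1 \tns L_2 \cong L_3 \tns L_4$, set $V_1 := H^0(C,L_1)$ and $V_2 := H^0(C,L_2)$, each two-dimensional by Riemann--Roch. The multiplication map $\mu_{12} : H^0(C,L_1) \tns H^0(C,L_2) \to H^0(C, L_1 \tns L_2)$ is an isomorphism onto a four-dimensional space by Castelnuovo's basepoint-free pencil trick (this is exactly Lemma~\ref{lem:ijembedding}). Using the isomorphism $L_1 \tns L_2 \cong L_3 \tns L_4$, I would identify $H^0(C, L_1 \tns L_2) \cong H^0(C, L_3 \tns L_4)$ and then split the latter via the inverse of $\mu_{34}: H^0(C,L_3) \tns H^0(C,L_4) \to H^0(C, L_3 \tns L_4)$, producing a canonical element of $V_1 \tns V_2 \tns V_3 \tns V_4$ where $V_3 := H^0(C,L_3)^\vee$ and $V_4 := H^0(C,L_4)^\vee$ (or a suitable dual arrangement). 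Checking that this is $G$-equivariant and inverse to $\Phi$ would follow the template of the Rubik's cube proof, recovering the curves $C_{ij}'$ as the loci cut out by the recovered tensor and matching them set-theoretically with the $C_{ij}$.

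The main obstacle I anticipate is the reverse direction's bookkeeping: unlike the single multiplication map in the Rubik's cube case, here one must simultaneously invert two multiplication maps $\mu_{12}$ and $\mu_{34}$ and glue them along the identification $L_1 \tns L_2 \cong L_3 \tns L_4$, then confirm that the resulting tensor is nondegenerate and that its four slicings reproduce the four given bundles rather than some permutation or twist. The compatibility that all four reconstructed curves coincide set-theoretically (and hence scheme-theoretically, by smoothness) with the originals is the delicate point, and I would handle it by the same argument as in the Theorem~\ref{thm:333bij} proof: show the reconstructed tensor has nonzero rank so the relevant determinants are not identically zero, then conclude the varieties agree since both are cut out by forms of the expected degrees on smooth irreducible genus one curves.
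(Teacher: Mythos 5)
Your forward direction tracks the paper's construction in \S\ref{sec:HCgeom} essentially step for step: the determinantal $(2,2)$ curve $C_{12}$, the four pullback line bundles, the minor/divisor count giving $L_1 \tns L_2 \cong L_3 \tns L_4$ (Lemma~\ref{lem:HCrelation}), and the diagonal argument for the non-isomorphisms. The one wobble is your ``skew-symmetry'' follow-up: unlike the $3\times 3$ case of Lemma~\ref{lem:L1notL2orL3}, a $2\times 2$ skew-symmetric matrix does \emph{not} have identically vanishing determinant, so that step does not transfer. But it is also unnecessary: once $L_i \cong L_j$ forces $C_{ij}$ onto a diagonal, the image is a $(1,1)$ curve isomorphic to $\PP^1$, contradicting that $C_{ij}$ is the smooth irreducible genus one curve cut out by a nondegenerate $(2,2)$ form --- which is exactly how the paper deploys Lemma~\ref{lem:ijembedding}.

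The reverse direction has a genuine gap. You build the tensor as $\mu_{34}^{-1}\circ\mu_{12}$, which requires $\mu_{34}\colon H^0(C,L_3)\tns H^0(C,L_4)\to H^0(C,L_3\tns L_4)$ to be invertible. By the basepoint-free pencil trick its kernel is $H^0(C,L_4\tns L_3^{-1})$, so $\mu_{34}$ is an isomorphism if and only if $L_3\not\cong L_4$. But the theorem only excludes $L_i\cong L_j$ for $i\in\{1,2\}$: the case $L_3\cong L_4$ is explicitly permitted, and it is realized by an entire subfamily of nondegenerate orbits (the doubly symmetric hypercubes of \S\ref{sec:2symHC}, where $L_3=L_4$ by construction). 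On that locus your construction produces nothing, so the proposed inverse map is not defined on all the geometric data the statement allows. The paper sidesteps this by using only $L_1,L_2,L_3$ --- which the hypotheses \emph{do} force to be pairwise non-isomorphic --- and taking the hypercube to be the inclusion of the $2$-dimensional kernel of the triple multiplication map $\mu_{123}$ (Lemma~\ref{lem:musurjective}), with $V_4:=(\ker\mu_{123})^\vee$, which is then identified with $H^0(C,L_4)^\vee$ since $\ker\mu_{123}\cong H^0(C,L_3^{-1}\tns L_1\tns L_2)=H^0(C,L_4)$. Where $L_3\not\cong L_4$ your tensor does recover the correct orbit (contracting it against the pair of evaluation-at-$p$ functionals on $H^0(C,L_3)$ and $H^0(C,L_4)$ yields the rank-one form given by evaluation on $H^0(C,L_1)\tns H^0(C,L_2)$, so the determinantal loci agree with the paper's), and the remaining set-theoretic matching goes through as you outline; but as written the proposal does not establish surjectivity of the bijection onto all quintuples in the statement.
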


The stabilizer of a nondegenerate hypercube giving the genus one curve $C$ is exactly the automorphism
group of the quintuple, provided that we record the isomorphism $L_1 \tns L_2 \cong L_3 \tns L_4$.  In particular,
let $H$ be the extension of $\Jac(C)[2]$ by the kernel of the multiplication map 
$\Gm(V_1) \times \Gm(V_2) \times \Gm(V_3) \times \Gm(V_4) \to \Gm$, 
where each $\Gm(V_i)$ is the set of scalar transformations of $V_i$ for $1 \leq i \leq 4$.  Then the
stabilizer consists of the $K$-points of a possibly non-split extension of $\Aut(\Jac(C))$ by this group scheme $H$.
For example, if the $j$-invariant of $\Jac(C)$ is not $0$ or $1728$, and $C$ has a rational point $O$ and
full rational $2$-torsion with respect to $O$, generated by $P_1$ and $P_2$, then the hypercube corresponding to
the quintuple $(C,\mathcal{O}(2O),\mathcal{O}(O+P_1), \mathcal{O}(O+P_2), \mathcal{O}(O+P_1 + P_2))$
will have stabilizer group $H \times \ZZ/2\ZZ$, that is, an extension of $(\ZZ/2\ZZ)^3$ by $\Gm^3$.

\subsubsection{Geometric construction} \label{sec:HCgeom}

We describe how to construct the genus one curve and degree $2$ line bundles from a hypercube;
any $G$-equivalent hypercube will produce an isomorphic curve and line bundles.

Let $\AA \in V_1 \tns V_2 \tns V_3 \tns V_4$, so $\AA$ induces a linear map from 
$V_1^\vee \tns V_2^\vee$ to $V_3 \tns V_4$ and thus a linear map
$$\PP(V_1^\vee \tns V_2^\vee) \to \PP(V_3 \tns V_4).$$
There is a natural determinantal quadric in $\PP(V_3 \tns V_4)$; with
choices of bases for $V_3$ and $V_4$, it consists of nonzero $2 \times 2$ matrices,
up to scaling, which have determinant $0$. Let $C_{12}$ be the intersection of this
quadric with the image of the Segre map $\PP(V_1^\vee) \times \PP(V_2^\vee) \to \PP(V_1^\vee \tns V_2^\vee)$,
composed with the linear map given by $\AA$.  Then $C_{12}$ is generically
a curve.

More explicitly, the curve $C_{12}$ is a determinantal variety, given
by the vanishing of the determinant of a $2 \times 2$ matrix of bidegree $(1,1)$
forms on $\PP(V_1^\vee) \times \PP(V_2^\vee) = \Poneone$.  With choices of
bases for all the vector spaces $V_i$, the hypercube may be written as 
a $2 \times 2 \times 2 \times 2$ array $(a_{rstu})_{1 \leq r, s, t, u \leq 2}$ with
$a_{rstu} \in K$.  Then the curve $C_{12}$ is given as the vanishing of
the determinant of $\AA(w,x,\cdot,\cdot)$, which may be represented as the matrix
	\begin{small}
	$$\begin{pmatrix} a_{1111} w_1 x_1 + a_{1211} w_1 x_2 + a_{2111} w_2 x_1 + a_{2211} w_2 x_2 &
	 a_{1112} w_1 x_1 + a_{1212} w_1 x_2 + a_{2112} w_2 x_1 + a_{2212} w_2 x_2   \\
	 a_{1121} w_1 x_1 + a_{1221} w_1 x_2 + a_{2121} w_2 x_1 + a_{2221} w_2 x_2  &
	 a_{1122} w_1 x_1 + a_{1222} w_1 x_2 + a_{2122} w_2 x_1 + a_{2222} w_2 x_2  \end{pmatrix},$$
	 \end{small}%
where $\{w_1, w_2 \}$ and $\{x_1, x_2\}$ are the bases for $V_1^\vee$ and $V_2^\vee$, respectively.  This
determinant $f_{12}(w,x)$ is a $(2,2)$ form, i.e., an element of $\Sym^2 V_1 \tns \Sym^2 V_2$, and it is invariant
under the action $\SL(V_3) \times \SL(V_4)$.  One may similarly define the varieties $C_{ij}$ for
any $1 \leq i \neq j \leq 4$.  (We identify $C_{ij}$ and $C_{ji}$.)

We call a hypercube {\em nondegenerate} if the variety $C_{12}$ is smooth and one-dimensional, which
by \S \ref{sec:bideg22forms} is given by the nonvanishing of a $\SL(V_1) \times \SL(V_2)$ invariant
of degree $12$ in the coefficients of $f_{12}$ and thus degree $24$ in the hypercube.  This degree $24$
polynomial is invariant under all $\SL(V_i)$ for $1 \leq i \leq 4$.  By symmetry (or explicit computation),
this polynomial is the discriminant for all the $f_{ij}$, and we call it the {\em discriminant} of the
hypercube $\AA$ itself.  Nondegeneracy is preserved by the action of $G$.  In the sequel, we will only
work with nondegenerate hypercubes.

If $\AA$ is nondegenerate, then for all points $(w,x) \in C_{12}$, the matrix $\AA(w,x,\cdot,\cdot)$
is not the zero matrix and thus has rank exactly $1$.  (If it were the zero matrix, then all the partial
derivatives would vanish at the point $(w,x)$, so $C_{12}$ would not be smooth.)  So for all $(w,x) \in C_{12}$,
there is exactly one dimension of vectors $y \in V_3$ such that $\AA(w,x,y,\cdot) = 0$ (and similarly, one
dimension of vectors $z \in V_4$ with $\AA(w,x,\cdot,z) = 0$).

Given a nondegenerate hypercube $\AA$, it turns out that all of the resulting curves $C_{ij}$ are isomorphic!
To see this, define the variety
	$$C_{123} := \{(w,x,y) \in \PP(V_1^\vee) \times \PP(V_2^\vee) \times \PP(V_3^\vee) : \AA(w,x,y,\cdot) = 0 \}.$$
The projection $\pi_{123}^{12}$ of this variety $C_{123}$ onto $C_{12}$ is an isomorphism, where the inverse map $\rho_{12}^{123}$ is given by taking the point in $\PP(V_3^\vee)$ corresponding to the exactly one-dimensional kernel of $\AA(w,x,\cdot,\cdot)$.
By symmetry, the curves $C_{13}$ and $C_{23}$ are also isomorphic to $C_{123}$.  We may similarly define the curves
$C_{ijk} \subset \PP(V_i^\vee) \times \PP(V_j^\vee) \times \PP(V_k^\vee)$ for any $\{i,j,k\} \subset \{1,2,3,4\}$,
and they are all isomorphic to their projections to any two factors.
The $C_{ij}$ are all smooth irreducible isomorphic genus one curves, related by natural isomorphisms
	$$\tau_{ij}^{jk} : C_{ij} \stackrel{\rho_{ij}^{ijk}}{\ra} C_{ijk} \stackrel{\pi_{ijk}^{ik}}{\ra} C_{jk}$$
for $\{i,j,k\} \subset \{1,2,3,4\}$, where each $\pi$ is the projection and $\rho$ is the natural inverse ``un-projection'' map (where each map has domain given by the subscript and target given by the superscript).  We also obtain isomorphisms of the form
	$$\tau_{ijk}^{jkl} : C_{ijk} \to C_{jk} \to C_{jkl}$$
for $\{i,j,k,l\} = \{1,2,3,4\}$.  Finally, by the definition of these curves, we have natural projection maps
	\begin{align*}
	\pi_{ij}^i : C_{ij} \to \PP(V_i^\vee) \!\!\!\!&&\!\!\!\! \textrm{\!\!\!\!\!\!\!\!\!\!\!and\!\!\!\!\!\!\!\!\!\!\!} 
	\!\!\!\!&&\!\!\!\! \pi_{ijk}^i : C_{ijk} \to \PP(V_i^\vee). \!\!\!\!\!\!\!\!\!\!\!\!\!\!\!\!\!\!\!\!\!\!\!
	\end{align*}
It is clear that $\tau_{ijk}^{jkl}$ and $\tau_{jkl}^{ijk}$ are inverse
maps, as are $\tau_{ij}^{jk}$ and $\tau_{jk}^{ij}$.  However, composing more than two such
maps in sequence will not always give identity maps on these curves.
There are two types of interesting cycles we can obtain from composing the $\tau_{ijk}^{jkl}$ maps.  These are best exemplified
by arranging the curves $C_{ijk}$ in a tetrahedron as in \eqref{eq:tet}: there are four triangles (the faces
of the tetrahedron) and three four-cycles.

\begin{lemma} \label{lem:threecycle}
	The triangle of maps on each of the faces of the tetrahedron \eqref{eq:tet} 
	is not the identity map, but composing it twice gives the identity map.  In particular,
	the composition $C_{ijk} \to C_{ikl} \to C_{ijl} \to C_{ijk}$ is the hyperelliptic involution for $C_{ijk} \to \PP(V_i^\vee)$,
	sending any point $(w,x_1,y_1)$ to the point $(w,x_2,y_2)$, where
	$\{x_1, x_2\}$ and $\{y_1, y_2\}$ are the $($not necessarily distinct$)$ solutions to 
	$\AA \subs (w \tns x) = 0$ and $\AA \subs (w \tns y) = 0$, respectively.
\end{lemma}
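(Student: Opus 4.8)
The plan is to analyze the three-cycle $\tau_{ijl}^{ijk} \circ \tau_{ikl}^{ijl} \circ \tau_{ijk}^{ikl}$ directly in coordinates, tracking how a point moves through the sequence of projections and un-projections. Fix indices, say $i=1$, $j=2$, $k=3$, $l=4$, and start with a point $(w,x,y)$ on $C_{123}$, so that $\AA(w,x,y,\cdot)\equiv 0$. The key observation is that each composite map $\tau$ keeps the first coordinate $w\in\PP(V_1^\vee)$ fixed while cycling through the other three projective factors; hence the entire three-cycle is an automorphism of $C_{1jk}$ commuting with the projection $\pi^1$ to $\PP(V_1^\vee)$. Since $\pi^1: C_{123}\to\PP(V_1^\vee)$ is the degree $2$ map attached to the line bundle $L_1$ (equivalently, to the binary quartic $f_1$), any nontrivial automorphism commuting with it must be the hyperelliptic involution.

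First I would make explicit the fiber structure of the degree $2$ cover. For a fixed $w\in\PP(V_1^\vee)$, the equation $\AA(w\tns x)=0$—viewing $\AA(w,\cdot,\cdot,\cdot)$ as an element of $V_2\tns V_3\tns V_4$ whose vanishing on $x\in\PP(V_2^\vee)$ is a condition—has at most two solutions $\{x_1,x_2\}$, and similarly the kernel directions in $\PP(V_3^\vee)$ and $\PP(V_4^\vee)$ come in pairs $\{y_1,y_2\}$ and $\{z_1,z_2\}$. This is exactly the statement that over a generic $w$ the fiber of $C_{123}\to\PP(V_1^\vee)$ consists of the two points determined by the two choices of $x$ (and correspondingly $y$). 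Then I would trace a point $(w,x_1,y_1)$ through the three-cycle: the first map sends it to a point on $C_{134}$ whose $V_3^\vee$-coordinate is the \emph{other} kernel direction forced by moving to the $(1,3,4)$ picture, the second map to $C_{124}$, and the third back to $C_{123}$, landing at $(w,x_2,y_2)$. The fixed first coordinate guarantees we stay in the same fiber, and a parity/counting argument shows that after three un-projections the $V_2^\vee$- and $V_3^\vee$-coordinates have each been swapped to their conjugate partner.

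The main obstacle will be verifying cleanly that the net effect is the swap rather than the identity—i.e., that the three un-projections compose to an orientation-reversing rather than orientation-preserving permutation of each fiber pair. I expect the cleanest route is not brute-force coordinate computation but a conceptual argument: the three-cycle is an automorphism of the genus one curve $C_{123}$ commuting with $\pi^1$, hence lies in the subgroup of $\Aut(C_{123})$ covering $\PP(V_1^\vee)$, which is generated by the hyperelliptic involution $\iota_1$ together with translations by $2$-torsion fixing $f_1$. One then rules out the identity by checking on a single generic point (e.g.\ that $x_1\neq x_2$ forces a genuine swap), and rules out a nontrivial translation by observing that the composite, being built from three un-projections each of which is \emph{defined} by taking ``the other root,'' manifestly interchanges the two sheets. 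Thus the three-cycle must be precisely $\iota_1$, and its square is the identity. I would close by recording the explicit description of the involution in terms of the paired solutions $\{x_1,x_2\}$ and $\{y_1,y_2\}$ of $\AA\subs(w\tns x)=0$ and $\AA\subs(w\tns y)=0$, as stated in the lemma.
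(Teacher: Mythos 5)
Your reduction to a deck-transformation argument is sound, and for the ``squares to the identity'' part it is arguably cleaner than the paper's route: since each $\tau$ in the cycle factors through a curve that retains the $\PP(V_1^\vee)$-coordinate, the composite $\sigma$ satisfies $\pi^1\circ\sigma=\pi^1$ exactly, so it is either the identity or the hyperelliptic involution of the degree $2$ cover $C_{123}\to\PP(V_1^\vee)$. (Note that no translations by $2$-torsion can enter here: a nontrivial translation never preserves every fiber of $\pi^1$, so the subgroup you describe is just $\{\mathrm{id},\iota_1\}$.) The gap is in the step that rules out the identity, which is the real content of the lemma. Your justification --- that each un-projection is ``defined by taking the other root'' and so ``manifestly interchanges the two sheets'' --- is not correct: each un-projection adjoins the \emph{unique} kernel direction in a new projective factor and leaves the retained coordinates unchanged. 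In particular the first map sends $(w,x_1,y_1)$ to $(w,y_1,z)$ with the \emph{same} $y_1$, not to ``the other kernel direction'' as you write. No individual step swaps sheets; the swap is an emergent property of the whole cycle.

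What is actually needed, and what the paper supplies, is a short linear-algebra verification. Writing the fiber of $C_{12}$ over a generic $w$ as $\{x_1,x_2\}$ with associated kernel directions $y_1,y_2\in\PP(V_3^\vee)$, one chooses $z\in\PP(V_4^\vee)$ with $\AA(w,x_2,y_1,z)=0$; then the linear form $\AA(w,\cdot,y_1,z)$ on $V_2^\vee$ vanishes at the two distinct points $x_1$ and $x_2$, hence identically, so $(w,y_1,z)\in C_{134}$, and symmetrically $\AA(w,x_2,\cdot,z)\equiv 0$. This is precisely what forces the second un-projection to land on $x_2$ rather than back on $x_1$, whence the composite is $(w,x_1,y_1)\mapsto(w,x_2,y_2)$. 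Without this computation (or an equivalent check on a single explicit nondegenerate hypercube combined with a connectedness or specialization argument over the nondegenerate locus, which you do not set up), the dichotomy ``identity or involution'' remains unresolved, so the proposal as written does not yet prove the lemma.
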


\begin{proof}
Given a point $w \in \PP(V_1^\vee)$ not in the ramification locus of
the projection from $C_{12}$, there are two distinct points $x_1, x_2 \in
\PP(V_2^\vee)$ such that $\det \AA(w,x_i,\cdot,\cdot) = 0$.  For $i = 1$ or $2$, we have $\AA(w, x_i, y_i,\cdot) = 0$
for exactly one point $y_i \in \PP(V_3^\vee)$.   Since $\AA(w, x_2, y_1, z) = 0$
for some $z \in \PP(V_4^\vee)$, the linear form $\AA(w, \cdot, y_1 ,z)$
vanishes when evaluated at both $x_1$ and $x_2$, so it is identically
$0$; similarly, the linear form $\AA(w, x_2, \cdot, z)$ is identically zero.  Thus, we have the composition
	\begin{equation*}
	\raisebox{\baselineskip}{
	\xymatrix @R=0pt{ 
		C_{123} \ar[r]^{\tau_{123}^{134}} & C_{134} \ar[r]^{\tau_{134}^{124}} & C_{124} \ar[r]^{\tau_{124}^{123}} & C_{123} \\
		               (w, x_1,y_1)  \ar@{|->}[r] & (w,y_1,z) \ar@{|->}[r] & (w, x_2, z) \ar@{|->}[r] & (w, x_2, y_2).
	} 
	}
	\qedhere
	\end{equation*}
\end{proof}

Four-cycles of maps $\tau_{ijk}^{jkl}$ are also not the identity; we will show this by proving a relation among
degree $2$ line bundles defined on each of the curves.  For simplicity of notation, choose one curve, say $C_{12}$, to be the
primary curve under consideration.  This choice matters in the
definitions and constructions we will make in the sequel, but all
choices are equivalent.

Define four line bundles $L_i$ on $C_{12}$ by pulling back the line
bundle $\OO(1)$ from each $\PP(V_i^\vee)$.  Of course, it is important
through which maps we choose to pullback the bundle:
	\begin{align} \label{eq:C12linebundles}
		L_1 &:= (\pi_{12}^1)^* \OO_{\PP(V_1^\vee)}(1) \nonumber\\
		L_2 &:= (\pi_{12}^2)^* \OO_{\PP(V_2^\vee)}(1) \\
		L_3 &:= (\pi_{123}^3 \circ \rho_{12}^{123})^* \OO_{\PP(V_3^\vee)}(1) \nonumber\\
		L_4 &:= (\pi_{124}^4 \circ \rho_{12}^{124})^* \OO_{\PP(V_4^\vee)}(1). \nonumber
	\end{align}
That is, $L_1$ and $L_2$ come directly from the maps $C_{12} \to
\PP(V_1^\vee)$ and $C_{12} \to \PP(V_2^\vee)$, and for $i = 3$ or $4$, the
line bundle $L_i$ is pulled back via the simplest maps $C_{12} \to C_{12i} \to C_{2i} \to
\PP(V_i^\vee)$. Since all the curves $C_{ij}$ are defined
by bidegree $(2,2)$ equations, each of these line bundles on $C_{12}$
have degree $2$.

By Lemma \ref{lem:ijembedding}, the line bundles $L_1$ and $L_2$ are not isomorphic, since $C_{12}$
is a smooth irreducible genus one curve given by a nondegenerate $(2,2)$ form.
Similarly, since $C_{ij}$ is also a smooth irreducible genus one curve for
$i = 1$ or $2$ and $j=3$ or $4$, the line bundles
$(\tau_{ij}^{12})^* L_i = (\pi_{ij}^i)^* \OO_{\PP(V_i^\vee)}(1)$ and
$(\tau_{ij}^{12})^* L_j = (\pi_{ij}^j)^* \OO_{\PP(V_j^\vee)}(1)$ on $C_{ij}$ are not isomorphic,
so $L_i$ and $L_j$ are not isomorphic bundles on $C_{12}$.  Thus, the four line
bundles defined in \eqref{eq:C12linebundles} are all pairwise nonisomorphic, except
possibly $L_3$ and $L_4$.

\begin{lemma} \label{lem:HCrelation}
For the line bundles on $C_{12}$ defined above, we have the relation
  \begin{equation} \label{eq:HCrelation}
   L_1 \tns L_2 \cong L_3 \tns L_4.
  \end{equation}
\end{lemma}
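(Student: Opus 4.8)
The plan is to realize both sides of \eqref{eq:HCrelation} as pullbacks of the hyperplane bundle $\OO_{\PP(V_3\tns V_4)}(1)$ along a single morphism $\psi\colon C_{12}\to\PP(V_3\tns V_4)$, namely the one sending $(w,x)$ to the class of the rank-one tensor $\AA(w,x,\cdot,\cdot)$. First I would observe that $L_1\tns L_2$ is, by the definitions in \eqref{eq:C12linebundles}, the restriction to $C_{12}$ of $\OO(1,1)$ on $\PP(V_1^\vee)\times\PP(V_2^\vee)$. The morphism $\psi$ is well-defined because $\AA(w,x,\cdot,\cdot)$ has rank exactly $1$, hence is nonzero, for every $(w,x)\in C_{12}$ when $\AA$ is nondegenerate, as established above. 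Moreover $\psi$ is the composite of the Segre embedding $\PP(V_1^\vee)\times\PP(V_2^\vee)\hookrightarrow\PP(V_1^\vee\tns V_2^\vee)$, which pulls $\OO(1)$ back to $\OO(1,1)$, with the projective-linear map $\PP(\AA)$, which pulls $\OO(1)$ back to $\OO(1)$. Hence $\psi^*\OO_{\PP(V_3\tns V_4)}(1)\cong L_1\tns L_2$.

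Next, because each $\AA(w,x,\cdot,\cdot)$ has rank one, the image of $\psi$ lies on the Segre variety $\PP(V_3)\times\PP(V_4)\subset\PP(V_3\tns V_4)$ (the rank-one locus cut out by the determinantal quadric). Thus $\psi$ factors as $(\psi_3,\psi_4)$ followed by the Segre embedding, where $\psi_3\colon C_{12}\to\PP(V_3)$ and $\psi_4\colon C_{12}\to\PP(V_4)$ record the two factors of the rank-one tensor. Since $\OO(1)$ restricts to $\OO(1,1)$ on the Segre variety, I obtain $\psi^*\OO_{\PP(V_3\tns V_4)}(1)\cong\psi_3^*\OO_{\PP(V_3)}(1)\tns\psi_4^*\OO_{\PP(V_4)}(1)$.

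The remaining step is to identify $\psi_3^*\OO_{\PP(V_3)}(1)$ with $L_3$ and $\psi_4^*\OO_{\PP(V_4)}(1)$ with $L_4$. The point is that if $\AA(w,x,\cdot,\cdot)=u\tns v$ with $u\in V_3$ and $v\in V_4$, then the $\PP(V_3^\vee)$-coordinate $y$ of $\rho_{12}^{123}(w,x)$ satisfies $\langle u,y\rangle=0$, so $y$ spans the annihilator $u^\perp$. For a two-dimensional space the assignment $[u]\mapsto[u^\perp]$ is exactly the projective-linear isomorphism $\PP(V_3)\cong\PP(V_3^\vee)$ induced by a nonzero alternating form in $\wedge^2 V_3^\vee$. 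Therefore the map $(w,x)\mapsto[y]$ defining $L_3$ differs from $\psi_3$ only by a linear isomorphism of the targets, and on the genus one curve $C_{12}$ such a map pulls $\OO(1)$ back to a line bundle isomorphic to $\psi_3^*\OO_{\PP(V_3)}(1)$; hence $L_3\cong\psi_3^*\OO_{\PP(V_3)}(1)$, and symmetrically $L_4\cong\psi_4^*\OO_{\PP(V_4)}(1)$. Chaining the three isomorphisms gives $L_3\tns L_4\cong\psi^*\OO_{\PP(V_3\tns V_4)}(1)\cong L_1\tns L_2$, which is \eqref{eq:HCrelation}.

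I expect the main obstacle to be purely one of bookkeeping: keeping straight the duality between $\PP(V_i)$ and $\PP(V_i^\vee)$ in the last paragraph, since $L_3$ and $L_4$ are defined via kernels (points of $\PP(V_3^\vee)$, $\PP(V_4^\vee)$) whereas $\psi$ naturally records the factors $[u]\in\PP(V_3)$, $[v]\in\PP(V_4)$. An alternative, more computational route would mirror the divisor argument of Lemma~\ref{lem:RCrelation}: express the maps to $\PP(V_3^\vee)$ and $\PP(V_4^\vee)$ through the $2\times 2$ minors of the matrix $\AA(w,x,\cdot,\cdot)$ and compare the resulting intersection divisors on $C_{12}$ directly against the bidegree $(1,1)$ divisor class, but the conceptual factorization through the Segre variety seems cleaner and avoids choosing coordinates.
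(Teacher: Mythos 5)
Your proof is correct, but it takes a different route from the one in the paper: the paper's argument is precisely the ``alternative, more computational route'' you mention at the end. There, one chooses explicit divisor representatives $D_3$ and $D_4$ of $L_3$ and $L_4$ as the preimages of fixed points $[1:0]$ in $\PP(V_3^\vee)$ and $\PP(V_4^\vee)$, observes that $D_3+D_4$ is cut out on $C_{12}$ by the single bidegree $(1,1)$ form $\AA_{11}(w,x)$ (together with the defining equation $\det\AA(w,x,\cdot,\cdot)=0$), and concludes $\OO(D_3+D_4)\cong\iota^*\OO(1,1)\cong L_1\tns L_2$ --- a deliberate parallel to the proof of Lemma~\ref{lem:RCrelation}. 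Your factorization of $(w,x)\mapsto[\AA(w,x,\cdot,\cdot)]$ through the Segre variety is coordinate-free and arguably more illuminating: it explains the relation as the statement that $\OO_{\PP(V_3\tns V_4)}(1)$ restricts to $\OO(1,1)$ on the rank-one locus, and it is closer in spirit to the general Hermitian framework of \S\ref{sec:deg2moduli}, where rank-one strata play exactly this role. The one step you rightly single out --- identifying $\psi_3^*\OO_{\PP(V_3)}(1)$ with $L_3$, which is defined via the kernel in $\PP(V_3^\vee)$ --- does go through as you say, because for a two-dimensional space the annihilator map $[u]\mapsto[u^\perp]$ is the projectivization of the linear isomorphism $u\mapsto\omega(u,\cdot)$ for any nonzero $\omega\in\wedge^2V_3^\vee$, and projective-linear isomorphisms preserve $\OO(1)$. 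What the paper's version buys is an explicit degree-$4$ divisor in hand (useful for the subsequent explicit computations with minors and invariants); what yours buys is independence of basis choices and a cleaner conceptual source for the relation.
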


\begin{proof}
	With a choice of a basis for $V_i$, points of the projective spaces
	$\PP(V_i^\vee)$ may be represented as $[a:b]$.  Let
	$\mathcal{D}(L)$ be the linear equivalence class of divisors
	corresponding to a line bundle $L$.  A representative $D_3$ of
        $\mathcal{D}(L_3)$ is (the formal sum of the points in) the preimage of a fixed point, say
        $[1:0]$, in $\PP(V_3^\vee)$, and similarly, we may choose a
        divisor $D_4$ in the class of $\mathcal{D}(L_4)$ as the
        preimage of $[1:0] \in \PP(V_4^\vee)$.  Let $\AA(w,x,\cdot,\cdot)$ be
        denoted by the matrix
		\begin{equation*}
			\begin{pmatrix} \AA_{11}(w,x) & \AA_{12}(w,x) \\ \AA_{21}(w,x) & \AA_{22}(w,x) \end{pmatrix} \in V_3 \tns V_4.
		\end{equation*}
	Then $D_3 + D_4$ is the sum of the four points that are
        solutions (counted up to multiplicity) of the system
  	\begin{equation*}
  	\left\{  \begin{matrix}
  		\AA_{11}(w,x) = 0 \\
  		\det \AA(w,x,\cdot,\cdot) = 0
  	\end{matrix} \right\}.
  	\end{equation*}
	Interpreted in another way, these four points of intersection
        are exactly the points of intersection of $C_{12}$ and the
        bidegree $(1,1)$ curve given by $\AA_{11}$ in $\PP(V_1^\vee)
        \times \PP(V_2^\vee)$.  Thus, the line bundle corresponding to
        the sum of these four points is just the pullback of
        $\OO_{\PP(V_1^\vee) \times \PP(V_2^\vee)}(1,1)$ to $C_{12}$;
        that is, $\OO(D_3 + D_4) \cong L_1 \tns L_2$, which is
        the desired relation.
\end{proof}

Using this relation among the line bundles, a computation shows that the
four-cycles in the tetrahedron \eqref{eq:tet} are not commutative.  For example, the composition map
$$\tau_{124}^{123} \circ \tau_{134}^{124} \circ \tau_{234}^{134} \circ \tau_{123}^{234} : C_{123} \ra C_{123}$$
is a nontrivial automorphism, given as a translation by the point 
$$P := L_3 \tns L_1^{-1} = L_2 \tns L_4^{-1} \in \Pic^0(C_{12}) \cong \Jac(C_{12}) \cong \Jac(C_{123}),$$
where the two isomorphisms are entirely canonical, so $P$ can be thought of as a point on $\Jac(C_{123})$.
The reverse four-cycle is the inverse map and is given by translation by
$-P$.  Similarly, the other four-cycles are given by translation by the points
$L_4 \tns L_1^{-1}$ and $L_2 \tns L_1^{-1}$ (up to sign).  Because of the relation in \eqref{eq:HCrelation},
these points (with the correct choice of sign) add up to $0$ on the Jacobian!  This may also be seen directly
from the tetrahedron picture, using the facts that each four-cycle decomposes as two consecutive three-cycles and
that each three-cycle composed with itself is the identity (by Lemma \ref{lem:threecycle}).  

We summarize these results in the following proposition:

\begin{prop} \label{prop:fourcycles}
	Given a nondegenerate hypercube $\AA$, we have the following statements,
	for any permutation $(i,j,k,l)$ of $(1,2,3,4)$:
	\begin{enumerate}
	\item[{\rm (i)}]
		The composition map
			\begin{equation*}
				\alpha_{ijkl} := \tau_{ijk}^{jkl} \circ \tau_{ijl}^{ijk} \circ \tau_{ikl}^{ijl} \circ \tau_{jkl}^{ikl} : C_{jkl} \ra C_{jkl}
			\end{equation*}
		is the automorphism of $C_{jkl}$ given by translation by the point
			$$P_{ijkl} := M_l \tns M_j^{-1} \in \Pic^0(C_{jl}) \cong \Jac(C_{jl}) \cong \Jac(C_{jkl})$$
		where $M_j = (\pi_{jl}^j)^*\OO_{\PP(V_j^\vee)}(1)$ and $M_l = (\pi_{jl}^l)^* \OO_{\PP(V_l^\vee)}(1)$ are degree $2$
		line bundles on $C_{jl}$.
	\item[{\rm (ii)}] We have $P_{ijkl} = - P_{ilkj}$, as $\alpha_{ijkl} \circ \alpha_{ilkj}$ is the identity map on $C_{jkl}$.
	\item[{\rm (iii)}]
		The points $P_{ijkl}$, $P_{iklj}$, and $P_{iljk}$ sum to $0$ on the Jacobian of $C_{jkl}$, so the composition
		of the automorphisms $\alpha_{ijkl}$, $\alpha_{iklj}$, and $\alpha_{iljk}$ in any order is the identity map
		on $C_{jkl}$.
	\end{enumerate}
\end{prop}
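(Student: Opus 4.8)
The plan is to reduce all three parts to the elementary group law on a genus one curve, together with the single structural input of Lemma~\ref{lem:threecycle}. The two facts I will lean on are: first, that each triangular face of the tetrahedron \eqref{eq:tet} traverses to a hyperelliptic involution $\iota_m$ of the base curve, the index $m$ being the common index shared by the three vertices of that face (this is exactly Lemma~\ref{lem:threecycle}, with $\iota_m$ the deck involution of the projection to $\PP(V_m^\vee)$); and second, that on a genus one curve the composite $\iota_M\circ\iota_N$ of the deck involutions attached to degree $2$ line bundles $M,N$ is the translation by the class $M\otimes N^{-1}\in\Jac$, since $x+\iota_M(x)\sim M$ forces $\iota_M(\iota_N(x))\sim x+(M-N)$. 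Granting these, everything becomes bookkeeping on the tetrahedron followed by an abelian-group telescoping on the Jacobian.

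For part (i), I would first rewrite the four-cycle as a composite of two face three-cycles. Reading $\alpha_{ijkl}$ right to left, it is the loop $C_{jkl}\to C_{ikl}\to C_{ijl}\to C_{ijk}\to C_{jkl}$; inserting the identity $\tau_{jkl}^{ijl}\circ\tau_{ijl}^{jkl}=\id$ (valid because $\tau_A^B$ and $\tau_B^A$ are mutually inverse whenever $A,B$ share two indices) at the vertex $C_{ijl}$ splits it along the diagonal edge joining $C_{jkl}$ and $C_{ijl}$ as $\alpha_{ijkl}=T_2\circ T_1$, where $T_1$ runs around the face $\{C_{jkl},C_{ikl},C_{ijl}\}$ and $T_2$ around $\{C_{jkl},C_{ijl},C_{ijk}\}$. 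The first face has common index $l$ and the second common index $j$, so by Lemma~\ref{lem:threecycle} we obtain $T_1=\iota_l$ and $T_2=\iota_j$ on $C_{jkl}$. Hence $\alpha_{ijkl}=\iota_j\circ\iota_l$ is translation by $\pm(M_l\otimes M_j^{-1})$, with $M_j,M_l$ the degree $2$ bundles pulled back through the projections to $\PP(V_j^\vee)$ and $\PP(V_l^\vee)$; the sign is pinned down by the orientation convention to give $P_{ijkl}=M_l\otimes M_j^{-1}$ as stated, after transport along the canonical isomorphisms $\Pic^0(C_{jl})\cong\Jac(C_{jl})\cong\Jac(C_{jkl})$.

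Parts (ii) and (iii) then follow formally. For (ii), the word $ilkj$ traverses the same Hamiltonian cycle as $ijkl$ but in the opposite direction; since each edge map inverts when its endpoints are swapped, $\alpha_{ilkj}=\alpha_{ijkl}^{-1}$, so the composite is the identity and $P_{ilkj}=-P_{ijkl}$ (equivalently $M_j\otimes M_l^{-1}=(M_l\otimes M_j^{-1})^{-1}$). For (iii), the same splitting applied to the three Hamiltonian cycles based at $C_{jkl}$ realizes $\alpha_{ijkl},\alpha_{iklj},\alpha_{iljk}$ as translations by the three pairwise differences $M_l\otimes M_j^{-1}$, $M_j\otimes M_k^{-1}$, $M_k\otimes M_l^{-1}$ of the three projection bundles $M_j,M_k,M_l$ on $C_{jkl}$. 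These classes telescope to the trivial class in $\Jac(C_{jkl})$, and translations commute, so their product is translation by $0$, i.e.\ the identity, in any order; adding the classes yields $P_{ijkl}+P_{iklj}+P_{iljk}=0$.

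The main obstacle is combinatorial rather than conceptual: I must verify that each four-cycle splits into exactly the right pair of faces, equivalently that only the three involutions $\iota_j,\iota_k,\iota_l$ attached to the projections of the fixed base curve $C_{jkl}$ can occur (never $\iota_i$), and I must track orientations carefully enough to land on the sign in the stated formula rather than its inverse. Once the canceling diagonal and the common-index bookkeeping are set up correctly, the telescoping in (iii) is automatic and requires no further input beyond the abelian group law on $\Jac(C_{jkl})$.
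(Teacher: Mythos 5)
Your strategy---split each four-cycle at the vertex opposite the base curve into two face three-cycles, identify each face three-cycle with the hyperelliptic involution attached to the common index of its three vertices via Lemma~\ref{lem:threecycle}, and then compose involutions using $\iota_M\circ\iota_N = $ translation by $M\tns N^{-1}$---is exactly the alternative argument the paper itself points to (it is carried out informally in \S\ref{sec:hypergenusone}, and the remark immediately preceding the proposition says the conclusions ``may also be seen directly from the tetrahedron picture'' in precisely this way; the paper's primary justification for (i) is instead an unspecified ``computation'' using the relation of Lemma~\ref{lem:HCrelation}). Your identification of the faces is correct: for the path $C_{jkl}\ra C_{ikl}\ra C_{ijl}\ra C_{ijk}\ra C_{jkl}$, splitting at $C_{ijl}$ gives a first face with common index $l$ and a second with common index $j$, so $\alpha_{ijkl}=\iota_j\circ\iota_l$. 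Parts (ii) and (iii) then follow exactly as you say, and your telescoping argument for (iii) is the same one used in \S\ref{sec:hypergenusone} via $(\iota_1\circ\iota_2)\circ(\iota_2\circ\iota_3)\circ(\iota_3\circ\iota_1)=\id$.

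The one genuine gap is the sign in (i), which you explicitly defer to ``the orientation convention'' without carrying it out---and if you do carry it out with your own conventions, you get the \emph{inverse} of the stated point. Your general formula $\iota_M(\iota_N(x))\sim x+(M-N)$ applied to $\alpha_{ijkl}=\iota_j\circ\iota_l$ gives translation by $M_j\tns M_l^{-1}$, not $M_l\tns M_j^{-1}$; so your sentence asserting that the sign ``is pinned down \dots to give $P_{ijkl}=M_l\tns M_j^{-1}$ as stated'' contradicts the two displayed formulas you just proved. You need to either locate a compensating sign (there is none in the face decomposition itself: the direction of traversal of a face is irrelevant since a three-cycle is an involution, and the pullback identifications $\Pic^0(C_{jl})\cong\Jac(C_{jkl})$ introduce no sign) or note explicitly that your method determines $\alpha_{ijkl}$ as translation by $\pm P_{ijkl}$ and that the residual sign is a convention in the statement. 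This discrepancy is harmless for (ii), (iii), and every downstream use of the proposition, since only the unordered pair $\{P_{ijkl},-P_{ijkl}\}$ and the relation $P_{ijkl}+P_{iklj}+P_{iljk}=0$ are ever needed; but since you named the sign bookkeeping as the main obstacle, you should actually do it and reconcile the answer with the printed statement rather than assert agreement.
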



\subsubsection{Bijections}

Because the geometric constructions of the previous section are entirely $G$-invariant, we have already seen
that the $G$-orbit of a nondegenerate hypercube gives rise to a genus one curve and four line bundles (with a relation), up to isomorphism.  
In fact, we may construct a nondegenerate hypercube from such a curve, along with the
line bundles, which will prove Theorem \ref{thm:hypercube}.

\begin{proof}[Proof of Theorem $\ref{thm:hypercube}$]
Let $C$ be a genus one curve, and let $L_1$, $L_2$, $L_3$, $L_4$ be degree $2$ line bundles
on $C$ as in the statement of the theorem.  We first show how to construct a hypercube from this
data.

\begin{lemma} \label{lem:musurjective}
	Given a genus one curve $C$ and three non-isomorphic degree
	$2$ line bundles $L_1$, $L_2$, $L_3$ on $C$, the multiplication map $($\ie the cup product
	on cohomology$)$
  	\begin{equation*}
  		\mu_{123}: H^0(C,L_1) \tns H^0(C,L_2) \tns H^0(C,L_3) \ra H^0(C,L_1 \tns L_2 \tns L_3)
  	\end{equation*}
	is surjective, and its kernel may be naturally identified with the space of global
	sections $H^0(C,L_i^{-1} \tns L_j \tns L_k)$ for any permutation $\{i,j,k\}$ of $\{1,2,3\}$.
\end{lemma}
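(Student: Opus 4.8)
The plan is to prove surjectivity by a two-step reduction using Castelnuovo's basepoint-free pencil trick (exactly as already invoked in Lemma \ref{lem:ijembedding}), and then to read off the kernel from the very same exact sequence. First I would record the relevant dimensions: since each $L_i$ has degree $2$ on the genus one curve $C$, Riemann--Roch gives $h^0(C,L_i)=2$ and $H^1(C,L_i)=0$, while $L_1\tns L_2\tns L_3$ has degree $6$, so $h^0=6$. Thus the source of $\mu_{123}$ has dimension $8$, and surjectivity forces $\dim\ker\mu_{123}=2$, which already matches $h^0(C,L_i^{-1}\tns L_j\tns L_k)=2$ (a degree $2$ bundle on $C$).

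Next I would factor $\mu_{123}$ as
$$H^0(L_1)\tns H^0(L_2)\tns H^0(L_3)\xrightarrow{\mu_{12}\tns\id}H^0(L_1\tns L_2)\tns H^0(L_3)\xrightarrow{\mu'}H^0(L_1\tns L_2\tns L_3).$$
The pencil trick applied to the basepoint-free bundle $L_1$ (with $h^0=2$) against $M=L_2$ gives an exact sequence whose kernel is $H^0(L_1^{-1}\tns L_2)$ and whose obstruction to surjectivity is $H^1(L_1^{-1}\tns L_2)$. Since $L_1\not\cong L_2$, the degree $0$ bundle $L_1^{-1}\tns L_2$ is nontrivial, so $H^0$ vanishes and, by Serre duality together with $\omega_C\cong\OO_C$, so does $H^1$. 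Hence $\mu_{12}$ is an isomorphism, and therefore so is $\mu_{12}\tns\id$. Applying the pencil trick again to $L_3$ against $M=L_1\tns L_2$, the bundle $L_3^{-1}\tns L_1\tns L_2$ now has degree $2>0$, so $H^1=0$ and $\mu'$ is surjective; composing, $\mu_{123}$ is surjective.

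Finally, for the kernel I would use the section sequence produced by the second pencil trick,
$$0\ra H^0(L_3^{-1}\tns L_1\tns L_2)\ra H^0(L_3)\tns H^0(L_1\tns L_2)\xrightarrow{\mu'} H^0(L_1\tns L_2\tns L_3)\ra 0,$$
and substitute the isomorphism $\mu_{12}$ to identify the middle term with $H^0(L_1)\tns H^0(L_2)\tns H^0(L_3)$ and $\mu'$ with $\mu_{123}$; this yields a natural isomorphism $\ker\mu_{123}\cong H^0(C,L_3^{-1}\tns L_1\tns L_2)$. Running the identical argument after first multiplying $L_2$ with $L_3$, or $L_1$ with $L_3$ (each legitimate because the $L_i$ are pairwise non-isomorphic, so every $\mu_{jk}$ is an isomorphism), produces the remaining identifications with $H^0(L_1^{-1}\tns L_2\tns L_3)$ and $H^0(L_2^{-1}\tns L_1\tns L_3)$. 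The only point requiring genuine care is the cohomology vanishing feeding the pencil trick: the pairwise non-isomorphism hypothesis is precisely what kills $H^1(L_i^{-1}\tns L_j)$ in degree $0$, while the positive degree of the triple products makes the outer vanishing automatic. Once this bookkeeping is in place, both the surjectivity and all three symmetric descriptions of the kernel fall out of a single exact sequence.
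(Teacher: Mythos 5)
Your argument is correct and is essentially the paper's own proof: both factor $\mu_{123}$ through the pairwise isomorphism $\mu_{jk}$ (established by Castelnuovo's basepoint-free pencil trick using $L_j\not\cong L_k$) and then apply the pencil trick a second time to one $L_i$ against the degree $4$ bundle $L_j\tns L_k$, with surjectivity coming from the vanishing of $H^1$ of the degree $2$ bundle $L_i^{-1}\tns L_j\tns L_k$ and the kernel read off from the same exact sequence. The only cosmetic difference is that the paper runs the argument for an arbitrary permutation $\{i,j,k\}$ in one pass, whereas you do the case $i=3$ and then invoke symmetry.
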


\begin{proof}
	Recall from the proof of Lemma \ref{lem:ijembedding} that the multiplication map
			$$\mu_{ij}: H^0(C,L_i) \tns H^0(C,L_j) \ra H^0(C,L_i \tns L_j)$$
	for two such line bundles is an isomorphism, due to the basepoint-free pencil trick.
	We apply the same trick again here: for any permutation $\{i,j,k\}$ of $\{1,2,3\}$,
	we tensor the sequence $0 \to L_i^{-1} \to H^0(C,L_i) \tns \OO_C \to L_i \to 0$
	with $L_j \tns L_k$ and take cohomology to obtain the exact sequence
		\begin{align} \label{eq:baseptfreeHC}
			0 &\to H^0(C,L_i^{-1} \tns L_j \tns L_k) \to H^0(C,L_i) \tns H^0(C,L_j \tns L_k)
					\to H^0(C,L_i \tns L_j \tns L_k) \nonumber \\
				&\to H^1(C,L_i^{-1} \tns L_j \tns L_k) = 0.
		\end{align}
	As the map $\mu_{123}$ factors through the isomorphism
		$$(\mathrm{id},\mu_{jk}) : H^0(C,L_i) \tns H^0(C,L_j \tns L_k) \to H^0(C,L_i \tns L_j \tns L_k),$$
	the sequence \eqref{eq:baseptfreeHC} shows that $\mu_{123}$ is surjective and its kernel may be naturally identified with
	$H^0(C,L_i^{-1} \tns L_j \tns L_k)$.
\end{proof}

Given $C$, $L_1$, $L_2$, $L_3$ as in the lemma, by Riemann-Roch, the kernel of $\mu_{123}$ has dimension $2$, and
we may use the inclusion of this kernel into the domain to specify a hypercube
	\begin{align*}
	\AA &\cong \Hom(\ker \mu_{123},H^0(C,L_1) \tns H^0(C,L_2) \tns H^0(C,L_3)) \\
	&\in H^0(C,L_1) \tns H^0(C,L_2) \tns H^0(C,L_3) \tns (\ker \mu_{123})^\vee
 	\end{align*}
where $V_i = H^0(C,L_i)$ for $1 \leq i \leq 3$ and $V_4 = (\ker \mu_{123})^\vee$.
We will show below that the hypercube $\AA$ thus constructed is nondegenerate and that
the geometric construction from $\AA$ gives a tuple isomorphic to the original $(C, L_1, L_2, L_3, L_4)$.

Let $C_{ij}'$ be the image of $C$
via the natural immersion into $\PP(H^0(C,L_i)^\vee) \times \PP(H^0(C,L_j)^\vee)$.
Let $C_{ij}$ be constructed from $\AA$ by
	$$C_{ij} := \{ (w,x) \in \PP(V_i^\vee) \times \PP(V_j^\vee) :
	 \det (\AA \subs (w \tns x)) = 0 \} \subset \PP(V_i^\vee) \times \PP(V_j^\vee).$$
We will show that these two varieties are the same for all $i \neq j$, but first for {\em some} $i \neq j$.

\begin{claim} \label{claim:CijCijprime} For some $1 \leq i \neq j \leq 3$, we have $C_{ij} = C_{ij}'$ as sets.
\end{claim}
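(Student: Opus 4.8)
The plan is to establish the containment $C_{ij}' \subseteq C_{ij}$ for each pair $1 \le i < j \le 3$ directly, and then to upgrade this to equality for at least one pair by ruling out the degenerate possibility that the relevant determinant vanishes identically. I would work over $\bar K$ and, via the identification $V_4^\vee = \ker\mu_{123}$, regard $\AA$ as the inclusion $\iota\colon \ker\mu_{123} \hookrightarrow V_1 \tns V_2 \tns V_3$; concretely, for $\phi \in V_4^\vee$ one has $\AA \subs \phi = \iota(\phi)$, and the defining property of the kernel is that $\mu_{123}(\iota(\phi)) = 0$ in $H^0(C, L_1 \tns L_2 \tns L_3)$.

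First I would treat the pair $(1,2)$. Let $p \in C$ be a geometric point, with images $w \in \PP(V_1^\vee)$ and $x \in \PP(V_2^\vee)$ coming from the maps attached to $L_1$ and $L_2$; after trivializing $L_1, L_2, L_3$ near $p$, the functionals $w$ and $x$ are evaluation at $p$. The matrix $\AA \subs (w \tns x) \in V_3 \tns V_4$ is then the linear map $\psi_p\colon \ker\mu_{123} \to V_3 = H^0(C, L_3)$ sending $\phi = \sum_a s_a \tns t_a \tns u_a$ to $\sum_a s_a(p)\, t_a(p)\, u_a$. The crucial point is that each section $\psi_p(\phi)$ vanishes at $p$: evaluating the global identity $\sum_a s_a t_a u_a = 0$ at $p$ gives $\sum_a s_a(p) t_a(p) u_a(p) = 0$, which is precisely $\psi_p(\phi)(p) = 0$. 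Hence $\psi_p$ maps the $2$-dimensional space $\ker\mu_{123}$ into $H^0(C, L_3(-p))$, which has dimension $\deg L_3 - 1 = 1$ by Riemann--Roch. Therefore $\psi_p$ has nonzero kernel, so $\det(\AA \subs (w \tns x)) = 0$ and $(w,x) \in C_{12}$. Permuting the roles of the three factors gives $C_{ij}' \subseteq C_{ij}$ for every pair $i \ne j$ in $\{1,2,3\}$.

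To finish, I would note that each $C_{ij}'$ is an irreducible bidegree $(2,2)$ curve in $\Poneone$ (the image of $C$ under the closed immersion of Lemma \ref{lem:ijembedding}, using $L_i \not\cong L_j$), whereas $C_{ij}$ is cut out by the $(2,2)$ form $\det(\AA \subs (w \tns x))$. For any pair where this determinant is not identically zero, $C_{ij}$ is an effective divisor in $|\OO(2,2)|$ containing the irreducible divisor $C_{ij}'$ of the same class, which forces $C_{ij} = C_{ij}'$. The only thing left to exclude is that $\det(\AA \subs \cdot)$ vanishes identically for all three pairs simultaneously, and I expect this to be the main obstacle. I would handle it exactly as in the proof of Theorem \ref{thm:333bij}: if all three determinants were identically zero, then $\AA \subs(\cdot)$ would degenerate in a manner incompatible with the injectivity of $\iota$ (equivalently, with $\AA$ having full tensor rank), a contradiction. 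Since the construction only guarantees a nonvanishing determinant for \emph{some} pairing, this proves the statement in precisely the asserted form, for some $i \ne j$.
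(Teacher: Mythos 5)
Your containment step and the irreducibility upgrade are both correct and essentially the paper's: the observation that $\AA\subs(w\tns x)$ carries $\ker\mu_{123}$ into $H^0(C,L_3(-p))$ is just the transpose of the paper's observation that this matrix kills the evaluation functional at $p$ in $V_3^\vee$, and the "irreducible $(2,2)$ curve inside a $(2,2)$ divisor forces equality" argument is exactly what the paper does. You have also correctly reduced the claim to the right statement: that $\det(\AA\subs(w\tns x))$, $\det(\AA\subs(w\tns y))$, $\det(\AA\subs(x\tns y))$ cannot all vanish identically.

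The gap is that your resolution of this last point is not an argument. Injectivity of $\iota$ (equivalently, $\AA$ having nonzero tensor rank) does \emph{not} by itself forbid even a single one of these determinants from vanishing identically: for instance the $2$-dimensional subspace of $V_1\tns V_2\tns V_3$ spanned by $e_1\tns e_1\tns e_1$ and $e_2\tns e_2\tns e_1$ includes injectively, yet $\AA\subs(w\tns x)$ has rank $\le 1$ for every $(w,x)$, so $f_{12}\equiv 0$. So whatever rules out the simultaneous vanishing of all three must use more than injectivity, and the step you cite from the proof of Theorem \ref{thm:333bij} ("contradicts nonzero tensor rank") does not transfer to the four-fold setting. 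What the paper actually does here is different and is the real content of the claim: it writes $\ker\mu_{123}$ as a pencil spanned by two linearly independent tridegree $(1,1,1)$ forms $f,g$ on $\PP(V_1^\vee)\times\PP(V_2^\vee)\times\PP(V_3^\vee)$, notes that the projection of $C_{123}=V(f,g)$ to $\PP(V_i^\vee)\times\PP(V_j^\vee)$ is set-theoretically $C_{ij}$, and then runs a $\gcd$ analysis: if $\gcd(f,g)=1$ then $C_{123}$ is a one-dimensional complete intersection and no projection is two-dimensional; otherwise $\gcd(f,g)$ omits some variable, say the third, and the projection to $\PP(V_1^\vee)\times\PP(V_2^\vee)$ is still one-dimensional. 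Either way \emph{some} $f_{ij}$ is not identically zero, which is exactly the "for some $i\ne j$" in the statement (and, as the example above shows, one really cannot hope for all three pairs at this stage). Without this pencil analysis, or a substitute for it, your proof is incomplete at its most delicate point.
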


\begin{proof}
For all $i \neq j$, the inclusion $C_{ij}' \subseteq C_{ij}$ is easy:  for each $1 \leq k \leq 3$,
let $\{r_{k1},r_{k2}\}$ be a basis of $H^0(C,L_k)$.  Then the definition of $\AA$ implies that
	\begin{equation*}
		\AA \subs \left( [r_{i1}(p):r_{i2}(p)] \tns [r_{j1}(p):r_{j2}(p)] \tns [r_{k1}(p):r_{k2}(p)] \right) = 0
	\end{equation*}
for all points $p \in C$, so $\left( [r_{i1}(p):r_{i2}(p)], [r_{j1}(p):r_{j2}(p)] \right)$ lies in $C_{ij}$.
Since $C_{ij}$ is defined by a bidegree $(2,2)$ equation $f_{ij}$ in $\PP(V_i^\vee) \times \PP(V_j^\vee)$,
if we show that $f_{ij}$ is nonzero and irreducible, then we find that $C_{ij}$ is a smooth irreducible genus one curve and thus $C_{ij} = C_{ij}'$.

An irreducible bidegree $(d_1,d_2)$ form on $\Poneone$ defines a genus $(d_1-1)(d_2-1)$ curve.
So if $f_{ij}$ is nonzero and factors nontrivially, then no irreducible component can be a smooth irreducible
genus one curve.  However, since $C_{ij}$ contains the smooth irreducible genus
one curve $C_{ij}'$, the polynomial $f_{ij}$ must be either zero
or irreducible for each pair $i \neq j$.  If $f_{ij} = 0$ identically, then
$C_{ij}$ is all of $\PP(V_i^*) \times \PP(V_j^*)$.  The projection of
	\begin{equation*}
		C_{123} := \{(w,x,y) \in \PP(V_1^\vee) \times \PP(V_2^\vee) \times \PP(V_3^\vee) : \AA(w,x,y,\cdot) = 0 \}
	\end{equation*}
to any $\PP(V_i^\vee) \times \PP(V_j^\vee)$ is exactly $C_{ij}$
by definition, and we will show that at least one of these projections is not two-dimensional.

Let $f$ and $g$ be the two tridegree $(1,1,1)$ equations defining $C_{123}$.  Because $\AA$
is defined by two linearly independent elements of $\ker \mu_{123}$, we have that $f$
and $g$ are nonzero and not multiples of one another.  If $\gcd(f,g) = 1$, then
$C_{123}$ is a complete intersection and thus a one-dimensional variety.  Otherwise,
suppose without loss of generality that $\gcd(f,g)$ has tridegree $(1,1,0)$ or $(1,0,0)$.
In either case, the projection to $\PP(V_1^\vee) \times \PP(V_2^\vee)$ is still one-dimensional.  
Therefore, there exists {\em some} $i \neq j$ such that $C_{ij}$ is not two-dimensional,
and thus must be exactly $C_{ij}'$.
\end{proof}

Since $f_{ij}$ cuts out a smooth irreducible genus one curve, we have
$\disc f_{ij} \neq 0$.  Thus, the hypercube $\AA$ has nonzero discriminant and is nondegenerate.

As $\disc(\AA) \neq 0$, the polynomials $f_{kl}$ do not vanish for any $k \neq l$,
and the $C_{kl}$ are smooth irreducible genus one curves.
It follows from the proof of Claim \ref{claim:CijCijprime} that
all of the $C_{kl}$ are in fact set-theoretically equal to $C_{kl}'$.  Moreover,
$C_{123}$ is set-theoretically equal to the image $C_{123}'$ of the
embedding of $C$ into the triple product space
$\PP(H^0(C,L_1)^\vee) \times \PP(H^0(C,L_2)^\vee) \times \PP(H^0(C,L_3)^\vee)$.
Because there is a canonical isomorphism $C_{123}' \to C_{123}$,
for $1 \leq i \leq 3$, the pullbacks of $\OO_{\PP(H^0(C,L_i)^\vee)}(1)$ to $C_{123}$ and then to $C$
are exactly the line bundles $L_i$.

From a genus one curve and three nonisomorphic degree $2$ line bundles on this curve,
we have constructed a nondegenerate hypercube.  This
hypercube, in turn, produces---via the constructions of \S \ref{sec:HCgeom}---an
isomorphic curve and the same line bundles.  Similarly, by the definitions of these
maps, going from $G$-orbits of  nondegenerate hypercubes to quintuples $(C,L_1,L_2,L_3,L_4)$
as in the theorem, and then back to $G$-orbits of hypercubes, is the identity map.
\end{proof}

We now rewrite the basic bijection of Theorem \ref{thm:hypercube}, by describing the
geometric data in a slightly different way.  This is analogous to the corollary following Theorem
\ref{thm:333bij}.  We simply replace the data of the line bundles $L_2$, $L_3$, $L_4$ by the differences
between each of them and $L_1$, which is a point on $\Pic^0(C) \cong \Jac(C)$.  
Since the sum of these differences (up to sign) is zero, it suffices
to keep track of two such points on the Jacobian of the curve, say $P : = L_2 \tns L_1^{-1}$ and
$P' := L_3 \tns L_2^{-1}$.  Recall (or see Appendix~\ref{appendix:torsors})
that the differences of two line bundles on a genus one curve are rational points in the appropriate period-index
subgroup of that curve.

\begin{cor} \label{cor:HCbijP}
Let $V_1$, $V_2$, $V_3$, $V_4$ be $2$-dimensional $K$-vector spaces.  Then the nondegenerate
$\GL(V_1) \times \GL(V_2) \times \GL(V_3) \times \GL(V_4)$-orbits of $V_1 \tns V_2 \tns V_3 \tns V_4$
are in bijection with isomorphism classes of quadruples $(C,L,P,P')$, where $C$ is a genus one curve
over $K$, $L$ is a degree $2$ line bundle on $C$, and $P$ and $P'$ are distinct nonzero points in $\Jac^2_C(K) \subset \Jac(C)(K)$.
\end{cor}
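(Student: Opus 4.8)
The plan is to deduce Corollary~\ref{cor:HCbijP} directly from Theorem~\ref{thm:hypercube} by a change of coordinates on the geometric data, exactly parallel to the way Corollary~\ref{cor:333CLP} was deduced from Theorem~\ref{thm:333bij}. Theorem~\ref{thm:hypercube} already gives a canonical bijection between nondegenerate orbits and isomorphism classes of quintuples $(C,L_1,L_2,L_3,L_4)$ with $L_1\tns L_2\cong L_3\tns L_4$ and $L_i\not\cong L_j$ (in the stated range of indices), so the entire content of the corollary is the purely formal observation that such quintuples, up to isomorphism, are the same data as quadruples $(C,L,P,P')$ of the indicated type. First I would set $L:=L_1$ and record the two \emph{differences}
\[
P:=L_2\tns L_1^{-1},\qquad P':=L_3\tns L_2^{-1}
\]
as elements of $\Pic^0(C)\cong\Jac(C)$. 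The relation $L_1\tns L_2\cong L_3\tns L_4$ forces $L_4\tns L_1^{-1}=-(P+P')$ in $\Jac(C)$, so the fourth bundle is recovered as $L_4\cong L_1\tns(P+P')^{-1}$; thus no information is lost in passing from the quadruple of bundles to the pair $(P,P')$, and conversely the triple $(L_2,L_3,L_4)$ is reconstructed from $(L,P,P')$ by $L_2=L\tns P$, $L_3=L\tns P\tns P'$, $L_4=L\tns (P+P')^{-1}$.

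Next I would translate the two nondegeneracy hypotheses of Theorem~\ref{thm:hypercube} into the conditions on $P$ and $P'$ stated in the corollary. The requirement $L_1\not\cong L_2$ says precisely $P\neq 0$; the requirement $L_2\not\cong L_3$ says $P'\neq 0$; and $L_1\not\cong L_3$ says $P\tns P'=L_3\tns L_1^{-1}\neq 0$, i.e.\ $P'\neq -P$, which together with $P,P'\neq 0$ is exactly the assertion that $P$ and $P'$ are distinct nonzero points (note $P\ne P'$ is automatic once we also use $L_1\ne L_4$ type relations; I would check each pairwise non-isomorphism from the index range $i\neq j,\ 1\le i\le 2,\ 1\le j\le 4$ and see which inequalities among $\{P,P',P+P'\}$ and $0$ they produce, confirming they amount to ``$P,P'$ distinct and nonzero''). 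The only remaining point is to verify that $P$ and $P'$ land in the degree $2$ period-index subgroup $\Jac_C^2(K)$ rather than merely in $\Jac(C)(K)$: since $P=L_2\tns L_1^{-1}$ is the difference of two honest degree $2$ $K$-rational line bundles (equivalently, of two $K$-rational divisor classes of degree $2$), it is represented by the class of a genuine $K$-rational degree $0$ divisor, and by Appendix~\ref{appendix:torsors} (the same fact invoked for Corollary~\ref{cor:333CLP} via Proposition~\ref{prop:periodindexsubgp}) such differences are exactly the elements of $\Jac_C^2(K)$.

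Finally I would check that this dictionary is an isomorphism of moduli data, i.e.\ respects isomorphisms: an isomorphism of quintuples $(C,L_1,L_2,L_3,L_4)\to(C',L_1',\dots)$ is an isomorphism $\sigma:C\to C'$ pulling back each $L_i'$ to $L_i$, and under pullback the differences $L_j'\tns L_i'^{-1}$ go to $L_j\tns L_i^{-1}$, so $\sigma$ carries $(P',\dots)$ to $(P,\dots)$; conversely an isomorphism of quadruples $(C,L,P,P')$ reconstitutes the $L_i$ and hence the quintuple. Composing the bijection of Theorem~\ref{thm:hypercube} with this data-level bijection yields the stated correspondence. I do not expect a genuine obstacle here: the argument is entirely formal once Theorem~\ref{thm:hypercube} and the period-index identification of Appendix~\ref{appendix:torsors} are granted. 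The only place demanding care is the bookkeeping of which pairwise non-isomorphisms $L_i\not\cong L_j$ are assumed (the asymmetric index range in Theorem~\ref{thm:hypercube}) and checking that they combine to give precisely ``$P$ and $P'$ distinct and nonzero'' with no redundant or missing constraint; everything else is direct substitution in $\Pic(C)$.
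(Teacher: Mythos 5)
Your overall route is exactly the paper's: deduce the corollary from Theorem \ref{thm:hypercube} by trading the line bundles $L_2,L_3,L_4$ for differences in $\Pic^0(C)\cong\Jac(C)$, use the relation $L_1\tns L_2\cong L_3\tns L_4$ to see that two points suffice, and invoke Proposition \ref{prop:periodindexsubgp} to place those differences in $\Jac^2_C(K)$. The paper gives only this one-paragraph dictionary, so your write-up is an expansion of the same argument, and the strategy is sound.

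However, the bookkeeping step you explicitly deferred does not come out the way you assert, and there is a related sign slip. First, with your normalization $P=L_2\tns L_1^{-1}$, $P'=L_3\tns L_2^{-1}$, the relation $L_1\tns L_2\cong L_3\tns L_4$ gives $L_4\tns L_1^{-1}=L_2\tns L_3^{-1}=-P'$, so $L_4\cong L_1\tns (P')^{-1}$, not $L_1\tns(P+P')^{-1}$. Second, carrying out the check of the index range $i\in\{1,2\}$, $j\in\{1,2,3,4\}$, $i\ne j$: in your coordinates the five conditions $L_i\not\cong L_j$ translate to $P\neq 0$, $P'\neq 0$, and $P+P'\neq 0$ --- i.e., precisely the statement of Theorem \ref{hyperpar} that the three points $P$, $P'$, $-(P+P')$ are all nonzero --- and \emph{not} to ``$P$ and $P'$ are distinct''; your inference that $P'\neq -P$ together with $P,P'\neq 0$ ``is exactly'' distinctness conflates $P'\neq -P$ with $P'\neq P$. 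To obtain literally the conditions stated in the corollary you must instead set $P':=L_3\tns L_1^{-1}$ (consistent with the paper's phrase ``the differences between each of them and $L_1$,'' though not with its displayed formula): then $L_1\not\cong L_2$, $L_1\not\cong L_3$, $L_1\not\cong L_4$ become $P\neq 0$, $P'\neq 0$, $P\neq P'$, and the remaining two conditions are redundant. The two normalizations differ by the invertible substitution $P'\mapsto P+P'$ on $\Jac_C^2(K)$, so the existence of the bijection is unaffected, but as written your dictionary does not yield the conditions in the statement being proved.
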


This also concludes the proof of Theorem \ref{hyperpar}, where we take $P''$ to represent the difference between $L_1$ and $L_3$.

\subsubsection{Invariant theory}

The $\SL(V_1) \times \SL(V_2) \times \SL(V_3) \times \SL(V_4)$-invariants of an element
of $V_1 \tns V_2 \tns V_3 \tns V_4$, for two-dimensional vector spaces $V_i$, form a polynomial
ring generated freely by $a_2$, $a_4$, $a_4'$, $a_6$ of degrees $2$, $4$, $4$, and $6$,
respectively \cite{littelmann}.  Just as in the previous cases considered, these invariants have
several interpretations in terms of each orbit's geometric data consisting of a genus one curve $C$,
a degree $2$ line bundle $L$, and nonzero points $P$, $P'$, and $P''$ in $\Jac^2_C(K)$ that sum to $0$.

One geometric interpretation of the generators of the invariant ring was discussed in \S \ref{sec:HCorbitpreview}: the Jacobian
of the genus one curve is given by
\begin{equation} \label{eq:EC812coeffs}
E: y^2 = x^3 + a_8 x + a_{12},
\end{equation}
where we have formulas for $a_8$ and $a_{12}$ in terms of $a_2$, $a_4$, $a_4'$, and $a_6$; then
$a_2$ is the slope of the line on which $P$, $P'$, $P''$ lie (on $E$); $(a_4, a_6)$ are the coordinates for the point $P$ on $E$;
and $a_4'$ is the $x$-coordinate for $P'$.

Another interpretation gives a model for the Jacobian elliptic curve with fixed points corresponding to $P$ and $P'$:

\begin{prop} \label{prop:HCinvsEC}
There exists a choice of normalization for the 
$\SL(V_1) \times \SL(V_2) \times \SL(V_3) \times \SL(V_4)$-invariants $\delta_2$, $\delta_4$, $\delta_4'$, $\delta_6$
such that given a nondegenerate tensor in $V_1 \tns V_2 \tns V_3 \tns V_4$ corresponding to $(C,L,P,P')$ as in
Corollary~$\ref{cor:HCbijP}$, the Jacobian of $C$ may be given in normal form as
	\begin{equation} \label{eq:HCEC}
		E: y^2 + \delta_4' y = x^4 + \delta_2 x^3 + \delta_4 x^2 + \delta_6 x
	\end{equation}
with identity point $(0,0)$, and the points $P$ and $P'$ correspond to the two points at infinity when homogenized.
\end{prop}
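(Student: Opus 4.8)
The plan is to imitate the abstract argument used for the degree-$3$ analogue in Proposition~\ref{prop:333invthy}: first exhibit the normal form \eqref{eq:HCEC} by a Riemann--Roch computation on $E := \Jac(C)$, and then recognize its coefficients as relative invariants of degrees $2$, $4$, $4$, $6$, which by the known structure of the invariant ring must, after scaling, be the free generators. Write $O$ for the identity of $E$, and let $P, P' \in E(K)$ be the distinct nonzero points attached to the orbit in Corollary~\ref{cor:HCbijP}. All of the construction takes place on $E$, with $P$ and $P'$ playing the role of the two points over $x = \infty$, so the relevant divisor is $D := (P) + (P')$ of degree $2$.

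First I would build the coordinates. Since $\dim H^0(E, \OO(nD)) = 2n$ on a genus one curve, choose a nonconstant $x \in H^0(E,\OO(D))$ normalized by $x(O) = 0$, so that $x$ has simple poles exactly at $P$ and $P'$; then choose $y \in H^0(E,\OO(2D))$ completing $\{1,x,x^2\}$ to a basis, normalized by $y(O) = 0$, so $y$ has double poles at $P$ and $P'$. In $H^0(E,\OO(4D))$, which is $8$-dimensional with basis $\{1, x, x^2, x^3, x^4, y, xy, x^2 y\}$, the nine functions obtained by adjoining $y^2$ are linearly dependent; the coefficient of $y^2$ in this relation is nonzero (otherwise the eight basis functions would be dependent), so the relation solves for $y^2$ in the shape $y^2 = (c_2 x^2 + c_1 x + c_0)\,y + Q(x)$ with $Q$ of degree $\le 4$. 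Evaluating at $O = (0,0)$ forces $Q(0) = 0$. Completing the square only on the $x$-dependent part, i.e.\ replacing $y$ by $y - \tfrac12(c_2 x^2 + c_1 x)$ (legitimate since $\mathrm{char}\,K \neq 2$, and it preserves $y(O)=0$), removes the $x^2 y$ and $xy$ terms while leaving a single $c_0 y$ term; finally rescaling $x$ and the induced $y$ restores leading coefficient $1$ and keeps the constant term $0$. The result is exactly \eqref{eq:HCEC} with $(0,0) = O$ and $P, P'$ the two points at infinity. One cannot also eliminate the $\delta_4' y$ term, as this would require translating $y$ by a nonzero constant and hence moving the identity off $(0,0)$; this is why the model necessarily retains its linear-in-$y$ term.

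It remains to identify $\delta_2, \delta_4, \delta_4', \delta_6$ with the generators. The construction pins these four coefficients down from the isomorphism class of $(E, O, P, P')$ together with a scaling of $x$ (a trivialization of the Hodge bundle, exactly as in \S\ref{sec:binaryquartics} and \S\ref{sec:ternarycubics}); assigning weights $2$ and $4$ to $x$ and $y$ makes \eqref{eq:HCEC} weighted homogeneous of total weight $8$, so under the scaling action the coefficients transform by $\delta_d \mapsto t^d \delta_d$. Hence each $\delta_d$ is an $\SL(V_1) \times \cdots \times \SL(V_4)$-invariant, homogeneous of degree $d$ in the hypercube entries; as in the proof of Proposition~\ref{prop:333invthy} it is therefore a relative $G$-invariant and lies in the polynomial invariant ring, which by \cite{littelmann} is freely generated in degrees $2$, $4$, $4$, $6$. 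Matching degrees, $\delta_2$ is a scalar multiple of $a_2$, while $\delta_4, \delta_4'$ lie in $\langle a_4, a_4', a_2^2 \rangle$ and $\delta_6$ in the degree-$6$ graded piece; so the proposition reduces to checking that $\delta_2, \delta_4, \delta_4', \delta_6$ are algebraically independent, after which a suitable scalar normalization of each makes them the free generators.

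That independence is the main obstacle. In principle one re-expresses $(\delta_2,\delta_4,\delta_4',\delta_6)$ in terms of the geometric generators $(a_2, a_4, a_4', a_6)$ of \S\ref{sec:HCorbitpreview} via the Weierstrass form \eqref{eq:EC812coeffs} and the coordinates of $P, P'$, and checks that the resulting change of variables is invertible; concretely it suffices to verify the nonvanishing of the Jacobian determinant of this substitution at a single nondegenerate hypercube. As in Proposition~\ref{prop:333invthy}, the completely explicit formulas are unwieldy, so I would carry out this verification at one convenient specialization rather than symbolically, relying otherwise on the degree-counting argument to identify the normalized coefficients with the free generators of the invariant ring.
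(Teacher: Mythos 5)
Your argument is essentially the paper's own ``abstract'' proof of this proposition: the authors likewise rest the claim on the fact that any elliptic curve with two distinct non-identity rational points can be written in the form \eqref{eq:HCEC}, so that the coefficients are relative invariants of degrees $2,4,4,6$, and they leave the identification with the free generators to degree considerations or to explicit computation (exactly as in their treatment of Proposition~\ref{prop:333invthy}). Your Riemann--Roch derivation of the normal form and your reduction of ``the $\delta_d$ generate'' to algebraic independence plus a Hilbert-series comparison fill in details the paper omits; the one point you gloss over---that the leading coefficient of the quartic is automatically a nonzero square (forced by the rationality of the two points over $x=\infty$), so it can legitimately be rescaled to $1$---is harmless and no terser than the paper's own sketch.
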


Just as for Proposition \ref{prop:333invthy}, straightforward proofs of both of these results are computational.  In this case, the invariants
are very reasonable to work with explicitly, and it is easy to show that the elliptic curve in \eqref{eq:HCEC} is isomorphic to the
Jacobian of the genus one curves constructed from the hypercube and that the points at infinity give the translations $\alpha_{ijk}$.  An
abstract proof of Proposition \ref{prop:HCinvsEC}, again like in Proposition \ref{prop:333invthy}, relies on the fact 
that elliptic curves with two distinct non-identity points may be written in the form \eqref{eq:HCEC}, 
so the coefficients $\delta_2$, $\delta_4$, $\delta_4'$, $\delta_6$ are relative 
$\GL(V_1) \times \GL(V_2) \times \GL(V_3) \times \GL(V_4)$-invariants of $V_1 \tns V_2 \tns V_3 \tns V_4$.

For any elliptic curve $E$ over $K$ of the form \eqref{eq:HCEC}, there always exists a $G$-orbit of $V$ where E is the Jacobian of the associated genus one curve, giving the analogous statement to Corollary \ref{cor:333surjorbits}:

\begin{cor} \label{cor:HCsurjorbits}
The map from nondegenerate orbits $V(K)/G(K)$ to elliptic curves of the form
	$$E: y^2 + \delta_4' y = x^4 + \delta_2 x^3 + \delta_4 x^2 + \delta_6 x$$
with $\delta_2$, $\delta_4$, $\delta_4'$, $\delta_6 \in K$, by
taking the Jacobian of the genus one curve associated to the orbit, is surjective.
\end{cor}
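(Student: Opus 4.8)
The plan is to prove surjectivity by exhibiting, for every elliptic curve $E$ of the stated form, an explicit nondegenerate hypercube whose associated genus one curve has Jacobian $E$. The key observation is that the form \eqref{eq:HCEC} is precisely the Weierstrass-type model for an elliptic curve equipped with two distinguished non-identity points, namely the two points at infinity after homogenization, as recorded in Proposition \ref{prop:HCinvsEC}. So given such an $E$, I would first read off from $E$ itself the data of a genus one curve together with two rational points $P$ and $P'$ on $E$ summing (with $P''$) appropriately, and then invoke the parametrization already established.

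Concretely, I would take $C$ to be the trivial torsor $E$ itself, and produce degree $2$ line bundles realizing the required differences. Writing $O$ for the identity point of $E$ and letting $P$, $P'$ be the two marked points coming from the normal form \eqref{eq:HCEC}, I would set $L := \OO(2O)$ and define $L_2 := \OO(O + P)$, $L_3 := \OO(O + P')$, and $L_4$ to be whatever degree $2$ bundle makes the relation $L_1 \tns L_2 \cong L_3 \tns L_4$ of Theorem \ref{thm:hypercube} hold — that is, $L_4 := \OO(O + P'')$ where $P'' = -(P + P')$, so that the three differences $P = L_2 \tns L_1^{-1}$, $L_3 \tns L_1^{-1}$, $L_4 \tns L_1^{-1}$ are exactly the nonzero points summing to zero demanded by Corollary \ref{cor:HCbijP}. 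One must check that the $L_i$ are pairwise distinct for generic choices, which holds as long as $P$, $P'$, $P''$ are distinct nonzero points; the degenerate coincidences are exactly the cases excluded by nondegeneracy, so they may be handled by the open-condition remarks already in the text. Applying Theorem \ref{thm:hypercube} (or its reformulation in Corollary \ref{cor:HCbijP}) to the quintuple $(C, L_1, L_2, L_3, L_4)$ then produces a nondegenerate $G$-orbit, and by the invariant-theoretic interpretation of Proposition \ref{prop:HCinvsEC}, the Jacobian of the genus one curve attached to this orbit is isomorphic to $E$ with the marked points restored.

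The one point requiring care is that the marked points $P$, $P'$ must be genuinely distinct and nonzero for the construction to land in the nondegenerate locus; this is automatic whenever $E$ is given in the form \eqref{eq:HCEC} with nonzero discriminant, since the two points at infinity are distinct and are not the identity by construction. Thus every $E$ of the prescribed shape arises, which is exactly the asserted surjectivity. I expect the main (though still routine) obstacle to be verifying that the concrete line bundles produce a \emph{nondegenerate} hypercube rather than one on the boundary locus — but since we are free to realize $E$ as the trivial torsor with its own rational points, the pair $(C, [D])$ is everywhere soluble and the nondegeneracy is guaranteed by the smoothness of $E$ itself, mirroring the argument given for Corollary \ref{cor:333surjorbits} in the degree $3$ case.
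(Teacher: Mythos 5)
Your overall strategy is the one the paper intends: realize $E$ as the trivial torsor over itself, equip it with explicit degree $2$ line bundles whose differences are the two marked points of the normal form, feed the resulting quintuple into Theorem \ref{thm:hypercube} (equivalently Corollary \ref{cor:HCbijP}), and read off the invariants via Proposition \ref{prop:HCinvsEC} --- exactly parallel to the explicit construction given for Corollary \ref{cor:333surjorbits} in the degree $3$ case. Your remarks about nondegeneracy being automatic from smoothness of $E$ and the distinctness of the points at infinity are also in the right spirit.

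There is, however, a concrete error in your choice of $L_4$, stemming from a misidentification of which three points sum to zero. The relation demanded by Theorem \ref{thm:hypercube} is $L_1 \tns L_2 \cong L_3 \tns L_4$, which in terms of the differences $P_i := L_i \tns L_1^{-1}$ reads $P_2 = P_3 + P_4$, \emph{not} $P_2 + P_3 + P_4 = 0$. (The triple that does sum to zero in Theorem \ref{hyperpar} consists of the consecutive differences $D_1 - D_2$, $D_2 - D_3$, $D_3 - D_1$ of three of the four divisors, which sum to zero for trivial reasons; it is not the triple $L_i \tns L_1^{-1}$ for $i = 2,3,4$.) With $L_1 = \OO(2O)$, $L_2 = \OO(O+P)$, $L_3 = \OO(O+P')$, $L_4 = \OO(O+P'')$, the condition $L_1 \tns L_2 \cong L_3 \tns L_4$ says $3O + P \sim 2O + P' + P''$, i.e.\ $P = P' + P''$ under the group law; your choice $P'' = -(P+P')$ gives $P' + P'' = -P$, so the relation holds only when $P$ is $2$-torsion, and Theorem \ref{thm:hypercube} cannot be applied to your quintuple in general. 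The repair is forced: take $L_4 := L_1 \tns L_2 \tns L_3^{-1}$, i.e.\ $P'' = P - P'$ under the group law; the distinctness hypotheses of Theorem \ref{thm:hypercube} then reduce to conditions on $P$ and $P'$ that are satisfied by the two distinct, non-identity, rational points at infinity of a smooth curve in the form \eqref{eq:HCEC}. Finally, note that in Proposition \ref{prop:HCinvsEC} the two points placed at infinity are $L_2 \tns L_1^{-1}$ and $L_3 \tns L_2^{-1}$ (not $L_3 \tns L_1^{-1}$), so to recover the \emph{given} coefficients $\delta_2, \delta_4, \delta_4', \delta_6$ rather than merely an isomorphic curve you should choose $L_3$ so that $L_3 \tns L_2^{-1}$ is the second point at infinity.
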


\subsection{Symmetric hypercubes} \label{sec:symhypercubes}

In this section, we study ``symmetrized'' hypercubes, as discussed in \S \ref{sec:symHCpreview}--\ref{sec:doubledoublesym}.  We show that nondegenerate orbits of the different types of symmetric hypercubes correspond to genus one curves with different numbers of degree $2$ line bundles.  Nondegeneracy for symmetric hypercubes is determined by the nonvanishing of the same degree~$24$ discriminant for hypercubes, although this discriminant factors differently in each case.

\subsubsection{Doubly symmetric hypercubes} \label{sec:2symHC}

The simplest case is that of doubly symmetric hypercubes, i.e., elements of the representation $V_1 \tns V_2 \tns \Sym_2V_3 \subset V_1 \tns V_2 \tns V_3 \tns V_3$ of $\GL(V_1) \times \GL(V_2) \times \GL(V_3)$, where the $V_i$ are $2$-dimensional $K$-vector spaces. With choices of bases for each vector space, the elements may be viewed as doubly symmetric hypercubes or as $2 \times 2$ matrices of binary quadratic forms.  Away from characteristic $2$, this is the same as the quotient representation $V_1 \tns V_2 \tns \Sym^2V_3$.

\begin{thm} \label{thm:2symHCL}
  Let $V_1$, $V_2$, $V_3$ be $2$-dimensional vector spaces over $K$.  Then
  nondegenerate $\GL(V_1) \times \GL(V_2) \times \GL(V_3)$-orbits of $V_1
  \tns V_2 \tns \Sym_2V_3$ are in bijection with isomorphism classes
  of quadruples $(C,L_1,L_2,L_3)$, where $C$ is a genus one curve over
  $K$, and $L_1$, $L_2$, $L_3$ are degree $2$ line bundles on $C$ satisfying
  $L_1 \tns L_2 \cong L_3^{\tns 2}$ and $L_3$ not isomorphic to
  $L_1$ or $L_2$.
\end{thm}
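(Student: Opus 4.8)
The plan is to derive Theorem~\ref{thm:2symHCL} as a symmetrization of the full hypercube parametrization Theorem~\ref{thm:hypercube}, exactly in parallel with how Theorem~\ref{thm:2symRC} was deduced from Theorem~\ref{thm:333bij} in the degree $3$ setting. A doubly symmetric hypercube lives in the subrepresentation $V_1 \tns V_2 \tns \Sym_2 V_3 \hookrightarrow V_1 \tns V_2 \tns V_3 \tns V_4$ under the identification $V_4 = V_3$, so every such element is in particular a nondegenerate hypercube once its discriminant (the degree $24$ invariant) is nonzero. Thus the geometric construction of \S\ref{sec:HCgeom} already applies: it produces a genus one curve $C = C_{12}$ together with four degree $2$ line bundles $L_1, L_2, L_3, L_4$ satisfying $L_1 \tns L_2 \cong L_3 \tns L_4$ and the nonisomorphism conditions of Theorem~\ref{thm:hypercube}.

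First I would observe that the $V_3$--$V_4$ symmetry of the tensor forces the two bundles $L_3$ and $L_4$ constructed from the symmetric third and fourth slicings to coincide. Indeed, the maps $\rho_{12}^{123}$ and $\rho_{12}^{124}$ defining $L_3$ and $L_4$ in \eqref{eq:C12linebundles} are interchanged by the involution of $\H_2(K)$ swapping the last two factors, and this involution fixes a doubly symmetric hypercube; hence $L_3 \cong L_4$ as bundles on $C_{12}$. Feeding $L_3 = L_4$ into the relation $L_1 \tns L_2 \cong L_3 \tns L_4$ of Lemma~\ref{lem:HCrelation} immediately gives $L_1 \tns L_2 \cong L_3^{\tns 2}$. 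The nonisomorphism of $L_3$ with $L_1$ or $L_2$ is inherited directly from Theorem~\ref{thm:hypercube}, which asserts all four $L_i$ are pairwise distinct (save possibly $L_3, L_4$). This establishes the forward map $\Phi$ from orbits to quadruples $(C, L_1, L_2, L_3)$ of the stated type, and it is $\GL(V_1) \times \GL(V_2) \times \GL(V_3)$-invariant because the ambient construction is $G$-invariant.

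For the reverse direction, given $(C, L_1, L_2, L_3)$ with $L_1 \tns L_2 \cong L_3^{\tns 2}$ and $L_3 \not\cong L_1, L_2$, I would apply the construction in the proof of Theorem~\ref{thm:hypercube} to the full quintuple $(C, L_1, L_2, L_3, L_3)$, setting $U_1 = H^0(C,L_1)$, $U_2 = H^0(C,L_2)$, $U_3 = H^0(C,L_3)$, and $V_4 = (\ker \mu_{123})^\vee$, where $\mu_{123}$ is the multiplication map of Lemma~\ref{lem:musurjective}. This recovers a nondegenerate hypercube. The essential new point is to check that this hypercube is genuinely doubly symmetric under the identification of the third and fourth factors, and here I would mimic the argument in the proof of Theorem~\ref{thm:2symRC}: the isomorphism $L_1 \tns L_2 \cong L_3^{\tns 2}$ induces a natural identification $\psi_{34}$ of the kernel $\ker \mu_{123}$ (which is identified with $H^0(C, L_1^{-1}\tns L_2 \tns L_3) \cong H^0(C, L_3)$ by Lemma~\ref{lem:musurjective}) with $U_3 = H^0(C,L_3)$, under which evaluation at each point $x \in C$ agrees on both factors, so that the trilinear-into-fourth-factor data factors through the symmetric square $\Sym_2 U_3$.

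The main obstacle I anticipate is the symmetry verification in the reverse direction: I must confirm that the identification $\ker \mu_{123} \cong H^0(C, L_3)$ supplied by $L_1 \tns L_2 \cong L_3^{\tns 2}$ is compatible, slice by slice, with the given $A$-structure so that the resulting element of $U_1 \tns U_2 \tns U_3 \tns U_4$ actually lands in $U_1 \tns U_2 \tns \Sym_2 U_3$ rather than merely having $L_3 \cong L_4$. This is the analogue of the pointwise evaluation argument $s_1(x)s_2(x) = s_2(x)s_1(x)$ used at the close of the proof of Theorem~\ref{thm:2symRC}, and I expect it to go through by the same basepoint-free pencil reasoning, but it requires care to track that the symmetrization is canonical and not merely up to the choice of an isomorphism. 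Once this is in place, the two functors $\Phi$ and its inverse are mutually inverse by restriction of the corresponding statement in Theorem~\ref{thm:hypercube}, completing the bijection.
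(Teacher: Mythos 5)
Your proposal follows essentially the same route as the paper: the forward direction reads off $L_3 \cong L_4$ from the symmetry of the construction in \S\ref{sec:HCgeom} and Lemma~\ref{lem:HCrelation}, and the reverse direction builds a hypercube from $(C,L_1,L_2,L_3,L_3)$ and then verifies that the identification of the third and fourth factors puts the tensor into $V_1 \tns V_2 \tns \Sym_2 V_3$ --- the paper does this last step by noting that a rank one element $B \in V \tns V$ with $B(x,\cdot) = B(\cdot,x) = 0$ lies in $\Sym_2 V$, applied to $\AA(x,y,\cdot,\cdot)$ for $(x,y)$ on the spanning curve $C_{12}$, which is exactly the pointwise-evaluation check you anticipate. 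One small correction: the identification of $\ker \mu_{123}$ with $H^0(C,L_3)$ should use the permutation $H^0(C, L_3^{-1} \tns L_1 \tns L_2) \cong H^0(C,L_3)$ supplied by $L_1 \tns L_2 \cong L_3^{\tns 2}$, not $H^0(C, L_1^{-1} \tns L_2 \tns L_3)$, whose underlying line bundle is isomorphic to $L_3$ only when $L_1 \cong L_2$.
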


\begin{proof}
Given a nondegenerate element $\AA \in V_1 \tns V_2 \tns \Sym_2V_3 \subset V_1 \tns V_2 \tns V_3 \tns V_3$, we construct the genus one curve $C \subset \PP(V_1^\vee) \times \PP(V_2^\vee)$ and four degree $2$ line bundles $L_1$, $L_2$, $L_3$, $L_4$ on $C$ in the same way as in \S \ref{sec:hypercube}.  The symmetry implies that the line bundles $L_3$ and $L_4$ may be naturally identified, so we have from before that $L_1 \tns L_2 \cong L_3^{\tns 2}$ and $L_3$ is not isomorphic to $L_1$ or $L_2$.

Conversely, given such $(C,L_1,L_2,L_3)$ as in the theorem, we may set $L_4 = L_3$ and use the construction in the proof of Theorem \ref{thm:hypercube} to obtain a nondegenerate orbit of a hypercube $\AA \in V_1 \tns V_2 \tns V_3 \tns V_4$, where $V_i = H^0(C,L_i)$ for $i = 1, 2, 3$ and $V_4$ is the dual of the kernel of the multiplication map $\mu_{123}$.  From the proof of Theorem \ref{thm:hypercube}, we see that $V_4$ is also identified with $H^0(C,L_4)$, so the spaces $V_3$ and $V_4$ are naturally identified, say by $\psi_{43}: V_4 \to V_3$.  The maps from $C \cong C_{12} \in \PP(V_1^\vee) \times \PP(V_2^\vee)$ to $\PP(V_3^\vee)$ and $\PP(V_4^\vee)$ are both given by sections of the same line bundle $L_3$ and thus identical (after applying the identification $\psi_{43}$).

Now given a rank one element $B \in V \tns V$ for a $2$-dimensional $K$-vector space $V$, if $B(x, \cdot) = 0$ and $B(\cdot, x) = 0$ for some nonzero $x \in V^\vee$, then $B$ is in the subspace $\Sym_2 V$ of $V \tns V$.  Therefore, for any $(x,y) \in C_{12}$, the map $1 \tns \psi_{43}$ on $V_3 \tns V_4$ takes $\AA(x,y,\cdot,\cdot)$ to an element of $\Sym_2 V_3 \subset V_3 \tns V_3$.  And since $C_{12}$ spans $\PP(H^0(C,L_1)^\vee) \times \PP(H^0(C,L_2)^\vee)$, the map $1 \tns 1 \tns 1 \tns \psi_{43}$ on $V_1 \tns V_2 \tns V_3 \tns V_4$ sends $\mathcal{A}$ to an element of $V_1 \tns V_2 \tns \Sym_2 V_3 \subset V_1 \tns V_2 \tns V_3 \tns V_3$. 
\end{proof}

The two points $P$ and $P'$ referred to in Corollary \ref{cor:HCbijP} are now related; in particular, we have $P = 2 P'$.  Therefore, it suffices to keep track of only a single point $P'$, giving the following basis-free formulation of Theorem \ref{doublesympar}:

\begin{cor} \label{cor:2symHCP}
  Let $V_1$, $V_2$, $V_3$ be $2$-dimensional vector spaces over $K$.  Then
  nondegenerate $\GL(V_1) \times \GL(V_2) \times \GL(V_3)$-orbits of $V_1
  \tns V_2 \tns \Sym_2V_3$ are in bijection with isomorphism classes
  of quadruples $(C,L,P')$, where $C$ is a genus one curve over
  $K$, and $L$ is a degree $2$ line bundle on $C$, and $P'$ is a nonzero, non-$2$-torsion point
  in $\Jac^2_C(K)$. 
\end{cor}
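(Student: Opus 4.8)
The plan is to read off the corollary from Theorem~\ref{thm:2symHCL} by translating the triple of line bundles into a single degree~$2$ line bundle and one point on the Jacobian, matching the relation $P = 2P'$ observed just before the statement. By Theorem~\ref{thm:2symHCL}, the nondegenerate orbits are already in bijection with isomorphism classes of $(C,L_1,L_2,L_3)$ satisfying $L_1 \tns L_2 \cong L_3^{\tns 2}$, with $L_3 \not\cong L_1$, $L_3 \not\cong L_2$, and (inherited from the hypercube case of Theorem~\ref{thm:hypercube} through which Theorem~\ref{thm:2symHCL} is proved) $L_1 \not\cong L_2$. It therefore suffices to exhibit a bijection, compatible with isomorphisms of $C$, between such quadruples and triples $(C,L,P')$ with $P'$ nonzero and non-$2$-torsion in $\Jac^2_C(K)$.

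First I would fix $L := L_1$ and $P' := L_3 \tns L_1^{-1} \in \Jac(C)(K)$. The relation $L_1 \tns L_2 \cong L_3^{\tns 2}$ then determines the other two bundles uniquely as $L_3 \cong L \tns P'$ and $L_2 \cong L \tns (P')^{\tns 2}$ (in additive notation for the action of $\Jac(C)$ on $\Pic^2(C)$ this reads $L_3 = L + P'$, $L_2 = L + 2P'$, which indeed gives $L_1 + L_2 = 2L_3$). This change of variables is invertible, with inverse $(C,L,P') \mapsto (C, L, L\tns(P')^{\tns 2}, L \tns P')$, and it commutes with pullback along any isomorphism $\sigma\colon C \to C'$ since pullback respects tensor products and inverses; hence it matches isomorphism classes. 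In the language of Corollary~\ref{cor:HCbijP}, this is precisely the collapse of the two independent points $P, P'$ into one: once $L_3 \cong L_4$, the relation forces $P = 2P'$, so $P$ carries no further information.

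Next I would convert the open conditions. Under the dictionary, $L_3 \not\cong L_1 \Leftrightarrow P' \neq 0$; $L_3 \not\cong L_2 \Leftrightarrow -P' \neq 0$, again $P' \neq 0$; and $L_1 \not\cong L_2 \Leftrightarrow 2P' \neq 0$, i.e.\ $P'$ is non-$2$-torsion. Thus the three non-isomorphism hypotheses are exactly ``$P'$ nonzero and non-$2$-torsion''. The remaining point is the membership $P' \in \Jac^2_C(K)$: in the forward direction $P' = L_3 \tns L_1^{-1}$ is a difference of two $K$-rational degree~$2$ line bundles, which lies in the degree~$2$ period-index subgroup by the same reasoning as in Corollary~\ref{cor:22formsCLP}; in the converse direction a point of $\Jac^2_C(K)$ is represented by a genuine $K$-rational degree~$0$ line bundle, so the translates $L \tns P'$ and $L \tns (P')^{\tns 2}$ are genuine degree~$2$ line bundles and the reconstructed triple satisfies the hypotheses of Theorem~\ref{thm:2symHCL}.

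I expect the period-index bookkeeping to be the only real obstacle: one must know both that a difference of degree~$2$ line bundles lands in $\Jac^2_C(K)$ and, for the converse, that every point of $\Jac^2_C(K)$ comes from an actual line bundle (so that $L_2$ and $L_3$ are representable rather than merely $K$-rational divisor classes). Granting the characterization of $\Jac^2_C(K)$ as the group of differences of $K$-rational degree~$2$ line bundles, established in Appendix~\ref{appendix:torsors}, the two directions agree and composing with Theorem~\ref{thm:2symHCL} gives the asserted bijection.
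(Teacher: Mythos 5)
Your proposal is correct and follows the same route as the paper: the paper derives Corollary~\ref{cor:2symHCP} from Theorem~\ref{thm:2symHCL} (equivalently, from Corollary~\ref{cor:HCbijP} restricted to the symmetric locus) by the single observation that $L_3\cong L_4$ forces $P=2P'$, so only $P'$ need be retained. You have merely spelled out the dictionary the paper leaves implicit --- the translation of the non-isomorphism conditions into ``$P'$ nonzero and non-$2$-torsion'' (correctly noting that $L_1\not\cong L_2$ comes from the nondegeneracy via Theorem~\ref{thm:hypercube} rather than from the statement of Theorem~\ref{thm:2symHCL}), and the period-index membership via Proposition~\ref{prop:periodindexsubgp} --- which matches the paper's intent exactly.
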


The ring of $\SL(V_1) \times \SL(V_2) \times \SL(V_3)$-invariants of $V_1 \tns V_2 \tns \Sym_2 V_3$ is a polynomial ring generated in degrees $2$, $4$, and $6$ \cite{littelmann}.  Again, we may find several related geometric interpretations of these invariants.

Recall that there is a choice of rational generators $a_2$, $a_4$, $a_4'$, $a_6$ for the invariant ring of hypercubes such that the Jacobian of the genus one curve is given by \eqref{eq:EC812coeffs}, where $a_8$ and $a_{12}$ are given as in \eqref{eq:a8a12} and the two points on the Jacobian are $P = (a_4, a_6)$ and $P' = (a_4', a_6')$.  For the doubly symmetric hypercube, as we know from Corollary \ref{cor:2symHCP}, because we have $P = 2P'$, we compute that $2 a_4' = 9 a_2^2 - a_4$.  The expressions for $a_8$, $a_{12}$, and the discriminant~$\Delta$ also simplify significantly, when written in terms of $a_2$, $a_4'$, $a_6'$:
	\begin{align}
	a_8 &= -3 a_4'^2 + 2 a_2 a_6', \nonumber \\
	a_{12} &= 2 a_4'^3 - 2 a_2 a_4' a_6' + a_6'^2,\nonumber \\ \label{discstar}
	\Delta &= -16 a_6'^2 (-36 a_2^2 a_4'^2 + 108 a_4'^3 + 32 a_2^3 a_6' - 108 a_2 a_4' a_6' + 27 a_6'^2).
	\end{align}
Note that because of the factorization of $\Delta$, the nondegeneracy condition that we require is now actually the nonvanishing of a degree $6 + 12 = 18$ invariant.

A little bit of algebra shows that the Jacobian is isomorphic to the elliptic curve
\begin{equation} \label{eq:Jacfor2symHC}
E: y^2 + 2 a_2 x y + 2 a_6' y = x^3 + (3 a_4' - a_2^2) x^2
\end{equation}
where the point $P'$ is now at $(x,y) = (0,0)$.

\begin{prop}
There is a choice of normalization for the relative invariants $b_2$, $b_4$, $b_6$ for the space of doubly symmetric hypercubes such that
given a nondegenerate element of $V_1 \tns V_2 \tns Sym_2 V_3$ corresponding to $(C,L,P')$ as in Corollary~$\ref{cor:2symHCP}$, the Jacobian of $C$ may be given in generalized Weierstrass form as
\begin{equation} \label{eq:2symHCJacnormal}
E: y^2 + b_2 x y + b_6 y = x^3 + b_4 x^2
\end{equation}
with the point $P'$ being $(x,y) = (0,0)$.  Furthermore, the map from nondegenerate $\GL(V_1) \times \GL(V_2) \times \GL(V_3)$-orbits of this
representation to elliptic curves of the form \eqref{eq:2symHCJacnormal}, given by taking the Jacobian of the associated elliptic curve, is
surjective.
\end{prop}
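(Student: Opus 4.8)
The plan is to follow the two-pronged strategy already used for Proposition~\ref{prop:333invthy} and Proposition~\ref{prop:HCinvsEC}: produce the normalized generators, exhibit the claimed Weierstrass model, and then verify surjectivity by a direct construction. First I would take the rational generators $a_2$, $a_4'$, $a_6'$ for the invariant ring of doubly symmetric hypercubes coming from Corollary~\ref{cor:2symHCP} and the relation $P = 2P'$, together with the simplified expressions for $a_8$, $a_{12}$, and $\Delta$ in \eqref{discstar}. Starting from the model \eqref{eq:Jacfor2symHC}, namely
\begin{equation*}
E: y^2 + 2 a_2 x y + 2 a_6' y = x^3 + (3 a_4' - a_2^2) x^2,
\end{equation*}
I would apply an admissible change of variables (of the Weierstrass type $x \mapsto u^2 x$, $y \mapsto u^3 y$, possibly combined with a translation that is forced to be trivial here since $P'=(0,0)$ must be preserved) to bring it into the shape \eqref{eq:2symHCJacnormal}. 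Matching coefficients then dictates the correct scaling weights and hence the normalized relative invariants $b_2$, $b_4$, $b_6$; one reads off $b_2 = 2a_2$, $b_4 = 3a_4' - a_2^2$, $b_6 = 2a_6'$ up to the chosen normalization, and checks these have degrees $2$, $4$, $6$ respectively. Since $a_2$, $a_4'$, $a_6'$ are relative $\GL(V_1)\times\GL(V_2)\times\GL(V_3)$-invariants scaling by powers of $\lambda$, the $b_i$ are as well, and $(0,0)$ is manifestly preserved as the point $P'$.

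For the invariant-theoretic content, the cleaner route—exactly as in the abstract argument for Proposition~\ref{prop:333invthy}—is to observe that any elliptic curve equipped with a distinguished non-identity, non-$2$-torsion rational point can be put in the generalized Weierstrass form \eqref{eq:2symHCJacnormal} with $(0,0)$ that point, for suitable $b_2, b_4, b_6 \in K$. Given a nondegenerate doubly symmetric hypercube, the associated pair $(\Jac(C), P')$ therefore determines such $b_2, b_4, b_6$, which are algebraic functions of the orbit data and scale by $\lambda^2$, $\lambda^4$, $\lambda^6$ under the $\Gm$-scaling induced by $\GL(V_i)$; hence they are forced to be relative invariants of the representation. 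This avoids grinding out the change-of-variables coefficient matching and simply invokes the universality of the normal form. I would remark that $P'$ being non-$2$-torsion (guaranteed by Corollary~\ref{cor:2symHCP}) is precisely what allows the coefficient of $y$ linear term together with the $xy$ term to be normalized as in \eqref{eq:2symHCJacnormal}.

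For surjectivity, the argument mirrors Corollary~\ref{cor:333surjorbits} and Corollary~\ref{cor:HCsurjorbits}: given any elliptic curve $E$ of the form \eqref{eq:2symHCJacnormal} over $K$, I would exhibit an explicit orbit mapping to it. Take $C$ to be the trivial torsor $E$ itself, let $P'$ be the point $(0,0)$, and set the degree $2$ line bundle $L := \OO(2O)$ where $O$ is the identity. Then via Corollary~\ref{cor:2symHCP} the triple $(E, \OO(2O), P')$ (with $P'$ non-$2$-torsion, which holds precisely because the model \eqref{eq:2symHCJacnormal} is nondegenerate) corresponds to a genuine nondegenerate orbit of $V_1 \tns V_2 \tns \Sym_2 V_3$, and by the invariant computation above this orbit has Jacobian isomorphic to $E$. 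Since every $E$ in the family arises this way, the Jacobian map on nondegenerate orbits is surjective.

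The main obstacle I anticipate is purely bookkeeping: pinning down the correct normalization constants so that the relative invariants $b_2, b_4, b_6$ transform with the clean weights $2$, $4$, $6$ and the model comes out exactly as \eqref{eq:2symHCJacnormal} rather than a scalar-twisted version. The abstract argument sidesteps the need to present these constants explicitly, but one must still confirm that $(0,0)$ remains the chosen point $P'$ throughout the normalization and that no additional $a_1$ or $a_3$-type term is forced by the non-$2$-torsion hypothesis. This is the same mild computational subtlety flagged in the remarks following Propositions~\ref{prop:333invthy} and~\ref{prop:HCinvsEC}, and I would dispatch it by the same appeal to the universal normal form for a curve with a marked non-identity point rather than by explicit coordinate manipulation.
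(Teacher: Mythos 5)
Your proposal is correct and matches the paper's (largely implicit) argument: the model \eqref{eq:2symHCJacnormal} is read off directly from \eqref{eq:Jacfor2symHC} by setting $b_2 = 2a_2$, $b_4 = 3a_4' - a_2^2$, $b_6 = 2a_6'$, and surjectivity follows by the same trivial-torsor construction used in Corollaries~\ref{cor:333surjorbits} and \ref{cor:HCsurjorbits}. Your observation that nondegeneracy forces $b_6 \neq 0$ and hence $(0,0)$ non-$2$-torsion is exactly the right check.
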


\subsubsection{Triply symmetric hypercubes} \label{sec:3symHC}

Next, we study the space of triply symmetric hypercubes.  We may use the same methods as before to obtain a parametrization of the $\GL(V_1) \times \GL(V_2)$-orbits of the space $V_1 \tns \Sym_3 V_2 \subset V_1 \tns V_2 \tns V_2 \tns V_2$, for $2$-dimensional $K$-vector spaces $V_1$ and $V_2$.  As $K$ does not have characteristic $2$ or $3$ by assumption, this space is isomorphic to the quotient space $V_1 \tns \Sym^3 V_2$, and it may also be thought of as pairs of binary cubic forms.  The following is a basis-free version of Theorem \ref{triplesympar}:

\begin{thm} \label{thm:3symHCL}
  Let $V_1$ and $V_2$ be $2$-dimensional vector spaces over $K$.  Then
  nondegenerate $\GL(V_1) \times \GL(V_2)$-orbits of $V_1 \tns \Sym_3V_2$
  are in bijection with isomorphism classes of triples $(C,L,P)$,
  where $C$ is a genus one curve over $K$, $L$ is a degree $2$ line bundle on $C$,
  and $P$ is a nonzero $3$-torsion point of $\Jac(C)(K)$.
\end{thm}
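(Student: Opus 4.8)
The plan is to specialize the hypercube parametrization (Theorem~\ref{thm:hypercube}, equivalently Corollary~\ref{cor:HCbijP}) to the triply symmetric locus $V_1 \tns \Sym_3 V_2 \subset V_1 \tns V_2 \tns V_2 \tns V_2$, exactly as was done for doubly symmetric hypercubes in the proof of Theorem~\ref{thm:2symHCL}, but now imposing symmetry in the last three factors rather than just two. First I would take a nondegenerate element $\AA \in V_1 \tns \Sym_3 V_2$, view it inside $V_1 \tns V_2 \tns V_2 \tns V_2$, and run the construction of \S\ref{sec:hypercube} to produce the genus one curve $C$ together with the four degree $2$ line bundles $L_1, L_2, L_3, L_4$ satisfying $L_1 \tns L_2 \cong L_3 \tns L_4$. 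The threefold symmetry (in the factors $V_2, V_3, V_4$) forces the natural identifications $L_2 \cong L_3 \cong L_4$, so writing $L = L_1$ and $P = L_2 \tns L_1^{-1}$, the relation $L_1 \tns L_2 \cong L_3 \tns L_4$ becomes $L_1 \tns L_2 \cong L_2^{\tns 2}$, i.e.\ $L_2 \cong L_1 \tns P$ and also $L_1 = L_2 \tns P^{-1}$ gives $3P = 0$; since $L_1 \not\cong L_2$, the point $P$ is a nonzero $3$-torsion point of $\Jac(C)(K)$. This shows the forward map lands in the claimed data $(C,L,P)$.

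For the reverse direction I would mimic the proof of Theorem~\ref{thm:2symHCL} and especially Theorem~\ref{thm:3symRC}: given $(C,L,P)$ with $P$ a nonzero $3$-torsion point, set $L_1 = L$, $L_2 = L_3 = L_4 = L \tns P$, and invoke the construction of Theorem~\ref{thm:hypercube} to build a nondegenerate hypercube $\AA \in H^0(C,L_1) \tns H^0(C,L_2) \tns H^0(C,L_3) \tns (\ker \mu_{123})^\vee$. The key point is to exhibit canonical identifications of all three ``symmetric'' spaces of sections and verify that $\AA$ actually lies in the symmetric subspace. As in the proof of Theorem~\ref{thm:2symHCL}, the kernel $(\ker \mu_{123})^\vee$ is naturally identified with $H^0(C,L_4) = H^0(C,L\tns P)$, so $V_2 \cong V_3 \cong V_4$; the identifications arise because the $3$-torsion translation $P$ induces a linear automorphism of $\PP(H^0(C,L\tns P)^\vee)$ preserving the image of $C$, exactly as $2$-torsion did in Theorem~\ref{thm:2symRC} and $2$-torsion did in the doubly symmetric hypercube case. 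Choosing correspondingly identified bases, I would then check, using the pointwise evaluation argument (that a rank one tensor $B \in V \tns V \tns V$ killed appropriately on $C$ lands in $\Sym_3 V$), that $\AA$ is fixed by the $S_3$-symmetrization in the last three factors.

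The main obstacle will be the final symmetrization check: verifying that the reconstructed hypercube genuinely lies in $\Sym_3 V_2$ and not merely in a pairwise-symmetric subspace. In the doubly symmetric case one only needed a rank one element $B \in V \tns V$ with $B(x,\cdot) = B(\cdot,x) = 0$ to conclude $B \in \Sym_2 V$; here I need the analogous but stronger statement that a rank one tensor in $V_2 \tns V_2 \tns V_2$ whose three ``slices'' all vanish on the relevant kernel directions lies in $\Sym_3 V_2$. The subtlety is that three $S_2$-symmetries do not automatically assemble into full $S_3$-symmetry at the level of individual tensors, so I would need to use the fact that $C_{12}$ spans $\PP(H^0(C,L_1)^\vee) \times \PP(H^0(C,L_2)^\vee)$ together with the compatibility of the three pairwise identifications $\psi_{ij}$ (which all come from the single translation-by-$P$ automorphism) to force genuine threefold symmetry. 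Once this is established, the two constructions are inverse to one another by the same bookkeeping as in Theorem~\ref{thm:hypercube}, and the bijection follows.

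Finally I would remark that, away from characteristic $2$ and $3$, the subspace $V_1 \tns \Sym_3 V_2$ is isomorphic to the quotient $V_1 \tns \Sym^3 V_2$ of pairs of binary cubic forms, so the theorem indeed provides the stated basis-free version of Theorem~\ref{triplesympar}, with the nondegenerate $\GL(V_1) \times \GL(V_2)$-orbits in canonical bijection with isomorphism classes of triples $(C,L,P)$ where $P$ is a nonzero $3$-torsion point on $\Jac(C)$.
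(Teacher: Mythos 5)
Your overall strategy (specialize Theorem \ref{thm:hypercube} to the symmetric locus, as in the doubly symmetric case) is indeed the paper's, but the central step is wrong, and it is precisely the pitfall the paper's own proof flags. The threefold symmetry does \emph{not} force $L_2 \cong L_3 \cong L_4$: for any nondegenerate hypercube, symmetric or not, one has $L_2 \not\cong L_3$ and $L_2 \not\cong L_4$ (this is part of the statement of Theorem \ref{thm:hypercube}). The reason is that the four bundles are defined asymmetrically on the chosen base curve $C_{12}$: $L_2$ is pulled back directly from the projection $C_{12} \to \PP(V_2^\vee)$, while $L_3$ and $L_4$ are pulled back through un-projection maps, so the symmetry swapping factors $2$ and $3$ carries $C_{12}$ to $C_{13}$ rather than acting trivially. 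Your algebra also does not cohere: from $L_2 \cong L_3 \cong L_4$ and $L_1 \tns L_2 \cong L_3 \tns L_4$ you would get $L_1 \cong L_2$, i.e.\ $P = 0$, a contradiction, and the chain ``$L_2 \cong L_1 \tns P$ and $L_1 = L_2 \tns P^{-1}$ gives $3P=0$'' derives nothing, being just the definition of $P$ twice. What the symmetry actually gives is $L_3 \cong L_4$ together with the extra relation $L_2 \tns L_3 \cong L_1^{\tns 2}$, because swapping the second and third factors induces the hyperelliptic involution of $C_{123}$ over $\PP(V_1^\vee)$; combined with $L_1 \tns L_2 \cong L_3^{\tns 2}$ this is what yields $3P = 0$ for $P = L_2 \tns L_1^{-1}$.

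The same error propagates into your reverse construction: the correct assignment is $L_1 = L$, $L_2 = L \tns P$, $L_3 = L_4 = L \tns P^{\tns 2}$, which satisfies $L_1 \tns L_2 \cong L_3 \tns L_4$ precisely because $3P = 0$ and keeps all the required non-isomorphisms. With your choice $L_2 = L_3 = L_4 = L \tns P$, the relation $L_1 \tns L_2 \cong L_3 \tns L_4$ fails (it would force $P = 0$), and Lemma \ref{lem:musurjective} would make the fourth bundle $L_1 \tns L_2 \tns L_3^{-1} \cong L_1$, violating the hypotheses of Theorem \ref{thm:hypercube}; no nondegenerate hypercube of the required type is produced. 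Finally, the symmetrization obstacle you flag is real, but the paper resolves it not by compatibility of pairwise identifications on $C_{12}$ but by changing base curve: after securing $(34)$-invariance as in Theorem \ref{thm:2symHCL}, one reconstructs the same hypercube from $\mu_{124}$ based at $C_{14}$, where the pullbacks of $\OO_{\PP(V_2^\vee)}(1)$ and $\OO_{\PP(V_3^\vee)}(1)$ both become $L_1 \tns P$, yielding $(23)$-invariance; the two transpositions then generate the full symmetric group on the last three factors.
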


\begin{proof}
From a nondegenerate element of $V_1 \tns \Sym_3V_2 \subset V_1 \tns V_2 \tns V_2 \tns V_2$, we obtain a genus one curve $C$ and four line bundles $L_1$, $L_2$, $L_3$, $L_4$ such that $L_1 \tns L_2 \cong L_3 \tns L_4$, just as for the usual hypercube.  For ease of exposition, we will sometimes refer to the second and third copies of $V_2$ as $V_3$ and $V_4$, respectively.  The symmetry clearly indicates that $L_3$ and $L_4$ are isomorphic.  While it is tempting to conclude that the symmetry also implies that $L_2$ is isomorphic to $L_3$ and $L_4$, recall that this cannot be so even in the usual hypercube case!  We instead have that the map from the curve $C$ to $\PP(V_1^\vee) \times \PP(V_2^\vee) \times \PP(V_3^\vee)$ is invariant under switching the latter two factors (where $V_2 = V_3$), so there must be an isomorphism $L_2 \tns L_3 \cong L_1^{\tns 2}$.  Thus, combining these relations and setting $P := L_2 \tns L_1^{-1}$ as a point on $\Jac(C)$, we have that $3P = 0$.  Note that because $2 \Jac(C)(K) \subset \Jac_C^2(K)$, all $3$-torsion points are in the degree $2$ period-index subgroup.  Therefore, from a nondegenerate triply symmetric hypercube, we obtain a genus one curve, a degree $2$ line bundle $L_1$, and a nonzero $3$-torsion point in $\Jac(C)(K)$.

Conversely, given such a triple $(C,L,P)$, we claim that we may construct a triply symmetric hypercube.  We may define the line bundles $L_1 = L$, $L_2 = L \tns P$ and $L_3 = L_4 = L \tns P \tns P$, and the usual construction produces a nondegenerate hypercube $\AA \in V_1 \tns V_2 \tns V_3 \tns V_4$ from $(C,L_1,L_2,L_3,L_4)$, where $V_1 = H^0(C,L_1)$, $V_2 = H^0(C,L_2)$, $V_3 = H^0(C,L_3)$, and $V_4$ is dual to the kernel of the multiplication map $\mu_{123}$.  By the same argument as in Theorem \ref{thm:2symHCL}, we may choose an appropriate identification of $V_3$ and $V_4$ such that $\AA$ lies in $V_1 \tns V_2 \tns \Sym_2 V_3$; in other words, $\AA$ is invariant under the transposition $(34)$ acting on the indices of the vector spaces $V_i$ for $i = 1$, $2$, $3$, $4$.

In fact, we may identify $V_2$ and $V_3$ as well.  Our $\AA$ gives rise again to a genus one curve isomorphic to $C$, but we may choose different line bundles to reconstruct the hypercube.  That is, by focusing on $C \hookrightarrow C_{14} \in \PP(V_1^\vee) \times \PP(V_4^\vee)$, we have line bundles $L_1$ and $L_4$, as before, which are the pullbacks of $\OO_{\PP(V_1^\vee)}(1)$ and $\OO_{\PP(V_4^\vee)}(1)$, respectively, to $C_{14}$.  The pullbacks of $\OO_{\PP(V_2^\vee)}(1)$ and $\OO_{\PP(V_3^\vee)}(1)$ to $C_{14}$ via $\rho_{14}^{124}$ and $\rho_{14}^{134}$, respectively, are now both isomorphic to $L_1 \tns P$.  Using the multiplication map $\mu_{124}$ to reconstruct the same hypercube $\AA$ gives a natural identification of $V_2$ and $V_3$ where the maps from $C_{14}$ to $\PP(V_2^\vee)$ and to $\PP(V_3^\vee)$ are identical.  Thus, we obtain an identification of $V_2$ and~$V_3$ such that the hypercube $\AA$ remains invariant under the transposition $(23)$.

Therefore, because $\AA$ is fixed under the transpositions $(23)$ and $(34)$, it is a triply symmetric hypercube in $V_1 \tns \Sym_3 V_2$, as desired.
\end{proof}

The $\SL(V_1) \times \SL(V_2)$-invariants for the space $V_1 \tns \Sym_3 V_2$ form a polynomial ring, generated by two polynomials $a_2$ and $a_6$ of degrees $2$ and $6$, respectively.  We may use our understanding of the invariant theory of normal hypercubes and of doubly symmetric hypercubes to explain these invariants geometrically.  In particular, the two degree $4$ invariants for hypercubes (and the one for doubly symmetric hypercubes) are now just $a_2^2/3$.  Substituting this relation into (\ref{discstar}) then gives that the Jacobian of the associated genus one curve $C$ has discriminant 
$$\Delta = 16 (4 a_2^3 - 27 a_6) a_6^3.$$
Thus, the condition that a triply symmetric hypercube is nondegenerate is given by the nonvanishing of a polynomial of degree $6 + 6 = 12$.
The Jacobian of $C$ may also be written in the form
$$E: y^2 + 2 a_2 x y + 2 a_6 y= x^3,$$
where $P$ is the $3$-torsion point at $(x,y) = (0,0)$.

\begin{prop}
There is a choice of normalization for the relative invariants $b_2$ and $b_6$ for the space of triply symmetric hypercubes such that
given a nondegenerate element of $V_1 \tns \Sym_3 V_2$ corresponding to $(C,L,P)$ as in Theorem $\ref{thm:3symHCL}$, the Jacobian
of $C$ may be given in generalized Weierstrass form as
\begin{equation} \label{eq:3symHCJacnormal}
E: y^2 + b_2 x y + b_6 y = x^3
\end{equation}
with the $3$-torsion point $P$ being $(x,y) = (0,0)$.  Furthermore, the map from nondegenerate $\GL(V_1) \times \GL(V_2)$-orbits of this
representation to elliptic curves of the form \eqref{eq:3symHCJacnormal}, given by taking the Jacobian of the associated elliptic curve, is
surjective.
\end{prop}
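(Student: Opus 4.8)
The plan is to prove the final Proposition by leveraging the already-established invariant theory for triply symmetric hypercubes together with the two abstract facts that carried the analogous Propositions for ordinary and doubly symmetric hypercubes. First I would recall that, by Theorem~\ref{thm:3symHCL}, a nondegenerate element of $V_1 \tns \Sym_3 V_2$ corresponds to a triple $(C,L,P)$ with $P$ a nonzero $3$-torsion point on $\Jac(C)$, and that the $\SL(V_1) \times \SL(V_2)$-invariant ring is freely generated by $a_2$ and $a_6$ in degrees $2$ and $6$. The text has already computed that the Jacobian can be written as $E: y^2 + 2 a_2 x y + 2 a_6 y = x^3$ with $P = (0,0)$; so the content of the Proposition is simply to recognize this as the assertion that \emph{every} elliptic curve over $K$ equipped with a nonzero rational $3$-torsion point arises this way, and to repackage the invariants as relative invariants $b_2, b_6$ scaled appropriately.

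The key abstract step, mirroring the proof strategy of Propositions~\ref{prop:333invthy} and~\ref{prop:HCinvsEC}, is the classical normal form: any elliptic curve $E/K$ with a distinguished nonzero rational $3$-torsion point $P$ may be placed in the Tate normal form $y^2 + b_2 x y + b_6 y = x^3$ with $P = (0,0)$, and this is exactly the condition that $P$ be $3$-torsion (the $3$-torsion condition forces the $a_2$-coefficient in the generalized Weierstrass equation to vanish, as one checks from the duplication/tripling formula, leaving precisely the two parameters $b_2$ and $b_6$). Consequently, given a triply symmetric hypercube with its data $(C,L,P)$, the coefficients $b_2, b_6$ of this normal form for $\Jac(C)$ are algebraic functions of the orbit; since a change of basis by $\lambda \in K^*$ scales the generators $a_2, a_6$ by $\lambda^2, \lambda^6$ respectively, the normal-form coefficients $b_2, b_6$ are scaled by the corresponding powers and hence are relative $\GL(V_1) \times \GL(V_2)$-invariants of degrees $2$ and $6$. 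This pins down the required normalization: one sets $b_2, b_6$ to be the rational multiples of $a_2, a_6$ (determined by comparing $y^2 + 2a_2 xy + 2a_6 y = x^3$ with $y^2 + b_2 xy + b_6 y = x^3$, so $b_2 = 2a_2$ and $b_6 = 2a_6$ up to the chosen scaling) that realize the Jacobian in the stated form with $P = (0,0)$.

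For the surjectivity claim I would argue exactly as in Corollary~\ref{cor:333surjorbits} and the analogous statement following Proposition~\ref{prop:333invthy}: given any $b_2, b_6 \in K$ defining a nondegenerate curve $E: y^2 + b_2 x y + b_6 y = x^3$ (nondegeneracy being the nonvanishing of the degree $12$ quantity, since the discriminant factors as $\Delta = 16(4a_2^3 - 27 a_6) a_6^3$ and the $a_6^3$ factor is the extraneous one), the point $P = (0,0)$ is a nonzero rational $3$-torsion point on $E$. Taking $C$ to be the trivial torsor $E$ itself, with $L = \OO(2O)$ and the $3$-torsion point $P$, Theorem~\ref{thm:3symHCL} produces a nondegenerate $\GL(V_1) \times \GL(V_2)$-orbit of $V_1 \tns \Sym_3 V_2$ whose associated triple is exactly $(C,L,P)$ and whose Jacobian is $E$. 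Hence every curve of the form \eqref{eq:3symHCJacnormal} is hit, proving surjectivity.

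The main obstacle is not conceptual but bookkeeping: one must verify that the $3$-torsion normal form genuinely has only the two free parameters $b_2, b_6$ and that the passage from the ``$2a_2, 2a_6$'' presentation already derived in the text to the normalized $b_2, b_6$ preserves the homogeneity degrees under the $K^*$-scaling action. I would handle this by directly checking that $P=(0,0)$ satisfies $3P = O$ on $y^2 + b_2 xy + b_6 y = x^3$ (equivalently, that the tangent line at $P$ meets $E$ with the correct intersection multiplicity), which is a short explicit computation with the group law that simultaneously confirms both the normal form and the absence of any $x^2$-term; I would state this verification rather than grind through it, since it is entirely parallel to the already-cited computations for Propositions~\ref{prop:333invthy} and~\ref{prop:HCinvsEC}.
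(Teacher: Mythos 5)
Your proposal is correct and follows essentially the same route as the paper: the explicit form $y^2+2a_2xy+2a_6y=x^3$ with $P=(0,0)$ obtained by specializing the doubly-symmetric invariants, the abstract Tate-normal-form argument (as in Propositions~\ref{prop:333invthy} and~\ref{prop:HCinvsEC}) to see that $b_2,b_6$ are relative invariants of degrees $2$ and $6$, and surjectivity via the trivial torsor $(E,\OO(2O),(0,0))$ exactly as in Corollary~\ref{cor:333surjorbits}. One minor imprecision: calling the $a_6^3$ factor of $\Delta=16(4a_2^3-27a_6)a_6^3$ ``extraneous'' is misleading, since the curve $y^2+b_2xy+b_6y=x^3$ is singular at $(0,0)$ when $b_6=0$, so both factors of the reduced degree-$12$ discriminant are genuinely required for nondegeneracy.
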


\subsubsection{Doubly doubly symmetric hypercubes, or bidegree \texorpdfstring{$(2,2)$}{(2,2)} forms again} \label{sec:22symHC}

We now study the subrepresentation $\Sym_2V_1 \tns \Sym_2V_2 \subset V_1 \tns V_1 \tns V_2 \tns V_2$ of $\GL(V_1) \times \GL(V_2)$, where $V_1$ and $V_2$ are $2$-dimensional $K$-vector spaces.  We call these doubly doubly symmetric hypercubes.  Away from characteristic $2$, this space is isomorphic to the representation $\Sym^2V_1 \times \Sym^2V_2$ of bidegree $(2,2)$ forms, which we examined in \S \ref{sec:bideg22forms} with a different interpretation.  

\begin{thm} \label{thm:22symHC}
Let $V_1$ and $V_2$ be $2$-dimensional vector spaces over $K$.  Then nondegenerate $\GL(V_1) \times \GL(V_2)$-orbits of $\Sym_2V_1 \times \Sym_2V_2$ are in bijection with isomorphism classes of triples $(C,L,P)$, where $C$ is a genus one curve over $K$, $L$ is a degree $2$ line bundle on $C$, and $P$ is a nonzero non-$2$-torsion point on $\Jac(C)(K)$.
\end{thm}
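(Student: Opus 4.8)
The plan is to treat a doubly-doubly symmetric hypercube as an ordinary hypercube carrying two commuting symmetries and to read off the geometric data by specializing the bijection of Theorem~\ref{thm:hypercube}, exactly as in the proofs of Theorems~\ref{thm:2symHCL} and~\ref{thm:3symHCL}. Write $\Sym_2 V_1 \tns \Sym_2 V_2 \subset V_1 \tns V_1 \tns V_2 \tns V_2$, labelling the four tensor factors $1,2,3,4$ so that factors $1,2$ equal $V_1$ and factors $3,4$ equal $V_2$; a doubly-doubly symmetric hypercube is precisely one invariant under the transpositions $(12)$ and $(34)$. Such an element is in particular invariant under $(34)$, hence lies in the doubly symmetric space $V_1 \tns V_1 \tns \Sym_2 V_2$, and Theorem~\ref{thm:2symHCL} (taking the primary curve $C:=C_{12}\subset\PP(V_1^\vee)\times\PP(V_1^\vee)$) produces a genus one curve $C$ with degree~$2$ line bundles $L_1,L_2,L_3,L_4$ on $C$ satisfying $L_3\cong L_4$ and $L_1\tns L_2\cong L_3^{\tns 2}$, where $L_1,L_2$ come from the two projections to $\PP(V_1^\vee)$ and $L_3$ from the factor-$3$ direction. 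Setting $L:=L_1$ and $P:=L_3\tns L_1^{-1}\in\Jac(C)(K)$, nondegeneracy forces $L_1,L_2,L_3$ to be pairwise non-isomorphic, so $P$ is a nonzero, non-$2$-torsion point; as a difference of degree~$2$ line bundles it lies in $\Jac_C^2(K)$ (Appendix~\ref{appendix:torsors}).

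Next I would account for the remaining symmetry $(12)$. Invariance under the coordinate swap of the two $\PP(V_1^\vee)$ factors makes the $(2,2)$ form cutting out $C_{12}$ symmetric, so this swap restricts to an involution $\sigma$ of $C$ with $\sigma^\ast L_1\cong L_2$ and $\sigma^\ast L_3\cong L_3$. Since $P$ is non-$2$-torsion, $\sigma$ cannot be a translation, so it acts as $-1$ on $\Pic^0(C)$, i.e. as a reflection of $\Pic^2(C)$ about a square root of $L_1\tns L_2$. As we already know $L_1\tns L_2\cong L_3^{\tns 2}$ from the $(34)$ symmetry, this is consistent with $\sigma$ being the hyperelliptic involution $\iota_3$ attached to $L_3$, and invariance under $(12)$ imposes no further condition on $(C,L,P)$. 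This yields a well-defined map from nondegenerate orbits to triples $(C,L,P)$ with $P$ nonzero and non-$2$-torsion, which is injective because it factors through the bijection of Theorem~\ref{thm:hypercube}.

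For the converse, given $(C,L,P)$ with $P$ nonzero and non-$2$-torsion, I would set $L_1:=L$, $L_2:=L\tns P^{\tns 2}$, and $L_3:=L_4:=L\tns P$; these are degree~$2$ bundles with $L_1\tns L_2\cong L_3\tns L_4$ that are suitably pairwise distinct, so Theorem~\ref{thm:hypercube} yields a nondegenerate hypercube $\AA$ with $V_i=H^0(C,L_i)$ and $V_4\cong H^0(C,L_4)$, and the argument of Theorem~\ref{thm:2symHCL} identifies $V_3\cong V_4$ so that $\AA\in V_1\tns V_2\tns\Sym_2 V_3$. To promote $\AA$ into $\Sym_2 V_1\tns\Sym_2 V_2$ I would use the hyperelliptic involution $\iota_3$: since $\iota_3^\ast L_1\cong L_2$ and $\iota_3$ fixes $L_3\cong L_4$, it induces an isomorphism $\psi\colon V_1=H^0(C,L_1)\xrightarrow{\sim}H^0(C,L_2)=V_2$ under which the two maps $C\to\PP(V_1^\vee)$ and $C\to\PP(V_2^\vee)$ are interchanged while the factor-$3$ map is preserved. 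Transporting $\AA$ along $\psi$ and running the rank-one argument used in Theorems~\ref{thm:2symHCL} and~\ref{thm:2symRC} should then show that $\AA$ is also fixed by the transposition $(12)$, hence lies in $\Sym_2 V_1\tns\Sym_2 V_2$. The two constructions are mutually inverse by the uniqueness in Theorem~\ref{thm:hypercube}, which completes the bijection.

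The hard part will be the $(12)$-symmetrization in the converse. In the $(34)$ direction the two symmetrized factors carry the \emph{same} bundle $L_3\cong L_4$, so $V_3$ and $V_4$ are matched as sections of a common bundle; but here $L_1\not\cong L_2$, so $V_1$ and $V_2$ cannot be identified naively. The resolution is to realize the transposition through the nontranslation involution $\iota_3$ rather than through an isomorphism of bundles, and then to verify carefully that the transported hypercube is genuinely invariant under swapping the first two factors. This is the analogue of — though more delicate than — the second identification step in the proof of Theorem~\ref{thm:3symHCL}, where a torsion translation sufficed; the subtlety is that the relevant automorphism of $C$ is now hyperelliptic, so one must track its action on $H^0(C,L_1)$ and establish the equivariance of the embedding of $C$ into $\PP(V_1^\vee)\times\PP(V_2^\vee)\times\PP(V_3^\vee)$ under $\iota_3$.
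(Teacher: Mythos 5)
Your proposal is correct and follows essentially the same route as the paper: the forward direction restricts Corollary \ref{cor:2symHCP}, and the converse builds a hypercube from the bundles $L$, $L\tns P^{\tns 2}$, $L\tns P$, $L\tns P$, symmetrizes the last two factors as in Theorem \ref{thm:2symHCL}, and then symmetrizes the first two via a canonical identification $H^0(C,L_1)\cong H^0(C,L_2)$. The identification the paper uses --- the composition $C_{123}\to C_{13}\to C_{134}\to C_{34}\to C_{234}\to \PP(U_2^\vee)$ --- is, by Lemma \ref{lem:threecycle}, precisely the standard projection $\pi_{123}^{2}$ precomposed with the hyperelliptic involution $\iota_3$, so your $\iota_3$-based identification is the same construction in different clothing.
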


This statement is a basis-free version of Theorem \ref{doubledoublesympar}.  Note that the moduli problem for doubly doubly symmetric hypercubes is identical to that for doubly symmetric hypercubes!

\begin{proof}
Starting from an element of $\Sym_2V_1 \tns \Sym_2 V_2 \subset V_1 \tns V_1 \tns \Sym_2V_2$,
Corollary~\ref{cor:2symHCP} gives the triple $(C,L,P)$ as desired.

Conversely, given such $(C,L,P)$, recall from the proof of Theorem \ref{thm:2symHCL} that we may construct a hypercube $\AA \in U_1 \tns U_2 \tns \Sym_2U_3$, where $U_i := H^0(C,L_i)$ for
\begin{align*}
L_1 := L && L_2 := L \tns P \tns P && L_3 := L \tns P.
\end{align*}
From this hypercube $\AA$, we may create the usual curves $C_{ijk}$ in $\Pone \times \Pone \times \Pone$; for example, we have $C_{123} \subset \PP(U_1^\vee) \times \PP(U_2^\vee) \times \PP(U_3^\vee)$ with $L_i$ the pullback of $\OO_{\PP(U_i^\vee)}(1)$ to $C_{123}$.  Under the composition
$$C_{123} \to C_{13} \to C_{134} \to C_{34} \to C_{234} \to \PP(U_2^\vee),$$ 
the line bundle $\OO_{\PP(U_2^\vee)}(1)$ pulled back to $C_{123}$ is isomorphic to $L_1$ (using the relations of the form in Lemma \ref{lem:HCrelation}).  In particular, this gives a natural identification of the vector spaces $U_1 = H^0(C,L_1)$ with $U_2$!  Moreover, the two maps
\begin{align*}
\pi_{134}^1 \circ \rho_{34}^{134}: C_{34} \to C_{134} \to \PP(U_1^\vee) \\
\pi_{234}^2 \circ \rho_{34}^{234}: C_{34} \to C_{234} \to \PP(U_2^\vee)
\end{align*}
are the same after the identification of $U_1$ and $U_2$.  Therefore, there exists a choice of basis for $U_1$ and for $U_2$ such that $\AA$ is actually
invariant when switching the first and second factor; in other words, it is in the orbit of an element in $\Sym_2U_1 \tns \Sym_2U_3$.
\end{proof}

There also exists a straightforward computational proof, by exhibiting a linear transformation in $\GL(V_1)$ taking an element of $V_1 \tns V_1 \tns \Sym_2V_2$ to an element of $\Sym_2V_1 \tns \Sym_2V_2$.  This linear transformation has entries that are degree $3$ in the coefficients of the
original element; its determinant is nonzero for nondegenerate doubly symmetric hypercubes, as it is exactly the degree~$6$ invariant~$a_6'$ from \S \ref{sec:2symHC}, which appears as a factor of the discriminant for doubly symmetric hypercubes.  If we choose bases $\{u_1,u_2\}$ and $\{v_1,v_2\}$ for $V_1$ and $V_2$, respectively, we may represent a doubly symmetric hypercube as 
\begin{equation} \label{eq:2symHCex}
\sum_{i,j=1}^2 (r_{ij} v_1^2 + s_{ij} v_1 v_2 + t_{ij} v_2^2) u_i u_j.
\end{equation}
Then the linear transformation $(\begin{smallmatrix} a & b \\ c & d \end{smallmatrix}) \in \GL(V_1)$ with
\begin{align*}
a &= -r_{22} s_{21} t_{11} + r_{21} s_{22} t_{11} + r_{22} s_{11} t_{21} - r_{11} s_{22} t_{21} - r_{21} s_{11} t_{22} + r_{11} s_{21} t_{22} \\
b &= -r_{21} s_{12} t_{11} + r_{12} s_{21} t_{11} + r_{21} s_{11} t_{12} - r_{11} s_{21} t_{12} - r_{12} s_{11} t_{21} + r_{11} s_{12} t_{21} \\
c &= -r_{22} s_{21} t_{12} + r_{21} s_{22} t_{12} + r_{22} s_{12} t_{21} - r_{12} s_{22} t_{21} - r_{21} s_{12} t_{22} + r_{12} s_{21} t_{22} \\
d &= -r_{22} s_{12} t_{11} + r_{12} s_{22} t_{11} + r_{22} s_{11} t_{12} - r_{11} s_{22} t_{12} - r_{12} s_{11} t_{22} + r_{11} s_{12} t_{22}
\end{align*}
will send the doubly symmetric hypercube \eqref{eq:2symHCex} to a doubly doubly symmetric hypercube.

Note that this moduli interpretation for the orbits is very similar to
the one in Corollary \ref{cor:22formsCLP}.  However, the curve $X$
obtained in that corollary (and the previous Theorem
\ref{thm:22curvesbij}) has discriminant of degree $12$.  The curve $C$
here, from Theorem \ref{thm:22symHC}, has discriminant of degree $24$,
so they are clearly not the same --- but they are closely related.  In
particular, the curve $C$ is the (generalized) Hessian of the curve
$X$!  Here, we define the {\em Hessian} of a bidegree $(2,2)$ form
$f(w_1,w_2,x_1,x_2)$ in $\Sym^2 V_1 \tns \Sym^2 V_2$ (and by abuse of
terminology, the Hessian of the corresponding curve) as the curve cut
out by the determinant of the matrix
$$\left(\frac{\partial^2 f}{\partial w_i \partial x_j}\right)_{1 \leq i,j \leq 2}$$
which is also a bidegree $(2,2)$ form on $\Sym^2 V_1 \tns \Sym^2 V_2$.
It is a small computation to check that the genus one curve $C$ is the
Hessian of $X$.

The $\SL(V_1) \times \SL(V_2)$-invariant ring of $\Sym_2V_1 \times
\Sym_2V_2$ is generated by three invariants $\delta_2$, $\delta_3$,
$\delta_4$ of degrees $2$, $3$, and $4$, respectively, as we know from
\S \ref{sec:bideg22invtheory}.  We may apply our understanding of
the invariants in the doubly symmetric hypercube case; in particular,
we find that
\begin{align*}
\delta_2 &= - \frac{8}{3} a_2, &
\delta_3^2 &= \frac{4}{27}a_6', &
\delta_4 &= \frac{64}{9} a_2^2 - 16 a_4',
\end{align*}
where $a_2$, $a_4'$, and $a_6'$ are the polynomials from \S \ref{sec:2symHC}.  We can substitute these formulas into
\eqref{eq:Jacfor2symHC} to obtain a formula for the Jacobian of the
curve $C$ arising from an element $f$ of $\Sym_2V_1 \tns
\Sym_2V_2$ via Theorem \ref{thm:22symHC}.  Thus, there are
rational generators $b_2$, $b_3$, $b_4$ for the invariants such that
$\Jac(C)$ may be written in generalized Weierstrass form as
\begin{equation} \label{eq:22symHCJac}
y^2 + b_2 x y + 6 b_3^2 y = x^3 + b_4 x^2,
\end{equation}
and the point $P$ is at $(x,y) = (0,0)$.  The discriminant of the
Jacobian of $C$ factors as a rational multiple of
$$\delta_3^4 (-64 a_2^2 a_4'^2 + 192 a_4'^3 + 384 a_2^3 \delta_3^2 - 1296 a_2 a_4' \delta_3^2 + 2187 \delta_3^4),$$
where the second factor (up to a scalar) is the discriminant of the Jacobian of the
genus one curve $X$ cut out by $f$ directly.

A priori, it may not be obvious that any elliptic curve over $K$ with a nonzero
non-$2$-torsion point can be expressed in the form (\ref{eq:22symHCJac}).  To see this, recall that any such curve can be expressed in the form
$$y^2 + a x y + b y = x^3 + c x^2$$
where the discriminant is nonzero (and thus $b \neq 0$).  We then note that the latter elliptic
curve is actually isomorphic to one of the form \eqref{eq:22symHCJac} by taking
$b_2 = 6a/b$, $b_4 = 6/b$, and $b_6 = 36c/b^2$.

\subsubsection{Quadruply symmetric hypercubes, or binary quartic forms again} \label{sec:4symHC}

The last symmetrization that we study is the case of fully symmetric hypercubes, i.e., elements of $\Sym_4 (V) \subset V^{\tns 4}$ for a $2$-dimensional
$K$-vector space $V$.  Since our field $K$ is not of characteristic $2$ or $3$, this space is isomorphic to the quotient space $\Sym^4 V$ of binary quartic forms.  Here, we take the standard $\GL(V)$-action on $\Sym_4(V)$, which is slightly different than the action considered on binary quartic forms in \S \ref{sec:binaryquartics}.  The following is a basis-free version of Theorem \ref{sympar}:

\begin{thm} \label{thm:4symHC}
For a $2$-dimensional $K$-vector space $V$, nondegenerate $\GL(V)$-orbits of $\Sym_4 (V)$ are in bijection with isomorphism classes of triples $(C,L,P)$, where $C$ is a genus one curve over~$K$, $L$ is a degree $2$ line bundle on $C$, and $P$ is a nonzero $3$-torsion point of $\Jac(C)(K)$.
\end{thm}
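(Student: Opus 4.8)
The plan is to obtain both directions by specializing the already-established triply symmetric case, Theorem~\ref{thm:3symHCL}. The fully symmetric space sits inside the triply symmetric one via the natural inclusion $\Sym_4 V \hookrightarrow V \tns \Sym_3 V$ (a symmetric tensor is in particular symmetric in the last three factors), where I identify the special factor with $V_1$ and $\Sym_3 V$ with $\Sym_3 V_2$, taking $V_1 = V_2 = V$. Since the diagonal $\GL(V) \hookrightarrow \GL(V_1) \times \GL(V_2)$ acts through the larger group, every construction in Theorem~\ref{thm:3symHCL} is automatically $\GL(V)$-equivariant, and nondegeneracy of an element of $\Sym_4 V$ is exactly nondegeneracy as a hypercube, so Theorem~\ref{thm:3symHCL} applies verbatim to the forward map.

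First I would treat the construction. Given a nondegenerate $\AA \in \Sym_4 V$, regarded as an element of $V_1 \tns \Sym_3 V_2$, Theorem~\ref{thm:3symHCL} produces a genus one curve $C$, a degree $2$ line bundle $L = L_1$, and the nonzero $3$-torsion point $P = L_2 \tns L_1^{-1}$ on $\Jac(C)$, where $L_2 = L \tns P$ and $L_3 = L_4 = L \tns P^{\tns 2}$ are the bundles arising from the hypercube construction of \S\ref{sec:HCgeom}. As this construction is $\GL(V)^4$-invariant, hence $\GL(V)$-invariant, it assigns to each nondegenerate $\GL(V)$-orbit a well-defined isomorphism class $(C,L,P)$ of the asserted type.

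The substance is the converse, where I must show that $(C,L,P)$ determines the orbit under the single diagonal group $\GL(V)$, not merely under $\GL(V_1) \times \GL(V_2)$. Starting from $(C,L,P)$ with $3P = 0$, the reconstruction of Theorem~\ref{thm:3symHCL} yields a hypercube $\AA \in V_1 \tns V_2 \tns V_3 \tns V_4$ (with $V_i = H^0(C,L_i)$ and $L_1 = L$, $L_2 = L \tns P$, $L_3 = L_4 = L \tns P^{\tns 2}$) that is already invariant under the transpositions $(23)$ and $(34)$, hence under the full symmetric group on the last three factors. It remains to produce one further transposition involving the first factor, say $(12)$; since $\langle (12),(23),(34)\rangle = S_4$, this forces $\AA \in \Sym_4 V$. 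To do so I would identify $V_1 = H^0(C,L_1)$ with $V_2 = H^0(C,L_2)$ exactly as in the proofs of Theorems~\ref{thm:2symHCL}, \ref{thm:3symHCL}, and \ref{thm:22symHC}: choosing the base curve $C_{34}$ among the faces of the tetrahedron \eqref{eq:tet} and following the appropriate composition of projection and un-projection maps $\tau$, the pullbacks of $\OO(1)$ from $\PP(V_1^\vee)$ and from $\PP(V_2^\vee)$ transported to $C_{34}$ become isomorphic (this is precisely where $3P = 0$ and the relation of Lemma~\ref{lem:HCrelation} are used), yielding a canonical identification under which $\AA$ is $(12)$-invariant. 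Because this identification depends only on $C$, $L$, and $P$, the resulting element of $\Sym_4 V$ is well-defined up to the diagonal $\GL(V)$, which gives injectivity; that the two constructions are mutually inverse then follows from the corresponding statement already proved in Theorem~\ref{thm:3symHCL}.

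The main obstacle is checking that this last identification is compatible with the $S_3$-symmetry already in hand, i.e., that the $V_1 \cong V_2$ coming from $(12)$ does not disturb $(23)$- and $(34)$-invariance, so that all four factors are simultaneously and consistently identified. This is a finite verification resting on the line-bundle relations around the tetrahedron together with the identity $3P = 0$; alternatively, as indicated in \S\ref{sec:triplesym}, one may confirm it by exhibiting the explicit element of $\GL(V)^2$ carrying the triply symmetric representative into $\Sym_4 V$, whose determinant is controlled by the nonvanishing discriminant. This completes the proof of Theorem~\ref{sympar}; comparison with Theorem~\ref{thm:bqorbit} then shows that the curve $C$ obtained here is the Hessian of the binary quartic curve of \S\ref{sec:binaryquartics}, as noted in \S\ref{sec:symHCpreview}.
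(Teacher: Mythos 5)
Your proposal is correct and follows essentially the same route as the paper: the forward direction by restricting the triply symmetric parametrization (Theorem~\ref{thm:3symHCL}) along $\Sym_4 V \hookrightarrow V \tns \Sym_3 V$, and the converse by taking the $S_3$-invariant hypercube produced from $(C,L,P)$ and then identifying $U_1$ with $U_2$ via a composition of the tetrahedron maps $\tau$ (using $3P=0$ and Lemma~\ref{lem:HCrelation}), exactly the ``almost identical argument to Theorem~\ref{thm:22symHC}'' that the paper invokes. The alternative explicit-matrix verification you mention is also the one the paper records.
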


This moduli problem is identical to that of triply symmetric hypercubes!

\begin{proof}
Given an element of $\Sym_4(V) \subset V \tns \Sym_3V$, we may apply Theorem \ref{thm:3symHCL} to obtain a genus one curve $C$ with a degree $2$ line bundle $L$ and a nonzero $3$-torsion point $P$ of $\Jac(C)(K)$.

Conversely, given such a triple $(C,L,P)$, we may use Theorem \ref{thm:3symHCL} to construct a hypercube $\AA$ in $U_1 \tns U_2 \tns U_3 \tns U_4$, where $U_1 = H^0(C,L)$ and $U_2 = H^0(C, L \tns P)$ has a natural identification with $U_3$ and $U_4$ such that $\AA$ is invariant under permutations of $U_2$, $U_3$, and $U_4$.  An almost identical argument to the one in Theorem \ref{thm:22symHC} shows that we may in fact identify $U_1$ with $U_2$ and that $\AA$ is invariant under switching $U_1$ and $U_2$.  In other words, the hypercube $\AA$ coming from $(C,L,P)$ is in the orbit of a quadruply symmetric hypercube.
\end{proof}

A simple computational proof, just as for Theorem \ref{thm:22symHC}, is also possible; we only need to specify an element of $\GL(V)$ that acts on
a given nondegenerate element of $V \tns \Sym_3 (V)$ (via the first factor only) to produce an element of $\Sym_4(V)$.  With a choice of basis $\{v_1, v_2\}$ for $V$, we may represent an element of $V \tns \Sym_3V$ as
\begin{equation} \label{eq:3symHCex}
\sum_{i=1}^2 v_i \tns (r_i v_1^3 + s_i v_1^2 v_2 + t_i v_1 v_2^2 + u_i v_2^3).
\end{equation}
Then applying the linear transformation $(\begin{smallmatrix} a & b \\ c & d \end{smallmatrix}) \in \GL(V)$,
with
\begin{align*}
a &= -s_2^2 t_1 + s_1 s_2 t_2 + r_2 t_1 t_2 - r_1 t_2^2 - r_2 s_1 u_2 + r_1 s_2 u_2 \\
b &= s_1 s_2 t_1 - r_2 t_1^2 - s_1^2 t_2 + r_1 t_1 t_2 + r_2 s_1 u_1 - r_1 s_2 u_1 \\
c &= s_2 t_1 t_2 - s_1 t_2^2 - s_2^2 u_1 + r_2 t_2 u_1 + s_1 s_2 u_2 - r_2 t_1 u_2 \\
d &= - s_2 t_1^2 + s_1 t_1 t_2 + s_1 s_2 u_1 - r_1 t_2 u_1 - s_1^2 u_2 + r_1 t_1 u_2,
\end{align*}
to the first factor of $V$, gives a quadruply symmetric hypercube.  The determinant of this transformation is just the degree $6$ invariant $a_6$
for triply symmetric hypercubes (from \S \ref{sec:3symHC}), and since it is a factor of the discriminant, it is nonzero for nondegenerate
hypercubes.

Just as for the triply symmetric Rubik's cubes and the doubly doubly symmetric hypercubes, this parametrization of binary quartic forms is related to the other ``dual'' parametrization for the same space, namely the one described in \S \ref{sec:binaryquartics}.  In Theorem~\ref{thm:bqorbit}, the genus one curve $X$ arising from a binary quartic form has discriminant of degree $6$.  Here, in Theorem~\ref{thm:4symHC}, we produce a genus one curve $C$ whose discriminant has degree $24$.  But these two curves $X$ and $C$ are again related by the Hessian construction, i.e., $C$ is the Hessian of $X$!  More precisely, let $q(w_1,w_2)$ be a binary quartic form in $\Sym^4 V$. Then the {\em Hessian} of $q$ (or of the genus one curve $X$ given as $y^2 = q(w_1,w_2)$) is the binary quartic
	$$H(q)(w_1,w_2) := \disc \left( \frac{\partial^3 q}{\partial w_i \partial w_j \partial w_k} \right)_{1 \leq i, j, k \leq 2},$$
i.e., the discriminant of the three-dimensional matrix of triple derivatives.  (Recall from \S \ref{sec:cubes} that the discriminant is the unique polynomial invariant of a $2 \times 2 \times 2$ cube.)   By abuse of terminology, we also say that the genus one curve $C$ associated to $H(q)$ is the Hessian of $q$ and of $X$.  This curve $C$ is the one obtained from Theorem \ref{thm:4symHC}.  In particular, we obtain a proof of the following:

\begin{cor}
Given a binary quartic form $q$ of $\Sym^4 V$, where $V$ is a $2$-dimensional $K$-vector space, let $H(q)$ denote the Hessian binary quartic form.  Then the Jacobian of the genus one curve given by the equation $y^2 = H(q)$ has a nonzero $3$-torsion point defined over $K$.
\end{cor}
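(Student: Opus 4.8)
The plan is to read the corollary directly off Theorem~\ref{thm:4symHC}, once the curve that theorem produces is identified with the Hessian curve $y^2 = H(q)$. Since $\mathrm{char}(K)\nmid 6$, I would identify $\Sym^4 V$ with $\Sym_4(V)\subset V^{\tns 4}$ and regard $q$ as a quadruply symmetric hypercube $\AA$. The standing assumption that $y^2 = H(q)$ is a genus one curve is exactly the nondegeneracy of $H(q)$, which (by the comparison below) coincides with nondegeneracy of $\AA$ as a hypercube; so Theorem~\ref{thm:4symHC} attaches to $\AA$ a triple $(C,L,P)$ in which $P$ is a nonzero $3$-torsion point of $\Jac(C)(K)$. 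The whole content of the corollary is then the claim $\Jac(C)\cong\Jac(\{y^2=H(q)\})$.

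First I would recall how $C$ is built: through the chain of constructions underlying Theorems~\ref{thm:hypercube}, \ref{thm:2symHCL}, and \ref{thm:3symHCL}, the curve $C$ attached to $\AA$ is the curve $C(f_1)$ cut out by one of the slicing binary quartics $f_1(w_1,w_2)=\disc(A_1 w_1 + B_1 w_2)$ of \eqref{bqfdef}, whose Jacobian is $E(f_1)\colon y^2 = x^3 - 27 I(f_1)\,x - 27 J(f_1)$. It therefore suffices to compare $f_1$ with $H(q)$. The key computation is the identity $H(q)=c\,f_1$ for a nonzero square constant $c\in K$: slicing the symmetric hypercube \eqref{sym4hyper} in the first direction gives the pair of triply symmetric cubes $(A_1,B_1)$ whose entries are the coefficients $a,b,c,d$ and $b,c,d,e$, so that the cube $A_1 w_1 + B_1 w_2$ has top entry $a w_1 + b w_2$, and so on. On the other hand, the cube of third partials $\bigl(\partial^3 q/\partial w_i\partial w_j\partial w_k\bigr)$ defining $H(q)$ has top entry $\partial^3 q/\partial w_1^3 = 24a w_1 + 24b w_2$, and likewise every entry is exactly $24$ times the corresponding entry of $A_1 w_1 + B_1 w_2$. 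Hence this cube equals $24\,(A_1 w_1 + B_1 w_2)$, and since the cube discriminant is homogeneous of degree $4$ in the entries, $H(q)=\disc\bigl(24(A_1 w_1 + B_1 w_2)\bigr)=24^4\,\disc(A_1 w_1 + B_1 w_2)=(24^2)^2 f_1$.

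From this identity the corollary follows immediately: the discriminants of $H(q)$ and of $f_1$ vanish together, which justifies the identification of the two nondegeneracy conditions used above, and the substitution $y\mapsto 24^2 y$ gives a $K$-isomorphism between $\{y^2 = f_1\}$ and $\{y^2 = H(q)\}$. Therefore $C\cong C(f_1)\cong\{y^2=H(q)\}$ over $K$, the two curves share the same Jacobian, and the nonzero rational $3$-torsion point $P\in\Jac(C)(K)$ supplied by Theorem~\ref{thm:4symHC} is the desired point on $\Jac(\{y^2=H(q)\})(K)$. The only genuinely computational step is the discriminant identity $H(q)=(24^2)^2 f_1$; everything else is a direct appeal to the already-established bijections. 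This step is routine but is the crux, since it is what pins down the abstractly constructed curve $C$ of Theorem~\ref{thm:4symHC} as the concrete Hessian curve $y^2 = H(q)$, the identification already asserted in the discussion preceding the corollary.
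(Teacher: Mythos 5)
Your proposal is correct and follows essentially the same route as the paper: the corollary is read off from Theorem~\ref{thm:4symHC} once the curve $C$ produced there is identified with the Hessian curve $y^2 = H(q)$, which is exactly the identification the paper asserts in the sentence immediately preceding the corollary. Your explicit verification that the cube of third partials equals $24(A_1w_1+B_1w_2)$, hence $H(q)=24^4 f_1$ with $24^4$ a nonzero square (so the two curves and their Jacobians are $K$-isomorphic, not merely quadratic twists), correctly supplies the ``small computation'' the paper leaves implicit.
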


Recall from \S \ref{sec:binaryquartics} that the $\SL(V)$-invariants of $\Sym^4 V$ are generated by two invariants $I$ and $J$ of degrees $2$ and $3$, respectively.  These invariants appear in the coefficients for the Jacobian of the curve $C$ arising from an element $\AA \in \Sym_4 (V)$ via Theorem \ref{thm:4symHC}.  In particular, because the space $\Sym_4 (V)$ is contained in both $V \tns \Sym_3 V$ and $\Sym_2 V \tns \Sym_2 V$, we may use the geometric interpretations of the invariants of triply symmetric hypercubes and of doubly doubly symmetric hypercubes to easily understand the invariants in this case!  The Jacobian of $C$ is isomorphic to
\begin{equation} \label{eq:4symHCJac}
E: y^2 + 2 a_2 x y + 216 a_3^2 y = x^3,
\end{equation}
where $a_2$ is defined as for hypercubes and $a_3$ is the primitive integral degree $3$ invariant generator.  In terms of the invariants $I$ and $J$, because $\AA$ viewed as a binary quartic form has coefficients with factors of $4$ and $6$, the polynomials $I(\AA)$ and $J(\AA)$ are not primitive: $I(\AA) = -4 a_2$ and $J(\AA) = -432 a_3$.  The discriminant of the Jacobian \eqref{eq:4symHCJac} factors as a rational multiple of
$$a_3^6 (a_2^3 + 729 a_3^2),$$
and the latter factor is just the discriminant of the Jacobian of $Z$ (or of $\AA$ as a binary quartic form).

Finally, just as before, we observe that any elliptic curve over $K$ with a nonzero $3$-torsion point defined over $K$ can be expressed in the form \eqref{eq:4symHCJac}.  Namely, if we have an elliptic curve of the form
$y^2 + a x y + b y = x^3$
with nonzero discriminant (implying $b \neq 0$),
then setting $a_2 = 108 a / b$ and $a_3 = 216 / b$ gives an isomorphic elliptic curve of the form (\ref{eq:4symHCJac}).


\subsection{Hermitian cubes}\label{subsec:Hcubes}

In this section, we discuss spaces $\mathscr{C}$, sometimes called ``Freudenthal algebras,''
or ``Freudenthal triple systems,'' that have a quartic norm form, originally introduced in \cite{freudenthal-magic}.  These vector spaces are related to the space of $2 \times 2 \times 2$ cubes---that is, the tensor space $V_1 \tns V_2 \tns V_3$
where $V_i$ is a two-dimensional vector space over $K$---in the same way that the spaces of Hermitian matrices
over composition algebras are related to the usual matrix algebras.  Our goal here, then, is to
``triply Hermitianize'' the space of $2 \times 2 \times 2$ cubes with respect to a cubic algebra.

The natural cubic algebras to use are cubic Jordan algebras $J$.  In fact, this process will work for any
cubic Jordan algebra with a nondegenerate trace form (see \S \ref{sec:springer} for general constructions).
Some of the properties and formulas we use below for
Hermitian cubes are explained in more detail in \cite{krutelevich} and also rely on ideas from \cite{faulkner, clerc}.

Analogously to \S \ref{sec:cubicjordan}, we will describe spaces $\mathscr{C}$ of Hermitian $2\times 2\times 2$ cubical matrices and their properties, including
a norm form, a trace form, an adjoint map, and a notion of rank.  We will then be interested in the rank one
loci of such spaces and their moduli descriptions.  
We will use such moduli
interpretations to obtain vector bundles on varieties mapping to these rank one loci.
This will allows us 
in \S \ref{sec:deg2moduli}
to uniformly study representations of the form $V\otimes \mathscr{C}$, where $V$ is a $K$-vector space of dimension~2, in terms of genus one curves.

\subsubsection{Definitions and invariants}

\begin{defn}
	A {\em Hermitian cube space} $\CJ$ over a cubic Jordan algebra $J$ is a vector space of elements of the form
		\begin{equation*}
		\xymatrix@!0{
			& b' \ar@{-}[rr]\ar@{-}'[d][dd] & & c'' \ar@{-}[dd] \\
			a \ar@{-}[ur]\ar@{-}[rr] \ar@{-}[dd] & & b \ar@{-}[ur]\ar@{-}[dd] \\
			& c \ar@{-}'[r][rr] & & d \\
			b'' \ar@{-}[rr]\ar@{-}[ur] & & c' \ar@{-}[ur]
		} \end{equation*}
	where $a, d \in K$ and $(b,b',b'')$ and $(c,c',c'')$ are conjugates in $J$.  Addition and multiplication
	by elements of $K$ occurs componentwise.  We will
	abbreviate elements, or {\em Hermitian cubes}, in $\CJ$ as $(a,b,c,d)$.
\end{defn}

\begin{example} \ \begin{enumerate}[(i)]
	\item
		If $J$ is the field $K$ itself with cubic norm form $\Norm(x) = x^3$ for $x \in K$, the conjugates of
		any $x \in K$ are both just $x$, so the space $\CJ$ is isomorphic to $\Sym^3K^2$.
	\item
		If $J$ is the split algebra $K \times K \times K$ with norm form $\Norm((x_1,x_2,x_3)) = x_1 x_2 x_3$, then
		$\CJ$ is isomorphic to $K^2 \tns K^2 \tns K^2$, or the space of $2 \times 2 \times 2$ cubes.
\end{enumerate} \end{example}

In general, if $J$ has dimension $d$ over $K$, then $\CJ$ is a $K$-vector space of dimension $2d + 2$.
Although there is a very weak algebra structure on $\CJ$, we will not use it in the sequel.  We will only
use the structure of $\CJ$ as a representation of a certain reductive group $G_\CJ$, which will be
a prehomogeneous vector space with a relative invariant of degree $4$.  This invariant will be
the norm form for Hermitian cubes.

Recall that a cubic Jordan algebra $J$ comes equipped with a trace form $\Tr$ and a cubic norm form $\Norm$,
as well as their (bi)linearizations (see equation \eqref{eq:jordanalgforms} in \S \ref{sec:springer}).
We restrict our attention to cubic Jordan algebras for which the trace form $\Tr$ is nondegenerate,
in which case we obtain a natural adjoint map $\sharp$, as in equation \eqref{eq:sharpmap}.
We will use these properties of $J$ to construct a trace form $\Tr_\CJ$ and quartic norm
form $\Norm_\CJ$ for $\CJ$, given the basepoint $\varepsilon := (1,0,0,1)$. 

For a $3 \times 3$ Hermitian matrix over a composition algebra $A$, the cubic norm form is a generalization
of the determinant of the matrix; here, we generalize the quartic discriminant of a $2 \times 2 \times 2$ cube,
which is the generator of the ring of $\SL_2^3$-invariants for $2 \times 2 \times 2$ cubes.

\begin{defn}
	The {\em discriminant} of a Hermitian cube $A = (a,b,c,d)$ is given by
		\begin{equation} \label{eq:discHermcube}
			\disc(A) = (ad - \Tr(b,c))^2 - 4\Tr(b^\sharp,c^\sharp) + 4a\Norm(c) + 4d\Norm(b).
		\end{equation}
	The {\em norm form} $\Norm_\CJ$ is the complete linearization
	of the discriminant form, so it is a symmetric quadrilinear map
		$$\Norm_\CJ: \CJ \times \CJ \times \CJ \times \CJ \ra K$$
	with $\Norm_\CJ(A,A,A,A) = \disc(A)$.  The linear trace form $\Tr_\CJ$ is defined as
		$$\Tr_\CJ(A) := \Norm_\CJ(A,\varepsilon,\varepsilon,\varepsilon)$$
	and there is an alternating bilinear trace form
		$$ \langle A,A' \rangle = ad' - \Tr(b,c') + \Tr(b',c) - a'd.$$
\end{defn}

Since the trace form $\Tr$ for the Jordan algebra $J$ is nondegenerate, and hence
the bilinear trace form $\langle \cdot,\cdot \rangle$ is nondegenerate,
the space $\CJ$ is self-dual.  There is a natural cubic ``adjoint'' map $\flat$ associated to $\CJ$:
	\begin{equation} \label{eq:flatmap} \xymatrix@R=0pt@C=18pt{
	  \flat:& \CJ \ar[r]& \CJ^\vee \ar[r]^{\cong} &  \CJ \\
		&A \ar@{|->}[r] & \Norm_\CJ(A,A,A,\cdot) \ar@{|->}[r] & A^\flat.
	}\end{equation}
This map is the analogue of the adjoint map $\sharp$ for cubic Jordan algebras
(defined in equation \eqref{eq:sharpmap}) and satisfies the properties
	\begin{eqnarray} \label{eq:adjadj}
	\Norm_\CJ(A,A,A,B) = \langle A^\flat, B \rangle & \mathrm{and} & (A^\flat)^\flat = - \disc(A)^2 A.
	\end{eqnarray}
Explicitly, for a Hermitian cube $A = (a,b,c,d)$, the adjoint $A^\flat = (a^\flat, b^\flat, c^\flat, d^\flat)$ is given by the formulas
	\begin{align} a^\flat &= a^2 d - a \Tr(b,c) + 2 \Norm(b) \nonumber \\
		b^\flat &= 2 c \times b^\sharp - 2 a c^\sharp + (a d - \Tr(b,c)) b  \\
		c^\flat &= -2 b \times c^\sharp + 2 d b^\sharp - (a d - \Tr(b,c))c \nonumber \\
		d^\flat &= -a d^2 + d \Tr(b,c) - 2 \Norm(c) \nonumber
	\end{align}
where $\times$ denotes the bilinearization of $\sharp$ defined in \eqref{eq:bilinearsharp}.
As in the cubic case, by an abuse of notation, the map $\flat$ will sometimes
just refer to the first map in \eqref{eq:flatmap} from $\CJ$ to $\CJ^\vee$.

\begin{example} \ \begin{enumerate}[(i)]
	\item
		If $J = K$, the discriminant of $A$ is the usual quartic discriminant
		of a binary cubic form in $\Sym^3K^2$.  That is, if $A = (a,b,c,d)$
		represents the binary cubic $a X^3 + 3 b X^2 Y + 3 c X Y^2 + d Y^3$, then
			$$\disc(A) = a^2 d^2 - 3 b^2 c^2 - 6 a b c d + 4 a c^3 + 4 b^3 d$$
		and the adjoint is the covariant given by the determinant of the Jacobian
		matrix of the cubic and its Hessian, namely
			$$A^\flat =   (2 b^3 - 3 a b c + a^2 d,b^2 c - 2 a c^2 + a b d, 
			- b c^2 + 2 b^2 d - a c d, - 2 c^3 + 3 b c d - a d^2).$$
	\item
		If $J = K \times K \times K$, then the discriminant of
		the Hermitian cube $A = (a,(b_1,b_2,b_3),(c_1,c_2,c_3),d)$ is the usual quartic discriminant
		of a $2 \times 2 \times 2$ cube (see \cite[\S 2.1]{hcl1}):
			\begin{align*}
				\disc(A) &= a^2 d^2 + b_1^2 c_1^2 + b_2^2 c_2^2 + b_3^2 c_3^2 + 4(ac_1c_2c_3 + b_1 b_2 b_3 d) \\
			 		&\quad - 2(ab_1c_1d + ab_2c_2d + ab_3c_3d + b_1 b_2 c_1 c_2 + b_1 b_3 c_1 c_3 + b_2 b_3 c_2 c_3).
			\end{align*}
		The adjoint $A^\flat := (a^\flat,(b_1^\flat,b_2^\flat,b_3^\flat),(c_1^\flat,c_2^\flat,c_3^\flat),d^\flat)$
		is easy to compute, with coordinates
			\begin{align*}
			 	a^\flat &= 2 b_1 b_2 b_3 - a (b_1 c_1 + b_2 c_2 + b_3 c_3) + a^2 d \\ 
			 	d^\flat &= -2 c_1 c_2 c_3 + d (b_1 c_1 + b_2 c_2 + b_3 c_3) - a d^2 \\
			 	b_i^\flat &= (-b_i c_i + b_j c_j + b_k c_k) b_i - 2 a c_j c_k + a d b_i \\
			 	c_i^\flat &= - (- b_i c_i + b_j c_j + b_k c_k) c_i + 2 d b_j b_k - a d c_i
			 \end{align*}
		where $\{i,j,k\} = \{1,2,3\}$.
\end{enumerate} \end{example}

\subsubsection{Rank and linear transformations}

We may also define an analogue of rank for Hermitian cube spaces that agrees with
the natural notion of rank for a tensor space in the simplest cases.

\begin{defn} \label{defn:rankHermitiancubes}
	A nonzero Hermitian cube $A \in \CJ$ has {\em rank one}
	if it is a scalar multiple of a Hermitian cube of the form 
		\begin{equation} \label{eq:rankonecubes}
			(a,b,c,d) = (\Norm(\alpha),\alpha^\sharp \bullet \beta, \beta^\sharp \bullet \alpha, \Norm(\beta))
		\end{equation}
	for any $\alpha, \beta \in J$.
	A Hermitian cube $A \in \CJ$ has {\em rank $\leq 2$} if its adjoint $A^\flat$ is $0$.
	It has {\em rank $\leq 3$} if its discriminant $\disc(A)$ is $0$.  Finally, it has {\em rank four} if its discriminant is nonzero.
\end{defn}

Note that the condition on scalar multiples for rank one cubes is necessary: if $k \in K$
is not in the image of the map $\Norm$, then $(k,0,0,0)$ is not of the form in
\eqref{eq:rankonecubes}, but intuitively we would still like this cube
to have rank one.  With this definition, the rank one (and zero) cubes are,
up to $K$-scaling, given by elements of $J^2$.  More precisely, there
is a map
	\begin{align*}
		J \oplus J &\ra \CJ \\
		(\alpha,\beta) &\longmapsto (\Norm(\alpha),\alpha^\sharp \bullet \beta, \beta^\sharp \bullet \alpha, \Norm(\beta))
	\end{align*}
that descends to a rational map 
\begin{equation} \label{eq:PJtoPCJ}
		\tau: \PP(J \oplus J) \dashrightarrow \PP(\CJ)
\end{equation}
where $\PP(J \oplus J)$ and $\PP(\CJ)$ denotes $K$-lines in the
$K$-vector spaces $J \oplus J$ and $\CJ$, respectively.  Let $X_\CJ$
denote the image of the map $\tau$.  Intuitively, the variety $X_\CJ$
is like the projective line over $J$, with $\tau$ being analogous to
the embedding of the twisted cubic.  (This comparison is not entirely
accurate, of course, since scaling $(\alpha,\beta)$ by elements of $J$
does not usually fix its image under $\tau$.)

Let $\SL_2(J)$ denote the group of discriminant-preserving $K$-linear
transformations of the space $\CJ$.\footnote{This group is denoted by $\mathrm{Inv}(\CJ)$ in \cite{krutelevich}.}
Some examples include
	\begin{center}
		\begin{tabular}{c|c|c} \label{table:SL2J}
			$J$ & $\CJ$ & $\SL_2(J)$ \\
			\hline
			$K$ & $\Sym^3(2)$ & $\SL_2(K)$ \\ 
			$K^{3}$ & $2 \tns 2 \tns 2$ & $\SL_2(K)^3$ \\
			$\HH_3(K)$ & $\wedge^3_0(6)$ & $\Sp_6(K)$ \\
			$\HH_3(K \times K)$ & $\wedge^3(6)$ & $\SL_6(K)$ \\
			$\HH_3(\mathscr{Q})$ & $S^+(32)$ & $\Spin_{12}$ \\
			$\HH_3(\mathscr{O})$ & $56$ & $E_7$
		\end{tabular}
	\end{center}
where the last two may also range over different quaternion and octonion algebras
over $K$ (and thus are associated with different forms of the corresponding groups).

Thus, the space $\CJ$ may be thought of as a representation of
$\SL_2(J)$.  In fact, the variety $X_\CJ$ is the projectivization
of the orbit of the highest weight vector of the representation of $\SL_2(J)$ on $\CJ$,
and is isomorphic to the homogeneous space given by this representation!  That is, the variety
$X_\CJ$ is isomorphic to $\SL_2(J) / P_\CJ$, where $P_\CJ$ is the
parabolic subgroup associated to the representation $\CJ$, and it has
a moduli interpretation as a flag variety.

In fact, the rank of all the elements of $\CJ$ is preserved under the action of $\SL_2(J)$, and over an
algebraically closed field, the group $\SL_2(J)$ acts transitively on the set of rank $r$ elements \cite[Lemma 21 \& Thm 2]{krutelevich}.
In other words, the space $\CJ$ is stratified by rank into orbits of $\SL_2(J)$ when $K$ is algebraically closed.
Thus, for many computations related to elements in $\CJ$, they may just be checked on representatives of $\CJ$ of
the appropriate rank; see, e.g., \cite[eqs.~(58)-(61)]{krutelevich} for some simple choices of representatives.%
\footnote{For example, the definition of ``rank one'' cubes in \cite{krutelevich} differs from Definition \ref{defn:rankHermitiancubes},
but it is easy to check that they are equivalent by this method.}

In the case where $J$ is the $\HH_3(\CC \times \CC)$ and
$X_\CJ$ is the Grassmannian $\Gr(3,6)$, these varieties and some
of the geometric constructions in the sequel are studied in Donagi's
work \cite{donagi-grassmannians}.

Furthermore, in each of the cases in the above table over an algebraically closed field,
the secant variety of $X_\CJ$ is the entire space $\PP(\CJ)$, and the tangent variety of $X_\CJ$ is the quartic hypersurface
$Y_\CJ$ given by the vanishing of the discriminant \cite[Chap.~III]{zak-book}; that is, 
$Y_\CJ$ is made up of the the rank $\leq 3$ elements of $\CJ$ (up to $K$-scaling).
In fact, Zak shows (and it is easy to check on representatives of the appropriate ranks):

\begin{lemma} \label{lem:ptonsecant} 
  Each general point of $\PP(\CJ) \setminus Y_\CJ$ lies on exactly one secant
  line of $X_\CJ$.  Each point of $Y_\CJ \setminus X_\CJ$ lies on exactly one tangent
  line of $X_\CJ$.
\end{lemma}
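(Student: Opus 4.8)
The plan is to reduce both assertions to a finite check on orbit representatives, exploiting the $\SL_2(J)$-action. First I would pass to the algebraic closure $\overline{K}$, since both statements concern incidences of points and lines in $\PP(\CJ)$ and are insensitive to the base field; over $\overline{K}$ the algebra $J$ is split. The key point is that both properties are $\SL_2(J)$-equivariant: as $\SL_2(J)$ acts linearly on $\CJ$ preserving the discriminant (hence $Y_\CJ$) and preserving the rank stratification (hence $X_\CJ$ and its rank-one locus), it carries secant lines of $X_\CJ$ to secant lines and tangent lines to tangent lines. Consequently the number of secant lines, and the number of tangent lines, through a point $P$ depends only on the $\SL_2(J)$-orbit of $P$.

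Next I would invoke transitivity. By \cite[Lemma 21 \& Thm 2]{krutelevich}, over $\overline{K}$ the group $\SL_2(J)$ acts transitively on the elements of each fixed rank, and since rank is scaling-invariant it acts transitively on each rank stratum of $\PP(\CJ)$. The locus $\PP(\CJ) \setminus Y_\CJ$ is precisely the rank-four stratum, a single orbit, so ``general point'' here means any rank-four point; and $Y_\CJ \setminus X_\CJ$ is the union of the rank-two and rank-three strata, two orbits. Thus it suffices to verify the secant count on one rank-four representative and the tangent count on one rank-three and one rank-two representative. Existence of at least one line is free from the cited geometry: since the secant variety of $X_\CJ$ is all of $\PP(\CJ)$ and the rank-four locus is the open orbit, every rank-four point lies on an honest secant line; and since $Y_\CJ$ is the tangent variety, every point of $Y_\CJ \setminus X_\CJ$ lies on a tangent line. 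So the entire content is uniqueness.

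For the rank-four representative I would take the basepoint $\varepsilon = (1,0,0,1)$, which equals $\tau(e,0) + \tau(0,e) = (1,0,0,0) + (0,0,0,1)$ and hence lies on the secant joining the rank-one points $(1,0,0,0)$ and $(0,0,0,1)$; one then shows that any two rank-one cubes $\tau(\alpha_i,\beta_i)$ collinear with $\varepsilon$ must be these. Writing the collinearity relation $\varepsilon = s\,\tau(\alpha_1,\beta_1) + t\,\tau(\alpha_2,\beta_2)$ componentwise, the vanishing of the two $J$-valued middle coordinates $\alpha_i^\sharp \bullet \beta_i$ and $\beta_i^\sharp \bullet \alpha_i$, together with the scalar equations on $\Norm(\alpha_i)$ and $\Norm(\beta_i)$, should force $\beta_1 = 0$ and $\alpha_2 = 0$ up to scaling. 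The conceptual reason this works uniformly is the eigenvalue picture for $2\times 2\times 2$ cubes: a cube $A$ determines a binary quadratic form whose discriminant is $\disc(A)$ and whose two roots cut out the two rank-one summands, so the secant is unique exactly when $\disc(A)\neq 0$, and the two endpoints collide into a single tangent line when $\disc(A)=0$. Using rank-three and rank-two representatives from \cite[eqs.~(58)--(61)]{krutelevich}, I would run the analogous collinearity computation to pin down the unique tangent line in each case, reducing everything to identities among $\Norm$, $\sharp$, and $\bullet$ via \eqref{eq:tracesharpformula}--\eqref{eq:adjointidentity} and the explicit formulas for $\flat$.

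The main obstacle is this uniqueness computation on representatives: one must verify that no second, ``hidden'' tangent line exists and that the rank-three representative lies on a genuine tangent line rather than a secant. I expect the rank-two case to be the most delicate, since there the associated binary form degenerates entirely, so one cannot argue through two distinct roots and must instead establish uniqueness of the tangent line directly from the rank-two normal form and the condition $A^\flat = 0$.
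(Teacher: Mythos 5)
Your plan is precisely the argument the paper intends: the paper's entire justification for this lemma is the citation of Zak for the identifications of the secant and tangent varieties together with the remark that the counts are ``easy to check on representatives of the appropriate ranks,'' which rests on exactly the $\SL_2(J)$-equivariance and the transitivity on rank strata (cited from Krutelevich) that you invoke. Your proposal simply spells out that reduction and names the representative computations (e.g.\ on $\varepsilon=(1,0,0,1)$ for the secant count) that the paper leaves implicit, so it is the same approach, only more explicit.
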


Therefore, we find that the adjoint map $\flat$ induces a birational map
		$$\beta_J: \PP(\CJ) \dashrightarrow \PP(\CJ)$$
whose reduced base locus is the variety $X_\CJ$.  Under $\beta_J$, the quartic hypersurface $Y_\CJ$
is blown down to $X_\CJ$, as the adjoint of rank $\leq 3$ elements in $\CJ$ have rank $\leq 1$.  By \eqref{eq:adjadj}, applying $\beta_J$ twice is the identity away from $Y_\CJ$.

Furthermore, the adjoint map $\beta_J$ preserves each secant line.  This is easy to check by computation, e.g., by showing that
the adjoint of the sum of two rank one cubes is in their span.  For example, the adjoint of the sum of $A = (1,0,0,0)$ and $B = (\Norm(\alpha),\alpha^\sharp \bullet \beta, \beta^\sharp \bullet \alpha, \Norm(\beta))$ is $\Norm(\beta) A - \Norm(\beta) B$.

\subsection{Triply Hermitian hypercubes} \label{sec:deg2moduli}

As in \S \ref{sec:deg3moduli}, we would like to study the orbits of a class of ``Hermitianized''
representations uniformly.  Namely, for $V$ a two-dimensional $K$-vector space and $J$ a cubic
Jordan algebra, we study the representation of $\GL(V) \times \SL_2(J)$-orbits on
$V \tns \CJ$.  We find that the orbits correspond to isomorphism classes of genus one curves with degree $2$ line bundles, 
along with bundles related to $X_\CJ$.  We consider only nondegenerate elements of the tensor space, which will correspond
to smooth curves.

\begin{defn}
  An element $\phi \in V \tns \CJ$ is called {\em nondegenerate} if
  the induced composition map
		$$\flat \circ \phi: V^\vee \to \CJ \to \CJ^\vee$$
	is injective.
Note that nondegeneracy implies that the elements in the image of $\phi$ do not have rank one, and the
discriminant of all but four points (over $\overline{K}$, up to multiplicity) in the image of $\phi$ is nonzero.

\end{defn}

The following theorem---a more precise version of Theorem \ref{triplehermpar}---states that the orbits of such nondegenerate elements of $V \tns \CJ$ are in correspondence with genus one curves with certain vector bundles:

\begin{thm} \label{thm:hermHC} 
  The nondegenerate $\GL_2(K) \times \SL_2(J)$-orbits of $V \tns \CJ$
  are in bijection with isomorphism classes of nondegenerate quadruples
  $(C,L,\mathcal{F}, \kappa)$, where $C$ is a genus one curve over~$K$; \,$L$ is a degree $2$ line bundle on $C$; \;$\mathcal{F} = (E_i)$
  is the flag of vector bundles $E_i$ given by pulling back the
  universal flag via the map $\kappa: C \to X_\CJ$; \,and $\kappa^*
  \OO_{\PP(\CJ)}(1) \cong L^{\tns 3}$.
\end{thm}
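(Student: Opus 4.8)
Theorem \ref{thm:hermHC} is the degree-2 analogue of Theorem \ref{thm:hermRC}, and the proof strategy should mirror that case almost exactly, with the cubic norm form $\Norm$ and adjoint $\sharp$ on $\JA$ replaced by the quartic discriminant $\disc$ and the adjoint $\flat$ on $\CJ$, and the Veronese/quadratic reembedding replaced by the analogous construction coming from the degree-$2$ line bundle $L$. Let me sketch both directions.

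The plan is to prove Theorem \ref{thm:hermHC} by the same two-step strategy used for Theorem \ref{thm:hermRC}, replacing the cubic norm and adjoint $(\Norm,\sharp)$ on $\JA$ by the quartic discriminant and cubic adjoint $(\disc,\flat)$ on $\CJ$, and replacing the \emph{plane-section} construction of the genus one curve by a \emph{hyperelliptic double cover}. First I would treat the forward direction. Given a nondegenerate $\phi \in V \tns \CJ$, regard it as a map $V^\vee \to \CJ$, injective since $\flat \circ \phi$ is; hence $\PP(\phi)$ embeds $\PP(V^\vee) \cong \Pone$ as a line $\ell \subset \PP(\CJ)$. Restricting the discriminant produces a binary quartic $f := \disc(\phi(\,\cdot\,)) \in \Sym^4 V$ whose four roots are exactly $\ell \cap Y_\CJ$; nondegeneracy forces them distinct, so as in \S\ref{sec:binaryquartics} the equation $y^2 = f$ defines a smooth genus one curve $C$ double-covering $\ell$, with degree $2$ line bundle $L = \pi^*\OO_{\PP(V^\vee)}(1)$. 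To build $\kappa$, I would invoke Lemma \ref{lem:ptonsecant}: over each $w \in \ell \setminus Y_\CJ$ the point $\phi(w)$ lies on a unique secant line of $X_\CJ$, meeting $X_\CJ$ in two points, which coalesce to a single tangency point exactly over the branch locus $\ell \cap Y_\CJ$. Thus the incidence double cover of pairs $(w,p)$ with $p \in X_\CJ$ on the secant (or tangent) line through $\phi(w)$ is branched over the four roots of $f$, hence is isomorphic to $C$ by Riemann--Hurwitz, and projection to $X_\CJ$ yields the desired $\kappa : C \to X_\CJ$. Pulling back the universal flag on $X_\CJ \cong \SL_2(J)/P_\CJ$ gives $\mathcal{F}$.

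For the line bundle relation, I would compute $\deg \kappa^* \OO_{\PP(\CJ)}(1)$: since $\kappa(C)$ is cut out inside $X_\CJ$ with each of its flag projections of degree $2$, one finds $\deg \kappa(C) = 6 = 3 \deg L$, and identifying the bundle itself (not just its degree) via the universal-flag description of $\OO_{\PP(\CJ)}(1)$ together with the cubic adjoint $w \mapsto \phi(w)^\flat$ gives $\kappa^* \OO_{\PP(\CJ)}(1) \cong L^{\tns 3}$; the exponent $3$ reflects the cubic nature of $\flat$, exactly as the exponent $2$ in Theorem \ref{thm:hermRC} reflected the quadratic $\sharp$. Equivariance is then immediate: $\GL(V)$ acts through $V$, while $\SL_2(J)$ preserves $\disc$, the varieties $X_\CJ$ and $Y_\CJ$, the secant structure, and the universal flag, so the isomorphism class of $(C,L,\mathcal{F},\kappa)$ is unchanged along a $\GL_2(K) \times \SL_2(J)$-orbit.

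For the reverse direction, given $(C,L,\mathcal{F},\kappa)$ I would use $L$ to recover the double cover $\pi : C \to \PP(H^0(C,L)^\vee) = \Pone$ branched along a binary quartic, setting $V := H^0(C,L)$. Letting $\sigma$ be the hyperelliptic involution, the secant lines of $X_\CJ$ joining $\kappa(c)$ and $\kappa(\sigma c)$ sweep out, over $\Pone$, a rank-two subbundle $\mathcal{E} \subset \CJ \tns \OO_{\Pone}$. Using $\kappa^* \OO_{\PP(\CJ)}(1) \cong L^{\tns 3}$ and the four branch points, a determinant computation gives $\det \mathcal{E} \cong \OO_{\Pone}(-4)$; ruling out the split type $\OO \oplus \OO(-4)$ (a common base point of all secants, excluded by nondegeneracy) and the type $\OO(-2)^{\oplus 2}$ (which would give a conic rather than a line) leaves $\mathcal{E} \cong \OO_{\Pone}(-1) \oplus \OO_{\Pone}(-3)$, the $\OO(-3)$ factor being realized explicitly by the cubic adjoint. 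The unique minimal sub-line-bundle $\OO_{\Pone}(-1) \hookrightarrow \mathcal{E}$ then maps $\Pone$ into $\PP(\CJ)$ by a degree-one linear system, i.e.\ as a line $\ell$, recovering $\phi \in \Hom(V^\vee,\CJ) = V \tns \CJ$. That the two constructions are mutually inverse follows, exactly as in Theorem \ref{thm:hermRC}, from the involutivity $(A^\flat)^\flat = -\disc(A)^2 A$ of \eqref{eq:adjadj}, i.e.\ $\beta_J \circ \beta_J = \id$ away from $Y_\CJ$.

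The hard part will be the reverse direction, and specifically the splitting $\mathcal{E} \cong \OO_{\Pone}(-1) \oplus \OO_{\Pone}(-3)$ --- equivalently, that the one-parameter family of secant lines admits a unique linear directrix recovering $\phi$, and that the resulting $\phi$ is nondegenerate and reproduces the flag $\mathcal{F}$. This is precisely where the hypothesis $\kappa^* \OO_{\PP(\CJ)}(1) \cong L^{\tns 3}$ and the cubic structure of $\flat$ are indispensable, and the main technical challenge is to carry it out \emph{uniformly} in $J$, using only the flag-variety description of $X_\CJ$ and Lemma \ref{lem:ptonsecant}, rather than the explicit tensor coordinates available in split cases such as the hypercube of \S\ref{sec:hypercube}.
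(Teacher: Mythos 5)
Your proposal is essentially the paper's proof. The forward direction (the double cover $y^2 = \disc(\phi(\,\cdot\,))$ of the line $\Im\PP(\phi)$, with $\kappa$ obtained from the incidence correspondence of secant/tangent lines via Lemma \ref{lem:ptonsecant}, and the relation $\kappa^*\OO_{\PP(\CJ)}(1)\cong L^{\tns 3}$ extracted from the cubic adjoint) is exactly Construction \ref{cnstr:hermHCtoCurve}, phrased with the incidence double cover in place of the universal degree-$2$ subscheme over $\Hilb_2(X_\CJ)$; the reverse direction is the dual formulation of Construction \ref{cnstr:curvetohermHC}, your $\mathcal{E}$ being $(\eta_*\kappa^*\OO_{\PP(\CJ)}(1))^\vee$ and your unique $\OO_{\Pone}(-1)$-subbundle being the paper's unique quotient $\eta_*\OO_C\tns\OO(3)\twoheadrightarrow\OO(1)$. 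The one soft spot is your determination of the splitting type of $\mathcal{E}$: the stated exclusions of $\OO\oplus\OO(-4)$ (``common base point, excluded by nondegeneracy''---but nondegeneracy of the quadruple is itself defined through this construction) and of $\OO(-2)^{\oplus 2}$ (``would give a conic''---a reason it would be bad, not a proof it cannot occur) are not arguments; the case analysis is unnecessary, since $\mathcal{E}^\vee\cong\eta_*(L^{\tns 3})\cong\OO(3)\tns\eta_*\OO_C\cong\OO(3)\oplus\OO(1)$ by the projection formula and $\eta_*\OO_C\cong\OO\oplus\OO(-2)$, which forces $\mathcal{E}\cong\OO(-1)\oplus\OO(-3)$ directly---this is precisely how the paper obtains $\Sigma=\PP(\eta_*\OO_C\tns\OO(3))$.
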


As in \S \ref{sec:deg3moduli}, we will discuss the nondegeneracy condition
on quadruples $(C,L,\mathcal{F},\kappa)$ in the proof.  It is again an open
condition, so the statement of the theorem may be rephrased as giving
a bijection between orbits of $V \tns \CJ$ and the $K$-points of an open
substack of the moduli space of $(C,L,\mathcal{F},\kappa)$ (with the
isomorphism from $\kappa^*\OO_{\PP(\CJ)}(1)$ to  $L^{\tns 3}$).  Again,
in many cases, this nondegeneracy condition will be a simple condition
on the bundles.

The rest of this subsection contains the proof of Theorem \ref{thm:hermHC}.
First, from a nondegenerate element $\phi$ in $V \tns \CJ$, we explain how to
naturally construct the genus one curve and associated data described in Theorem
\ref{thm:hermHC}.  The construction will be invariant under the group
action.  Although many features of this construction are similar to
the one described in \S \ref{sec:deg3moduli}, here the curve will
not necessarily be immersed in $\PP(\CJ)$ but instead have a degree
$2$ map to a line in $\PP(\CJ)$.  In addition, the adjoint map plays a
different role here and is not logically necessary for describing
the genus one curve and other geometric data from
the multilinear object.  For the remainder of this subsection, we fix the cubic
Jordan algebra $J$ and set $X := X_\CJ$ and $Y := Y_\CJ$.

We will work with the Hilbert scheme $\Hilb_2(X)$ of two points on $X$.  Let
$\zeta: \mathcal{Z}^{\univ} \to \Hilb_2(X)$ denote the universal degree $2$ subscheme over $\Hilb_2(X)$, so there is also a natural map $\epsilon: \mathcal{Z}^{\univ} \to X$.
Let $\mathcal{L}^{\univ}$ denote the universal line over $\Hilb_2(X)$ pulled back from
the universal line over the Hilbert scheme $\Hilb_2(\PP(\CJ))$ of two points in $\PP(\CJ)$.
That is, to a zero-dimensional degree $2$ subscheme in $X \subset \PP(\CJ)$,
we associate the unique line passing through it; a nonreduced subscheme gives a point and a tangent direction,
and thus also a unique line.  We then have the diagram
	\begin{equation} \label{eq:ZLuniv} \xymatrix{
		\mathcal{Z}^{\univ} \ar[rd]_-{\zeta} \ar@{^{(}->}[r]^{\iota} & \mathcal{L}^{\univ} \ar[d]^-{\PP^1 \textrm{-bundle}} \ar[r]^-{b} & \PP(\CJ) \\
		& \Hilb_2(X)
	} \end{equation}
where the map $b$ on the right comes from the construction of $\mathcal{L}^{\univ}$.  
A straightforward computation (e.g., as found in \cite[\S 2.1]{vermeire}) gives
	\begin{equation} \label{eq:Luniv}
	\mathcal{L}^{\univ} \cong \PP(\zeta_* \epsilon^* \OO_X(1)),
	\end{equation}
where $\OO_X(1)$ denotes the pullback of $\OO_{\PP(\CJ)}(1)$ to $X \subset \PP(\CJ)$.

We first give the construction of a genus one curve and the appropriate bundles from an orbit of $V \tns \CJ$.

\begin{construction} \label{cnstr:hermHCtoCurve}
Given $\phi \in V \tns \CJ$, we view  $\phi$ as a linear map
in $\Hom(V^\vee,\CJ)$.  Nondegeneracy implies that there is a map
	$$\PP(\phi): \PP(V^\vee) \ra \PP(\CJ)$$
whose image does not intersect $X$.
Lemma \ref{lem:ptonsecant} implies that the general points of $\PP(V^\vee)$ each lie
in exactly one secant line.  The idea is that the curve will be made up
of the ``pivot'' points of these secant lines and thus be a double cover
of $\PP(V^\vee)$.

More precisely, because the image of the map $\PP(\phi): \PP(V^\vee) \to \PP(\CJ)$ is not completely contained in $X$, the map $\PP(\phi)$ lifts uniquely to $\mathcal{L}^\univ$ by the valuative criterion, as $b$ is birational.  We thus have
	\begin{equation} \label{eq:ZLunivP1} \xymatrix{
		& & \PP(V^\vee) \ar[d]^{\PP(\phi)} \ar@{..>}[ld]_{\widetilde{\PP(\phi)}} \\
		\mathcal{Z}^{\univ} \ar[rd]_-{\zeta} \ar@{^{(}->}[r]^{\iota} & \mathcal{L}^{\univ} \ar[d]^-{p} \ar[r]^b & \PP(\CJ) \\
		& \Hilb_2(X)
	} \end{equation}
If $\widetilde{\PP(\phi)}$ factors through a fiber of $p$, then the image of $\PP(\phi)$
would itself be a secant line to $X$, which contradicts the nondegeneracy assumption.
Thus, the composite map $p \circ \widetilde{\PP(\phi)}$ is a finite map; because two
lines in $\PP(\CJ)$ intersect in at most one point, this composite map is actually an
isomorphism onto its image in $\Hilb_2(X)$.

Pulling back the bottom left triangle of \eqref{eq:ZLunivP1}
via $p \circ \widetilde{\PP(\phi)}: \PP(V^\vee) \to \Hilb_2(X)$, we obtain the diagram
	\begin{equation*} \xymatrix{
		C \ar[rd]_{\eta} \ar@{^{(}->}[r]^{\iota} & \Sigma \ar[d]^{p} \\
		& \PP(V^\vee)
	} \end{equation*}
where $\Sigma$ is a ruled surface and $C$ is a degree $2$ cover of $\PP(V^\vee)$.

Recall that $\Hilb_2(X)$ is the blowup of $\Sym^2(X)$ along the diagonal.
The $2$-to-$1$ map $C \to \PP(V^\vee)$ ramifies exactly where $\PP(V^\vee)$ intersects the locus of fat schemes of $\Hilb_2(X)$,
namely the pullback of the diagonal of $\Sym^2(X)$ via the natural birational map $\Hilb_2(X) \dasharrow \Sym^2(X)$.  
This ramification locus is the intersection of $\Im(\PP(\phi)) \subset \PP(\CJ)$ and the
tangent variety $Y$.  Since $Y$ is a quartic hypersurface in $\PP(\CJ)$, the ramification locus is a zero-dimensional subscheme
of degree $4$, and by the nondegeneracy assumption, the four points of ramification over $\overline{K}$ are distinct.  By Riemann-Hurwitz, the curve $C$, possibly after normalization, has genus one.

This construction also produces an explicit equation for the curve $C$, e.g., for $v \in V^\vee$, it is the double cover of $\PP(V^\vee)$ given by $y^2 = \disc(\phi(v)).$
The bundles on the curve $C$ may also be found immediately from the construction.
First, the pullback of $\OO_{\PP(V^\vee)}(1)$ via the degree $2$ map
$\eta$ is the desired degree $2$ line bundle $L$.  The map
\begin{equation*}
  \kappa: C \ra \mathcal{Z}^{\univ} \stackrel{\epsilon}{\ra} X \hookrightarrow \PP(\CJ)
\end{equation*}
and the moduli interpretation of $X$ as a flag variety in
$\PP(\CJ)$ together give a flag of bundles $\mathcal{F} = (E_i)$ on
the curve $C$.

Finally, we determine a relation between the line bundles $L$ and $\kappa^* \OO_{\PP(\CJ)}(1)$ on $C$.  It is easiest to
describe geometrically.  Take a hyperplane $H$ in $\PP(\CJ)$
containing the image of $\PP(\phi)$ but not its (cubic) image under the adjoint map $\beta_J$.  Then for
any point of $C$ intersecting $H$, its conjugate (under the map $\eta$) also lies on $H$, since the
secant line containing these two conjugate points intersects $\Im{\PP(\phi)}$ by the construction of $C$.
Thus, the line bundle $\kappa^* \OO_{\PP(\CJ)}(1)$ is a power of $L$.  

To show that this power is $3$, recall that the adjoint map $\beta_J$ preserves secant lines.  In particular, for a general
point in $\PP(\CJ) \setminus X$, the line spanned by itself and its image under $\beta_J$ is a secant line of $X$.
Now applying the adjoint map $\beta_J$ to $\Im{\PP(\phi)}$ gives a cubic rational curve in $\PP(\CJ)$, i.e., a curve whose intersection with $H$ is degree $3$.  Therefore, for a general choice of $H$, there are exactly three secant lines that contain points of $\Im{\PP(\phi)}$ and are contained in $H$; these give rise to exactly three pairs of conjugate points on the curve $C$ contained in $H$.

As noted earlier, the data of the curve $C$, the line bundle $L$, the flag $\mathcal{F}$, and the map $\kappa$
are all clearly preserved (up to isomorphism) under the action of the group
$\GL(V) \times \SL_2(J)$, since each factor acts by linear
transformations on its respective projective space.
\end{construction}

  We have described the map from $\GL(V) \times \SL_2(J)$-orbits of $V
  \tns \CJ$ to isomorphism classes of quadruples
  $(C,L,\mathcal{F},\kappa)$.  We now describe the reverse map.
  It will be clear that these two constructions are inverse to one another.

\begin{construction} \label{cnstr:curvetohermHC}
 The general idea of the reverse map is as follows: starting with the
  geometric data of $(C,L,\mathcal{F},\kappa)$, we would like to pick
  out a linear $\PP^1$ in $\PP(\CJ)$.  For each degree $2$ divisor on
  the curve in the linear series $\left| L \right|$, we find the
  ``average'' point of the images of its support in $X$, and all such points
  together form the desired $\PP^1$.
     
     More precisely, given a quadruple $(C,L,\mathcal{F},\kappa)$ as in
  the theorem, we have a natural degree $2$ map $\eta: C \to
  \PP(H^0(C,L)^\vee) \cong \PP^1$.  Using the hyperelliptic involution $\iota$ on $C$ given by $\eta$
  and the map $\kappa: C \to X$, we obtain the commutative diagram
  	\begin{equation*} \xymatrix{
  		C \ar[r]^{(\kappa, \kappa \circ \iota)} \ar[d]_{\eta} & X \times X \ar[d]^{S_2-\textrm{quotient}} \\
  		\PP(H^0(C,L)^\vee) \ar[r]_-{h'} & \Sym^2(X).
  	} \end{equation*}
  The map $h'$ may be lifted to a map $h: \PP(H^0(C,L)^\vee) \to \Hilb_2(X)$,
  since the image of $h'$ does not lie completely in the diagonal of $\Sym^2(X)$.  We thus have
  the commutative diagram
  	\begin{equation*} \xymatrix{
  		C \ar[r] \ar[d]_{\eta} & \mathcal{Z}^{\univ} \ar[d]_-{\zeta} \ar[r] & X \times X \ar[d]^{S_2-\textrm{quotient}} \\
  		\PP(H^0(C,L)^\vee) \ar[r]_-{h} & \Hilb_2(X) \ar[r] & \Sym^2(X).
  	} \end{equation*}
  By diagram \eqref{eq:ZLuniv}, recall that $\mathcal{L}^{\univ}$ is a $\PP^1$-bundle
  over $\Hilb_2(X)$.  Define $\Sigma := h^* \mathcal{L}^{\univ}$, 
  so $p: \Sigma \to \PP(H^0(C,L)^\vee)$ gives a ruled surface, specifically a $\PP^1$-bundle
  over $\PP(H^0(C,L)^\vee)$.
  In fact, using the relation between $L$ and $\kappa^*\OO_X(1)$ and \eqref{eq:Luniv},
  along with the projection formula, we see
  that $$\Sigma = \PP(\eta_* \OO_C \tns \OO_{\PP(H^0(C,L^\vee)}(3)).$$  
  We would like to pick out a section $s: \PP(H^0(C,L)^\vee) \to \Sigma$
  such that the image of the composite $b \circ h \circ s$ is linear in $\PP(\CJ)$.
  
  First, we claim there is a unique section $s: \PP(H^0(C,L)^\vee) \to \Sigma$ classifying
  $\eta_* \OO_C \tns \OO(3) \to \OO(1)$.
  That is, a cohomology computation shows that there is an exact sequence
  $$0 \to \OO(3) \to \eta_* \OO_C \tns \OO(3) \to \OO(1) \to 0$$
  on $\PP(H^0(C,L)^\vee)$, and because $\Hom(\OO(3),\OO(1)) = 0$,
  any such map $\eta_* \OO_C \tns \OO(3) \to \OO(1)$ is this one, up to scalars.
 
  Therefore, we obtain a map from $\PP(H^0(C,L)^\vee)$ to $\mathcal{L}^{\univ}$,
  and thus to $\PP(\CJ)$.  The nondegeneracy condition on the geometric data that we require
  is exactly that the image of this map does not intersect $X$; it is clear that it is satisfied
  by the data in Construction \ref{cnstr:hermHCtoCurve} by assumption.

  We now check that this map $\PP(H^0(C,L)^\vee) \to \PP(\CJ)$ is linear,
  i.e., the pullback of $\OO_{\PP(\CJ)}(1)$ to $\PP(H^0(C,L)^\vee)$
  from $\PP(\CJ)$ (via $b \circ h \circ s$) is isomorphic to $\OO_{\PP(H^0(C,L)^\vee)}(1)$.
  Since $\Sigma$ is a ruled surface,
  there exists some $a_1, a_2 \in \ZZ$ such that
  \begin{equation} \label{eq:linearityHerm}
  (b \circ h)^* \OO_{\PP(\CJ)}(1) = \OO_p(a_1) \tns p^* \OO_{\PP(H^0(C,L)^\vee)}(a_2).
  \end{equation}
  It is easy to see that $a_1 = 1$ because the fibers of $\Sigma$ map to lines in $\PP(\CJ)$.
  To compute $a_2$, we pull back \eqref{eq:linearityHerm} to $C$ via $\iota: C \to \Sigma$:
  \begin{align} \label{eq:linearityHerm2}
  \iota^* (b \circ h)^* \OO_{\PP(\CJ)}(1)
	  &= \iota^* \OO_p(1) \tns \eta^* \OO_{\PP(H^0(C,L)^\vee)}(a_2) \\
	  &= \eta^* \OO_{\PP(H^0(C,L)^\vee)}(3) \tns \eta^* \OO_{\PP(H^0(C,L)^\vee)}(a_2), \nonumber
  \end{align}  
  Since the left-hand side of \eqref{eq:linearityHerm2}
  is just $\eta^* \OO_{\PP(H^0(C,L)^\vee)}(3)$ by the
  assumed relation, we must have $a_2 = 0$.
  Thus, we obtain
  $$s^* (b \circ h)^* \OO_{\PP(\CJ)}(1) = s^* (\OO_p(1)) = \OO_{\PP(H^0(C,L)^\vee)}(1),$$
  as desired.
  
\end{construction}

\subsection{Specializations} \label{sec:hermHCcases}

Allowing the cubic Jordan algebra $J$ in Theorem \ref{thm:hermHC} to vary gives many special cases, as highlighted in Table \ref{table:examples}.  For certain choices of $J$, we recover some of the previously considered spaces related to hypercubes and the corresponding parametrization theorems, while for others, we obtain moduli spaces of genus one curves with higher rank vector bundles.

For example, for $J = K \times K \times K$, we recover the case of standard hypercubes from \S \ref{sec:hypercube}.  In this case,
the homogeneous variety $X_\CJ$ is just the Segre embedding of $\PP^1 \times \PP^1 \times \PP^1$.
If we instead let $J = K$, with norm form $\Norm(x) = x^3$ for an element $x \in K$, then the space $\CJ$ coincides with the space of triply symmetric hypercubes studied in \S \ref{sec:3symHC}, and $X_\CJ$ is the twisted cubic in $\PP^3$.  Also, the space of doubly symmetric hypercubes (see \S \ref{sec:2symHC}) may be obtained by taking $J = K \times K$, with the norm form $\Norm(x_1, x_2) = x_1 x_2^2$ for $(x_1,x_2) \in K \times K$ and $X_\CJ$ the image of $\PP^1 \times \PP^1$ embedded in $\PP^5$ via $\mathcal{O}(1,2)$.

We describe some of the new moduli problems below.  In each case, we also describe more carefully the bijections when the algebra $J$ contains split algebras, e.g., matrix algebras instead of more general central simple algebras.  Taking other forms of these algebras then give twisted versions of the geometric data on the genus one curve.

\subsubsection{Doubly skew-symmetrized hypercubes} \label{sec:2skewHC}

A new case arises by choosing the cubic Jordan algebra $J$ to be $K \times \Mat_{2 \times 2}(K)$.  We first describe the structure of $J$ and the space of Hermitian cubes $\CJ$ for this choice of $J$.

The norm of an element $(x, M) \in K \times \Mat_{2 \times 2}(K)$ is $\Norm(x,M) = x \det(M)$, and for Springer's construction, we take the basepoint $e$ to be $(1, \mathrm{Id})$.  Composition in this algebra is component-wise, with the usual Jordan structure on $\Mat_{2 \times 2}(K)$: for elements $(x_1, M_1), (x_2, M_2) \in K \times \Mat_{2 \times 2}(K)$, we have $(x_1, M_1) \bullet (x_2, M_2) = (x_1 x_2, (M_1 \cdot M_2 + M_2 \cdot M_1)/2)$.

We claim that there is a natural isomorphism between $\CJ$ and the space $W := K^2 \tns \wedge^2 K^4$, where the action of $\SL_2(J)$ on $\CJ$ corresponds to the natural action of $\SL_2(K) \times \SL_4(K)$ on $W$.  We represent elements of $W$ as a pair of $4 \times 4$ skew-symmetric matrices.  Let $a, b, c, d, b_{ij}, c_{ij} \in K$ for $1 \leq i, j \leq 2$.  Then this isomorphism sends
$(a, (b, (b_{ij})), (c, (c_{ij})), d) \in \CJ$ to the pair
\begin{equation*}
{\begin{pmatrix}
0 & a & -b_{12} & b_{11} \\
-a & 0 & -b_{22} & b_{21} \\
b_{12} & b_{22} & 0 & c \\
-b_{11} & -b_{21} & -c & 0
\end{pmatrix}}
\qquad \qquad
{\begin{pmatrix}
0 & b & -c_{12} & c_{11} \\
-b & 0 & -c_{22} & c_{21} \\
c_{12} & c_{22} & 0 & d \\
-c_{11} & -c_{21} & -d & 0
\end{pmatrix}}
\end{equation*}
in $W$.  It is a slightly tedious but trivial computation to check that the two group actions align.

The homogeneous variety $X_\CJ$ is $\PP^1 \times \Gr(2,4) \hookrightarrow \PP^1 \times \PP(\wedge^2 K^4) \hookrightarrow \PP(W^\vee)$, where the first inclusion is by the Pl\"{u}cker map and the second is the Segre embedding.  A map from a scheme $T$ to $X$ thus gives a degree $2$ line bundle and a rank $2$ vector bundle on $T$.  Theorem \ref{thm:hermHC} with this choice of $J$ gives the following moduli description for pairs of Hermitian cubes up to equivalence:

\begin{thm}
  Let $V_1$, $V_2$, and $V_3$ be $K$-vector spaces of dimensions $2$, $2$, and $4$,
  respectively.  Then nondegenerate $\GL(V_1) \times \GL(V_2) \times \GL(V_3)$-orbits of
  $V_1 \tns V_2 \tns \wedge^2(V_3)$ are in bijection with isomorphism classes of nondegenerate 
  triples $(C,L_1,L_2,E)$, where $C$ is a genus one curve over $K$, $L_1$ and $L_2$ are non isomorphic
  degree $2$ line bundles on $C$, and $E$ is a rank $2$ vector bundle
  on $C$ with $\det E \cong L_1 \tns L_2$.
\end{thm}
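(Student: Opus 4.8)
The plan is to obtain this statement as the specialization of Theorem \ref{thm:hermHC} to the cubic Jordan algebra $J = K \times \Mat_{2\times 2}(K)$. As recorded in \S\ref{sec:2skewHC}, for this $J$ (of dimension $5$ over $K$) the space of Hermitian cubes $\CJ$ is isomorphic, as a representation, to $W = V_2 \tns \wedge^2 V_3$ with $\dim V_2 = 2$ and $\dim V_3 = 4$, and under this isomorphism the group $\SL_2(J)$ acts as $\SL(V_2) \times \SL(V_3)$, while the rank-one locus $X_\CJ$ becomes $\PP(V_2^\vee) \times \Gr(2,V_3)$, embedded in $\PP(W^\vee) = \PP(\CJ)$ by the composite of the Pl\"ucker and Segre maps. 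Taking $V := V_1$ of dimension $2$, the representation $V \tns \CJ$ is exactly $V_1 \tns V_2 \tns \wedge^2 V_3$, so Theorem \ref{thm:hermHC} furnishes a bijection between nondegenerate $\GL(V_1) \times \SL_2(J)$-orbits and isomorphism classes of quadruples $(C,L,\mathcal{F},\kappa)$. First I would reconcile the acting groups: the target group $\GL(V_1) \times \GL(V_2) \times \GL(V_3)$ differs from $\GL(V_1) \times \SL_2(J)$ only by the central torus of $\GL(V_2) \times \GL(V_3)$, which acts on $\CJ$, and hence on $\phi \in V_1 \tns \CJ$, purely by overall scaling; since such scaling leaves $\PP(\phi)$ and therefore the whole construction of \S\ref{sec:deg2moduli} unchanged, and overall scaling of $\phi$ is already realized through the center of $\GL(V_1)$, the two groups have identical orbits on $V_1 \tns \CJ$.

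Next I would translate the flag datum $(\mathcal{F},\kappa)$ of Theorem \ref{thm:hermHC} into the triple $(L_1,L_2,E)$. Since $X_\CJ = \PP(V_2^\vee) \times \Gr(2,V_3)$ is a product, the map $\kappa : C \to X_\CJ$ splits as $\kappa = (\kappa_1,\kappa_2)$, and the universal flag on $X_\CJ$ is the pair consisting of the tautological line on the $\PP(V_2^\vee)$-factor and the one-step flag $0 \subset S \subset V_3 \tns \OO$ on the Grassmannian. Pulling back along $\kappa$ then produces a degree $2$ line bundle $L_2$ on $C$ (from $\kappa_1$, a degree $2$ map to $\PP^1$) together with a rank $2$ vector bundle $E := \kappa_2^* S$ (or the universal quotient, according to convention), and I set $L_1 := L$, the degree $2$ line bundle supplied by the theorem. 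Note that, in contrast to the split-quaternion Rubik's-cube case, here the Grassmannian factor directly yields a genuine rank $2$ bundle, so no passage through an Azumaya algebra is needed.

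The main work is to establish the determinant relation $\det E \cong L_1 \tns L_2$ and the non-isomorphism $L_1 \not\cong L_2$. For the relation I would compute $\OO_{\PP(\CJ)}(1)|_{X_\CJ}$ in terms of $\OO_{\PP(V_2^\vee)}(1)$ and the Pl\"ucker determinant bundle on $\Gr(2,V_3)$, and feed this into the identity $\kappa^* \OO_{\PP(\CJ)}(1) \cong L^{\tns 3}$ coming from Theorem \ref{thm:hermHC}; combined with the way $C$ double-covers $\PP(V_1^\vee)$ and with the conjugate-point (secant-line) structure built into $\kappa$ through the Hilbert scheme $\Hilb_2(X_\CJ)$, this should pin down $\det E$ as $L_1 \tns L_2$ after the correct normalization of the two degree $2$ bundles. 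The delicate point is precisely this normalization: the bundles pulled back naively from the factors of $X_\CJ$ are twisted by the hyperelliptic involution, so matching them to $L_1$ and $L_2$ requires care, and I expect this bookkeeping to be the main obstacle. I would verify it on the split locus, where the element lies in the image of the skew-symmetrization map $\id \tns \wedge_{2,2}$ of \S\ref{sec:2skewHCpreview}: there $E$ degenerates to $L_3 \oplus L_4$ and the relation $\det E \cong L_1 \tns L_2$ reduces to the regular-hypercube identity $L_3 \tns L_4 \cong L_1 \tns L_2$ of Lemma \ref{lem:HCrelation} (equivalently Theorem \ref{thm:hypercube}), while $L_1 \not\cong L_2$ follows from the corresponding non-isomorphism there. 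Finally, since the constructions of \S\ref{sec:deg2moduli} are mutually inverse, the specialized maps remain inverse to one another, completing the bijection.
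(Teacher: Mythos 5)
Your proposal is correct and follows essentially the same route as the paper: §\ref{sec:2skewHC} obtains this theorem precisely by specializing Theorem \ref{thm:hermHC} to $J = K \times \Mat_{2\times 2}(K)$, identifying $\CJ$ with $V_2 \tns \wedge^2 V_3$ and $X_\CJ$ with $\PP(V_2^\vee) \times \Gr(2,V_3)$ so that the flag datum becomes the degree $2$ line bundle and the rank $2$ bundle $E$. Your extra care about the group reconciliation, the normalization of $L_1,L_2$ against $\kappa^*\OO_{\PP(\CJ)}(1)\cong L^{\tns 3}$, and the check on the split locus via Lemma \ref{lem:HCrelation} only makes explicit bookkeeping that the paper leaves implicit.
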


If we record the point $P := L_2 \tns L_1^{-1}$ instead of the line bundle $L_2$, we recover Theorem~\ref{doubleskewpar}.
Also,  we may also replace $J$ with $K \times Q$, for any quaternion algebra $Q$, in which case the homogeneous variety $X$ is a product of $\PP^1$ and a twisted form of the Grassmannian $\Gr(2,4)$.

\subsubsection{Triply skew-symmetrized hypercubes} \label{sec:3skewHC}

Now let $J = \HH_3(K \times K)$, i.e., the space of $3 \times 3$ Hermitian matrices over the quadratic algebra
$K \times K$.  It is easy to check that $J$ is isomorphic to the space $\Mat_{3 \times 3}(K)$ of $3 \times 3$ matrices over $K$,
with composition given by $(M_1 \cdot M_2 + M_2 \cdot M_1)/2$ for $M_1, M_2 \in \Mat_{3 \times 3}(K)$.

Then the space $\CJ$ of Hermitian cubes is isomorphic to $W := \wedge^3 K^6$, with the action of $\SL_2(J)$ matching the natural action of $\SL_6(K)$ on $W$.  We will write down the isomorphism, following \cite[Example 19]{krutelevich}.  Let $a, b_{ij}, c_{ij}, d \in K$ for $1 \leq i, j \leq 3$.  Let $\{e_1, e_2, e_3, f_1, f_2, f_3 \}$ be a basis for $K^6$, and let $e_j^* = e_{j+1} \wedge e_{j+2}$ and $f_j^* = f_{j+1} \wedge f_{j+2}$ in $\wedge^2 K^6$, where the indices are taken cyclically. Then the element $(a,(b_{ij}),(c_{ij}),d)$ is sent to
$$a e_1 \wedge e_2 \wedge e_3 + \sum_{i,j = 1}^3 b_{ij} e_i \wedge f_j^* + \sum_{i,j = 1}^3 c_{ij} f_i \wedge e_j^* + d f_1 \wedge f_2 \wedge f_3$$
in $W = \wedge^3 K^6$.

Here, the homogeneous variety $X_\CJ$ is the Grassmannian $\Gr(3,6)$, which lies in $\PP(W^\vee)$ via the Pl\"{u}cker map.
Specializing Theorem \ref{thm:hermHC} gives the following basis-free version of Theorem \ref{thm:3skewHCpreview}:

\begin{thm} \label{thm:3skewHC}
  Let $V_1$ and $V_2$ be $K$-vector spaces of dimensions $2$ and $6$,
  respectively.  Then nondegenerate $\GL(V_1) \times \SL(V_2)$-orbits of
  $V_1 \tns \wedge^3(V_2)$ are in bijection with isomorphism classes of
  nondegenerate triples $(C,L,E)$, where $C$ is a genus one curve over $K$, $L$ is a
  degree $2$ line bundle on $C$, and $E$ is a rank $3$ vector bundle
  on $C$ with $\det E \cong L^{\tns 3}$.
  \end{thm}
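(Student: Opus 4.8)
The plan is to specialize the uniform Theorem \ref{thm:hermHC} to the cubic Jordan algebra $J = \HH_3(K \times K) \cong \Mat_{3\times 3}(K)$ and translate the abstract geometric data $(C,L,\mathcal{F},\kappa)$ into the concrete triple $(C,L,E)$. First I would record the identification $\CJ \cong \wedge^3 V_2$ (with $V_2$ of dimension $6$) and the matching of group actions $\SL_2(J) \cong \SL(V_2)$, as already set up in the paragraph preceding the theorem following \cite[Example 19]{krutelevich}; this reduces the claim to unwinding what Theorem \ref{thm:hermHC} asserts in this case. The homogeneous variety is $X_\CJ = \Gr(3,V_2)$, embedded in $\PP(\wedge^3 V_2^\vee)$ by the Pl\"ucker map. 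The crux is therefore to understand the flag $\mathcal{F}$ and the map $\kappa: C \to X_\CJ$ concretely: since $X_\CJ$ is the Grassmannian $\Gr(3,V_2)$, its universal flag is simply the tautological rank $3$ subbundle of the trivial bundle $V_2 \tns \OO$, so pulling back via $\kappa$ produces a single rank $3$ vector bundle $E$ on $C$. The data $\mathcal{F} = (E_i)$ collapses to just this one bundle $E$, giving the desired object.

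Next I would verify the determinantal relation. Theorem \ref{thm:hermHC} supplies the isomorphism $\kappa^* \OO_{\PP(\CJ)}(1) \cong L^{\tns 3}$. On the Grassmannian $\Gr(3,V_2)$, the restriction of $\OO_{\PP(\wedge^3 V_2)}(1)$ under the Pl\"ucker embedding is exactly the determinant $\det(\mathcal{S}^\vee)$ of the dual tautological subbundle, i.e.\ the Pl\"ucker line bundle is $\det$ of the universal rank $3$ quotient/sub. Pulling back via $\kappa$ then gives $\det E^\vee \cong \kappa^*\OO_{\PP(\CJ)}(1)$ up to the standard sign/duality convention, so that $\det E \cong L^{\tns 3}$ after fixing conventions. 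I would state this using the standard fact (as cited in the proof of the degree $3$ analogue, \cite[Chapter 9]{fulton-youngtableaux}) that the pullback of $\OO(1)$ to a Grassmannian in its Pl\"ucker embedding is the determinant of the tautological bundle. This handles the relation $\det E \cong L^{\tns 3}$ appearing in the statement.

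The main obstacle I anticipate is making the nondegeneracy condition on triples $(C,L,E)$ explicit and matching it cleanly with the open condition coming from Theorem \ref{thm:hermHC}. The uniform theorem phrases nondegeneracy via the injectivity of $\flat \circ \phi$ and the requirement that the image of $\PP(\phi)$ avoid $X_\CJ$ and meet $Y_\CJ$ transversally in four distinct points; translating this into an intrinsic condition on the bundle $E$ (namely that $E$ is a genuine rank $3$ bundle realizing $C$ as a smooth curve rather than a degenerate or decomposable configuration) requires care. The point is that $\kappa$ must be an immersion into $\Gr(3,V_2)$ giving a very ample embedding data, and the four ramification points of $C \to \PP(V^\vee)$ must be distinct. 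I would argue, as in the degree $3$ specializations of \S\ref{sec:deg3special}, that this is an open condition and can be rephrased as a mild genericity hypothesis on $(C,L,E)$; I would then note that it is automatically satisfied for data arising from an actual element of $V_1 \tns \wedge^3(V_2)$, so the correspondence is a genuine bijection on the nondegenerate loci. Finally, I would remark that passing from $L_2$ in the earlier theorems to the difference point $P$, as in the transition to Theorem \ref{thm:3skewHCpreview}, requires only the same bookkeeping used in Corollary \ref{cor:333CLP} and is not a substantive difficulty here, since the statement of Theorem \ref{thm:3skewHC} already keeps the bundle $E$ directly.
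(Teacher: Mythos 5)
Your proposal is correct and follows essentially the same route as the paper: the paper's entire argument for this theorem consists of identifying $J = \HH_3(K\times K)\cong \Mat_{3\times 3}(K)$, writing down the isomorphism $\CJ\cong\wedge^3 V_2$ compatibly with the group actions (citing Krutelevich's Example 19), observing that $X_\CJ$ is $\Gr(3,V_2)$ in its Pl\"ucker embedding so the universal flag collapses to the tautological rank $3$ bundle, and then specializing Theorem \ref{thm:hermHC}. Your additional remarks on the determinant relation and on matching the nondegeneracy conditions are consistent with (and slightly more explicit than) what the paper records.
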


\subsubsection{Some more exceptional representations} \label{sec:excHC}

For $J = \HH_3(\mathscr{Q})$, where $\mathscr{Q}$ denotes the split quaternion algebra over $K$ (isomorphic to $\Mat_{2 \times 2}(K)$), we obtain a more exceptional representation and theorem.

\begin{thm}
	Let $V_1$ and $V_2$ be $K$-vector spaces of dimensions $2$ and $32$, respectively, where $V_2$
	is the half-spin representation of $\Spin_{12}$.  Let $X$ be the homogeneous space for this action in
	$\PP(V_2^\vee)$.
	Then nondegenerate $\GL(V_1) \times \Spin_{12}$-orbits of $V_1 \tns V_2$
	are in bijection with the $K$-points of an open substack
	of the moduli space of nondegenerate tuples $(C,L,\kappa, \psi)$, where $C$ is a genus one curve, $L$ is a degree $2$
	line bundle on $C$, and $\kappa$ is a map from $C$ to $X \subset \PP(V_2^\vee)$, along 
	with an isomorphism $\psi$ from $L^{\tns 3}$ to the pullback of $\OO_{\PP(V_2^\vee)}(1)$ to $C$ via $\kappa$.
\end{thm}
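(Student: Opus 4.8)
The plan is to deduce this theorem as the specialization of Theorem \ref{thm:hermHC} to the cubic Jordan algebra $J = \HH_3(\mathscr{Q})$, where $\mathscr{Q} \cong \Mat_{2\times 2}(K)$ is the split quaternion algebra over $K$. First I would record the numerics. Since $\mathscr{Q}$ has dimension $4$ over $K$, the Hermitian matrix algebra $J = \HH_3(\mathscr{Q})$ has dimension $3 + 3\cdot 4 = 15$, and hence the Hermitian cube space $\CJ$ has dimension $2\dim_K J + 2 = 32$. This matches the dimension of $V_2$ and is the first confirmation that we are in the right case.

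The heart of the argument is the representation-theoretic identification recorded in the table of \S\ref{subsec:Hcubes}: for $J = \HH_3(\mathscr{Q})$, the group $\SL_2(J)$ of discriminant-preserving linear transformations of $\CJ$ is a (split) form of $\Spin_{12}$, and $\CJ$ is its $32$-dimensional half-spin representation $S^+$. I would justify this by exhibiting the quartic norm form $\Norm_\CJ$ on $\CJ$ as the (essentially unique) $\Spin_{12}$-invariant quartic on the half-spin representation, and by matching the $\SL_2(J)$-action with that of $\Spin_{12}$; this is classical in the theory of Freudenthal triple systems and follows from the Freudenthal--Tits construction relating $\HH_3(A)$ to the $56$-dimensional representation and its degenerations (cf.\ \cite{krutelevich, freudenthal-magic}). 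Under this identification we set $V_2 = \CJ = S^+$.

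Next I would identify the variety $X$ of the statement with the variety $X_\CJ$ of \S\ref{subsec:Hcubes}. As explained there, $X_\CJ$ is the projectivization of the orbit of the highest weight vector of $\CJ$ under $\SL_2(J) = \Spin_{12}$, i.e.\ the unique closed orbit $\Spin_{12}/P_\CJ$ in $\PP(\CJ)$, which is exactly the homogeneous space $X \subset \PP(V_2^\vee)$ named in the theorem. In particular the flag $\mathcal{F} = (E_i)$ of Theorem \ref{thm:hermHC}, being the pullback of the universal flag on $X_\CJ$ via $\kappa$, is completely determined by the map $\kappa\colon C \to X$; recording it separately is redundant, so the data of Theorem \ref{thm:hermHC} collapses to the tuple $(C, L, \kappa, \psi)$, where $\psi$ is precisely the isomorphism $L^{\tns 3} \stackrel{\cong}{\ra} \kappa^*\OO_{\PP(\CJ)}(1)$ furnished by the relation $\kappa^*\OO_{\PP(\CJ)}(1) \cong L^{\tns 3}$.

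With these three identifications in place, the theorem becomes a direct transcription of Theorem \ref{thm:hermHC}: nondegenerate $\GL(V_1)\times \SL_2(J)$-orbits of $V_1 \tns \CJ$, that is, nondegenerate $\GL(V_1)\times \Spin_{12}$-orbits of $V_1 \tns V_2$, biject with the stated tuples, and the open nondegeneracy condition on the geometric side is inherited verbatim from Constructions \ref{cnstr:hermHCtoCurve} and \ref{cnstr:curvetohermHC}, whence the phrasing as the $K$-points of an open substack of the relevant moduli space. I expect the main obstacle to be the identification $\SL_2(\HH_3(\mathscr{Q})) \cong \Spin_{12}$ with $\CJ \cong S^+$: everything else is bookkeeping, but pinning down the correct form of $\Spin_{12}$ over a non-closed field $K$ — and verifying that the split choice of $\mathscr{Q}$ yields the split $\Spin_{12}$ — requires care, since other quaternion algebras would produce twisted forms of both the group and the variety $X$.
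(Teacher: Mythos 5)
Your proposal is correct and follows exactly the paper's route: the paper obtains this theorem as the immediate specialization of Theorem \ref{thm:hermHC} to $J = \HH_3(\mathscr{Q})$ for the split quaternion algebra $\mathscr{Q} \cong \Mat_{2\times 2}(K)$, using the identification $\SL_2(J) \cong \Spin_{12}$ with $\CJ \cong S^+(32)$ recorded in the table of \S\ref{subsec:Hcubes}, and the observation that the flag $\mathcal{F}$ is determined by $\kappa$ so the data reduces to $(C,L,\kappa,\psi)$. Your dimension check and your caveat about which form of $\Spin_{12}$ arises over a non-closed field are both apt; the paper itself supplies no more detail than this.
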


In Table \ref{table:examples}, we referred to this choice of $X_\CJ$ as the projective line
over the cubic algebra $J$; this is due to our interpretation of $X_\CJ$ as the rank one cubes
that are Hermitian over $J$.
In addition, analogous theorems hold when $\mathscr{Q}$ is replaced by a non-split quaternion algebra.  Then, the group $\Spin_{12}$ is replaced by the appropriate twists.

For $J = \HH_3(\mathscr{O})$, where $\mathscr{O}$ denotes the split octonion algebra over $K$, we have
a similar statement.

\begin{thm}
	Let $V_1$ and $V_2$ be $K$-vector spaces of dimensions $2$ and $56$, respectively, where $V_2$ is the miniscule
	representation of the group $E_7$.  Let $X$ be the homogeneous space for this action in $\PP(V_2^\vee)$.  Then
	nondegenerate $\GL(V_1) \times E_7$-orbits of $V_1 \tns V_2$ are in bijection with the $K$-points of an open substack
	of the moduli space of nondegenerate tuples $(C,L,\kappa,\psi)$, where $C$ is a genus one curve, $L$ is a degree $2$
	line bundle on $C$, and $\kappa$ is a map from $C$ to $X \subset \PP(V_2^\vee)$, along 
	with an isomorphism $\psi$ from $L^{\tns 3}$ to the pullback of $\OO_{\PP(V_2^\vee)}(1)$ to $C$ via $\kappa$.
\end{thm}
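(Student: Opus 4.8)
The plan is to deduce this statement as the specialization of Theorem \ref{thm:hermHC} to the cubic Jordan algebra $J = \HH_3(\mathscr{O})$, where $\mathscr{O}$ is the split octonion algebra over $K$. The whole of the genuine geometric content -- the construction of the genus one curve as a double cover of $\PP(V_1^\vee)$ branched along the intersection with the tangent variety $Y_\CJ$, the degree $2$ line bundle $L$, the map $\kappa$ to the rank-one locus $X_\CJ$, and the relation $\kappa^*\OO_{\PP(\CJ)}(1) \cong L^{\tns 3}$ -- is already carried out uniformly in \S\ref{sec:deg2moduli}. So the task reduces to correctly matching the data, and the remaining work is representation-theoretic bookkeeping.

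First I would record the identifications collected in the table of \S\ref{subsec:Hcubes}: for $J = \HH_3(\mathscr{O})$ the Hermitian cube space $\CJ$ is the $56$-dimensional representation $V_2$ of the split group $E_7$, and the group $\SL_2(J)$ of discriminant-preserving linear transformations of $\CJ$ is precisely $E_7$. Under these identifications the representation $V \tns \CJ$ becomes $V_1 \tns V_2$ with its natural $\GL(V_1) \times E_7$-action, and the nondegeneracy condition of \S\ref{sec:deg2moduli} (injectivity of $\flat \circ \phi$) matches the condition that the image of $\PP(\phi)$ avoid $X$ and meet $Y_\CJ$ in four distinct points over $\overline{K}$. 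I would then identify the variety $X = X_\CJ \subset \PP(V_2^\vee)$: by the general discussion in \S\ref{subsec:Hcubes}, $X_\CJ$ is the projectivization of the orbit of the highest weight vector of $\CJ$, hence the unique closed $E_7$-orbit in $\PP(V_2^\vee)$, which is exactly the homogeneous space $X$ named in the statement.

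With these identifications in place, Theorem \ref{thm:hermHC} applies directly and yields a canonical bijection between nondegenerate $\GL(V_1) \times E_7$-orbits of $V_1 \tns V_2$ and isomorphism classes of nondegenerate quadruples $(C,L,\mathcal{F},\kappa)$, where $\mathcal{F}$ is the flag pulled back from the universal flag on $X$ via $\kappa \colon C \to X$, subject to $\kappa^*\OO_{\PP(\CJ)}(1) \cong L^{\tns 3}$. The final step is cosmetic repackaging: unlike the Grassmannian cases of \S\ref{sec:2skewHC} and \S\ref{sec:3skewHC}, the minuscule $E_7$-variety $X$ carries no single familiar vector bundle capturing the whole flag, so the cleanest record of the geometric data simply retains the map $\kappa$ together with the isomorphism $\psi \colon L^{\tns 3} \to \kappa^*\OO_{\PP(V_2^\vee)}(1)$, from which $\mathcal{F}$ is recovered. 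Rephrasing the bijection (and the open nondegeneracy condition) in terms of an open substack of the moduli space of tuples $(C,L,\kappa,\psi)$ then gives the theorem.

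I expect the only real difficulty to be the first step: pinning down the isomorphism of $(\SL_2(J),\CJ)$ with $(E_7, V_2)$ as representations and confirming that the two group actions coincide on the nose, rather than merely up to abstract isomorphism. This rests on the structure theory of Freudenthal triple systems and is essentially extracted from \cite{krutelevich}; everything downstream is a verbatim appeal to Theorem \ref{thm:hermHC} and the constructions of \S\ref{sec:deg2moduli}.
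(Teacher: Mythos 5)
Your proposal is correct and follows exactly the paper's route: the theorem is obtained by specializing Theorem \ref{thm:hermHC} to the cubic Jordan algebra $J = \HH_3(\mathscr{O})$, using the identification of $(\SL_2(J), \CJ)$ with $(E_7, V_2)$ from the table in \S\ref{subsec:Hcubes} and retaining the map $\kappa$ and the isomorphism $\psi$ in place of a flag of familiar vector bundles. The paper's own treatment in \S\ref{sec:excHC} is even terser than yours, so your added care about matching the group actions and the nondegeneracy conditions is welcome but not a departure.
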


Again, taking different octonion algebras over $K$ gives similar theorems, where the split $E_7$ is replaced by twisted forms
of $E_7$.


\section{Connections with exceptional Lie groups and Lie algebras} \label{sec:ExcLieAlgs}

In this section, we describe two ways in which the coregular representations we have considered
in this paper are related to exceptional Lie groups and Lie algebras.  
These still mysterious connections give hints 
as to further moduli problems and directions for investigation.

\subsection{Vinberg's \texorpdfstring{$\theta$}{theta}-groups}\label{vinberg}

All of the representations we have considered in this paper are  {\em $\theta$-groups} in the sense
of Vinberg \cite{vinberg}, when viewed as representations of complex Lie groups.
Vinberg's idea was to extend the concept of a Weyl group and a Cartan
subspace to graded Lie algebras.  Then the invariants of the representation correspond exactly
to the invariants of the Cartan subspace under the action of the Weyl group.  Since the Weyl group
here is generated by complex reflections, its ring of invariants is free, so the representations
obtained in this way are coregular.  Moreover, this construction gives a description of the invariants
(including their degrees) in terms of the Lie theory.

Let $\mathfrak{g}$ be a $\ZZ/m\ZZ$-graded (or $\ZZ$-graded) Lie algebra for some integer $m \geq 1$ (or
respectively, $m = \infty$).  Then for $m$ finite, we may write
	$$\mathfrak{g} = \mathfrak{g}_0 + \mathfrak{g}_1 + \cdots + \mathfrak{g}_{m-1}$$
with $[\mathfrak{g}_i,\mathfrak{g}_j] \subset \mathfrak{g}_{i+j}$ for $i,j \in \ZZ/m\ZZ$.
To each such graded Lie algebra, one associates an automorphism $\theta$ (or, for $m$ infinite, a one-parameter
family of such $\theta$) of $\mathfrak{g}$, \eg for $m$ finite, we have $\theta(x) = \omega^k x$ for
$x \in \mathfrak{g}_k$ and $\omega = e^{2\pi i}$.

Given a graded Lie algebra $\mathfrak{g}$, let $G$ be any connected group having $\mathfrak{g}$ as its Lie algebra,
and let $G_0$ be the connected subgroup of $G$ with $\mathfrak{g}_0$ as its Lie algebra.  Then a 
{\em $\theta$-group} corresponding to $\mathfrak{g}$ is the representation of $G_0$ on $\mathfrak{g}_1$.
(The name ``group'' makes sense by thinking of $G_0$ as a subgroup of $\GL(\mathfrak{g}_1)$.)

Vinberg showed that the $G_0$-invarants of $\mathfrak{g}_1$ form a polynomial ring, and in fact,
the elements of $\mathfrak{g}_1$ with the same $G_0$-invariants comprise a finite number of orbits 
over $\CC$ \cite{vinberg}. Moreover, Kac showed that most such representations arise in this way
\cite{kac-nilpotent}.

These observations give a heuristic reason for looking at these particular representations if
we want to find parametrizations of genus one curves with data such as line bundles and 
points on the Jacobian.  The coarse moduli space of such objects is often a weighted projective space,
or a generically finite cover thereof, e.g., in many of the cases in this paper, the orbit spaces over $\CC$ parametrize elliptic
curves in some family determined by the invariants.  In contrast, arithmetic objects such as rings and ideal
classes, whose coarse moduli spaces are just a finite number of points, are often parametrized by
orbits of prehomogeneous vector spaces.

The $\theta$-groups may be read off directly from subdiagrams of Dynkin diagrams or
affine Dynkin diagrams \cite{vinberg,kac-nilpotent}.  For subdiagrams of Dynkin 
diagrams, the $\theta$-groups are all prehomogeneous vector spaces.  All other
irreducible $\theta$-groups are listed in \cite[Table III]{kac-nilpotent}.  
These are given by removing a single node from the affine Dynkin diagram.
We have indicated the affine Dynkin diagram that gives rise to each representation
in Table \ref{table:examples}.  (Note that in Table~\ref{table:examples} we only 
list the semisimple part of the $\theta$-group.) We have $m=2$ for lines 1--12 of Table~\ref{table:examples}, $m=3$ for lines 13--18, $m=4$ for line 19, and $m=5$ for line 20.  Thus the value of $m$ corresponds to the degree of the associated line bundles in the geometric data!

The connection between these moduli problems and Vinberg theory, especially in the special case $m=2$, is investigated in the beautiful work of Thorne~\cite{jthorne-thesis}.  It is an interesting question as to how Thorne's representation-theoretic constructions of families of curves when $m=2$, obtained via Vinberg theory and the deformation theory of simple singularities, are related to our more direct geometric constructions of these families.  

\subsection{The Magic Triangle of Deligne and Gross}

In~\cite{delignegross}, Deligne and Gross observed that many of the remarkable properties of the
adjoint representations of the groups in the exceptional series
\begin{equation}\label{es}
1\subset A_1 \subset A_2 \subset G_2 \subset D_4 \subset F_4 \subset E_6 \subset E_7\subset E_8
\end{equation}
(as observed in~\cite{deligne-exceptionalseries}) persist for certain other natural sequences of representations.
Namely, for each pair $H \subset K$ of distinct subgroups in (\ref{es}), we may consider the centralizer $Z(H,K)$ of $H$ in $K$. In this way, we obtain a triangle of subgroups of $E_8$, as shown in Table~\ref{dgtable}, where the rows are 
indexed by $H=E_7,\ldots,A_1$ from top to bottom, and the columns are indexed by $K=A_1,\ldots,E_7$ from left to right, and where we have 
ignored all semidirect products with, and quotients by, finite groups.
  Deligne and Gross show that each of the groups in this triangle is naturally equipped with a certain
{\it preferred} representation of that group, obtained from the action of $\Z(H,K) \times H$ on ${\rm Lie}(K)$ 
(see~\cite{delignegross} for details).
In Table~\ref{dgtable}, we have included also the dimensions of these preferred representations.
\begin{table}\label{dgtable}
$$\begin{array}{cccccccc}
 & & & & & & & (A_1,2) \\[.1in]
   & & & & & &(\G_m,1) & (A_2,3) \\[.1in]
   & & & & &(\mu_3,1) &(A_1,3) & (G_2,7) \\[.1in]
    & & & &(\mu_2^2,1) & (\G_m^2,2)& (A_1^3,4)& (D_4,8) \\[.1in]
     & & &(\mu_2^2,2) & (A_1,5)& (A_2,8) & (C_3,14) & (F_4,26) \\[.1in]
      & & (\mu_3,1)&(\G_m^2,3) & (A_2,6)& (A_2^2,9)& (A_5,15)& (E_6,27)\\[.1in]
       & (\G_m,2)& (A_1,4)& (A_1^3,8)& (C_3,14)& (A_5,20)& (D_6,32)& (E_7,56) \\[.1in]
\end{array} 
$$
\caption{Deligne and Gross's Magic Triangle of group representations}
\end{table}

Note that the groups in the bottom right $4\times 4$ square in Table~\ref{dgtable} correspond to the
entries of ``Freudenthal's Magic Square'' of Lie algebras \cite{freudenthal-magic}.

If one takes the last row of representations $(Z(H,K),V)$, where $H=A_1$, and considers instead $(Z(H,K)\times H, V\otimes 2)$, one obtains many of the representations that we used in Sections~\ref{sec:HCpreview} and 
\ref{sec:hermHC} to understand degree 2 line bundles on genus one curves.    Similarly, if one takes the second-to-last row of representations $(Z(H,K),V)$, where $H=A_2$, and considers instead $(Z(H,K)\times H,V\otimes 3)$, one obtains many of the representations that we used in Sections~\ref{sec:RCpreview} and \ref{sec:hermRC} to understand degree~3 line bundles on genus one curves.

As with \S\ref{vinberg}, we suspect that much more lies behind this connection.

\appendix

\section{Torsors of elliptic curves and the period-index subgroup} \label{appendix:torsors}

Let $E$ be an elliptic curve over the field $K$, and let $E[n]$ denote the
$n$-torsion of $E$.  Then if $n$ is invertible in $K$, the Kummer
sequence
	\begin{equation*}
		0 \ra E[n] \ra E \stackrel{n}{\ra} E \ra 0
	\end{equation*}
given by multiplication by $n$ induces the sequence of Galois cohomology
	\begin{equation} \label{eq:galcohom}
		0 \ra E(K)/nE(K) \stackrel{a_n}\ra H^1(K,E[n]) \stackrel{b_n}{\ra} H^1(K,E).
	\end{equation}
Elements of the group $H^1(K,E)$, also known as the Weil-Ch\^{a}telet
group $\WC(E/K)$, may be thought of as isomorphism classes of 
torsors for the elliptic curve
$E$, namely genus one curves $C$ over $K$ with a specified isomorphism
between $E$ and the connected component $\Aut^0(C)$ of the
automorphism group scheme of $C$.  Two such torsors are isomorphic if
there exists an isomorphism between the curves that respects the
action of $E$.  This identification of $H^1(K,E)$ with $E$-torsors $C$ is an example
of the phenomenon that $H^1(K,G)$ for any group $G$ parametrizes $G$-torsors.  

On the other hand, the group $H^1(K,G)$ may be identified with
$\mathrm{Gal}(\Kbar/K)$-sets whose automorphism group is isomorphic
to $G$.  By this principle, elements of the group $H^1(K,E[n])$ are in
correspondence with twists of objects with automorphism group $E[n]$.
As explained in \cite{explicitndescentI}, there are many
interpretations for these twists.  For example, the pair $(C,[D])$,
where $C$ is a torsor for $E$ and $[D]$ is a $K$-rational%
\footnote{The divisor class $[D]$ being $K$-rational means that $D$ is
  linearly equivalent to all of its Galois conjugates; the divisor $D$
  itself may not be $K$-rational.}
divisor class on $C$ of degree $n$, is a twist of $(E,[n \cdot O])$
where $O$ is the identity point of $E$.  The group $H^1(K,E[n])$
parametrizes isomorphism classes of such pairs $(C,[D])$, where two
such pairs $(C,[D])$ and $(C',[D'])$ are isomorphic if there is an
isomorphism $\sigma: C \to C'$ respecting the $E$-action such that $\sigma^* D'$ is linearly
equivalent to $D$.  Under this interpretation, the map $b_n: H^1(K,E[n])
\to H^1(K,E)$ from \eqref{eq:galcohom} simply sends the pair $(C,[D])$
to the class $[C]$ of the curve $C$.

Pairs $(C,[D])$ are equivalent to so-called Brauer-Severi diagrams $[C
  \to \mathbb{P}]$, where $\mathbb{P}$ is a $(n-1)$-dimensional
Brauer-Severi variety.  Given a pair $(C,[D])$, the $K$-rationality of the divisor class $[D]$ gives rise to 
a $K$-rational structure on the embedding of $C_{\Kbar} := C \tns_K \Kbar$ into $\PP(H^0(C_{\Kbar},\OO(D_{\Kbar}))^\vee)$. 
The resulting closed immersion $[C \to \PP]$ is the Brauer-Severi diagram 
representing $(C,[D])$ in $H^1(K,E[n])$.  These
Brauer-Severi diagrams are twists of the diagram $[E \to \PP^{n-1}]$,
and they are (up to isomorphism) another way to represent elements of
$H^1(K,E[n])$.

\begin{defn}
  The {\em obstruction map}
	\begin{equation*}
		\Ob: H^1(K,E[n]) \ra \Br(K)
	\end{equation*}
	sends a Brauer-Severi diagram $[C \to \mathbb{P}]$ to the Brauer class of $\mathbb{P}$.
\end{defn}

The obstruction map, defined and studied extensively in \cite{cathy-periodindex}, is not a group
homomorphism in general; although the kernel is not a group, it
contains the identity of $H^1(K,E[n])$ and is closed under inverses.
The kernel of the obstruction map consists of pairs $(C,[D])$ for which there actually exists a
$K$-rational divisor $D$ representing the class $[D]$ (equivalently,
such that $\left| D \right|$ is isomorphic to $\PP^{n-1}_K$).

The obstruction map may also be given in terms of natural
cohomological maps coming from the elliptic curve $E$, using the Heisenberg
group $\Theta_{E,n}$.  The action of $E[n]$ on $E$ by translation extends to a
linear automorphism of the projective space
$\PP(H^0(E,n \cdot O)^\vee) = \PP^{n-1}$, so there exists a map
	$$E[n] \ra \PGL_n,$$
in other words, a projective representation of $E[n]$.  The inverse
image $\Theta_{E,n}$ of $E[n]$ in $\GL_n \to \PGL_n$ is a central
extension of $E[n]$ by $\Gm$:
	\begin{equation} \label{eq:defTheta}
		0 \ra \Gm \ra \Theta_{E,n} \ra E[n] \ra 0
	\end{equation}
with commutator given by the Weil pairing \cite{mumford-AVs}.  As
proved in \cite{explicitndescentI}, the obstruction map is just the
coboundary map
	$$ \Ob: H^1(K,E[n]) \ra H^2(K,\Gm)$$
from taking non-abelian cohomology of the exact sequence \eqref{eq:defTheta}.
Thus, elements of $\ker(\Ob)$ may be identified with
$H^1(K,\Theta_{E,n})$ by the exact sequence of pointed sets
  $$ 0 = H^1(K,\Gm) \ra H^1(K,\Theta_{E,n}) \stackrel{c_n}{\ra} H^1(K,E[n]) \stackrel{\Ob}{\ra} H^2(K,\Gm).$$

The elements of $H^1(C,\Theta_{E,n})$ may be viewed as isomorphism classes
of torsors for $\Theta_{E,n}$, namely pairs $(C,L)$
where $C$ is an $E$-torsor and $L$ is a degree $n$ line bundle on $C$.
Two pairs $(C,L)$ and $(C',L')$ are isomorphic if there is an
isomorphism $\sigma: C \stackrel{\cong}{\ra} C'$ respecting the $E$-action such that $\sigma^*
L' \cong L$.  The action of $E[n]$ on $C$ fixes the degree $n$ line
bundle $L$, and the line bundle $L$ itself has automorphism group
$\Gm$.  This viewpoint agrees with the interpretation of $\ker(\Ob)$
as pairs $(C,[D])$ where $D$ is a $K$-rational degree $n$ divisor on
the $E$-torsor $C$, since the divisor $D$ (up to equivalence) gives
rise to a line bundle $\OO(D)$.

Following \cite{cathy-periodindex}, we define the period-index subgroup of
a genus one curve $C$ with Jacobian $E$.  
The {\em period} of the curve $C$ is the order of the class of $C$ 
as an $E$-torsor, that is, as an element in $H^1(K,E)$.  The {\em index}
of $C$ is the smallest positive integer $n$ such that there is a line bundle
of degree $n$ on $C$ over $K$, in other words, a lift of $[C]$ to
$H^1(K,E[n])$ that lies in $\ker(\Ob)$.

Let the bilinearization of the quadratic map $\Ob$ be denoted $B_{\Ob}$.
Given a curve $C$ of period $n$ and $P \in E(K)$, the element 
$\langle P, C \rangle := B_{\Ob}(a_n(P),\widetilde{[C]}) \in \Br(K)[n]$ does not depend on the choice of the
lift $\widetilde{[C]} \in H^1(K,E[n])$ of $[C]$.

\begin{defn}
	The {\em period-index subgroup} of a genus one curve $C$ with Jacobian $E$ and period $n$ is the group
		\begin{equation*}
			\Jac_C^n(K) = \left\{ P \in E(K) : \langle P,C \rangle = 0 \right\}.
		\end{equation*}
\end{defn}

A point $P \in E(K)$ lies in $\Jac_C^n(K)$ if it corresponds to an actual $K$-rational degree $0$ divisor
(as opposed to only a $K$-rational divisor {\em class}) on the curve $C$.

\begin{prop} \label{prop:periodindexsubgp}
The period-index subgroup $\Jac_C^n(K) \subset \Jac(C)(K)$ for a genus one curve $C$ of index dividing $n$ consists of
points $P \in \Jac(C)(K) = \Pic^0(C)(K)$ such that $P$ is the difference of two line bundles
in $\Pic^n(C)(K)$.
\end{prop}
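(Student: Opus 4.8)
The plan is to characterize the period-index subgroup $\Jac_C^n(K)$ via the cohomological description already set up in this appendix. Recall that points in $\Jac_C^n(K)$ are defined as those $P \in E(K)$ with $\langle P, C\rangle = B_{\Ob}(a_n(P), \widetilde{[C]}) = 0$, and that the final sentence of the statement already tells us the geometric meaning: these are exactly the points corresponding to \emph{actual} $K$-rational degree $0$ divisors, rather than merely $K$-rational divisor classes. So the task is to translate ``$P$ is represented by a genuine $K$-rational divisor of degree $0$'' into ``$P$ is the difference of two elements of $\Pic^n(C)(K)$.''

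First I would establish the easy containment. If $P = [D_1] - [D_2]$ with $D_1, D_2$ each a $K$-rational line bundle of degree $n$ (i.e.\ elements of $\Pic^n(C)(K)$ in the sense of being represented by actual $K$-rational divisors, which is what membership in the kernel of the obstruction map provides), then $P$ is represented by the honest $K$-rational degree $0$ divisor $D_1 - D_2$. By the characterization stated just before the proposition---that $\Jac_C^n(K)$ consists of points corresponding to actual $K$-rational degree $0$ divisors---such a $P$ lies in $\Jac_C^n(K)$. The only subtlety is ensuring that $\Pic^n(C)(K)$ here means the classes in $\ker(\Ob)$, i.e.\ those lifting to $H^1(K,\Theta_{E,n})$; this is precisely the condition that the degree $n$ divisor class is represented by a genuine $K$-rational divisor, so the difference is a genuine $K$-rational degree $0$ divisor.

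For the reverse containment, I would take $P \in \Jac_C^n(K)$, so $P$ corresponds to a $K$-rational degree $0$ divisor $D$ on $C$. The goal is to write $D \sim D_1 - D_2$ where each $D_i$ is an effective (or at least $K$-rational) divisor of degree $n$ whose class lies in $\ker(\Ob)$. The natural move is to pick any $K$-rational divisor $D_2$ of degree $n$ whose class lies in the index-$n$ kernel of the obstruction map---such a class exists because $C$ has index dividing $n$, which guarantees a $K$-rational line bundle of degree $n$, equivalently a lift of $[C]$ to $\ker(\Ob) \subset H^1(K,E[n])$. Then set $D_1 := D + D_2$, a $K$-rational divisor of degree $n$. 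The crux is verifying that $[D_1] = [D] + [D_2]$ also lies in $\ker(\Ob)$, i.e.\ that $D_1$ (not just its class) is genuinely $K$-rational; this is where the hypothesis $\langle P, C\rangle = 0$ does the real work, via the bilinearity of $B_{\Ob}$ and the fact that $a_n(P)$ measures exactly the obstruction difference between $[D_1]$ and $[D_2]$ in $\Br(K)$.

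The main obstacle will be this last verification: showing that adding the degree $0$ class $P$ to an unobstructed degree $n$ class $[D_2]$ yields another unobstructed class precisely when $\langle P, C\rangle = 0$. I would handle it by chasing the relation in the exact sequence
\begin{equation*}
H^1(K,\Theta_{E,n}) \xrightarrow{c_n} H^1(K,E[n]) \xrightarrow{\Ob} H^2(K,\Gm),
\end{equation*}
using that $\Ob$ restricted to a coset $a_n(P) + \widetilde{[C]}$ differs from $\Ob(\widetilde{[C]})$ by the bilinear term $B_{\Ob}(a_n(P),\widetilde{[C]}) = \langle P, C\rangle$, together with the standard polarization identity for the quadratic obstruction map from \cite{cathy-periodindex, explicitndescentI}. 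Since $[D_2]$ was chosen in $\ker(\Ob)$, the class $[D_1] = a_n(P) + [D_2]$ lies in $\ker(\Ob)$ if and only if $\langle P, C \rangle = 0$, which is exactly our hypothesis. This closes the reverse containment and completes the proof.
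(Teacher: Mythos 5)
Your proposal is correct and is essentially the paper's own argument: both containments come down to the polarization identity $B_{\Ob}(x,y)=\Ob(x+y)-\Ob(x)-\Ob(y)$, the vanishing of $\Ob$ on $a_n(E(K))$ and on classes represented by honest line bundles, and the index hypothesis supplying a base class $[D_2]\in\ker(\Ob)$ of degree $n$. One point to tighten: in both directions you lean on the informal sentence preceding the proposition (that $\Jac_C^n(K)$ consists of points represented by actual $K$-rational degree $0$ divisors), which is a gloss on the proposition rather than an established fact --- but this is harmless, since the $B_{\Ob}$ computation you carry out for the reverse containment is exactly what proves that sentence, and running the same computation on $P=L'\tns L^{-1}$ gives the forward containment directly (as the paper does), with no circularity.
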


\begin{proof}
	Given a genus one curve $C$ over $K$ and two degree $n$ line bundles $L$ and $L'$ on $C$, let $P := L' \tns L^{-1}$.
	We check that $P \in \Jac_C^n(K)$:
		\begin{align*}
			B_{\Ob}(a_n(P),\widetilde{C}) &= B_{\Ob}(a_n(P),[(C,L')]) \\
			&= \Ob(a_n(P) + [(C,L')]) - \Ob(a_n(P)) - \Ob([(C,L')]) \\
			& = \Ob([C,L]) - \Ob(a_n(P)) - \Ob([(C,L')]) = 0
		\end{align*}
	as the last three terms are all trivial in $\Br(K)$.	
	
	Conversely, given a degree $n$ line bundle $L$ and a point $P \in \Jac_C^3(K)$, the same computation shows
	that there exists a lift $[(C,L')]$ of $C$ in $\ker(\Ob)$ such that the
	difference of $L'$ and $L$ is $P$.
\end{proof}

Proposition~\ref{prop:periodindexsubgp} explains why, in Theorems \ref{hyperpar}, \ref{doublesympar}, \ref{doubleskewpar}, \ref{thm:RCparam1}, etc., the points $P$, $P'$, $P''$ arising are not general $K$-points of the Jacobian of the curve $C$, but lie in the relevant period-index subgroup.

\small{
\bibliography{allbib}

\newcommand{\etalchar}[1]{$^{#1}$}
\providecommand{\bysame}{\leavevmode\hbox to3em{\hrulefill}\thinspace}
\providecommand{\MR}{\relax\ifhmode\unskip\space\fi MR }
\providecommand{\MRhref}[2]{%
  \href{http://www.ams.org/mathscinet-getitem?mr=#1}{#2}
}
\providecommand{\href}[2]{#2}
\begin{thebibliography}{AKM{\etalchar{+}}01}

\bibitem[ACGH85]{acgh}
E.~Arbarello, M.~Cornalba, P.~A. Griffiths, and J.~Harris, \emph{Geometry of
  algebraic curves. {V}ol. {I}}, Grundlehren der Mathematischen Wissenschaften
  [Fundamental Principles of Mathematical Sciences], vol. 267, Springer-Verlag,
  New York, 1985.

\bibitem[AKM{\etalchar{+}}01]{ankim}
Sang~Yook An, Seog~Young Kim, David~C. Marshall, Susan~H. Marshall, William~G.
  McCallum, and Alexander~R. Perlis, \emph{Jacobians of genus one curves}, J.
  Number Theory \textbf{90} (2001), no.~2, 304--315.

\bibitem[ARVT05]{ArtinRodriguezVillegasTate}
Michael Artin, Fernando Rodriguez-Villegas, and John Tate, \emph{On the
  {J}acobians of plane cubics}, Adv. Math. \textbf{198} (2005), no.~1,
  366--382. \MR{2183258 (2006h:14043)}

\bibitem[BE82]{buchsbaumeisenbud1}
David~A. Buchsbaum and David Eisenbud, \emph{Gorenstein ideals of height
  {$3$}}, Seminar {D}. {E}isenbud/{B}. {S}ingh/{W}. {V}ogel, {V}ol. 2,
  Teubner-Texte zur Math., vol.~48, Teubner, Leipzig, 1982, pp.~30--48.

\bibitem[BH12]{cofreecounting}
Manjul Bhargava and Wei Ho, \emph{On the average sizes of {S}elmer groups in
  families of elliptic curves}, preprint, 2012.

\bibitem[Bha]{hcl5}
Manjul Bhargava, \emph{Higher composition laws. {V}. {T}he parametrization of
  quaternionic and octonionic rings and modules}, in preparation.

\bibitem[Bha04a]{hcl1}
\bysame, \emph{Higher composition laws. {I}. {A} new view on {G}auss
  composition, and quadratic generalizations}, Ann. of Math. (2) \textbf{159}
  (2004), no.~1, 217--250.

\bibitem[Bha04b]{hcl2}
\bysame, \emph{Higher composition laws. {II}. {O}n cubic analogues of {G}auss
  composition}, Ann. of Math. (2) \textbf{159} (2004), no.~2, 865--886.

\bibitem[Bha04c]{hcl3}
\bysame, \emph{Higher composition laws. {III}. {T}he parametrization of quartic
  rings}, Ann. of Math. (2) \textbf{159} (2004), no.~3, 1329--1360.

\bibitem[Bha05]{manjulcountquartic}
\bysame, \emph{The density of discriminants of quartic rings and fields}, Ann.
  of Math. (2) \textbf{162} (2005), no.~2, 1031--1063.

\bibitem[Bha08]{hcl4}
\bysame, \emph{Higher composition laws. {IV}. {T}he parametrization of quintic
  rings}, Ann. of Math. (2) \textbf{167} (2008), no.~1, 53--94.

\bibitem[Bha10]{manjulcountquintic}
\bysame, \emph{The density of discriminants of quintic rings and fields}, Ann.
  of Math. (2) \textbf{172} (2010), no.~3, 1559--1591.

\bibitem[BK11]{buckley-pfaff}
Anita Buckley and Toma{\v{z}} Ko{\v{s}}ir, \emph{Plane curves as {P}faffians},
  Ann. Sc. Norm. Super. Pisa Cl. Sci. (5) \textbf{10} (2011), no.~2, 363--388.

\bibitem[BSa]{arulmanjul-4Sel}
Manjul Bhargava and Arul Shankar, \emph{The average number of elements in the
  4-{S}elmer groups of elliptic curves is 7}, preprint.

\bibitem[BSb]{arulmanjul-5Sel}
\bysame, \emph{The average number of elements in the 5-{S}elmer groups of
  elliptic curves is 6}, in progress.

\bibitem[BS10a]{arulmanjul-bqcount}
Manjul Bhargava and Arul Shankar, \emph{Binary quartic forms having bounded
  invariants, and the boundedness of the average rank of elliptic curves},
  2010, \url{http://arxiv.org/abs/1006.1002}.

\bibitem[BS10b]{arulmanjul-tccount}
\bysame, \emph{Ternary cubic forms having bounded invariants, and the existence
  of a positive proportion of elliptic curves having rank 0}, 2010,
  \url{http://arxiv.org/abs/1007.0052}.

\bibitem[BSD63]{BSD}
B.~J. Birch and H.~P.~F. Swinnerton-Dyer, \emph{Notes on elliptic curves. {I}},
  J. Reine Angew. Math. \textbf{212} (1963), 7--25.

\bibitem[Cas62]{Cassels}
J.~W.~S. Cassels, \emph{Arithmetic on curves of genus {$1$}. {IV}. {P}roof of
  the {H}auptvermutung}, J. Reine Angew. Math. \textbf{211} (1962), 95--112.

\bibitem[CF09]{cremonafisher}
John Cremona and Tom~A. Fisher, \emph{On the equivalence of binary quartics},
  Journal of Symbolic Computation \textbf{44} (2009), 673--682.

\bibitem[CFO{\etalchar{+}}08]{explicitndescentI}
J.~E. Cremona, T.~A. Fisher, C.~O'Neil, D.~Simon, and M.~Stoll, \emph{Explicit
  {$n$}-descent on elliptic curves. {I}. {A}lgebra}, J. Reine Angew. Math.
  \textbf{615} (2008), 121--155.

\bibitem[CFS10]{cremonafisherstoll}
John~E. Cremona, Tom~A. Fisher, and Michael Stoll, \emph{Minimisation and
  reduction of 2-, 3- and 4-coverings of elliptic curves}, Algebra Number
  Theory \textbf{4} (2010), no.~6, 763--820.

\bibitem[Cle03]{clerc}
Jean-Louis Clerc, \emph{Special prehomogeneous vector spaces associated to
  {$F_4,\ E_6,\ E_7,\ E_8$} and simple {J}ordan algebras of rank 3}, J. Algebra
  \textbf{264} (2003), no.~1, 98--128.

\bibitem[Cre99]{cremona-binarycubicquartic}
J.~E. Cremona, \emph{Reduction of binary cubic and quartic forms}, LMS J.
  Comput. Math. \textbf{2} (1999), 64--94 (electronic).

\bibitem[CS03]{conway-quatoct}
John~H. Conway and Derek~A. Smith, \emph{On quaternions and octonions: their
  geometry, arithmetic, and symmetry}, A K Peters Ltd., Natick, MA, 2003.

\bibitem[Del96]{deligne-exceptionalseries}
Pierre Deligne, \emph{La s\'erie exceptionnelle de groupes de {L}ie}, C. R.
  Acad. Sci. Paris S\'er. I Math. \textbf{322} (1996), no.~4, 321--326.
  \MR{1378507 (96m:22012)}

\bibitem[DG02]{delignegross}
Pierre Deligne and Benedict~H. Gross, \emph{On the exceptional series, and its
  descendants}, C. R. Math. Acad. Sci. Paris \textbf{335} (2002), no.~11,
  877--881.

\bibitem[Dol12]{dolgachevCAG}
Igor Dolgachev, \emph{Classical algebraic geometry: A modern view}, Cambridge
  University Press, 2012.

\bibitem[Don77]{donagi-grassmannians}
Ron~Y. Donagi, \emph{On the geometry of {G}rassmannians}, Duke Math. J.
  \textbf{44} (1977), no.~4, 795--837. \MR{MR0572991 (58 \#28012)}

\bibitem[ESB89]{ein-shepherdbarron}
Lawrence Ein and Nicholas Shepherd-Barron, \emph{Some special {C}remona
  transformations}, Amer. J. Math. \textbf{111} (1989), no.~5, 783--800.

\bibitem[Fau72]{faulkner}
John~R. Faulkner, \emph{A geometry for {$E_{7}$}}, Trans. Amer. Math. Soc.
  \textbf{167} (1972), 49--58.

\bibitem[Fis]{fisher-genus1pf}
Tom Fisher, \emph{Genus one curves defined by pfaffians}, preprint.

\bibitem[Fis06]{fisher-ternarycubics}
\bysame, \emph{Testing equivalence of ternary cubics}, Algorithmic number
  theory, Lecture Notes in Comput. Sci., vol. 4076, Springer, Berlin, 2006,
  pp.~333--345.

\bibitem[Fis08]{fisher-invts}
\bysame, \emph{The invariants of a genus one curve}, Proc. Lond. Math. Soc. (3)
  \textbf{97} (2008), no.~3, 753--782. \MR{2448246 (2009j:11087)}

\bibitem[Fis10]{fisher-pfaffianECs}
\bysame, \emph{Pfaffian presentations of elliptic normal curves}, Trans. Amer.
  Math. Soc. \textbf{362} (2010), no.~5, 2525--2540.

\bibitem[Fre]{freudenthal-magic}
Hans Freudenthal, \emph{Beziehungen der {$\mathfrak{E}_7$} und
  {$\mathfrak{E}_8$} zur {Oktavenebene, {\rm I-XI}}}, Nederl. Akad. Wetensch.
  Proc. Ser. A, {\bf 57} (1954), 218-230, 363-368, {\bf 58} (1955), 151-157,
  277-285, {\bf 62} (1959), 165-179, 180-191, 192- 201, 447-465, 466-474, {\bf
  66} (1963), 457-471, 472-487.

\bibitem[Ful97]{fulton-youngtableaux}
William Fulton, \emph{Young tableaux}, London Mathematical Society Student
  Texts, vol.~35, Cambridge University Press, Cambridge, 1997, With
  applications to representation theory and geometry.

\bibitem[Gau01]{Gauss}
Carl~Friedrich Gauss, \emph{Disquisitiones arithmeticae}, 1801.

\bibitem[Ho09]{wei-thesis}
Wei Ho, \emph{Orbit parametrizations of curves}, {PhD} dissertation, Princeton
  University, August 2009.

\bibitem[Kac80]{kac-nilpotent}
V.~G. Kac, \emph{Some remarks on nilpotent orbits}, J. Algebra \textbf{64}
  (1980), no.~1, 190--213.

\bibitem[KMRT98]{bookofinvolutions}
Max-Albert Knus, Alexander Merkurjev, Markus Rost, and Jean-Pierre Tignol,
  \emph{The book of involutions}, American Mathematical Society Colloquium
  Publications, vol.~44, American Mathematical Society, Providence, RI, 1998,
  With a preface in French by J. Tits.

\bibitem[Kru07]{krutelevich}
Sergei Krutelevich, \emph{Jordan algebras, exceptional groups, and {B}hargava
  composition}, J. Algebra \textbf{314} (2007), no.~2, 924--977.

\bibitem[Lit89]{littelmann}
Peter Littelmann, \emph{Koregul\"are und \"aquidimensionale {D}arstellungen},
  J. Algebra \textbf{123} (1989), no.~1, 193--222.

\bibitem[LVdV84]{lazarsfeldE16}
R.~Lazarsfeld and A.~Van~de Ven, \emph{Topics in the geometry of projective
  space}, DMV Seminar, vol.~4, Birkh\"auser Verlag, Basel, 1984, Recent work of
  F. L. Zak, With an addendum by Zak.

\bibitem[McC04]{mccrimmon}
Kevin McCrimmon, \emph{A taste of {J}ordan algebras}, Universitext,
  Springer-Verlag, New York, 2004.

\bibitem[MSS96]{merrimansikseksmart}
J.~R. Merriman, S.~Siksek, and N.~P. Smart, \emph{Explicit {$4$}-descents on an
  elliptic curve}, Acta Arith. \textbf{77} (1996), no.~4, 385--404.

\bibitem[Mum70]{mumford}
David Mumford, \emph{Varieties defined by quadratic equations}, Questions on
  {A}lgebraic {V}arieties ({C}.{I}.{M}.{E}., {III} {C}iclo, {V}arenna, 1969),
  Edizioni Cremonese, Rome, 1970, pp.~29--100.

\bibitem[Mum08]{mumford-AVs}
\bysame, \emph{Abelian varieties}, Tata Institute of Fundamental Research
  Studies in Mathematics, vol.~5, Published for the Tata Institute of
  Fundamental Research, Bombay, 2008, With appendices by C. P. Ramanujam and
  Yuri Manin, Corrected reprint of the second (1974) edition.

\bibitem[Ng95]{ng}
Kok~Onn Ng, \emph{The classification of {$(3,3,3)$} trilinear forms}, J. Reine
  Angew. Math. \textbf{468} (1995), 49--75.

\bibitem[O'N02]{cathy-periodindex}
Catherine O'Neil, \emph{The period-index obstruction for elliptic curves}, J.
  Number Theory \textbf{95} (2002), no.~2, 329--339.

\bibitem[SBG{\etalchar{+}}95]{compprojplanes}
Helmut Salzmann, Dieter Betten, Theo Grundh{\"o}fer, Hermann H{\"a}hl, Rainer
  L{\"o}wen, and Markus Stroppel, \emph{Compact projective planes}, de Gruyter
  Expositions in Mathematics, vol.~21, Walter de Gruyter \& Co., Berlin, 1995,
  With an introduction to octonion geometry.

\bibitem[Stu09]{sturmfels}
Bernd Sturmfels, \emph{Open problems in algebraic statistics}, Emerging
  applications of algebraic geometry, IMA Vol. Math. Appl., vol. 149, Springer,
  New York, 2009, pp.~351--363.

\bibitem[Tho12]{jthorne-thesis}
Jack Thorne, \emph{The arithmetic of simple singularities}, Ph.D. thesis,
  Harvard University, 2012.

\bibitem[Ver09]{vermeire}
Peter Vermeire, \emph{Singularities of the secant variety}, J. Pure Appl.
  Algebra \textbf{213} (2009), no.~6, 1129--1132. \MR{2498802 (2010b:14108)}

\bibitem[Vin76]{vinberg}
{\`E}.~B. Vinberg, \emph{The {W}eyl group of a graded {L}ie algebra}, Izv.
  Akad. Nauk SSSR Ser. Mat. \textbf{40} (1976), no.~3, 488--526, 709.

\bibitem[WY92]{wrightyukie}
David~J. Wright and Akihiko Yukie, \emph{Prehomogeneous vector spaces and field
  extensions}, Invent. Math. \textbf{110} (1992), no.~2, 283--314.

\bibitem[Zak85]{zak-severivarieties}
F.~L. Zak, \emph{Severi varieties}, Mat. Sb. (N.S.) \textbf{126(168)} (1985),
  no.~1, 115--132, 144.

\bibitem[Zak93]{zak-book}
\bysame, \emph{Tangents and secants of algebraic varieties}, Translations of
  Mathematical Monographs, vol. 127, American Mathematical Society, Providence,
  RI, 1993, Translated from the Russian manuscript by the author.

\end{thebibliography}
\bibliographystyle{amsalpha}
}

\end{document}